\numberwithin{equation}{section}
\title{Freeness over the diagonal and outliers detection in deformed random matrices with a variance profile}
\author{J\'er\'emie Bigot \hspace{0.2cm} \&  \hspace{0.1cm} Camille Male  \\
\\  Institut de Math\'ematiques de Bordeaux et CNRS  (UMR 5251)   \\ Universit\'e de Bordeaux }
\newtheorem{Th}{Theorem}[section]
\newtheorem{Prop}[Th]{Proposition}
\newtheorem{Lem}[Th]{Lemma}
\newtheorem{Cor}[Th]{Corollary}
\theoremstyle{remark}
\newtheorem{Rk}[Th]{Remark}
\renewcommand\leq\leqslant
\renewcommand\geq\geqslant
\def\Tr{\mathrm{Tr}}
\def\esp{\mathbb E}
\def\Pr{\mathbb P}
\def\var{\mathbb V\textrm{ar}}
\def\etc{,\ldots ,}
\def\one{\mathbbm{1}}
\def\MN{\mathrm{M}_N(\mathbb C) }
\def\HN{\mathrm{H}_N(\mathbb C) }
\def\DN{\mathrm{D}_N(\mathbb C) }
\def\i{\mathbf i}
\def\DD{\mathcal D}
\def\R{\mathbb R}
\def\C{\mathbb C}
\def\eps{\varepsilon}
\def\eq{\begin{eqnarray*}}
\def\qe{\end{eqnarray*}}
\def\eqa{\begin{eqnarray}}
\def\qea{\end{eqnarray}}
\newcommand\at[2]{\left.#1\right|_{#2}}
\def\diag{\mathop{\rm diag}\nolimits}%
\def\mbf{\mathbf}
\def\mcal{\mathcal}
\def\mbb{\mathbb}
\def\mrm{\mathrm}
\def\bor{\begin{color}{orange}}
\def\eor{\end{color}}
\def\bte{\begin{color}{teal}}
\def\ete{\end{color}}
\newcommand{\bG}{\boldsymbol{G}}
\newcommand{\bone}{\mathbf{1}}    
\begin{document}
\maketitle

\begin{abstract}
We study the eigenvalue distribution of a GUE matrix with a variance profile that is  perturbed by  an additive random matrix  that may possess spikes. Our approach is guided by Voiculescu's notion of freeness with amalgamation over the diagonal and by the notion of deterministic equivalent. This allows to derive a fixed point equation  to approximate the spectral distribution of certain deformed GUE matrices with a variance profile and to characterize the location of potential outliers in such models in a non-asymptotic setting. We also consider the singular values distribution of a rectangular Gaussian random matrix with a variance profile in a similar setting of additive perturbation. We discuss the application of this approach to the study of low-rank matrix denoising models in the presence of heteroscedastic noise, that is when the amount of variance in the observed data matrix may change from entry to entry. Numerical experiments are used to illustrate our results. 

\end{abstract}

\noindent \emph{Keywords:}  Deformed random matrix, Variance profile, Outlier detection, Free probability, Freeness with amalgamation, Operator-valued Stieltjes transform, Gaussian spiked model,  Low-rank model.

\tableofcontents

\section{Introduction}

We first introduce some questions related to the estimation of the eigenvalue distribution of the sum of a GUE matrix with a variance profile and a deterministic matrix that may possess spikes. Then, we present the main theoretical contributions of the paper and its organization.

\subsection{Aim of the article}
\subsubsection{Deformed Hermitian random matrices}

We recall that for any integer $N\geq 1$, a $N \times N$ GUE matrix is a Hermitian random matrix $ \big(\frac{x_{i,j}}{\sqrt N}\big)_{i,j=1\etc N}$, such that  the sub-diagonal entries are independent,   the variables $x_{i,i}$, $i=1\etc N$, are centered real Gaussian variables with variance one,
and  the non-diagonal entries $x_{i,j}$ (with $i\neq j$) are centered complex Gaussian variables with variance one. Let $\Gamma_N = \big(\gamma_N^2(i,j)\big)_{i,j=1\etc N}$ be a symmetric matrix with non negative entries. Then, the random matrix
\begin{equation}
	X_N := \left( \gamma_N(i,j)\frac{x_{i,j}}{\sqrt N}\right)_{i,j=1\etc N} \label{def:GUEvarprof}
\end{equation}
is called a \emph{GUE matrix of size $N$ with variance profile} $\Gamma_N$. Furthermore, we consider a deterministic Hermitian matrix $Y_N$. The random matrix 
	\eqa\label{Model1}
		H_N  := X_N + Y_N
	\qea
is then an (additively) \emph{deformed random matrix}. We shall also study the situation where $Y_N$  has \emph{spikes} that may generate a finite number of eigenvalues (called \emph{outliers}) that detach from the rest of the spectrum. 

There are two dual ways to interpret  model \eqref{Model1} in RMT. From the point of view of mathematical physics, the random matrix $X_N$ models the Hamiltonian of a system, and the deformation $Y_N$ is a perturbation coming from an external source. A variance profile for $X_N$ can model impurities of the system. From the signal processing point of view, $Y_N$ models a signal and $X_N$ an additive noise which deforms the data. 

 In this paper, we present results and heuristics to approximate the eigenvalue distribution of a deformed random matrix by a deterministic function, called classically a \emph{deterministic equivalent}. Our approach involves tools in RMT and free probability, some of them having been created quite recently. They may have applications for the numerical analysis of the spectral distribution  of data organized in a matrix form such as noisy images. 

The substance of our mathematical arguments is a legacy of the investigations of Haagerup and Thorbj{\o}rnsen who developed a strategy in \cite{HT05} for the study of block GUE matrices, involving classical techniques of Gaussian calculus and complex analysis. In their seminal paper, such a strategy served as an ingredient to reinforce the link between RMT and the abstract theory of $\mcal C^*$-algebras of free groups. The analysis in \cite{HT05} and in our work are both based on Voiculescu's equations of subordination with amalgamation. Adapting Haagerup and Thorbj{\o}rnsen's technique, we want to spread their method to more applied contexts that we believe to be of interest in high-dimensional statistics and signal processing. We notify the reader that the aim of this work is not to obtain sharp estimations (that is with optimal rates of convergence) of the spectral distribution of deformed random matrix with a variance profile, and we refer to \cite{EKS19} for recent results on this topic.

\subsubsection{Information plus noise model}

If  model \eqref{Model1} is quite natural from the mathematical point of view, it is more interesting for application in statistics to study rectangular matrices with no symmetry. We define a standard Gaussian matrix as a $N \times M$ rectangular random matrix  $\big(\frac{x_{i,j}}{\sqrt M} \big)_{i,j}$ with $x_{i,j}$ independent centered complex Gaussian entries with variance one for all $i,j$. Let $\Gamma_{N,M} = \big(\gamma_{N,M}^2(i,j)\big)_{i,j}$ be a $N \times M$ matrix with non negative entries. Then, the random matrix 
	$$X_{N,M} = \left( \gamma_{N,M}(i,j)\frac{x_{i,j}}{\sqrt M}\right)_{\substack{ i=1\etc N\\j=1\etc M}}.$$
is called a \emph{Gaussian matrix with variance profile} $\Gamma_{N,M}$. Let $Y_{N,M}$ be a deterministic matrix of size $N\times M$.  The random matrix 
	\eqa\label{Model3}
		H_{N,M}  =  X_{N,M} + Y_{N,M}
	\qea
is called an \emph{information plus noise model with a variance profil}.

When $Y_{N,M}$ is a finite rank matrix, then the rectangular information plus noise model \eqref{Model3} corresponds to the  low-rank matrix denoising problem which arises in various applications, where it is of interest to estimate a $N$ by $M$ signal matrix $Y_{N,M}$ from noisy data.  When the variance profile $\Gamma_{N,M}$ is a matrix with equal entries, the problem of estimating the low-rank matrix $Y_{N,M}$ has been extensively studied in statistics and machine learning \cite{MR3054091,MR3105401,donoho2014,MR3200641} using spectral estimators constructed  from the singular value decomposition of $H_{N,M}$. These works build upon well understood results of the asymptotic behavior (as $\min(N,M) \to \infty$) of the singular values of $H_{N,M}$ in the Gaussian spiked population model \cite{Benaych-GeorgesN12,MR2322123}. 

Hence, low-rank matrix estimation is  well understood when the additive noise is Gaussian with \emph{homoscedastic variance} (that is a  constant variance profile). However, in many applications the  noise can be highly \emph{heteroscedastic} meaning that the amount of variance in the observed data matrix may significantly change from entry to entry. Examples can be found in photon imaging \cite{SalmonHDW14}, network traffic analysis \cite{Bazerque2013} or genomics for microbiome studies \cite{Cao17}. In such applications, the observations are count data that are modeled by Poisson or multinomial distributions which leads to heteroscedasticity. The literature on statistical inference from high-dimension matrices with heteroscedastic noise has thus recently been growing \cite{Bigot17,liu2018,MAL2016,Zhang18,Fan19}. 

As discussed in \cite{Zhang18}, the Gaussian model \eqref{Model3} with a variance profile can serve as a prototype for various applications involving low-rank matrix denoising in the presence of heteroscedasticity. The analysis of the outliers in model \eqref{Model3} with a non-constant variance profile is also of interest in neural networks to model synaptic matrix \cite{Rajan06} and in signal processing for radio communications \cite{Chapon14}. Nevertheless, to the best of our knowledge, the characterization of potential outliers in model  \eqref{Model3} when the noise matrix $X_{N,M}$ has a variance profile that is not constant has not received very much attention so far.

We now address the two questions on deformed random matrices we investigate.

\subsubsection{Global behavior of the eigenvalues of an additive perturbation} \label{subsec:global}

The first issue is  the global behavior of the eigenvalues of $H_N$. In particular, under general assumptions on $\Gamma_N$ and $Y_N$, we shall consider the question of how  approximating the empirical spectral distribution (e.s.d.) of $H_N$:
		$$\mu_{H_N} := \frac 1 N \sum_{i=1}^N \delta_{\lambda_i(H_N)},$$
where $\delta$ is the Dirac mass  and $\lambda_i(H_N)$ the $i$-th eigenvalue of $H_N$. As commonly done in random matrix theory (RMT), we use the Stieltjes transform $g_{H_N}$ of the e.s.d.\ of $H_N$, namely the map defined by the trace of the resolvent $(\lambda \mbb I_N - H_N)^{-1}$ of $H_N$ that is
	\eqa\label{Def:Stieltjes}
		g_{H_N} (\lambda) :=  \frac 1 N \Tr \big[ (\lambda \mbb I_N - H_N)^{-1}\big].
	\qea
In above formula, $\lambda$ belongs to the set $\mbb C^+$ of complex number with positive imaginary part. When the context is clear, we write $\lambda - H_N$ for the matrix $\lambda \mbb I_N-H_N$. This transform has numerous properties and applications, we refer to Benaych-Georges and Knowles lecture notes \cite[Section 2.2]{BK16} for an introduction. Note that for $\lambda = t+\mbf i \eta$ (with $t \in \R$ and $\eta > 0$) then $\frac{1}\pi \Im m \, g_{H_N}(\lambda) = (\mu_{H_N}* \rho_\eta)(t)$ where $*$ denotes the convolution product and $\rho_\eta(t) = \frac{\eta}{t^2+ \eta^2}$ is an approximate delta function (Cauchy kernel) as $\eta \rightarrow 0^+$. Hence for $\eta$ chosen sufficiently small, the function
 \begin{equation} \label{eq:Stieltjesinv}
 t \mapsto -\frac{1}\pi \Im m \left( g_{H_N}(t+\mbf i \eta) \right)
\end{equation}
is a good approximation of $\mu_{H_N}$ over $\R$ by a random analytic function. In what follows, the smooth density defined by \eqref{eq:Stieltjesinv} will be referred to as the {\it inverse Stieltjes transform}  of $g_{H_N}$.

Then, we first address the question of the construction of a deterministic analytic function $g_{H_N}^\square:\mbb C^+ \to \mbb C^-$, which only depends  on $N$, on the variance profile $\Gamma_N$ and  on $Y_N$, and that approximates $g_{H_N}$ with high probability. 

\subsubsection{Outliers location in finite rank perturbation models} \label{subsec:outlierdetection}

The second issue is the effect of a low rank additive perturbation. For notational convenience, we introduce another deterministic Hermitian matrix $Z_N$ of finite rank $k$ (not depending on $N$). We can thus write $Z_N =  U_{N,k} \Theta_k U_{N,k}^*$, where $\Theta_k$ is a $k \times k$ diagonal matrix with nonzero diagonal entries and $U_{N,k}$ is a $N\times k$ matrice whose columns are  orthonormal vectors. The eigenvalues of $Z_N$ are commonly called the \emph{spikes}. The matrix 
	\eqa\label{Model2}
		H'_N  = X_N + Y_N + Z_N = H_N  + U_{N,k} \Theta_k U_{N,k}^*
	\qea
is a \emph{finite rank deformation} of $H_N$. For such models, we consider the problem of how locating in the spectrum of $H_N'$ the eigenvalues coming from $Z_N$  that detach from the spectrum of $H_N$.

To formulate more precisely this problem, we say that, for a given level of precision $\eps>0$, an \emph{outlier} is a real eigenvalue $t$ of $H_N'$ which is not in an $\eps$-neighborhood of the spectrum of $H_N$. Following  \cite{Benaych-GeorgesN12}, let us first \emph{factorize the spectrum of} $H_N$ in $H'_N$, in the following sense. For any complex $\lambda$ which is not in the spectrum of $H_N$, we consider the decomposition
	$$ ( \lambda    - H'_N ) = (\lambda   - H_N) \times \alpha_N(\lambda),$$
where 
	$$\alpha_N(\lambda) = \mbb I_N -  (\lambda   - H_N)^{-1}Z_N.$$
An outlier is then a real number $t \in \R$ away from the spectrum of $H_N$ such that $\det\big(\alpha_N(t) \big) = 0$. Therefore, noticing that the determinant of $\alpha_N(\lambda)$ is equal to the determinant of the $k \times k$ matrix 
	\eqa\label{Eq:Beta}
		\beta_k(\lambda) :=  \mbb I_k - U_{N,k}^*(\lambda   - X_N-Y_N)^{-1}  U_{N,k} \Theta_k,
	\qea
it follows that, to  localize the outliers of $H_N'$, it  is sufficient to compute the real numbers $t \in \R$ such that
\begin{equation}
\det\big(\beta_k(t) \big) = 0. \label{eq:detbeta}
\end{equation}
Then, the second question that we address in this paper is the construction of a deterministic  matrix-valued function $\beta_k^\square$, which depends on $N$, on the variance profile $\Gamma_N$, and on the matrices $Y_N,Z_N$, and that approximates $\beta_k$. In this manner, the zeros of $\beta_k^\square$ that are away from the spectrum of $H_N$ shall indeed be close to the outliers of $H_N'$.

\subsection{Main statements}\label{Sec:MainStat}

We now provide the formal statements of our main results and the method to answer the two questions raised above. We let $\mrm D_N(\mbb C^+)$  (resp. $\mrm D_N(\mbb C)^-$) denote the set of diagonal matrices $\Lambda=\big(\Lambda(i,j)\big)_{i,j}$ of size $N$ with diagonal entries having positive imaginary (resp. negative) parts. For any matrix $A_N=(a_{i,j})_{i,j}$, we denote by $\Delta(A_N)$ the diagonal matrix whose diagonal entries are those of $A_N$. 

The operator-valued Stieltjes transform $G_{A_N}$ of a Hermitian matrix $A_N$ is the map 
\begin{equation} \label{eq:opStieltjes}
	\begin{array}{cccc} 
		G_{A_N}: & \DN^+ & \to & \DN^-\\
		& \Lambda & \mapsto & \Delta\big[ (\Lambda - A_N)^{-1} \big].\end{array} 
\end{equation}
For any $\Lambda \in \mrm D_N(\mbb C^+)$, we also introduce the mapping
\begin{equation}
		\mcal R_N(\Lambda) =   \underset{i=1\etc N}{\mrm{diag}} \Big( \sum_{j=1}^N \frac {\gamma_N^2(i,j)}N \Lambda(j,j)\Big). \label{eq:defRNbis}
\end{equation}
that is a key tool in our analysis. The map $\mcal R_N$ may also be written as
\begin{equation}
\mcal R_N(\Lambda) = \mrm{deg}\big(  \frac{\Gamma_N}N  \Lambda\big) =\esp [  X_N \ \Lambda \ X_N  ], \label{eq:RNdeg}
\end{equation}
where $\mrm{deg}(A)$ for a matrix $A$ is the diagonal matrix whose $k$-diagonal element is the sum of the entries of the $k$-row of $A$. We now state our main result on the construction of a deterministic equivalent of the operator-valued Stieltjes transform of deformed random matrices and its finite sample properties of approximation.
   
\begin{Th}\label{MainTh} There exists a unique function $G_{H_N}^\square : \DN^+ \to \mrm D_N(\mbb C)^-$,  analytic in each variable, that solves of the following fixed point equation
	\eqa\label{FixPtEq1} 
		  G_{H_N}^\square({ \Lambda}) =\Delta\bigg [ \Big( \Lambda   -\mcal R_N\big(  G_{H_N}^\square({ \Lambda})\big)  - Y_N \Big)^{ -1}\bigg],
	\qea
for any $\Lambda \in  \DN^+$. Let $\gamma^2_{\mrm {max}} = \max_{i,j}  \gamma_N^2(i,j)$, let  $0 < \delta < 1$, and consider $\Lambda \in \DN^+$ satisfying
\begin{equation}
\Im m \, \Lambda \geq  \gamma_{\mrm{max}}  \left(\frac{2}{N(1-\delta)}\right)^{1/5} \mbb I_N. \label{eq:condlambda0}
\end{equation}
Then, for any $d > 1$, setting
	\eq
		\eps_N(d) &: =&  \sqrt{2} \gamma_{\max}   \sqrt{\frac{d \log(N)}N}  \| (\Im m \, \Lambda)^{-1}\| ^2  \\
		&&+   \Big( 1 +  \frac{\gamma^2_{\mrm {max}}}{\delta}   \| (\Im m \, \Lambda)^{-1} \|^2 \Big)  \frac{2 \gamma_{\mrm{max}}^3\| (\Im m \, \Lambda)^{-1}\|^4}N,
	\qe
we have, for $N \geq 1$,
\begin{equation}
\mbb P\Big(  \big\| G_{H_N}(\Lambda) - G_{H_N}^\square(\Lambda) \big\| \geq \eps_N(d)  \Big) \leq 4 N^{1-d}, \label{eq:MainThStieltjes}
\end{equation}
where $\| \cdot \|$ denotes the operator norm of a matrix.
	
If moreover $Y_N$ is diagonal, for any $\Lambda \in \DN^+$ satisfying
\begin{equation}
\Im m \, \Lambda \geq\gamma_{\mrm{max}} N^{-1/4} (1-\delta)^{-1/6} \mbb I_N \label{eq:condlambda2}
\end{equation}
then \eqref{eq:MainThStieltjes} holds with 
	\eq
		\eps_N(d) &:= &  \sqrt{2} \gamma_{\max}   \sqrt{\frac{d \log(N)}N}  \| (\Im m \, \Lambda)^{-1}\| ^2  \\
		&&+   \big( 1 +  \frac{\gamma^2_{\mrm {max}}}{\delta}   \| (\Im m \, \Lambda)^{-1} \|^2 \big) \gamma_{\mrm{max}}^4 \frac{ \| (\Im m \, \Lambda)^{-1}\|^5}{N^{\frac 3 2}}.
	\qe
\end{Th}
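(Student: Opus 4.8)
The plan is to follow the Haagerup--Thorbj{\o}rnsen scheme adapted to the variance profile, in three stages: (i) existence and uniqueness of the fixed point $G_{H_N}^\square$; (ii) a master inequality controlling the discrepancy $\|G_{H_N}(\Lambda) - G_{H_N}^\square(\Lambda)\|$ via Gaussian integration by parts; (iii) concentration of the random operator-valued Stieltjes transform around its expectation. For (i), I would set up the map
\[
\Phi_\Lambda(\Delta) = \Delta\bigl[(\Lambda - \mcal R_N(\Delta) - Y_N)^{-1}\bigr]
\]
on the cone $\DN^-$ and exhibit it as a strict contraction in the appropriate hyperbolic (Carath\'eodory) metric on $\DN^+$ composed with the analytic self-map structure: since $\Delta \mapsto \mcal R_N(\Delta)$ is linear and positivity-preserving, and $A \mapsto \Delta[(\Lambda - A - Y_N)^{-1}]$ is an analytic self-map of the Siegel-type upper half space that is non-expansive and strictly contracts when the imaginary part is bounded below, a Banach fixed point argument gives a unique solution; analyticity in each variable then follows from the implicit function theorem (the derivative $\mathrm{Id} - D\Phi_\Lambda$ is invertible by the contraction estimate). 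The condition \eqref{eq:condlambda0}, resp. \eqref{eq:condlambda2}, is exactly what makes the relevant Lipschitz constant $\gamma_{\mathrm{max}}^2 N^{-1}\|(\Im m\,\Lambda)^{-1}\|^2$-type quantity strictly less than $1$.

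For (ii), write $R_N(\Lambda) = (\Lambda - H_N)^{-1}$ and apply Gaussian integration by parts (the Stein/Gaussian calculus identity $\esp[x_{ij} f] = \esp[\partial_{x_{ij}} f]$) to $\esp[R_N(\Lambda)]$, using $\partial_{x_{ij}} R_N = \tfrac{\gamma_N(i,j)}{\sqrt N} R_N (E_{ij}+E_{ji}) R_N$. Taking the diagonal part and using \eqref{eq:RNdeg} ($\mcal R_N(\Lambda) = \esp[X_N \Lambda X_N]$), this produces an approximate fixed point equation
\[
\Delta\bigl[\esp R_N(\Lambda)\bigr] = \Delta\Bigl[\bigl(\Lambda - \mcal R_N(\Delta[\esp R_N(\Lambda)]) - Y_N\bigr)^{-1}\Bigr] + \mathrm{(error)},
\]
where the error term collects a variance contribution of $\mcal R_N$ applied to $\esp[R_N] - R_N$ inside the resolvent and a fourth-order Gaussian remainder. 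Bounding these by $\|R_N\| \le \|(\Im m\,\Lambda)^{-1}\|$ and by the variance control $\esp\|R_N - \esp R_N\|^2$, the error is of order $\gamma_{\mathrm{max}}^3 N^{-1}\|(\Im m\,\Lambda)^{-1}\|^4$ times the $(1 + \gamma_{\mathrm{max}}^2\delta^{-1}\|(\Im m\,\Lambda)^{-1}\|^2)$ amplification factor. Then I subtract this from the exact fixed point equation for $G_{H_N}^\square$ and use the contraction property from step (i) to convert the ``approximate solution'' bound into a genuine bound on $\|\esp G_{H_N}(\Lambda) - G_{H_N}^\square(\Lambda)\|$ (the usual $x = \Phi(x) + e$, $y = \Phi(y)$, $\|x-y\| \le \|e\|/(1 - \mathrm{Lip})$ argument, with the $\delta$ absorbing $1-\mathrm{Lip}$). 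The diagonal-$Y_N$ refinement comes from the fact that then $\esp R_N$ and $G_{H_N}^\square(\Lambda)$ are themselves diagonal, so one gains an extra factor and the self-improving bootstrap on the error term sharpens the power of $N$ (hence the milder condition \eqref{eq:condlambda2} and the $N^{-3/2}$ rate).

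For (iii), the concentration $\|G_{H_N}(\Lambda) - \esp G_{H_N}(\Lambda)\| \le \sqrt{2}\,\gamma_{\mathrm{max}}\sqrt{d\log N / N}\,\|(\Im m\,\Lambda)^{-1}\|^2$ with probability $\ge 1 - 4N^{1-d}$ is obtained entry-wise: each diagonal entry $(i \mapsto (R_N(\Lambda))_{ii})$ is a smooth function of the independent Gaussian variables $(x_{ij})$ with Lipschitz constant bounded by $\gamma_{\mathrm{max}} N^{-1/2}\|(\Im m\,\Lambda)^{-1}\|^2$ (differentiate $R_N$, use $\|R_N\|\le\|(\Im m\,\Lambda)^{-1}\|$), so the Gaussian concentration inequality gives sub-Gaussian tails for each of the $N$ entries (real and imaginary parts separately, whence the factor $4$), and a union bound over the $N$ coordinates produces the $\sqrt{\log N}$. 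Combining (ii) and (iii) by the triangle inequality yields \eqref{eq:MainThStieltjes} with the stated $\eps_N(d)$.

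\textbf{Main obstacle.} The delicate point is step (ii): controlling the variance-type error term in the Gaussian integration by parts uniformly, i.e. showing that replacing $\mcal R_N(R_N)$ inside the resolvent by $\mcal R_N(\esp R_N)$ costs only $O(\gamma_{\mathrm{max}}^3 N^{-1}\|(\Im m\,\Lambda)^{-1}\|^4)$ after amplification — this requires a self-consistent (bootstrap) estimate on $\esp\|R_N - \esp R_N\|$ and careful bookkeeping of where the $\delta$ and the cone condition \eqref{eq:condlambda0} enter to keep the contraction constant bounded away from $1$; getting the exponents of $\gamma_{\mathrm{max}}$, $N$ and $\|(\Im m\,\Lambda)^{-1}\|$ exactly as stated (and the sharpening to $N^{-3/2}$ in the diagonal case) is the most calculation-heavy part.
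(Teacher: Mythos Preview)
Your three-stage outline is essentially the paper's, but two of the three stages differ from the paper's actual arguments in ways worth noting.

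For existence and uniqueness of $G_{H_N}^\square$, the paper does \emph{not} use a hyperbolic/Carath\'eodory metric contraction on all of $\DN^+$. It first proves that $\psi_\Lambda$ is an operator-norm contraction only on the region $\Im m\,\Lambda > \gamma_{\max}\mathbb I_N$ (Lemma~6.1, Corollary~6.2), and then extends to all of $\DN^+$ by constructing the solution as a limit of genuine operator-valued Stieltjes transforms of auxiliary $NM\times NM$ random matrices $H_{N,M}=X_{N,M}+Y_N\otimes\mathbb I_M$ as $M\to\infty$ (Section~6.2.2), followed by analytic continuation. Analyticity is proved by induction via Schur complements (Lemma~6.4), not by the implicit function theorem. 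Your Earle--Hamilton style route would likely also work, but it is a genuinely different construction. More importantly, your claim that condition \eqref{eq:condlambda0} ``is exactly what makes the relevant Lipschitz constant strictly less than $1$'' is a misreading: the fixed point exists on all of $\DN^+$ without any such condition. Condition \eqref{eq:condlambda0} enters only in the stability step, to ensure that the perturbed parameter $\tilde\Lambda = \Lambda - \mcal R_N(\Theta_N(\Lambda))$ still lies in $\DN^+$ and that the resulting amplification factor $(1+\gamma_{\max}^2\delta^{-1}\|(\Im m\,\Lambda)^{-1}\|^2)$ is finite.

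For step (ii), the paper's error control is not a self-consistent bootstrap. The Master equality (Lemma~5.7, via Gaussian IBP) produces a covariance term $E_N = \mathrm{Cov}(\mcal R_N(G_{H_N}(\Lambda)), (\Lambda-H_N)^{-1})$, and this is bounded directly by the Gaussian Poincar\'e inequality (Proposition~5.10) applied to the diagonal entries, yielding $\|E_N\|\le 2\gamma_{\max}^3 N^{-1}\|(\Im m\,\Lambda)^{-1}\|^3$ and, separately, $\|\Delta[E_N]\|\le \gamma_{\max}^4 N^{-3/2}\|(\Im m\,\Lambda)^{-1}\|^4$. The $N^{-3/2}$ improvement in the diagonal-$Y_N$ case is \emph{not} a bootstrap: it is the structural observation that when $Y_N$ is diagonal, $(\Omega_{H_N}-Y_N)^{-1}$ is diagonal, so $\Delta[F_N] = (\Omega_{H_N}-Y_N)^{-1}\Delta[E_N]$, allowing one to use the sharper bound on $\|\Delta[E_N]\|$ (obtained from entrywise Cauchy--Schwarz rather than the operator Cauchy--Schwarz \eqref{CSnorm}) instead of $\|E_N\|$. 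The stability step then uses a clever reparametrization $\tilde G_N(\Lambda) = \esp[G_{H_N}(\Lambda)] - \Theta_N(\Lambda)$, $\tilde\Lambda = \Lambda - \mcal R_N(\Theta_N(\Lambda))$, recognizing $\tilde G_N(\Lambda)$ as an \emph{exact} fixed point of $\psi_{\tilde\Lambda}$, and invoking the Lipschitz estimate \eqref{Eq:EstimDeterEq} for $G_{H_N}^\square$ in $\Lambda$ --- this is cleaner than your $\|x-y\|\le\|e\|/(1-\mathrm{Lip})$ argument and does not require contractivity of $\psi_\Lambda$ at the given $\Lambda$. Your step (iii) matches the paper.
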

The solution $G_{H_N}^\square$ of the fixed point equation is referred to as the {\it deterministic equivalent} of the operator-valued Stieltjes transform of $H_N$. It has another description given in Section \ref{sec:large}, as the limit of a functional on large random matrices. One may remark that the results of Theorem \ref{MainTh} hold without any assumption on the Hermitian matrix $Y_N$ and the variance profile $\Gamma_N$. In particular no bound from below for the entries $\Gamma_N$ is involved to derive the concentration inequality \eqref{eq:MainThStieltjes}. If we are given sequences of matrices with growing dimension $N$, then this estimate is also meaningful when $\gamma_{\max}$ grows slowly with $N$.

The proof of Theorem \ref{MainTh} is divided into three steps that are detailed in Section \ref{Sec:AsympSubProp}, Section \ref{Sec:FixedPtAnalysis} and Section \ref{Sec:resolventAnalysis}. We then deduce the following methods to answer the two issues raised in Section \ref{subsec:global} and Section \ref{subsec:outlierdetection}.

\paragraph{Approximation of the global behavior of the e.s.d.\ of $H_N$ by a smooth density.}  The proof of Theorem 1.1.\ with a slightly different argument of concentration implies the following approximation for the Stieltjes transform of $H_N$ (recall that $\gamma^2_{\mrm {max}} = \max_{i,j}  \gamma_N^2(i,j)$).

\begin{Cor}\label{MainCor}
For any $\lambda \in \mbb C^+$ such that
\begin{equation}
\Im m \, \lambda \geq  \gamma_{\mrm{max}}  \left(\frac{2}{N(1-\delta)}\right)^{1/5} , \label{eq:condlambda2bis}
\end{equation}
denoting for $d >0$
$$
\tilde{\eps}_N(d) :=    \frac{\sqrt{2} \gamma_{\max} \sqrt{2d \log(N)}}{  | \Im m \, \lambda|^2 N}  +   \Big( 1 +  \frac{\gamma^2_{\mrm {max}}}{\delta | \Im m \, \lambda|^{2}}    \Big) \frac{2 \gamma_{\mrm{max}}^3}{ N   (\Im m \, \lambda)^4},
$$
and $g_{H_N}^\square(\lambda) := \frac 1 N \Tr G_{H_N}^\square(\lambda \mbb I_N)$, it follows that
\begin{equation}
\Pr \big( \big| g_{H_N} (\lambda)   - g_{H_N}^\square (\lambda) \big| \geq \tilde{\eps}_N(d) \big) \leq N^{-d}, \label{eq:MainThStieltjes2}
\end{equation}
where $G_{H_N}^\square(\lambda \mbb I_N)$ is the unique solution of \eqref{FixPtEq1} for the scalar matrix $\Lambda = \lambda \mbb I_N$. 
If moreover $Y_N$ is diagonal and $\Im m \, \lambda \geq \gamma_{\mrm{max}} N^{-1/4} (1-\delta)^{-1/6}$, then \eqref{eq:MainThStieltjes2} holds with 
$$
\tilde{\eps}_N(d) :=    \frac{\sqrt{2} \gamma_{\max} \sqrt{2d \log(N)}}{  | \Im m \, \lambda|^2} N^{-1} +   \Big( 1 +  \frac{\gamma^2_{\mrm {max}}}{\delta | \Im m \, \lambda|^{2}}    \Big) \frac{ \gamma_{\mrm{max}}^4}{ N^{3/2}   (\Im m \, \lambda)^5},
$$
\end{Cor}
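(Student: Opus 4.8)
The plan is to keep untouched the deterministic part of the proof of Theorem~\ref{MainTh} and to replace only the concentration step, passing from the operator norm of an $N\times N$ diagonal matrix to the single scalar $\tfrac 1 N\Tr$. First I would observe that, since $\Lambda=\lambda\mbb I_N$ is a scalar matrix and $\mcal R_N$ maps diagonal matrices to diagonal matrices, the fixed point equation \eqref{FixPtEq1} admits a diagonal solution $G_{H_N}^\square(\lambda\mbb I_N)$, and that
$$
g_{H_N}(\lambda)=\tfrac 1 N\Tr\Delta\big[(\lambda\mbb I_N-H_N)^{-1}\big]=\tfrac 1 N\Tr G_{H_N}(\lambda\mbb I_N),\qquad g_{H_N}^\square(\lambda)=\tfrac 1 N\Tr G_{H_N}^\square(\lambda\mbb I_N).
$$
Then I would write $g_{H_N}(\lambda)-g_{H_N}^\square(\lambda)=\big(g_{H_N}(\lambda)-\esp g_{H_N}(\lambda)\big)+\big(\esp g_{H_N}(\lambda)-g_{H_N}^\square(\lambda)\big)$ and treat the two summands separately.

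For the bias summand, I would invoke the deterministic (non-random) estimates established in the proof of Theorem~\ref{MainTh}, specialized to $\Lambda=\lambda\mbb I_N$: under \eqref{eq:condlambda2bis} — which is exactly \eqref{eq:condlambda0} for $\Lambda=\lambda\mbb I_N$ — they bound $\big\|\esp G_{H_N}(\lambda\mbb I_N)-G_{H_N}^\square(\lambda\mbb I_N)\big\|$ by the non-random summand of $\eps_N(d)$, respectively by its diagonal-$Y_N$ sharpening when $Y_N$ is diagonal. Using $\big|\tfrac 1 N\Tr A\big|\le\|A\|$ together with $\|(\Im m\,\lambda\mbb I_N)^{-1}\|=|\Im m\,\lambda|^{-1}$, this controls $\big|\esp g_{H_N}(\lambda)-g_{H_N}^\square(\lambda)\big|$ by precisely the second summand of $\tilde\eps_N(d)$ (resp. its diagonal-$Y_N$ version), with no probabilistic input.

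For the fluctuation summand I would carry out the sharper concentration argument: view $g_{H_N}(\lambda)$ as a smooth function of the real Gaussian vector formed from the entries $x_{i,j}$, and by Gaussian calculus on the resolvent $R=(\lambda\mbb I_N-H_N)^{-1}$ compute $\partial_{x_{i,j}}g_{H_N}(\lambda)=\tfrac{\gamma_N(i,j)}{N\sqrt N}\,(R^2)_{j,i}$ (with the obvious modification on the diagonal and for conjugate entries), so that
$$
\|\nabla g_{H_N}(\lambda)\|^2\le\frac{\gamma_{\max}^2}{N^3}\,\|R^2\|_{\mathrm{HS}}^2\le\frac{\gamma_{\max}^2}{N^3}\,N\,\|R\|^4\le\frac{\gamma_{\max}^2}{N^2\,|\Im m\,\lambda|^4},
$$
using $\|R\|\le|\Im m\,\lambda|^{-1}$ and $\|\cdot\|_{\mathrm{HS}}^2\le N\|\cdot\|^2$. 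Hence the real and imaginary parts of $g_{H_N}(\lambda)$ are $L$-Lipschitz in the underlying standard Gaussian vector with $L=\gamma_{\max}\big(N|\Im m\,\lambda|^2\big)^{-1}$, and the Gaussian concentration inequality for Lipschitz functions gives $\Pr\big(|g_{H_N}(\lambda)-\esp g_{H_N}(\lambda)|\ge t\big)\le C\exp(-c\,t^2/L^2)$; taking $t$ equal to the first summand $\sqrt 2\,\gamma_{\max}\sqrt{2d\log N}\big(N|\Im m\,\lambda|^2\big)^{-1}=2L\sqrt{d\log N}$ of $\tilde\eps_N(d)$ makes the right-hand side at most $N^{-d}$. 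The point is that replacing the operator norm of the diagonal matrix $G_{H_N}(\Lambda)-\esp G_{H_N}(\Lambda)$ — for which one needs a union bound over the $N$ diagonal entries, each $\tfrac{\gamma_{\max}}{\sqrt N}\|(\Im m\,\Lambda)^{-1}\|^2$-Lipschitz, yielding the $\sqrt{d\log N/N}$ rate of Theorem~\ref{MainTh} — by the averaged quantity $\tfrac 1 N\Tr$ gains a full power of $N$, only one factor $\sqrt N$ being lost in $\|\cdot\|_{\mathrm{HS}}^2\le N\|\cdot\|^2$.

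Adding the two estimates and intersecting with the complement of the fluctuation event yields \eqref{eq:MainThStieltjes2}; the diagonal-$Y_N$ case follows identically, the fluctuation bound never using the structure of $Y_N$. The main — and essentially only — obstacle is bookkeeping of constants: tracking the exact power of $\Im m\,\lambda$ and the numerical factor in the Lipschitz bound, and invoking the (complex) Gaussian concentration inequality with a sharp enough constant so that the deviation lands exactly on $\sqrt 2\,\gamma_{\max}\sqrt{2d\log N}\,(N|\Im m\,\lambda|^2)^{-1}$ with failure probability exactly $N^{-d}$, rather than a harmless multiple thereof. One should also check that the deterministic estimates borrowed from the proof of Theorem~\ref{MainTh} are genuinely deterministic and do not reintroduce a $\sqrt{d\log N/N}$ contribution that would swamp the sharper fluctuation term.
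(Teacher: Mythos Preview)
Your proposal is correct and follows essentially the same route as the paper: split $g_{H_N}-g_{H_N}^\square$ into bias plus fluctuation, control the bias via Lemma~\ref{StabilityAnalysis} specialized to $\Lambda=\lambda\mbb I_N$ together with $|\tfrac1N\Tr A|\le\|A\|$, and control the fluctuation by Gaussian concentration for the scalar $g_{H_N}(\lambda)$ with Lipschitz constant of order $\gamma_{\max}/(N|\Im m\,\lambda|^2)$. The only cosmetic difference is how the Lipschitz constant is obtained: you compute the gradient directly and bound $\|R^2\|_{\mathrm{HS}}^2\le N\|R\|^4$, whereas the paper bounds $|\Tr[R(A-A')R']|$ via a Lasserre-type singular value inequality to get $|\phi_\lambda(A)-\phi_\lambda(A')|\le 2\sqrt N|\Im m\,\lambda|^{-2}\|A-A'\|_F$; both lead to the same $N^{-1}$ scale for $L$ (yours is in fact tighter by a factor of $2$).
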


 The concentration inequality \eqref{eq:MainThStieltjes2} means that the  deterministic equivalent  $g_{H_N}^\square(\lambda) = \frac 1 N \Tr \, G_{H_N}^\square(\lambda \mbb I_N)$ is a good approximation of   $g_{H_N}(\lambda)$ when the imaginary part of $\lambda$ is allowed to decay to zero as the dimension $N$ grows at a rate given by the right hand side of Inequality \eqref{eq:condlambda2bis}. Hence, by the   inverse Stieltjes transform formula \eqref{eq:Stieltjesinv}, the map $t \mapsto  \frac{1}\pi \Im m \left( g_{H_N}^\square(t+\mbf i\eta) \right)$ is  a good approximation for the e.s.d.\ of $H_N$ when $\eta$ is small. However, compared to existing results in the RMT literature, the lower bound \eqref{eq:condlambda2bis} on $\Im m \, \lambda$ is not optimal, and thus Corollary \ref{MainCor} can only be interpreted as a {\it weak local law}. More generally, the lower bound condition for $\Im m \, \Lambda$ in Theorem  \ref{MainTh}  means that all diagonal entries of $\Lambda$ must have an imaginary part that is sufficiently large as shown by the Conditions \eqref{eq:condlambda0} and \eqref{eq:condlambda2}. Therefore, Theorem  \ref{MainTh} may  be interpreted as an \emph{operator-valued} weak local law for the operator-valued Stieltjes transform of $H_N$. In Section \ref{sec:Dyson}, we  discuss  more precisely the connections between our work and existing results on local laws for random matrices with a variance profile.

\paragraph{Outliers localization in the case where $Y_N$ is diagonal.} From its definition, the natural way to construct a deterministic equivalent for $\beta_k(\lambda)$ is to replace $(\lambda \mbb I_N - X_N-Y_N)^{-1}$ in expression  \eqref{Eq:Beta} by an appropriate deterministic estimate. When $Y_N$ is diagonal, then for any $\Lambda \in \DN^+$,  the matrix $\esp\big[ (\Lambda - X_N-Y_N)^{-1}\big]$ is also diagonal (thanks to Corollary \ref{Cor:ExpOutDiag}). Using a concentration inequality, one has  that $(\Lambda - X_N-Y_N)^{-1}$ is thus close to a diagonal matrix, and so we can replace this generalized resolvent with the deterministic equivalent $G^\square_{X_N+Y_N}(\Lambda)$ of the operator-valued Stieltjes transform. 

\begin{Cor}\label{Cor:Beta1} We assume that $Y_N$ is diagonal and define the  deterministic matrix valued-function function
	\begin{equation}
\beta_k^\square(\lambda) =   \mbb I_k - U_{N,k}^*G_{H_N}^\square( \lambda \mbb I_N)  U_{N,k} \Theta_k, \mbox{ for } \lambda \in \mbb C^+.\label{eq:defbetasquare}
	\end{equation}
Then, for any $\lambda$ such that $\Im m \, \lambda \geq \gamma_{\mrm{max}} N^{-1/4} (1-\delta)^{-1/6}$, then $\beta_k^\square(\lambda)$ is a deterministic equivalent of $\beta_k(\lambda)$, in the  sense that
\begin{equation}
\Pr \left( \| \beta_k(\lambda) - \beta_k^\square(\lambda) \|  \geq \| \Theta_k \| \eps'_N(d)  \right) \leq 4 k^2 N^{-d}, \label{Goal}
\end{equation}
where, for any $d > 0$,
$$
\eps'_N(d) :=  \sqrt{2} k \gamma_{\max}  \frac{\sqrt{d \log(N)}}{  | \Im m \, \lambda|^2} N^{-1/2} + \Big( 1 +  \frac{\gamma^2_{\mrm {max}}}{\delta | \Im m \, \lambda|^{2}}    \Big) \frac{ \gamma_{\mrm{max}}^4}{ N^{3/2}   (\Im m \, \lambda)^5}.
$$
\end{Cor}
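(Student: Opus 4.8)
The plan is to derive \eqref{Goal} directly from Theorem \ref{MainTh}, exploiting the fact that both $\beta_k(\lambda)$ and $\beta_k^\square(\lambda)$ are obtained from a resolvent of $H_N = X_N + Y_N$ by sandwiching with the fixed $N \times k$ isometry $U_{N,k}$ and the $k\times k$ spike matrix $\Theta_k$. Writing $R_N(\lambda) = (\lambda\mbb I_N - H_N)^{-1}$ and comparing definitions \eqref{Eq:Beta} and \eqref{eq:defbetasquare}, we get the exact identity
\eq
	\beta_k(\lambda) - \beta_k^\square(\lambda) = - U_{N,k}^* \Big( R_N(\lambda) - G_{H_N}^\square(\lambda\mbb I_N) \Big) U_{N,k}\, \Theta_k,
\qe
so that, since $\|U_{N,k}\| = 1$ and $\|\,\cdot\,\|$ is submultiplicative,
\[
	\| \beta_k(\lambda) - \beta_k^\square(\lambda) \| \leq \| \Theta_k\| \cdot \big\| U_{N,k}^*\big( R_N(\lambda) - G_{H_N}^\square(\lambda\mbb I_N)\big) U_{N,k} \big\|.
\]
Thus it suffices to control, with high probability, the $k \times k$ compression of $R_N(\lambda) - G_{H_N}^\square(\lambda\mbb I_N)$.

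The key observation is that Theorem \ref{MainTh} controls $G_{H_N}(\lambda\mbb I_N) - G_{H_N}^\square(\lambda\mbb I_N) = \Delta[R_N(\lambda)] - G_{H_N}^\square(\lambda\mbb I_N)$, i.e.\ the \emph{diagonal part} of $R_N(\lambda)$, not the full resolvent. Here is where the diagonality of $Y_N$ enters: by Corollary \ref{Cor:ExpOutDiag} the expectation $\esp[R_N(\lambda)]$ is itself diagonal, so $\esp[R_N(\lambda)] = \esp\big[\Delta[R_N(\lambda)]\big]$, and $G_{H_N}^\square(\lambda\mbb I_N)$ is its deterministic equivalent. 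Consequently the full matrix $R_N(\lambda) - G_{H_N}^\square(\lambda\mbb I_N)$ has diagonal part controlled by Theorem \ref{MainTh} and off-diagonal part equal (in expectation) to zero. To handle the off-diagonal fluctuations of the bilinear form $u^* R_N(\lambda) v$ for fixed unit vectors $u,v$ (the columns of $U_{N,k}$), I would re-run the Gaussian concentration argument underlying Theorem \ref{MainTh} — the same Poincaré / log-Sobolev inequality for the Gaussian entries $x_{i,j}$ applied to the smooth, Lipschitz functional $x \mapsto u^* (\lambda - X_N - Y_N)^{-1} v$, whose gradient is bounded in terms of $\gamma_{\max}\|(\Im m\,\lambda)^{-1}\|^2$. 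Summing over the $k^2$ entries of the compression via a union bound then produces the stated error $\eps'_N(d)$ (the extra factor $k$ in the first term and the $k^2$ in the probability bound reflect exactly this union bound over the $k^2$ bilinear forms, or equivalently over entries of the $k\times k$ matrix), while the second, deterministic term in $\eps'_N(d)$ is inherited verbatim from the bias term $\esp[R_N(\lambda)] - G_{H_N}^\square(\lambda\mbb I_N)$ estimated in the diagonal-$Y_N$ part of Theorem \ref{MainTh}, under the same condition $\Im m\,\lambda \geq \gamma_{\max} N^{-1/4}(1-\delta)^{-1/6}$.

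The main obstacle is the off-diagonal concentration: Theorem \ref{MainTh} as stated only gives the diagonal block, so one cannot simply quote it as a black box for $u^* R_N v$ when $u \neq v$ are not coordinate vectors. However, the functional $F(x) = u^*(\lambda - X_N - Y_N)^{-1}v$ is smooth in the Gaussian variables with the same resolvent-derivative structure used in Section \ref{Sec:resolventAnalysis}, and the relevant gradient bound is the standard one, $\|\nabla F\| \lesssim \gamma_{\max}\|(\Im m\,\lambda)^{-1}\|^2 N^{-1/2}$, so the Gaussian concentration inequality gives sub-Gaussian tails of the right order; one then takes $t = \sqrt{2}\,\gamma_{\max}\sqrt{d\log N}\,\|(\Im m\,\lambda)^{-1}\|^2 N^{-1/2}$ to obtain a tail bounded by $2N^{-d}$ per bilinear form, and the union bound over the $k^2$ pairs $(u_a, u_b)$ — plus possibly a further factor $2$ from separately bounding real and imaginary parts or from the $\esp F$ versus deterministic-equivalent step — yields the factor $4k^2 N^{-d}$ in \eqref{Goal}. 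Once this off-diagonal estimate is in place, combining it with the bias bound and the triangle inequality closes the proof.
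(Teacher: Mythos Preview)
Your proposal is correct and follows essentially the same route as the paper's proof: you identify the decomposition into a bias term $\esp[R_N(\lambda)]-G_{H_N}^\square(\lambda\mbb I_N)$ (controlled via Lemma \ref{StabilityAnalysis} in the diagonal-$Y_N$ case) and a fluctuation term $R_N(\lambda)-\esp[R_N(\lambda)]$, you invoke Corollary \ref{Cor:ExpOutDiag} to see that $\esp[R_N(\lambda)]$ is diagonal, and you handle the fluctuation of the $k\times k$ compression by Gaussian concentration of the bilinear forms $u_\ell^*R_N(\lambda)u_{\ell'}$ (this is exactly Lemma \ref{Lem:ConcenResolvent}, inequality \eqref{eq:concen0}) followed by a union bound over the $k^2$ pairs. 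The only cosmetic difference is that the paper packages the concentration step as a separate lemma rather than ``re-running'' it, and bounds the operator norm of the $k\times k$ matrix by $k$ times the maximum entry to produce the factor $k$ in $\eps'_N(d)$.
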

The proof of  Inequality  \eqref{Goal} can be found in Section \ref{Sec:resolventAnalysis}.

\paragraph{Outliers localization, general case.} In general, if $Y_N$ is not diagonal then the expectation of the generalized resolvent, that is $\esp\big[ (\Lambda - X_N-Y_N)^{-1}\big]$, is no longer a diagonal matrix. Hence it is no longer correct to approximate $\beta_k(\lambda)$ by replacing $(\Lambda - X_N-Y_N)^{-1}$ with $G_{H_N}^\square( \lambda \mbb I_N)$ in Equation \eqref{Eq:Beta}. Yet, it is proved in Section \ref{Sec:AsympSubProp} that the generalized resolvent  $(\Lambda - X_N-Y_N)^{-1}$ can be approximated by the following deterministic matrix
\begin{equation}
(\Lambda - X_N-Y_N)^{-1} \approx   \big(\Omega_{X_N,Y_N}(\Lambda)-Y_N \big)^{-1} , \label{eq:approxresolvent}
\end{equation}
with  $\Omega_{X_N,Y_N}(\Lambda) :=  \Lambda - \mcal R_N\big(  \esp\big[G_{X_N+Y_N}(\Lambda)\big]  \big)$. It appears that the matrix $\Omega_{X_N,Y_N}(\Lambda)$ can be interpreted as an  approximate operator-valued subordination function, and we refer to Sections \ref{sec:freeapproach} and \ref{Sec:LitteratureRMTvp} for further details and discussion on this heuristic.

\begin{Cor}\label{Cor:Beta2}
We define the deterministic matrix valued-function function
\begin{equation}
\tilde{\beta}_k^\square(\lambda) =   \mbb I_k - U_{N,k}^* \big(\Omega_{H_N}^\square(\lambda \mbb I_N)-Y_N \big)^{-1}   U_{N,k} \Theta_k, \mbox{ for } \lambda \in \mbb C^+,\label{eq:defbetasquaregen}
\end{equation}
where
$$
\Omega_{H_N}^\square(\Lambda) := \Lambda - \mcal R_N\big(  G_{H_N}^\square(\Lambda)  \big),
$$
with $G_{H_N}^\square(\Lambda)$ solution of the  fixed point Equation \eqref{FixPtEq1}. 
Then, for any $\lambda$ such that  $\Im m \, \lambda \geq  \gamma_{\mrm{max}}  \left(\frac{2}{N(1-\delta)}\right)^{1/5}$, then $\beta_k^\square(\lambda)$ is a deterministic equivalent of $\beta_k(\lambda)$, in the  sense that
\begin{equation}
\Pr \left( \| \beta_k(\lambda) - \tilde{\beta}_k^\square(\lambda) \|  \geq \| \Theta_k \| \bar{\eps}_N(d)  \right) \leq 4 k^2 N^{-d}, \label{Goal2}
\end{equation}
where, for any $d > 0$,
$$
\bar{\eps}_N(d) :=  \sqrt{2} k \gamma_{\max}  \frac{\sqrt{d \log(N)}}{  | \Im m \, \lambda|^2} N^{-1/2} + \frac{2 \gamma_{\mrm{max}}^3}{ N   (\Im m \, \lambda)^4} +  \Big( 1 +  \frac{\gamma^2_{\mrm {max}}}{\delta | \Im m \, \lambda|^{2}}    \Big) \frac{2 \gamma_{\mrm{max}}^5}{ N   (\Im m \, \lambda)^6}.
$$
\end{Cor}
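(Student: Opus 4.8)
The plan is to mimic the argument used for Corollary~\ref{Cor:Beta1}, but replacing the diagonal approximation of the generalized resolvent by the non-diagonal approximation \eqref{eq:approxresolvent}. Recall that, for $\lambda$ with $\Im m\,\lambda$ satisfying the hypothesis, one has $\det(\alpha_N(\lambda))=\det(\beta_k(\lambda))$, and $\beta_k(\lambda)=\mbb I_k - U_{N,k}^*(\lambda\mbb I_N - X_N - Y_N)^{-1}U_{N,k}\Theta_k$. The quantity $\tilde\beta_k^\square(\lambda)$ is obtained by substituting the deterministic matrix $\big(\Omega_{H_N}^\square(\lambda\mbb I_N)-Y_N\big)^{-1}$ for the random resolvent $(\lambda\mbb I_N - X_N - Y_N)^{-1}$. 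Hence, the core estimate to establish is
\begin{equation}
\Big\| U_{N,k}^*\Big( (\lambda\mbb I_N - X_N - Y_N)^{-1} - \big(\Omega_{H_N}^\square(\lambda\mbb I_N)-Y_N\big)^{-1}\Big) U_{N,k}\Big\| \leq \bar\eps_N(d) \label{eq:coreGen}
\end{equation}
with probability at least $1 - 4k^2 N^{-d}$; multiplying by $\Theta_k$ on the right gives \eqref{Goal2} since $\|U_{N,k}\|\leq 1$ and the operator norm is submultiplicative.

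I would split the left side of \eqref{eq:coreGen} by inserting the intermediate deterministic matrix $\big(\Omega_{X_N,Y_N}(\lambda\mbb I_N)-Y_N\big)^{-1}$, where $\Omega_{X_N,Y_N}(\Lambda)=\Lambda - \mcal R_N(\esp[G_{X_N+Y_N}(\Lambda)])$ uses the \emph{true} expected operator-valued Stieltjes transform. The first difference,
$$ \big\| (\lambda\mbb I_N - X_N - Y_N)^{-1} - \big(\Omega_{X_N,Y_N}(\lambda\mbb I_N)-Y_N\big)^{-1}\big\|, $$
is precisely the approximation \eqref{eq:approxresolvent} of the generalized resolvent proved in Section~\ref{Sec:AsympSubProp} (this is the asymptotic subordination property, and it is stated to hold in full operator norm, not merely after diagonal compression); I would extract from there the explicit bound controlling it, which should account for the $\frac{2\gamma_{\mrm{max}}^3}{N(\Im m\,\lambda)^4}$ and the last term of $\bar\eps_N(d)$, together with the $\sqrt{2}k\gamma_{\max}\sqrt{d\log N}\,|\Im m\,\lambda|^{-2}N^{-1/2}$ fluctuation term coming from concentration of $\Tr$-type quantities (the factor $k$ arising because we compress by a rank-$k$ projector and measure $k^2$ scalar entries, each contributing an $N^{-d}$ failure probability via a Gaussian concentration inequality of the type used throughout the paper). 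The second difference,
$$ \big\| \big(\Omega_{X_N,Y_N}(\lambda\mbb I_N)-Y_N\big)^{-1} - \big(\Omega_{H_N}^\square(\lambda\mbb I_N)-Y_N\big)^{-1}\big\|, $$
is deterministic and reduces, via the resolvent identity $A^{-1}-B^{-1}=A^{-1}(B-A)B^{-1}$ and the Lipschitz property of $\mcal R_N$ (which has operator norm at most $\gamma_{\mrm{max}}^2$ as an operator on diagonal matrices), to controlling $\|\esp[G_{X_N+Y_N}(\lambda\mbb I_N)] - G_{H_N}^\square(\lambda\mbb I_N)\|$. This last quantity is governed by Theorem~\ref{MainTh} (more precisely by the analysis in Section~\ref{Sec:FixedPtAnalysis} comparing the expected transform to the fixed-point solution), and contributes the remaining terms. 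The bounds $\|(\lambda\mbb I_N - X_N - Y_N)^{-1}\|\leq (\Im m\,\lambda)^{-1}$ and the analogous bounds on the deterministic resolvents (valid because $\Im m\,\Omega \leq \Im m\,\lambda$... rather, because $\Im m$ of the relevant matrices stays bounded below, which follows from $\mcal R_N$ mapping $\DN^-$-type objects appropriately) feed into the powers $(\Im m\,\lambda)^{-4}$, $(\Im m\,\lambda)^{-5}$, $(\Im m\,\lambda)^{-6}$ appearing in $\bar\eps_N(d)$.

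The main obstacle I anticipate is bookkeeping the constants and powers of $\|(\Im m\,\lambda)^{-1}\|$ so that the three contributions assemble exactly into the stated $\bar\eps_N(d)$ with failure probability $4k^2N^{-d}$ — in particular, checking that the imaginary-part lower bound $\Im m\,\lambda\geq\gamma_{\mrm{max}}(2/(N(1-\delta)))^{1/5}$ (the \emph{non-diagonal} condition, weaker than the diagonal one used in Corollary~\ref{Cor:Beta1}, hence the worse powers here) is enough to make the subordination approximation \eqref{eq:approxresolvent} valid with the claimed rate, and that no additional assumption on $Y_N$ sneaks in. A secondary subtlety is the union bound over the $k^2$ entries of the compressed matrix: one must verify that each scalar concentration step indeed yields probability $N^{-d}$ (not $N^{-d}$ up to a constant that would spoil the clean $4k^2N^{-d}$), which relies on the precise form of the Gaussian concentration inequality invoked in Section~\ref{Sec:resolventAnalysis} for Corollary~\ref{Cor:Beta1}; the present proof is essentially identical to that one except that the deterministic target is $\big(\Omega_{H_N}^\square-Y_N\big)^{-1}$ rather than $G_{H_N}^\square$, so I would present it as a variant, emphasizing only the point where the non-diagonality of $Y_N$ forces the use of \eqref{eq:approxresolvent} in place of the simpler substitution.
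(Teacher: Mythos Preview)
Your approach is essentially the same as the paper's: compress by $U_{N,k}$, split the resolvent error through the intermediate $(\Omega_{X_N,Y_N}(\lambda\mbb I_N)-Y_N)^{-1}$, handle the random part by Gaussian concentration over the $k^2$ compressed entries, handle the deterministic parts by the Master equality/inequality and the resolvent identity together with Lemma~\ref{StabilityAnalysis}.

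The only slip is in the attribution of terms. Your first difference $(\lambda\mbb I_N-H_N)^{-1}-(\Omega_{X_N,Y_N}(\lambda\mbb I_N)-Y_N)^{-1}$ decomposes as (random $-$ expectation) $+$ (expectation $-$ $(\Omega_{X_N,Y_N}-Y_N)^{-1}$); the former gives the fluctuation term $\sqrt 2\,k\gamma_{\max}\sqrt{d\log N}\,|\Im m\,\lambda|^{-2}N^{-1/2}$ via concentration, and the latter gives \emph{only} the middle term $2\gamma_{\max}^3 N^{-1}(\Im m\,\lambda)^{-4}$ via the Master equality (Lemma~\ref{Lem:RecallASP}) and the Master inequality (Lemma~\ref{MasterIneq}). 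The \emph{last} term $\big(1+\gamma_{\max}^2\delta^{-1}|\Im m\,\lambda|^{-2}\big)\,2\gamma_{\max}^5 N^{-1}(\Im m\,\lambda)^{-6}$ comes entirely from your second difference $(\Omega_{X_N,Y_N}-Y_N)^{-1}-(\Omega_{H_N}^\square-Y_N)^{-1}$: the resolvent identity produces two factors bounded by $(\Im m\,\lambda)^{-1}$ (here one needs $\Im m\,\Omega_{H_N}^\square(\lambda\mbb I_N)\geq \Im m\,\lambda\,\mbb I_N$, which follows from $\Im m\,G_{H_N}^\square<0$ and Lemma~\ref{Lem:RNProp}), and the middle factor $\mcal R_N\big(\esp[G_{H_N}]-G_{H_N}^\square\big)$ is bounded by $\gamma_{\max}^2$ times the Stability estimate of Lemma~\ref{StabilityAnalysis}. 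With this correction the three contributions assemble exactly into $\bar\eps_N(d)$, and the $4k^2N^{-d}$ probability bound follows from the union bound over the $k^2$ entries exactly as in the proof of Corollary~\ref{Cor:Beta1}.
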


\begin{Rk} 
A key argument developed to obtain the proof of Inequality \eqref{Goal2} is the derivation of an upper bound on the difference in operator norm between  $\esp\big[  (\lambda \mbb I_N - X_N-Y_N)^{-1} \big]$  and its approximation by $\big(\Omega_{H_N}^\square(\lambda \mbb I_N)-Y_N \big)^{-1}$. In the general case, when $\lambda$ is held fixed, this error term decays at the rate $N^{-1}$ whereas, in the case where $Y_N$ is diagonal, the operator norm of the difference between  $\esp\big[  (\lambda \mbb I_N - X_N-Y_N)^{-1} \big]$  and $G_{H_N}^\square( \lambda \mbb I_N)$ decays at the rate $N^{-3/2}$. Note that an integrable decay such as $N^{-(1+\eta)}$, $\eta>0$, implies the convergence of the spectrum also called strong convergence \cite{Mal11}: almost surely for $N$ large enough the spectrum of $X_N+Y_N$ belongs to a small neighborhood of the support of the measure whose Stieltjes transform is $g_{H_N}^\square$. Under mild assumptions, the strong convergence of $H_N$ is proved in \cite[Corollary 2.3]{EKS19} with $Y_N$ possibly non diagonal.
\end{Rk}

\begin{Rk} In general, even when $Y_N=0$ the locations of outliers possibly depend  on the eigenvectors of $Z_N$, and there is no longer a canonical way to associate an outlier to a specific spike as it is the case when the variance profile has constant entries. We shall discuss this specific case in the literature review proposed in Section  \ref{sec:review}, and  this property will be illustrated by numerical experiments in Section \ref{sec:num}.
\end{Rk}

\subsection{Organization of the paper}

In Section \ref{sec:review} we relate the approach followed in this paper to the existing literature in RMT and free probability on deformed models and outlier detection. Then, we report the results of numerical experiments in Section \ref{sec:num} to shed some light on the benefits of our approach to localize potential outliers in information plus noise models with a variance profile. We present the strategy  of the proof of Theorem \ref{MainTh} and its organization  in Section \ref{sec:organization}. The details of the main steps of the proof are then gathered in Section \ref{Sec:AsympSubProp}, Section \ref{Sec:FixedPtAnalysis} and Section \ref{Sec:resolventAnalysis}.

\paragraph{Acknowledgements.} J\'er\'emie Bigot is a member of Institut Universitaire de France (IUF), and this work has been carried out with financial support from the IUF. Camille Male received the support of the Simons CRM Scholar-in-Residence Program. We would like to also thank László Erdös for his remarks on a previous version of this paper and for pointing out recent references.

\subsection{Publicly available source code}

For the sake of reproducible research, Python scripts available at the following address
\url{https://www.math.u-bordeaux.fr/~jbigot/Site/Publications\_files/ScriptsSpikesVarianceProfile.zip}
allow to reproduce the numerical experiments carried out in this paper.

\section{Related literature and methods} \label{sec:review}

\subsection{Additive perturbations and the standard spiked population model}

We first discuss the case when all entries of the variance profile equal one, that is $X_N$ is a {\it standard GUE matrix}. The celebrated Wigner's Theorem \cite{Wig58} states that the empirical spectral distribution of $X_N$ converges to the semicircular distribution $\mu_x:= (2\pi)^{-1}\sqrt{ 4 - t^2}\one_{[-2,2]}\mrm d t$. Pastur studies in \cite{Pas72} the global behavior of a GUE matrix with additive perturbation.  Assuming that the e.s.d.\ $\mu_{Y_N}$ of $Y_N$ converges to a measure $\mu_y$, then almost surely the e.s.d.\ of $X_N+Y_N$ converges to a measure denoted $\mu_{x+y}$. In general, there is no explicit description of $\mu_{x+y}$, but the limiting Stieltjes transform $g_{x+y}$ of $X_N+Y_N$ satisfies the so-called Pastur's equation, which is expressed in terms of the Stieltjes transform $g_y$ of $\mu_y$ as follows: for all $\lambda$ in $\mbb C^+$, we have
	\eqa\label{Eq:PasturEq}
		g_{x+y}(\lambda) = g_y\big( \lambda - g_{x+y}(\lambda) \big).
	\qea
Note that $Y_N=0$ implies $\mu_y=\delta_0$ and the above equation reads $g_x(\lambda) = \big(\lambda - g_x(\lambda) \big)^{-1}$.

In practical applications, we are given a matrix $Y_N$ of fixed size and not a sequence whose e.s.d.\ converges to a certain distribution $\mu_y$. The deterministic equivalent method, for Pastur's equation \eqref{Eq:PasturEq}, consists in replacing $\mu_y$ by the true e.s.d.\ $\mu_{Y_N}$. In this setting, there is a unique analytic map $g^{\square}_{X_N+Y_N}:\mbb C^+ \to \mbb C^-$, solution of the fixed-point equation
	\eqa\label{Eq:EqDetPastur}
		g(\lambda) = g_{Y_N}\big( \lambda - g(\lambda) \big), \ \forall \lambda \in \mbb C^+,
	\qea
for a map $g : \mbb C^+ \to \mbb C^-$. Note that there is a convenient abuse of notation in the sense that $g^{\square}_{X_N+Y_N}$ depends only on $Y_N$. We say that $g^{\square}_{X_N+Y_N}$ is a deterministic equivalent of $g_{X_N+Y_N}$. The interest is that $g^{\square}_{X_N+Y_N}$ is a good approximation of  $g_{x+y}$ of the Stieltjes transform that we can approximate numerically thanks to the fixed-point equation \eqref{Eq:EqDetPastur}.

A fundamental result of phase transition for finite rank deformed random matrices was discovered by Ben Arous, Baik and P\'ech\'e \cite{baik2005} in the slightly different model of Gaussian matrices without symmetry. Called in short BBP-transition, the analogue result for GUE matrices \cite{Pec06} states that, in the large $N$ limit, a spike $\theta$ of $Z_N$ will create an outlier $\sigma$ in $X_N+Z_N$ only if $\theta>1$ in which case $\sigma = \theta+ \theta^{-1}$. For related results, we also refer to \cite{FK81, FP07, CDF09,CDFF11}.

\begin{Rk}  When all entries of the variance profile equal one, we emphasis that the existence and the position of an outlier is independent of the eigenvector of $Z_N$ associated to the spike $\theta$. 
\end{Rk}

\subsection{The free probability approach} \label{sec:freeapproach}

More generally, the issues discussed above have also been considered when $X_N$ is a unitary invariant random matrix. In this context, Voiculescu's notion of asymptotic freeness \cite{VDN92} implies a generalization of Pastur's equation. Assume that $\mu_{X_N}$ and $\mu_{Y_N}$ have limiting e.s.d.\ $\mu_{x}$ and $\mu_{y}$ respectively. Recall that, denoting $g_x$ the Stieltjes transform of $\mu_x$,  the $\mcal R$-transform $\mcal R_x$ of $x$ is the analytic map satisfying
	\eqa\label{Def:RTransf}
		g_x(\lambda) = \Big( \lambda - \mcal R_x\big( g_x(\lambda) \big) \Big)^{-1}.
	\qea
Then $\mu_{X_N+Y_N}$ converges to a measure $\mu_{x+y}=\mu_{x}\boxplus \mu_{y}$, called the free convolution of $\mu_x$ and $\mu_y$. This limit is characterized by the so-called subordination property:  $\forall \lambda \in \mbb C^+$,
	\eqa\label{Eq:VoicEq}
		g_{x+y}(\lambda) = g_y\Big( \lambda - \mcal R_x\big( g_{x+y}(\lambda) \big) \Big).
	\qea
The map $\mcal R_x$ is linear if and only if $\mu_x$ is a centered semicircular distribution. 
The method of deterministic equivalent can be extended to this case using Voiculescu's equation \eqref{Eq:VoicEq} instead of Pastur's one. The difficulty in general is to compute the $\mcal R$-transform $\mcal R_x$, or to replace it with a good approximation.

For finite rank deformation, an important discovery was made in the early decade by Capitaine \cite{Cap13}. We recall the heuristic presented in \cite{belinschi2017} and refer to this paper for the mathematical arguments, without defining the notions of free probability. In the context of Voiculescu's problem, the limit $x$ of $X_N$ and $y$ of $Y_N$ are modeled in the free von Neumann algebra generated by two self-adjoint variables $x$ and $y$ with distribution $\mu_x$ and $\mu_y$ respectively. Let $E_y$ be the projection on the von Neumann algebra generated by $y$. Then Biane proved \cite{Bia98} that there exists an analytic map $\omega_{x,y}$ defined outside the spectrum of $x+y$, called the subordination function, such that
	\eqa\label{Eq:VonNeuSub}
		\mrm E_y\big[ (\lambda - x - y )^{-1}\big] = \big( \omega_{x,y}(\lambda) - y \big)^{-1}, \ \forall \lambda \in \mbb C^+.
	\qea
The above equality means that the projection of the resolvent of $x+y$ equals the resolvent of $y$ evaluated at the subordination function. Taking the trace in the identity yields the relation for Stieltjes transforms
	\eq
		g_{x+y}(\lambda) = g_y\big( \omega_{x,y}(\lambda)\big), \ \forall \lambda \in \mbb C^+.
	\qe

The reader not familiar with free probability language can still translate this result into a heuristic for unitary invariant matrices by replacing the condition expectation $E_y$ by the classical expectation $\esp$ and the variables by the matrices. One method to introduce an approximate subordination function consists in setting 
	$$\omega_{X_N,Y_N}(\lambda) := \big(\esp \big[ (\lambda - X_N - Y_N )^{-1}\big] \big)^{-1} + Y_N,$$
so that we have $\esp \big[ (\lambda - X_N - Y_N )^{-1}\big] = \big( \omega_{X_N,Y_N}(\lambda)  - Y_N \big)^{-1}$, similar to \eqref{Eq:VonNeuSub}.

The matrix $\omega_{X_N,Y_N}$ is not a scalar, but this property is true for the compression involved in the outlier detection problem, in the following sense. If $X_N$ is unitarily invariant, then $\esp \big[ (\lambda - X_N - Y_N )^{-1}\big]$ belongs to the unital algebra generated by $Y_N$. Let now assume that $Z_N =U_{N,k}\Theta U_{N,k}^*$ whose eigenvectors are orthogonal to those of $Y_N$. This property ensures that the spikes are eigenvalues of $Y_N+Z_N$. It implies that for any matrix $A$ in the algebra generated by $Y_N$, its compression $A\mapsto U_{N,k}^* A U_{N,k}$ is a scalar matrix. Hence in particular 
	$$U_{N,k}^* \omega_{X_N,Y_N}(\lambda) U_{N,k}= \big( U_{N,k}^*\esp \big[ (\lambda - X_N - Y_N )^{-1}\big]U_{N,k}\big)^{-1} + 0,$$
is a scalar matrix, and it is actually a good approximation for $\omega_{x,y}(\lambda)\mbb I_k$. 

Concentration properties of unitarily invariant matrices implies that the function $\beta_k:\lambda\mapsto \mbb I_k - U_{N,k}^*(\lambda   - X_N-Y_N)^{-1}  U_{N,k} \Theta_k$ defined in \eqref{Eq:Beta} is close to its expectation, and so $\mrm{det}(\beta_k(\lambda))$ is well approximated by
$$\prod_{i=1}^k \Big(1 - \frac{\theta_i}{\omega_{X_N,Y_N}(\lambda)}\Big),$$
where $\theta_1\etc \theta_k$ denote the eigenvalues of $Z_N$. Hence, we retrieve the fundamental Capitaine's relation  \cite{Cap13} between spikes and outliers, namely in the large $N$ limit, the locations of the outliers belong to the pre-image by $\omega_{x,y}$ of the spikes.

\begin{Rk}
When $X_N$ is unitary invariant random matrix and assuming that the eigenvectors of $Z_N$ are orthogonal to those of $Y_N$, the locations of outliers in $X_N+Y_N+Z_N$ depend individually on the eigenvalues of $Z_N$. They do not depend on its eigenvectors, and the outliers generated by a spike $\theta$ do not depend on the other spikes. 
\end{Rk}

\subsection{Random matrices with a variance profile in free probability}\label{Sec:LitteratureRMTvp}

The asymptotic of GUE matrix with a variance profile is characterized by Shlyakhtenko in \cite{SHL96} in the multi-matrix setting of operator-valued free probability over the diagonal. Assuming the variance profiles are of the form $\gamma_N^2(i,j) = \gamma^2\big( \frac i N, \frac j N\big)$, for some bounded real-valued function $\gamma:[0,1]^2\to \mbb R^+$, Shlyakhtenko proved that independent GUE matrices with variance profiles are \emph{asymptotically free with amalgamation over the diagonal}. 

In particular, this implies that the e.s.d.\ of $X_N$ converges almost surely, and that the limit is characterized by an integral operator with kernel $\gamma^2$. The approach is in the lineage of previous works on band matrices, see \cite{KLH91,BMP91,Pas92}, for which it was observed that to derive the limiting e.s.d.\ of a random matrix one can derive a system of linear equations for the diagonal of the resolvent $(\lambda \mbb I_N - X_N)^{-1}$ of the matrix. An interest in Shlyakhtenko's approach is the use of the notion of operator-valued free probability, which (in particular) nicely generalizes  Pastur's equation.

More formally, Shlyakhtenko considers in \cite{SHL96} the operator-valued Stieltjes transform $G_{X_N}$ of $X_N$, which the a map between sets of diagonal matrices defined by \eqref{eq:opStieltjes}. For any bounded function $\Lambda:[0,1]\to \mbb C^+$, let $\Lambda_N \in \mrm D_N(\mbb C)^+$ defined by $\Lambda_N(i,i) = \Lambda\big(\frac i N\big)$. Then, for any $\Lambda:[0,1]\to \mbb C^+$, the diagonal matrix $G_{X_N} (\Lambda_N)$, seen as a piece-wise constant function on $[0,1]$, converges to a function $G_x(\Lambda)$ in $L^\infty\big([0,1], \mbb C^-\big)$. The functional map $G_x$ is characterized the identity
	\eqa
		G_x(\Lambda) = \Big( \Lambda - \mcal R_x\big( G_x(\Lambda) \big) \Big)^{-1},
	\qea
where for any $\Lambda$ in $L^\infty\big([0,1], \mbb C\big)$, 
	\eqa\label{Eq:ShlR}
		\mcal R_x(\Lambda) := \int_0^1 \gamma(\, \cdot \, ,y) \Lambda(y)\mrm d y.
	\qea
Since $\mcal R_x$ is linear, we say that the abstract limit $x$ of $X_N$ (which lives in a von Neumann algebra)  is a semicircular variable with amalgamation over the diagonal. Note that the mapping $\mcal R_N$ defined by \eqref{eq:defRNbis}  is a discretization of the functional map $R_x$.

Recently, the \emph{traffic method} yields to the observation that asymptotic freeness over the diagonal was a generic rule for large permutation invariant random matrices with a variance profile \cite{ACDGM17}. We mention briefly a consequence of this result, referring to \cite{Mal20} for definitions.  Let $X_N = X_N' \circ (\gamma_{ij})_{i,j}$ be the entry-wise product of a permutation invariant random matrix $X_N'$ that converges in traffic distribution and of a matrix $(\gamma_{ij})_{i,j=1\etc N}$ that converges in graphons topology \cite[see second item in Corollary 2.19]{Mal20}. Let $Y_N$ be a matrix bounded in operator norm that converges in traffic distribution.  Then, under the additional assumption that $Y_N$ is permutation invariant, $X_N$ and $Y_N$ are asymptotically free over the diagonal. They converge to elements $x$ and $y$ of a von Neumann algebra endowed with a conditional expectation $\Delta$, and their operator-valued Stieltjes transforms satisfy
\begin{equation}
G_{x+y}(\Lambda) = G_{y}\Big( \Lambda - \mcal R_x\big( G_{x+y}(\Lambda) \big) \Big), \label{eq:opsubord}
\end{equation}
where $\mcal R_x$ is such that the above equation is valid for $y=0$. For more details about the operator-valued subordination property, we refer to  \cite{Voic00}.

A motivation of our work is to specify this statement, in a comprehensive way, when $X_N$ is a GUE matrix with a variance profile and to give an estimate for its operator-valued Stieltjes transform. Then, we explicit the associated deterministic equivalent and we show how to adapt Capitaine's approach for outlier detection. 

\subsection{The Dyson equations and local laws in RMT} \label{sec:Dyson}

In the RMT literature, Hermitian random matrices with centered entries but non-equal distribution are referred to as generalized Wigner matrices for which many asymptotic properties are now well understood in a  precise sense. For example, under the assumption that the variance profile is bi-stochastic (that is its rows and columns elements sump up to one), bulk universality at optimal spectral resolution for local spectral statistics have been established in \cite{erdos2012} and they are shown to converge to those of the GUE.  The case of a Wigner matrix with a variance profile that is not necessarily bi-stochastic has been studied in \cite{Ajanki2017}, and non-hermitian random matrices with a variance profile have been considered in \cite{cook2018,hachem2007,HACHEM2006649} using the notion of deterministic equivalent.

Under mild assumptions on the variance profile $\Gamma_N$, the e.s.d.\ of a generalized Wigner matrice converges to a limiting spectral measure for which there is generally no explicit formula. Currently, a classical method to approximate an asymptotic spectral measure is to solve a nonlinear system of  deterministic equations that are referred to as the Dyson equation \cite{Ajanki2017,Ajanki2019,alt2017,alt2018}. For each fixed dimension $N$,  the solution of this equation yields a deterministic equivalent of the resolvent of $H_N = X_N +Y_N$. These equations are also equivalent to the operator-valued equation of the subordination functions where the parameter is scalar, instead of being functional. For example, the  {\it vector Dyson equation} studied in details in \cite{Ajanki2017} corresponds to Equation \eqref{FixPtEq1} with $\Lambda = \lambda \mbb I_N$  and $Y_N$ a diagonal matrix. The  {\it matrix Dyson equation}, introduced in \cite{Ajanki2019} to study Hermitian random matrices with correlated entries and nonzero expectation, is the following nonlinear {\it matrix equation} formulated for an unknown matrix-valued function $A_N : \mbb C^+ \to \mbb C^{N \times N}$ 
\begin{equation} \label{eq:MatrixDyson}
\mbb I_N - \left(\lambda \mbb I_N - \mcal S_N(A_N(\lambda)) - Y_N  \right) A_N(\lambda)= 0
\end{equation}
for $\lambda \in \mbb C^+$, where $\mcal S_N$ is the mapping (see Equation (1.3) in \cite{Ajanki2019})
\begin{equation}
\mcal S_N(A) =\esp\big[  X_N \ A \ X_N\big] = \mcal R_N\big( \Delta(A)\big), \mbox{ for } A \in \mbb C^{N \times N}.  \label{eq:SN}
\end{equation}
Thus, provided that $A_N(\lambda)$ is invertible, Equation \eqref{eq:MatrixDyson} may be written as 
$$
A_N(\lambda) = \left( \lambda \mbb I_N - \mcal R_N(\Delta(A_N(\lambda))) - Y_N  \right)^{-1}.
$$
Hence, applying the operator $\Delta$ of both sides of the above equality yields the fixed point Equation \eqref{FixPtEq1} with scalar parameter  $\Lambda = \lambda \mbb I_N$.

The existence and stability of the solutions of the vector and matrix Dyson equation are studied in details in \cite{Ajanki2017} and \cite{Ajanki2019} respectively. These deterministic vector or matrix valued functions (parametrized by  $\lambda \in \mbb C^+$)  are used to prove {\it local laws} for the resolvent of $H_N$.   In RMT, the derivation of local laws  refers to results controlling the difference between the Stieltjes transform $g_{H_N} (\lambda)$ (or the resolvent $(\lambda \mbb I_N - H_N)^{-1}$) and a deterministic function when $\Im m \, \lambda$ is allowed to decay to zero at a rate depending on $N$. Deriving a local law for the  the Stieltjes transform is a generally a delicate problem that is more involved than proving a {\it global law} which refers to the convergence (e.g.\ in probability) of  $g_{H_N} (\lambda)$ for a {\it fixed value} of $\lambda  \in \mbb C^+$. The notion of local semicircular law for Wigner matrices, which constituted the central open question known as Wigner-Dyson-Mehta conjecture, was solved in 2011 independently by \cite{ESY11} and \cite{TV11}.  For detailed lecture notes on this notion, we  refer to \cite{BK16}.

In the case where $Y_N = 0$ and $X_N$ is a standard  Wigner matrix satisfying mild assumptions (with a constant variance profile), then the {\it optimal local law} for the Stieltjes transform and the resolvent of $H_N = X_N$ reads as the following concentration inequalities \cite[Theorem 2.6]{BK16}: for a fixed $\tau > 0$, define the complex domain
$$
\mcal C_{N}(\tau) = \left\{\lambda \in \mbb C *
 \; : \;  |\lambda| \leq \tau^{-1}, N^{-1 + \tau} \leq \Im m \, \lambda  \right\}.
$$
Then, denoting by $g_{sc}$ the  Stieltjes transform of the semicircular distribution $\mu_{sc}$, for any $\varepsilon > 0$ and $D > 0$, one has that 
$$
\mbb P\left( |g_{H_N} (\lambda) - g_{sc}(\lambda)| \geq  N^{\varepsilon} \psi^{(1)}_{N}(\lambda) \right) \leq N^{-D} \mbox{ with } \psi^{(1)}_{N}(\lambda) = \frac{1}{N \Im m \, \lambda},
$$
and, uniformly for $i,j =1 \ldots, N$,
$$
\mbb P\left( | (\lambda \mbb I_N - H_N)^{-1}(i,j) - g_{sc}(\lambda) \delta_{ij}| \geq  N^{\varepsilon} \psi^{(2)}_{N}(\lambda) \right) \leq N^{-D},$$
$$  \mbox{ with } \psi^{(2)}_{N}(\lambda) = \sqrt{\frac{\Im m (g_{sc}(\lambda))}{N \Im m \, \lambda}} + \frac{1}{N \Im m \, \lambda},
$$
for all $\lambda \in \mcal C_{N}(\tau)$ and all sufficiently large $N \geq N_{0}(\varepsilon,D)$. The above deviation inequalities are called {\it optimal local laws} as the rate of convergence  of the error terms $\psi^{(1)}_{N}(\lambda)$ and $\psi^{(2)}_{N}(\lambda)$ as $\lambda = \lambda_N$ tends to zero slower than $N^{-1}$ are known to be optimal for $t = \Re e(\lambda) \in  [-2,2]$ which is the support of $\mu_{sc}$ (see e.g.\  \cite[Section 2]{BK16}). Note that when $t$ is outside $[-2,2]$, faster error rate (as $\lambda \to 0$) may be obtained (see e.g.\ \cite[Theorem 10.3]{BK16}).

 The derivation of optimal local laws for Hermitian random matrices with an arbitrary variance profile using the deterministic solution of a Dyson equation has been largely investigated by László Erdös and his collaborators over the last decade, and for a recent overview we refer to Erd\"os's lecture notes \cite{Erdos19}. In \cite{Ajanki2017,Ajanki2019}, the authors make several assumptions to derive local laws, in particular they suppose that the entries of the variance profile $\Gamma_N$ are bounded away from below. In \cite{EKS19} this assumption is relaxed: a weak local law is obtained for the model $H_N = X_N + Y_N$ without lower bound assumptions on the variance profile. The generalized Wigner matrix $X_N$ can possibly have correlated entries that are not necessarily Gaussian random variables. More precisely, from \cite[Theorem 2.1]{EKS19} (see also the preliminary result \cite[Lemma B.1]{alt2019}) the following deviation inequalities hold: for a fixed $\delta > 0$ define the complex domain
$$
\mcal C_{N}^{out}(\delta) = \left\{\lambda \in \mbb C  \; : \;  |\lambda| \leq N^{C_0}, \mbox{dist}(\lambda,\mbox{supp } \mu^\square) \geq N^{-\delta} \right\}
$$
for some arbitrary $C_0 \geq 100$, where $\mu^\square$ is the probability measure associated to the Stieltjes transform $  \frac 1 N \Tr \big[ A_N^\square (\lambda) \big]$ where $A_N^\square$  is the solution of the matrix Dyson equation \eqref{eq:MatrixDyson} (see \cite[Section 2]{EKS19}). Then,  from \cite[Theorem 2.1]{EKS19} it follows that  for any $\varepsilon > 0$, there exists $\delta > 0$ such that for any $D > 0$ 
\begin{equation}  
\mbb P\left(|\langle u , ((\lambda \mbb I_N - H_N)^{-1}-A_N^\square (\lambda)) v\rangle|  \geq  \frac{N^{\varepsilon}}{(1+|\lambda|)^2 \sqrt{N}} \mbox{ in } \mcal C_{N}^{out}(\delta) \right) \leq C N^{-D}, \label{ineqErdos1a}
\end{equation}
for all unit vectors $u,v \in \mbb C^N$, and
\begin{equation}
\mbb P\left( |g_{H_N} (\lambda) - \frac 1 N \Tr \big[ A_N^\square (\lambda) \big] | \geq  \frac{N^{\varepsilon}}{(1+|\lambda|)^2 N}  \mbox{ in } \mcal C_{N}^{out}(\delta)  \right) \leq C N^{-D}, \label{ineqErdos1b}
\end{equation}
where $C$ is a constant depending on $\varepsilon, D$ and other quantities not depending on $N$. Inequalities \eqref{ineqErdos1a} and \eqref{ineqErdos1b} hold under general assumptions on the distributions of the entries of $X_N$ and the additive deformation $Y_N$ (we refer to \cite[Section 2.2]{EKS19} for further details) but {\it without assuming} that the entries of $\Gamma_N$ are lower  bounded. In \cite{EKS19} this result is referred to as a  local law outside the support of $\mu^\square$ as the error terms in  Inequalities \eqref{ineqErdos1a} and \eqref{ineqErdos1b} do not decay at an optimal rate when $t = \Re e(\lambda)$ belong to $\mbox{supp } \mu^\square$. To obtain an optimal local law when $t \in \mbox{supp } \mu^\square$ (that is in the bulk of the spectrum) a flatness condition on the variance profile  is added in \cite[Section 2.2]{EKS19} which implies that the entries of $\Gamma_N$ are bounded from below by positive constant. Under this supplementary condition, the following deviation inequalities hold:  for fixed $\delta, \gamma > 0$ define the complex domain
$$
\mcal C_{N}^{out}(\delta, \gamma) = \left\{\lambda = t+\mbf i \eta  \; : \;  |\lambda| \leq N^{C_0}, |\eta| \geq N^{-1+\gamma}, \rho(x) + \mbox{dist}(x,\mbox{supp } \mu^\square) \geq N^{-\delta} \right\},
$$
where $\rho$ is the density of $\mu$.  Then,  from \cite[Theorem 2.2]{EKS19} it follows that  for any $\gamma, \varepsilon > 0$, there exists $\delta > 0$ such that for any $D > 0$ 
\begin{equation}  
\mbb P\left(|\langle u , ((\lambda \mbb I_N - H_N)^{-1}-A_N^\square (\lambda)) v\rangle|  \geq  \frac{N^{\varepsilon}}{\sqrt{N \eta}} \mbox{ in } \mcal C_{N}^{out}(\delta) \right) \leq C N^{-D}, \label{ineqErdos2a}
\end{equation}
for all unit vectors $u,v \in \mbb C^N$, and
\begin{equation}
\mbb P\left( |g_{H_N} (\lambda) - \frac 1 N \Tr \big[ A_N^\square (\lambda) \big] | \geq  \frac{N^{\varepsilon}}{ N \eta}  \mbox{ in } \mcal C_{N}^{out}(\delta)  \right) \leq C N^{-D}. \label{ineqErdos2b}
\end{equation}
The error terms in Inequalities  \eqref{ineqErdos2a} and \eqref{ineqErdos2b} decay at the optimal rate when $t \in \mbox{supp } \mu^\square$ and $\eta = \Im m(\lambda)$ becomes small, and this result is thus an optimal local law within the bulk of the spectrum. 

Comparing Inequalities \eqref{ineqErdos1a}-\eqref{ineqErdos2b} with the results stated in Theorem  \ref{MainTh} (when $\Lambda = \lambda \mbb I_N$ is scalar) and Corollary \ref{MainCor}, it follows that we have only obtained  a weak local law on the convergence of the operator-valued Stieltjes transform $G_{H_N} $ and the scalar Stieltjes transform $g_{H_N}$ to their deterministic equivalent, since the decay of the error terms in Inequalities \eqref{eq:MainThStieltjes} and \eqref{eq:MainThStieltjes2} as $\Im m(\lambda) \to 0$ is not optimal. Hence, when $\Lambda = \lambda \mbb I_N$ is scalar, the results of Theorem  \ref{MainTh} are clearly sub-optimal as better concentration bounds already exist in the literature that also go beyond the Gaussian assumption for the entries of $X_N$. Nevertheless, to the best of our knowledge, obtaining a weak local law for  the operator-valued Stieltjes transform $G_{H_N} $ is a novel result. Hence, it would be interesting to have a generalization of optimal local laws as in \cite{Ajanki2017,Ajanki2019,EKS19} but in an operator-valued sense, that is for $(\Lambda  - H_N)^{-1}$, when $\Lambda$ is not a scalar matrix as we consider in the current paper.

Finally, it should be mentioned that the results from Corollary \ref{Cor:Beta1} and Corollary \ref{Cor:Beta2} on the convergence of the deterministic equivalents $\beta_k^\square$  and $\tilde{\beta}_k^\square$ to the matrix-valued function $\beta_k$ are novel. Indeed, we are not aware of other works dealing with the issue of localizing outliers in the singular values distribution of the matrix $H_N' = X_N+Y_N+Z_N$ when $X_N$ has an arbitrary variance profile. The question of the optimality of the deviation inequalities \eqref{Goal} and \eqref{Goal2} is obviously left open.

\section{Applications and numerical illustrations} \label{sec:num}

In this section, we report results on numerical experiments on the localization of outliers in the rectangular information plus noise model \eqref{Model3} for various variance profiles and additive perturbations that may posses spikes generating outliers. 

\subsection{Information plus noise model with a variance profile} \label{sec:infonoise}

We introduce the model
\eqa\label{Model4}
H'_{N,M}  =  X_{N,M} + Y_{N,M} + Z_{N,M}.
\qea
where $Z_{N,M} = U_{N,k} \Theta_{k} V_{M,k}^{\ast}$ is a low-rank matrix $N \times M$ with $k$ spikes that are equal to the singular values of $Z_{N,M}$, where $\Theta_{k}$ is $k \times k$ diagonal matrix with positive diagonal entries and $U_{N,k}$ (resp.\ $V_{M,k}$) is the matrix whose columns are the left (resp.\ right) singular vectors of  $Z_{N,M}$.

The question of locating potential outliers in model \eqref{Model4} can be answered using the approach developed in this paper for the Hermitian setting.  To this end, we use the principle of  \emph{Hermitian dilation} \cite{paulsen2002completely} which corresponds to embed any rectangular matrix $A_{N,M}$ (with complex entries) of size $N \times M$ within a larger Hermitian block matrix by defining
\begin{equation}
\DD(A_{N,M}) = \left[\begin{array}{cc}0 & A_{N,M} \\A_{N,M}^* & 0\end{array}\right]. \label{Eq:Dilation}
\end{equation}
Note that if one denotes by $\sigma_1 \geq \ldots \geq \sigma_r > 0$ the singular values of $A_{N,M}$ assumed to be of rank $r$, then the spectrum of the Hermitian matrix $\DD(A_{N,M})$ is
$$
\left\{- \sigma_1 \leq \ldots \leq - \sigma_r \leq 0 \leq \sigma_r \leq \ldots  \leq \sigma_1 \right\}
$$
where the eigenvalue $0$ is of multiplicity $M+N-2r$. By applying Hermitian dilation to Equation \eqref{Model3}, we obtain that
$$
\DD(H'_{N,M}) =\DD(X_{N,M}) + \DD(Y_{N,M})+ \DD(Z_{N,M}), 
$$
which is a finite rank deformation of  the GUE model $\DD(H_{N,M}) =\DD(X_{N,M}) + \DD(Y_{N,M})$.  For $i\leq N$ and $j\geq N+1$ the entry $(i,j)$ of $\DD(X_{N,M})$ is  a centered complex Gaussian variable with variance $\frac{\gamma_{N,M}(i,j)}{M}$ satisfying $ \frac{x_{ij}}{\sqrt M}  = \sqrt{\frac{N+M}M}\big( \frac{x_{ij}}{\sqrt {N+M}} \big)$. Therefore, $\DD(X_{N,M})$ is a GUE matrix of size $N+M$ with variance profile $\frac{N+M}M \DD(\Gamma_{N,M})$, where the zero entries of $\DD(X_{N,M})$ are considered as centered Gaussian variables with variance equal to zero. The additive perturbation $\DD(Z_{N,M})$ is a matrix of rank $2k$ that can be written as
$$
\DD(Z_{N,M}) = W_{N+M,2k}  \left[\begin{array}{cc}\Theta_k & 0 \\ 0& - \Theta_k \end{array}\right] W_{N+M,2k}^*
$$
where
$$
W_{N+M,2k} =  \frac{1}{\sqrt{2}} \left[\begin{array}{cc} - U_{N,k}  & U_{N,k}  \\ -V_{M,k}& - V_{M,k} \end{array}\right]
$$
is a $(N+M) \times 2k$ matrix whose columns are orthonormal vectors. After introducing the Hermitian dilation of model \eqref{Model3}, one may thus consider the  function $G_{\DD(H)}^\square : \mrm D_{N+M}(\mbb C)^+ \to \mrm D_{N+M}(\mbb C)^-$,  analytic in each variable, that is the unique solution of the  fixed point equation
	\eqa\label{FixPtEq2}
		  G_{\DD(H)}^\square({ \Lambda}) =\Delta\bigg [ \Big( \Lambda   -\mcal R_{N+M}\big(  G_{\DD(H)}^\square({ \Lambda})\big)  - \DD(Y_{N,M}) \Big)^{ -1}\bigg].
	\qea
which holds for any $\Lambda \in  \mrm D_{N+M}(\mbb C)^+$. Following Equation \eqref{eq:RNdeg},  $\mcal R_{N+M}$  is the map  defined on $\mrm D_{N+M}(\mbb C)^+$ by
\begin{equation}
		\mcal R_{N+M}(\Lambda) = \mrm{deg}\Big(  \frac{1}{N+M} \times \frac{N+M}M \DD(\Gamma)  \Lambda \Big) = \mrm{deg}\Big( \frac{\DD(\Gamma)  }{M}  \Lambda  \Big). \label{eq:defRNM}
\end{equation}
Hence, after Hermitian dilation, one may follow the approach described in Section \ref{Sec:MainStat} to approximate of the global behavior of the empirical distribution of the singular values of $H_{N,M}$ and to localize potential outliers generated by the spikes of $Z_{N,M}$.

\begin{Rk}
In the RMT literature, the Hermitian dilation \eqref{Eq:Dilation} of a rectangular random matrices is classically referred to as Girko's Hermitization trick\cite{MR1887675,MR1887676}. As our study of a GUE matrix with variance profile  allows the setting where large blocks of  $\DD(\Gamma_{N,M})$ are equal to zero, treating the setting of the information plus noise model using  Girko's Hermitization  is an immediate application of our results in the Hermitian case.   As explained e.g.\ in \cite{alt2017,alt2018} this Hermitization trick may also be used beyond the Gaussian case. However, a direct application of the results in \cite{Ajanki2017,Ajanki2019} on the vector and matrix Dyson equation for the study of generalized Wigner matrices is not possible as a key assumption in these papers is that  the entries of the variance profile must be bounded from below. Hence, beyond the Gaussian case, the use of Girko's Hermitization requires a specific treatment in \cite{alt2017,alt2018}  and the introduction of a system of quadratic vector equations extensively studied in \cite{Oskari17} that relates the resolvent of $X_{N,M} X_{N,M}^{\ast}$ to the resolvent of $\DD(X_{N,M})$ via the equality
$
( \lambda^2 \mbb I_{N}- X_N X_N^{\ast} )^{-1} = A_{1,1}(\lambda) / \lambda,
$
where $A_{1,1}(\lambda)$ denotes the upper left $N \times N$ block of $(\lambda \mbb I_{N+M}  - \DD(X_{N,M}) )^{-1}$.
\end{Rk}

\subsection{Additive deformation of rank one}

Let us first consider the rectangular model \eqref{Model4} under the simplified setting $Y_{N,M} = 0$ and the low rank denoising model with an additive deformation of rank $k=1$, that is
\eqa\label{ModelLowRankOne}
	H'_{N,M}  = X_{N,M} + Z_{N,M}, \quad \mbox{where} \quad Z_{N,M} = \theta u_{N}  v^{\ast}_{M},
\qea
where  $u_{N} \in \R^{N}, v_{M} \in \R^{M}$ are unit vectors, $\theta > 0$ is a spike, and $X_{N,M}$ is a rectangular Gaussian matrix with a variance profile $\Gamma_{N,M} = \big(\gamma_{N,M}^2(i,j)\big)_{i,j}$. We assume that the variance profile satisfies
\begin{equation}
\frac{1}{N} \sum_{i=1}^{N} \sum_{j=1}^{M} \frac{\gamma_{N,M}^2(i,j)}{M} = 1, \label{eq:normcond}
\end{equation}
which ensures the normalization condition $\esp\big[ \frac 1{N} \Tr  X_{N,M} X_{N,M}^* \big] = 1$.
In all the numerical experiments we took $N=360$ and $M=400$. Following the discussion in Section \ref{Sec:MainStat} and the principle of Hermitian dilation described in Section \ref{sec:infonoise}, a potential outlier in model \eqref{ModelLowRankOne} may be found by searching for a positive real $\lambda$ such that
\begin{equation}
\det\big(\beta_{2}^\square(\lambda ) \big) = 0,  \label{eq:detbeta2}
\end{equation}
where
\begin{equation}
\beta_{2}^\square(\lambda) =   \mbb I_{2} - \frac{1}{2} \left[\begin{array}{cc} - u_{N}^*  & -v_{M}^*   \\  u_{N}^* & - v_{M}^* \end{array}\right] G_{\DD(H)}^\square( \lambda \mbb I_{N+M})  \left[\begin{array}{cc} - u_{N}  & u_{N}  \\ -v_{M}& - v_{M} \end{array}\right] \left[\begin{array}{cc}\theta & 0 \\ 0& - \theta \end{array}\right], \label{eq:defbeta2}
\end{equation}
and $G_{\DD(H)}^\square( \lambda \mbb I_{N+M})$ is the solution of the fixed point equation \eqref{FixPtEq2}, with $\DD(Y_{N,M}) = 0$.
\begin{Rk}
We choose to directly search for potential outliers by minimizing $\beta_{2}^\square$ over the set of reals $\lambda \in \R$ rather than over the set of complex values $(\lambda = t + \mbf i \eta)_{t \in \R}$ with a small and fixed value $\eta > 0$ as both approaches lead to the same numerical results.
\end{Rk}
A numerical approximation (for a given value of $\lambda$) is easily obtained by the following iterative procedure
\begin{equation}
 G_{n+1}^\square( \lambda) =\Delta\bigg [ \Big( \lambda \mbb I_{N+M}   -\mcal R_{N+M}\big(   G_{n}^\square( \lambda)\big) \Big)^{ -1}\bigg], \label{eq:vectorfixedpoint}
\end{equation}
that is stopped for $n$ sufficiently large or when the difference between two successive iterations is sufficiently small. Since $\lambda \mbb I_{N+M}   -\mcal R_{N+M}\big(   G_{n}^\square( \lambda)\big)$ is a diagonal matrix, the fixed point iteration corresponds to the numerical evaluation of the vector Dyson equation \cite{Ajanki2017}, and it simplifies to the vector equation
\begin{equation}
\bG_{n+1}^\square( \lambda) = \frac{1}{\lambda \bone_{N+M} -  \frac{\DD(\Gamma_{N,M})  }{M}  \bG_{n}^\square( \lambda) }. \label{eq:itervector}
\end{equation}
In practice, to find a potential solution to equation \eqref{eq:detbeta2}, we use a numerical optimization procedure to obtain a minimizer $\lambda = \bar{\lambda}(\theta)$ of the function $\lambda \mapsto \det\big(\beta_{2}^\square(\lambda ) \big)$ over $\R_+$. To this end, we have used Python's command \texttt{fminsearch} which is based on the Nelder-Mead simplex method. Then, if the value $\det\big(\beta_{2}^\square(\bar{\lambda}(\theta)) \big)$ is \emph{sufficiently close to zero}, we conclude that $\bar{\lambda}(\theta)$ is an outlier.

Finally, a smooth approximation of the  singular values distribution (s.v.d.) of $H_{N,M} = X_{N,M}$ at location $t \in \R$ may be obtained from the inverse Stieltjes transform formula \eqref{eq:Stieltjesinv}. Letting $g_{n}^\square(\lambda) = \frac{1}{N+M} \Tr \,G_{n}^\square( \lambda)$, we define $f_{n}(t) = - \frac{1}{\pi} \Im m \left( g_{n}^\square(t + \mbf i \eta) \right)$ for $\eta > 0$ small enough. Now, recall that  $g_{n}^\square(\lambda)$ is an approximation of the dilation matrix $\DD(H_{N,M}$) whose eigenvalue values are $0$ (with multiplicity $M-N$) and $\left\{- \sigma_1, \ldots , - \sigma_N, \sigma_N , \ldots  , \sigma_1 \right\}$ where $\sigma_N \leq \ldots  \leq \sigma_1$ are the singular values of $H_{N,M}$, and that the inverse Stieltjes transform \eqref{eq:Stieltjesinv} amounts to approximate a measure by a convolution with the Cauchy kernel  $t \mapsto \frac{\eta}{t^2 + \eta^2}$. Therefore, an  approximation of the s.v.d.\ of $H_{N,M}$ is given by the density
\begin{equation}
\tilde{f}_{n}(t) = \frac{2}{1-(M-N)/(M+N)} \left( f_{n}(t) - \frac{M-N}{M+N} \frac{\eta}{t^2 + \eta^2} \right) , \quad t \geq 0. \label{eq:approxdens}
\end{equation} 

\subsubsection{Constant variance profile} We propose to  validate this way of localizing outliers by first considering the standard case where the variance profile $\Gamma_{N,M}$ has constant entries equal to one. This setting corresponds the so-called \emph{Gaussian spike population model} for which the asymptotic behavior (as $\min(N,M) \to + \infty$) of the singular values of $H'_{N,M}$  is  well understood \cite{Benaych-GeorgesN12,MR2322123,MR3054091,loubaton2011} when the rank $k$ of the additive deformation $Z_{N,M}$ is held fixed. In the asymptotic framework where the sequence $M = M_{N} \geq N$ is such that $\lim_{N \to + \infty} \frac{N}{M} = c$ with $0 < c \leq 1$,  it is well known \cite{MR2567175} that the empirical distribution of the singular values of $X_{N,M}$  converges, as $N \to + \infty$, to the quarter circle distribution if $c = 1$,  or to its generalized version if $c < 1$,  called the Marchenko-Pastur distribution, which is supported on the compact interval $[c_{-},c_{+}]$ with
$
c_{\pm} = 1 \pm \sqrt{c}
$
where $c_{+}$ is the so-called bulk (right) edge. Now, if one denotes by $\sigma_1 \geq \ldots \geq \sigma_N > 0$ (recall that $N \leq M$) the singular values of  $H'_{N,M}$, then the following result holds (see e.g.\ Theorem 2.8 in  \cite{Benaych-GeorgesN12} and Proposition 9 in \cite{MR3054091}). Almost surely, one has that
\begin{align*}
\lim_{N \to + \infty} \sigma_{1} =
\left\{
\begin{array}{cc}
\lambda_c\left( \theta \right) & \mbox{ if }  \theta > c^{1/4}, \\
c_{+} & \mbox{ otherwise,}
\end{array}
\right.
\end{align*}
and
$
\lim_{N \to + \infty} \sigma_N = c_{-},
$
where
\begin{equation}
\lambda_c\left( \theta \right) = \sqrt{\frac{(1+\theta^{2})(c+\theta^{2})}{\theta^{2}}} \mbox{ for any } \theta > c^{1/4}. \label{eq:lambdatheta}
\end{equation}
The interpretation of this result, called the BBP transition after \cite{baik2005}, is as follows. If the spike $\theta$ in model \eqref{ModelLowRankOne}  is larger than $c^{1/4}$ then an outlier exists and it is asymptotically located at $\lambda_c\left( \theta \right) > c_{+}$. To the contrary, if $\theta \leq c^{1/4}$ then there exists no outlier as the largest singular value $H'_{N,M}$ converges to the bulk edge $c_{+}$.

\begin{figure}[!t]
\centering
\subfigure[]{\includegraphics[width=0.48\linewidth]{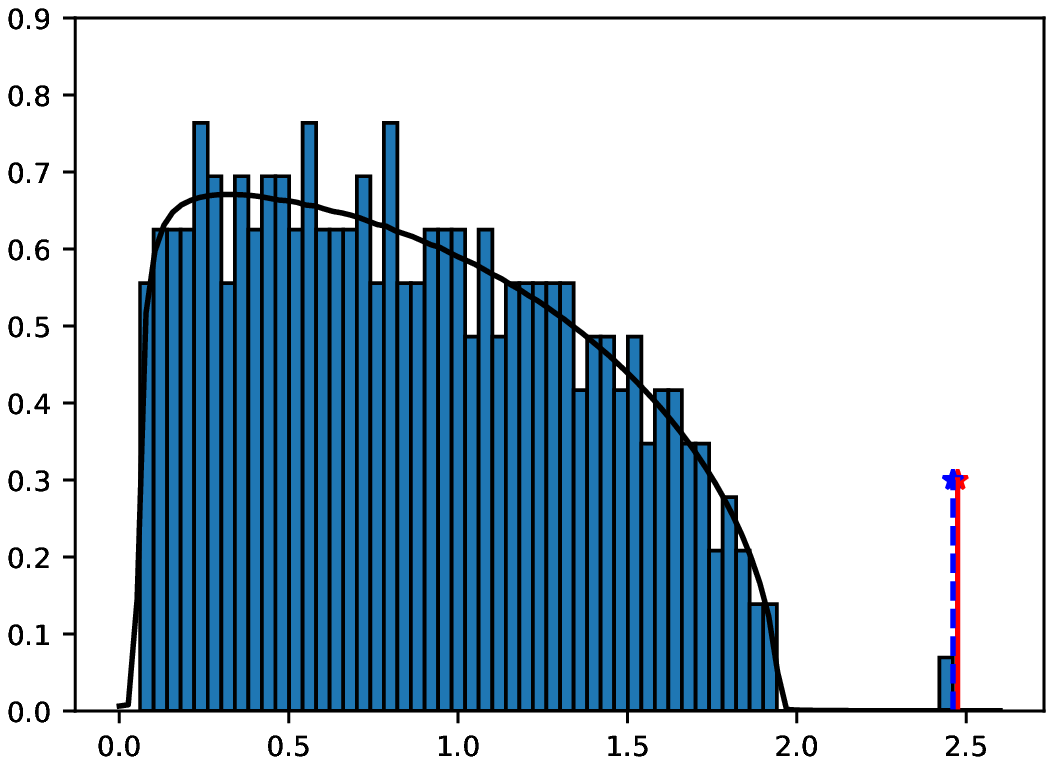}} \\

\subfigure[]{\includegraphics[width=0.4\linewidth]{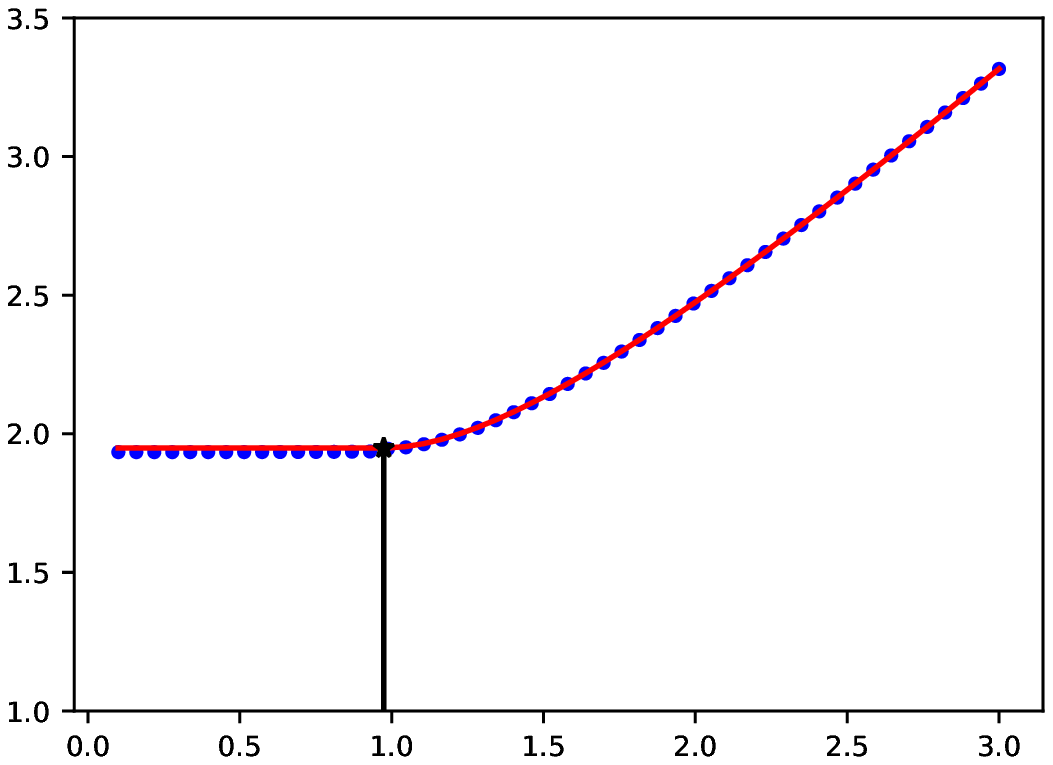}}
\subfigure[]{\includegraphics[width=0.4\linewidth]{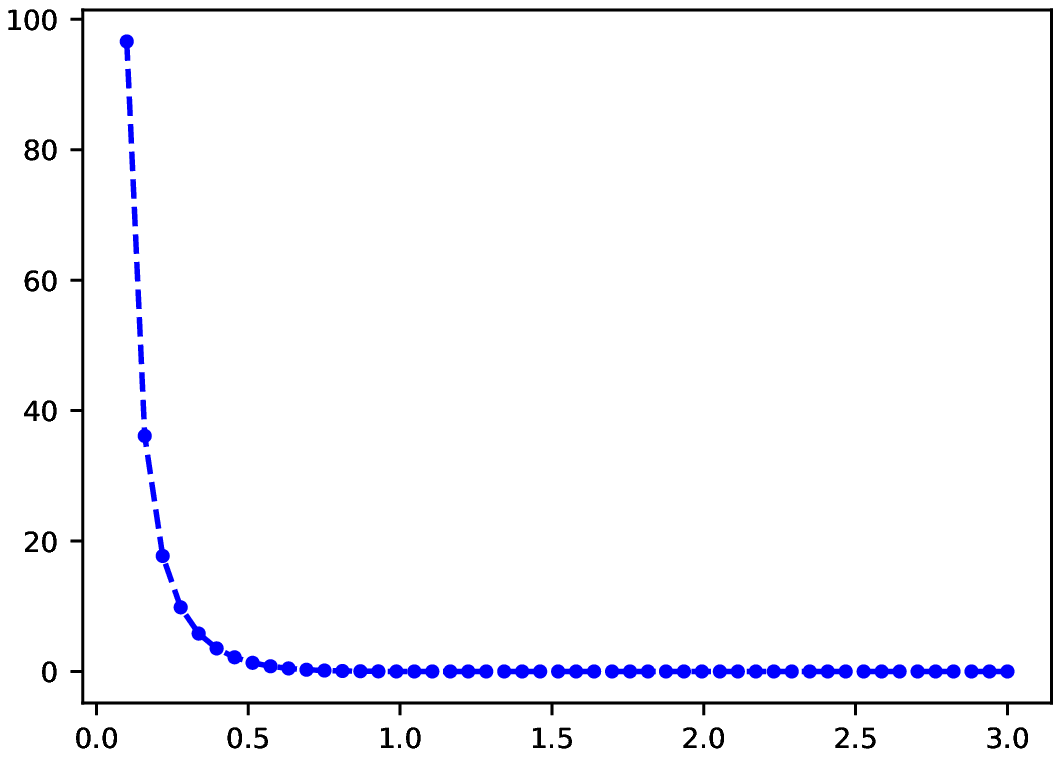}}\\
\caption{Constant variance profile. (a) Histogram of the singular values of one realization of $H'_{N,M}$ for $\theta = 2$. The black curve is the smooth approximation by $\tilde{f}_{n}$ of the singular values distribution of $X_{N,M}$. The red vertical line  denotes the value $\lambda_{N/M}\left(2\right) \approx 2.48$  which is  the approximation of the location of the outlier, while the blue vertical dashed line  denotes the location of the singular value of $H'_{N,M}$ which is the closet to $\lambda_{N/M}\left(2\right)$. (b) The red line is the curve $\theta \mapsto \max\left(1 + \sqrt{\frac{N}{M}} , \lambda_{N/M}\left( \theta \right)\right)$, and the blue dots are the points $(\theta,\bar{\lambda}(\theta))$ where $\bar{\lambda}(\theta)$ is found by numerical minimization of  $\lambda \mapsto \det\big(\beta_{2}^\square(\lambda ) \big)$ for $\theta$ ranging in a grid of 50 regularly spaced values in $[0,3]$. The black vertical line is located at $(\frac{N}{M})^{1/4}$ and its height is $1 + \sqrt{\frac{N}{M}}$. (c) The blue dashed line is the curve $\theta \mapsto \det\big(\beta_{2}^\square(\bar{\lambda}(\theta) ) \big)$.}
  \label{fig:ConstVarProfile}
\end{figure}

For a constant variance profile, an explicit solution of the equation $\det\big(\beta_{2}^\square(\lambda ) \big) = 0$ exists as stated below.

\begin{Lem}\label{Lem:lambdatheta2}  
Assume that $\Gamma_{N,M}(i,j)=1$ for any $i,j$. Then, the Equation \eqref{eq:detbeta2} admits a solution given by
\begin{equation}
\lambda_{N/M}\left( \theta \right) = \sqrt{\frac{(1+\theta^{2})( \frac{N}{M}+\theta^{2})}{\theta^{2}}} \mbox{ provided that } \theta > \left( \frac{N}{M}\right)^{1/4}. \label{eq:lambdatheta2}
\end{equation}
\end{Lem}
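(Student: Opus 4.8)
The plan is to exploit the permutation symmetry of the constant variance profile to collapse the fixed point equation \eqref{FixPtEq2} into a two-dimensional scalar system, to evaluate $\det\beta_2^\square$ explicitly in terms of its solution, and then to solve the resulting algebraic equation. Write $c:=N/M$. When $\Gamma_{N,M}(i,j)=1$ for all $i,j$, the dilated profile $\DD(\Gamma_{N,M})$ is the block matrix with all-ones off-diagonal blocks of sizes $N\times M$ and $M\times N$; by invariance under permuting indices inside each block, I would look for a block-constant solution of \eqref{FixPtEq2} at the scalar argument $\Lambda=\lambda\mbb I_{N+M}$, namely $G_{\DD(H)}^\square(\lambda\mbb I_{N+M})=\mathrm{diag}(g_1\mbb I_N,\,g_2\mbb I_M)$, and by the uniqueness part of Theorem \ref{MainTh} exhibiting one such block-constant solution identifies it with $G_{\DD(H)}^\square$. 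Using the definition \eqref{eq:defRNM} of $\mcal R_{N+M}$ and reading off the row-degrees gives $\mcal R_{N+M}\big(\mathrm{diag}(g_1\mbb I_N,g_2\mbb I_M)\big)=\mathrm{diag}(g_2\mbb I_N,\,c\,g_1\mbb I_M)$, so that \eqref{FixPtEq2} reduces to the scalar system
\[
g_1=\frac{1}{\lambda-g_2}\,,\qquad g_2=\frac{1}{\lambda-c\,g_1}\,.
\]

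Next I would compute the determinant. Since $u_N$ and $v_M$ are unit vectors and $G_{\DD(H)}^\square(\lambda\mbb I_{N+M})$ is block-constant, the rectangular matrices surrounding it in \eqref{eq:defbeta2} compress $G_{\DD(H)}^\square$ to an explicit $2\times2$ matrix, giving
\[
\beta_2^\square(\lambda)=\mbb I_2-\frac{\theta}{2}\begin{pmatrix} g_1+g_2 & g_1-g_2 \\ g_2-g_1 & -(g_1+g_2)\end{pmatrix},
\]
and expanding the determinant yields $\det\beta_2^\square(\lambda)=1-\tfrac{\theta^2}{4}\big[(g_1+g_2)^2-(g_1-g_2)^2\big]=1-\theta^2 g_1g_2$. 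So solving \eqref{eq:detbeta2} amounts to adjoining the equation $g_1g_2=\theta^{-2}$ to the scalar system. Substituting $g_1g_2=\theta^{-2}$ into the two fixed point relations decouples them, $\lambda g_1=1+\theta^{-2}$ and $\lambda g_2=1+c\,\theta^{-2}$, hence $g_1=\frac{\theta^2+1}{\theta^2\lambda}$ and $g_2=\frac{\theta^2+c}{\theta^2\lambda}$; imposing $g_1g_2=\theta^{-2}$ once more forces $\lambda^2=\frac{(1+\theta^2)(c+\theta^2)}{\theta^2}$, which is exactly $\lambda_{N/M}(\theta)^2$. A direct substitution then confirms that this pair $(g_1,g_2)$ solves the scalar system and satisfies $g_1g_2=\theta^{-2}$, so that $\det\beta_2^\square(\lambda_{N/M}(\theta))=0$ --- provided $\mathrm{diag}(g_1\mbb I_N,g_2\mbb I_M)$ is really the value of $G_{\DD(H)}^\square$ at $\lambda_{N/M}(\theta)$.

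The hard part --- and the only place where the hypothesis $\theta>(N/M)^{1/4}$ enters --- is this last proviso. I have to check, first, that $\lambda_{N/M}(\theta)$ lies outside the support of the limiting (dilated) Marchenko--Pastur law, so that $G_{\DD(H)}^\square$, defined on $\mbb C^+$ by \eqref{FixPtEq2}, extends analytically there; and second, that among the two real solutions of the scalar system the explicit pair above is the one matching this analytic continuation, i.e.\ the branch with $g_1,g_2\to 0^+$ as $\lambda\to+\infty$. For the first point, writing $s=\theta^2$ one has $\lambda_{N/M}(\theta)^2=s+c/s+(1+c)$, whose minimum over $s>0$ is $(1+\sqrt c)^2=(1+\sqrt{N/M})^2$, attained at $s=\sqrt c$; hence $\lambda_{N/M}(\theta)\geq 1+\sqrt{N/M}$, the right edge of the bulk, with strict inequality whenever $\theta\neq(N/M)^{1/4}$. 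For the second point, the scalar system reduces to a quadratic in $g_1$ whose two roots have product $1/c$, so the physical (smaller) root is characterised by $g_1<c^{-1/2}$; and a one-line computation shows that $g_1=\frac{\theta^2+1}{\theta^2\lambda_{N/M}(\theta)}$ satisfies $g_1<c^{-1/2}$ if and only if $\theta^4>c$, i.e.\ $\theta>(N/M)^{1/4}$. Under this hypothesis the explicit pair is genuinely the value of $G_{\DD(H)}^\square$ at $\lambda_{N/M}(\theta)$, and $\lambda_{N/M}(\theta)$ is an actual zero of $\det\beta_2^\square$. I expect no other obstacle: all remaining steps are finite $2\times2$ linear algebra together with the elementary algebra above, and the exponent $1/4$ in the threshold is precisely what the branch-identification condition $\theta^4>c$ produces.
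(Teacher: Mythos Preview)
Your proof is correct and follows essentially the same route as the paper: reduce the fixed point equation to the two-scalar system $g_1=(\lambda-g_2)^{-1}$, $g_2=(\lambda-cg_1)^{-1}$, compute $\det\beta_2^\square(\lambda)=1-\theta^2 g_1g_2$, and solve. The paper simply records the explicit quadratic formula for $g_2$ (their $g_M^\square$) and dispatches the rest with ``by simple calculations''; your version is more explicit, and in particular your branch-selection argument (product of roots $=1/c$, physical root characterised by $g_1<c^{-1/2}$, which forces $\theta^4>c$) is a genuine addition --- the paper does not spell out where the threshold $\theta>(N/M)^{1/4}$ comes from.
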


\begin{proof} 
Let us  first  determine the  solution $G_{\DD(H)}^\square( \lambda \mbb I_{N+M})$ of the fixed point equation \eqref{FixPtEq2} for a constant variance profile, namely $\Gamma_{N,M}(i,j)=1$ for any $i,j$.  Using the particular structure of the variance profile $\DD(\Gamma_{N,M})$ and the expression \eqref{eq:defRNM} of $R_{N+M}$, one  obtains by simple calculations that  $G_{\DD(H)}^\square( \lambda \mbb I_{N+M}) =  \left[\begin{array}{cc} g_N^\square(\lambda)  \mbb I_{N}  & 0 \\ 0 &  g_M^\square(\lambda) \mbb I_{M} \end{array}\right]$, where $g^\square_N, g^\square_M$ are  complex-valued functions satisfying 
$
g_N^\square(\lambda) = (\lambda - g_M^\square(\lambda))^{-1}
$
and
$$
g_M^\square(\lambda) = \frac{1+\lambda^2- \frac{N}{M} -\sqrt{(1+\lambda^2- \frac{N}{M} )^2-4 \lambda^2}}{2 \lambda}.
$$
Inserting this expression for $G_{\DD(H)}^\square( \lambda \mbb I_{N+M})$ into \eqref{eq:defbeta2},  one obtains that
\begin{eqnarray*}
\det\big(\beta_{2}^\square(\lambda ) \big) 
& = & 1 - \theta^2 g_N^\square(\lambda)g_M^\square(\lambda).
\end{eqnarray*}
Then, by simple calculations, it can be shown that the equation $\det\big(\beta_{2}^\square(\lambda ) \big) = 0$ admits a solution given by \eqref{eq:lambdatheta2} provided that  $\theta > \left( \frac{N}{M}\right)^{1/4}$.
\end{proof}

Note that the condition $\theta > \left( \frac{N}{M}\right)^{1/4}$ guarantees that $\lambda_{N/M}\left( \theta \right)  > 1 + \sqrt{\frac{N}{M}}$. Hence, we retrieve the expression \eqref{eq:lambdatheta} of the asymptotic location of an outlier in the Gaussian spike population model where the asymptotic ratio $c = \lim_{N \to + \infty} \frac{N}{M}$ is replaced with its non-asymptotic approximation $\frac{N}{M}$. As expected, we also remark that the localization $\lambda_{N/M}\left( \theta \right)$ of an outlier does not depend on the singular vectors $u_N$ and $v_N$ of the additive perturbation $Z_{N,M}$.

In Figure \ref{fig:ConstVarProfile}(a), we display the histogram of the singular values of one realization of $H'_{N,M}  = X_{N,M} +  \theta u_{N}  v^{\ast}_{M}$ with $\theta = 2$, where $u_N$ and $v_M$ are chosen to be unit vectors with constant entries. There is clearly an outlier outside the interval $[1-\sqrt{\frac{N}{M}}, 1+  \sqrt{\frac{N}{M}}] \approx [0.05,1.95]$. In Figure \ref{fig:ConstVarProfile}(a), we also plot the curve $x \mapsto \tilde{f}_n(x)$ which shows that the density defined by \eqref{eq:approxdens} is a very satisfactory approximation the distribution of the singular values  of $X_{N,M}$. In Figure \ref{fig:ConstVarProfile}(b), we  plot the curve $\theta \mapsto \max\left(1 + \sqrt{\frac{N}{M}} , \lambda_{N/M}\left( \theta \right)\right) $ for $\theta \in [0,3]$, which gives the location of outliers for any $\theta > \left( \frac{N}{M}\right)^{1/4} \approx 0.974$.

For a set of regularly spaced values of $\theta$ on $[0,3]$, we also report the results of the numerical procedure that we use to compute a minimizer $\bar{\lambda}(\theta)$  of the function $\lambda \mapsto \det\big(\beta_{2}^\square(\lambda ) \big)$ over $\R_+$. In Figure \ref{fig:ConstVarProfile}(c), we display the curve $\theta \mapsto \det\big(\beta_{2}^\square(\bar{\lambda}(\theta) ) \big)$. It can be seen that this curve is close to zero for $\theta > \left( \frac{N}{M}\right)^{1/4} $, and that it does not vanish for smallest values of $\theta$ which is in agreement with the fact that there is no outlier for $\theta \leq \left( \frac{N}{M}\right)^{1/4} $. In Figure \ref{fig:ConstVarProfile}(b), we also display the curve $\theta \mapsto  \bar{\lambda}(\theta)$ found by numerical minimization which coincides with $\theta \mapsto \lambda_{N/M}\left( \theta \right)$ for $\theta > \left( \frac{N}{M}\right)^{1/4} $. Interestingly, for all values of $\theta$ smaller than $\left( \frac{N}{M}\right)^{1/4} $ it appears that $\bar{\lambda}(\theta) =  1 + \sqrt{\frac{N}{M}}$, suggesting that $\lambda \mapsto \det\big(\beta_{2}^\square(\lambda))$ admits a minimizer at  the bulk edge.

\subsubsection{Piecewise constant variance profile} We now consider the following example of a piecewise constant variance profile
\begin{equation}
\Gamma_N = \left[\begin{array}{cc} \gamma_1  \mbb \one_{N/4}\one_{M/4}^*  & \gamma_2  \mbb \one_{N/4}\one_{3M/4}^* \\ \gamma_2  \mbb \one_{3N/4}\one_{M/4}^* & \gamma_1  \mbb \one_{3N/4}\one_{3M/4}^*  \end{array}\right], \label{eq:piecewise}
\end{equation}
where $\one_{q}$ denotes the vector of length $q$ with all entries equal to one, and $\gamma_1,\gamma_2$ are positive constant such that $\gamma_2 = 200 \times \gamma_1$.  Then, we compare two settings where either  $u_N$ and $v_M$ are unit vectors with constant entries, or $u_N$ (resp.\ $v_M$) is equal to the first vector $e_1^N$ (resp.\ $e_1^M$) of the canonical basis of $\R^N$ (resp. $\R^M$). 

\begin{figure}[!t]
\subfigure[$\theta = 0.39$]{\includegraphics[width=0.45\linewidth]{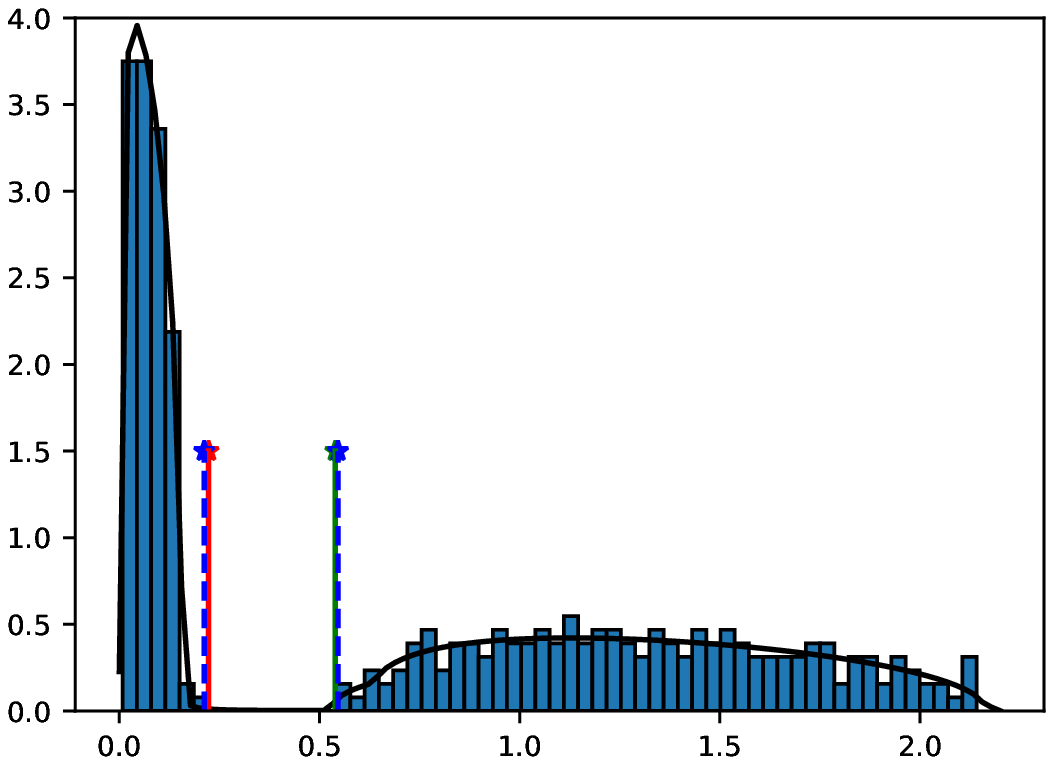}}
\subfigure[$\theta = 0.57$]{\includegraphics[width=0.45\linewidth]{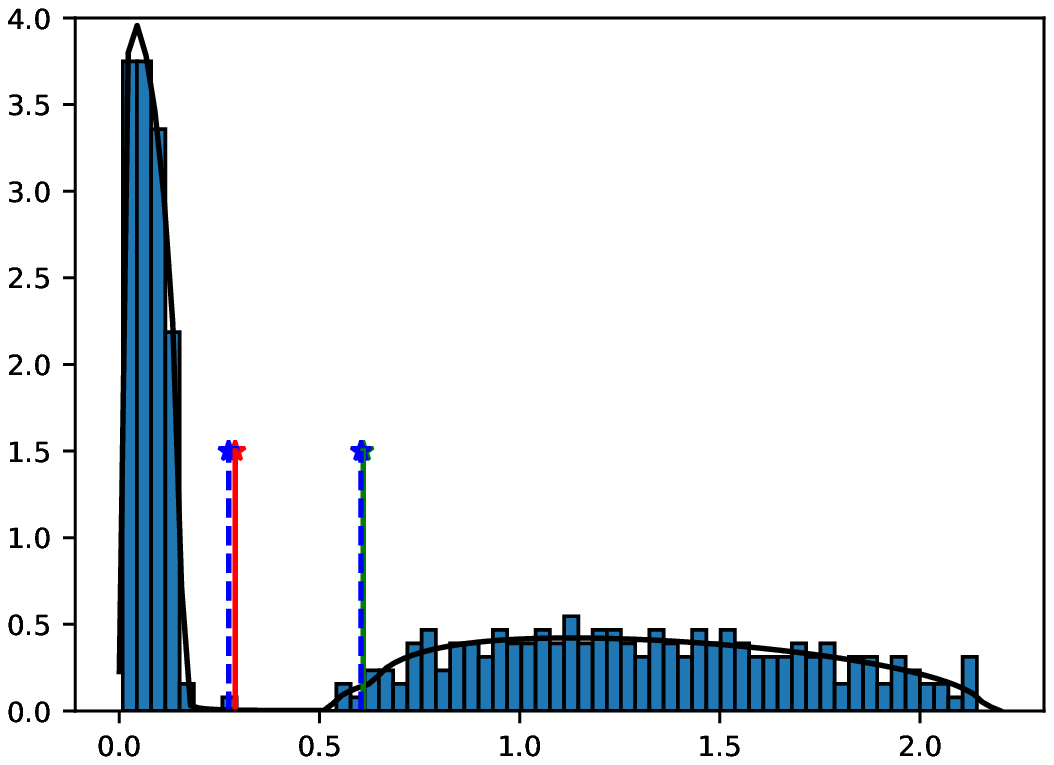}}

\subfigure[$\theta = 0.81$]{\includegraphics[width=0.45\linewidth]{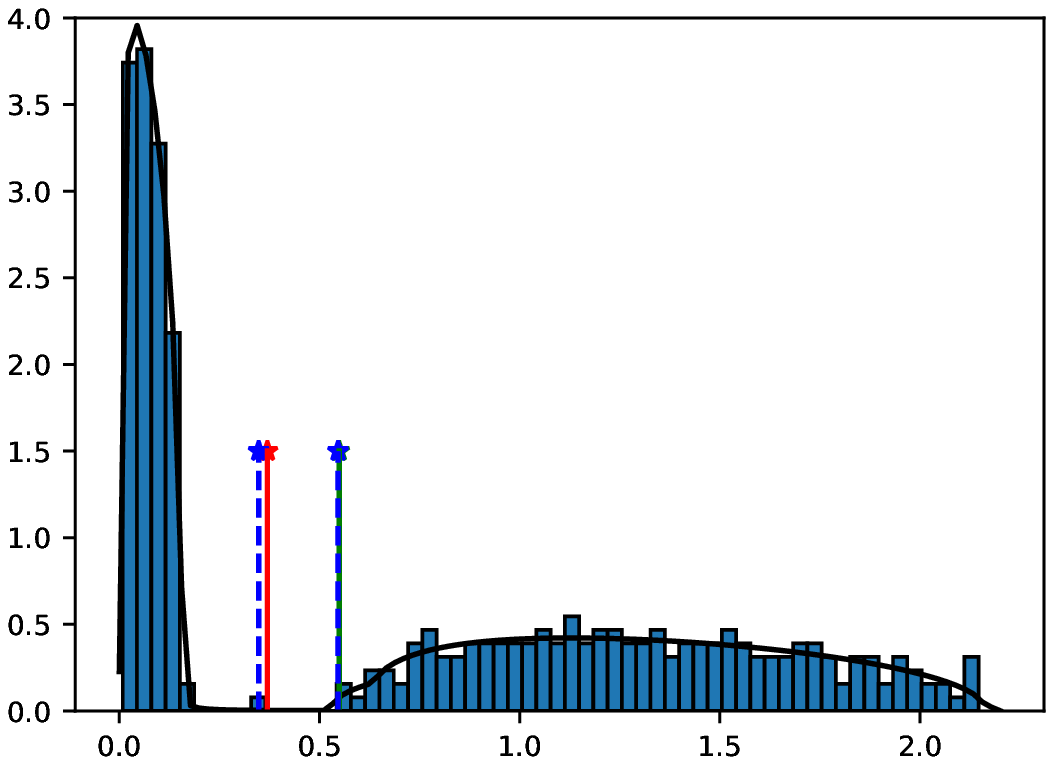}}
\subfigure[$\theta = 0.93$]{\includegraphics[width=0.45\linewidth]{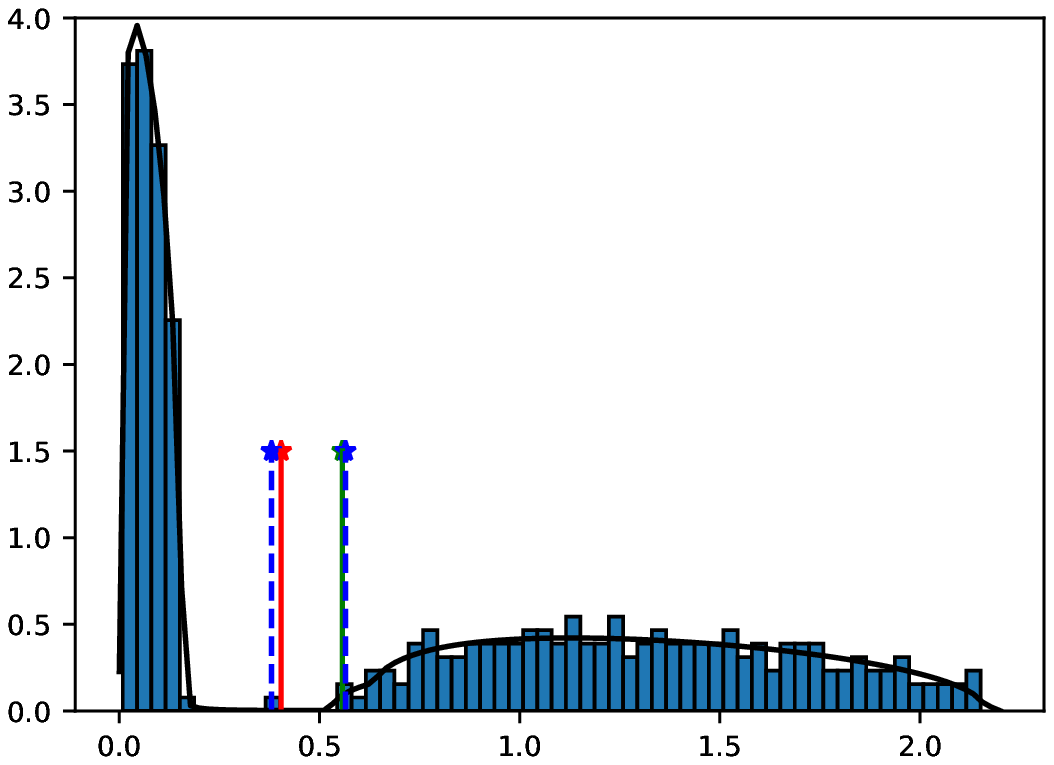}}

\caption{Piecewise constant variance profile. (c-f) Histograms of the singular values of one realization of $H'_{N,M} = X_{N,M} + \theta u_{N}  v^{\ast}_{M}$  and  smooth approximation  of the singular values distribution of $X_{N,M}$  (black curve) for $u_N = \frac{1}{\sqrt{N}} \bone_N$ and $v_M= \frac{1}{\sqrt{M}} \bone_M$. The red (resp.\ green) vertical line  denotes the value $\bar{\lambda}(\theta)$ when $u_N = \frac{1}{\sqrt{N}} \bone_N$ and $v_M= \frac{1}{\sqrt{M}} \bone_M$    (resp.\   $u_N = e_1^N$ and  $v_M = e_1^M$), while the blue vertical dashed line  denotes the location of the singular value of $H'_{N,M}$ which is the closet to $\bar{\lambda}(\theta)$.}
  \label{fig:PiecewiseConstVarProfile3_4}
\end{figure}

\begin{figure}[!t]
\centering
\subfigure[]{\includegraphics[width=0.4\linewidth]{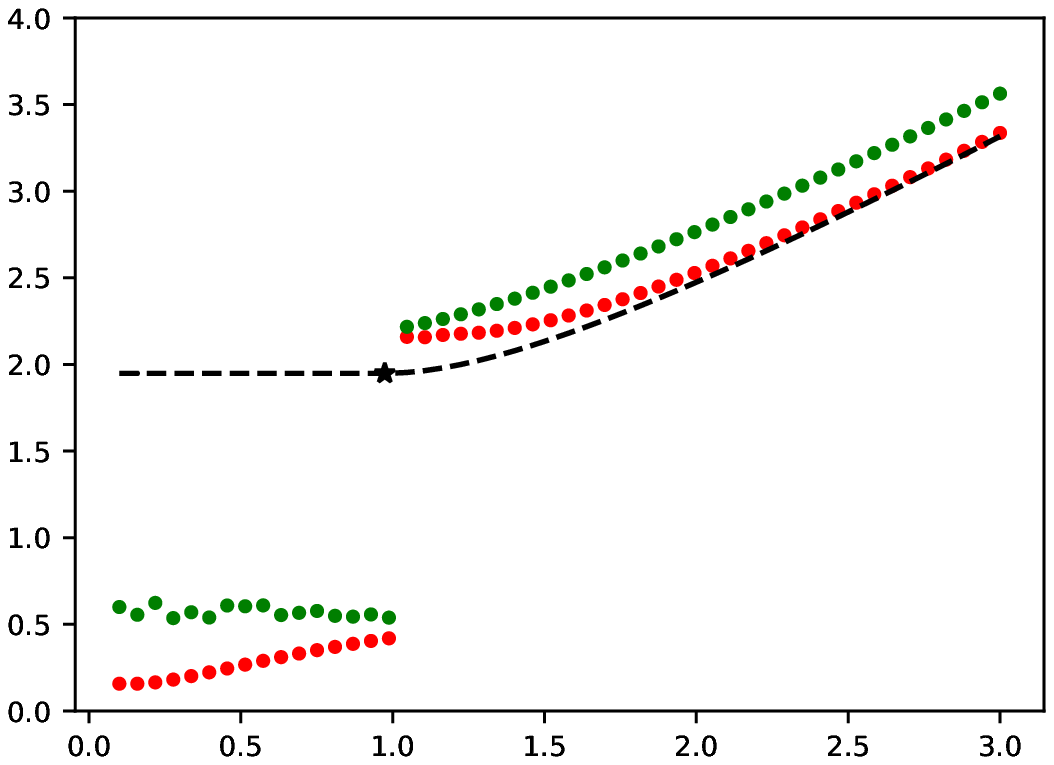}}\hspace{1.3cm}
\subfigure[]{\includegraphics[width=0.4\linewidth]{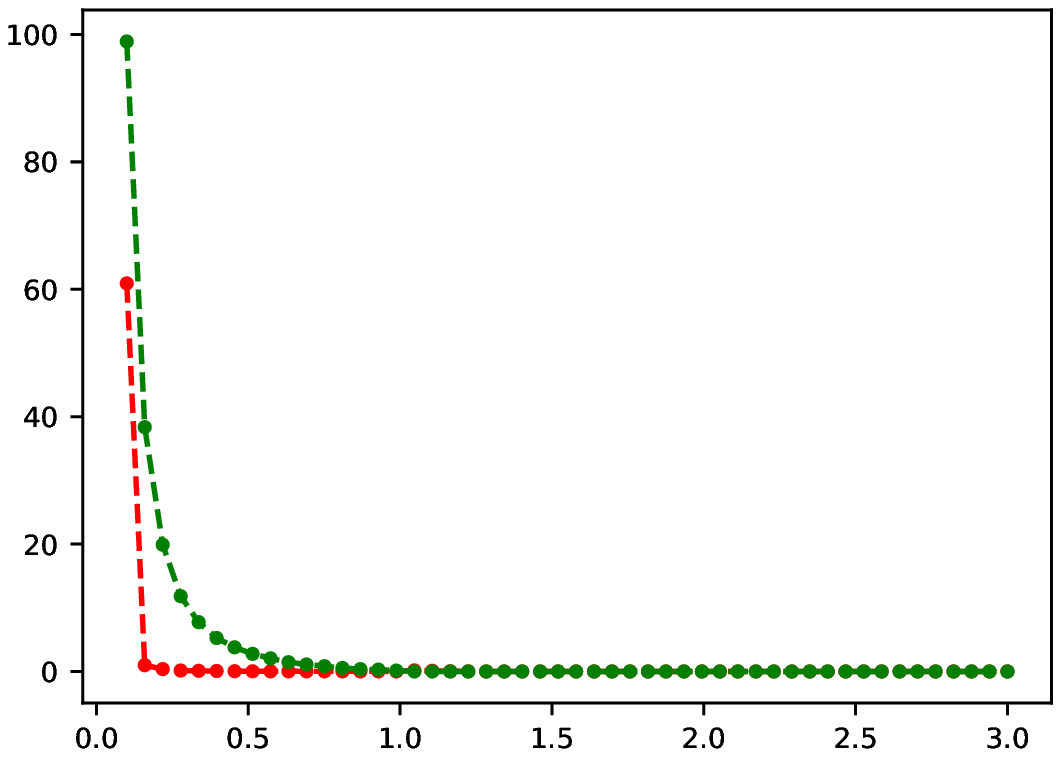}} 
\caption{Piecewise constant variance profile. (a) The dashed black line is the curve $\theta \mapsto \max\left(1 + \sqrt{\frac{N}{M}} , \lambda_{N/M}\left( \theta \right)\right)$, and the red (resp.\ green) dots are the points $(\theta,\bar{\lambda}(\theta))$ when $u_N = \frac{1}{\sqrt{N}} \bone_N$ and $v_M= \frac{1}{\sqrt{M}} \bone_M$    (resp.\   $u_N = e_1^N$ and  $v_M = e_1^M$). When $u_N$ and $v_M$ are unit vectors with constant entries, outliers are generated within the interval $[0.2,0.5]$ for spikes $\theta \in [0.28,1]$. (b) The dashed lines are the curves $\theta \mapsto \det\big(\beta_{2}^\square(\bar{\lambda}(\theta) ) \big)$ depending on the choice of $(u_N,v_M)$. }  \label{fig2:PiecewiseConstVarProfile3_4}
\end{figure}

In Figure \ref{fig:PiecewiseConstVarProfile3_4}, we display the histogram of the singular values of one realization of $H'_{N,M}  = X_{N,M} +  \theta u_{N}  v^{\ast}_{M}$ for  different values of $\theta < 1$ for $u_N = \frac{1}{\sqrt{N}} \bone_N$ and $v_M= \frac{1}{\sqrt{M}} \bone_M$. When  $u_N$ and $v_M$ are unit vectors with constant entries, a spike $\theta \in \{0.39,0.57,0.81,0.93\}$ clearly generates an outlier at $\lambda \in [0.2,0.5]$, while in the case $u_N = e_1^N$ and  $v_M = e_1^M$  there is no outlier for such values of the spike. For each setting, we also display  in Figure \ref{fig2:PiecewiseConstVarProfile3_4}   the curves $\theta \mapsto \bar{\lambda}(\theta)$ and $\theta \mapsto \det\big(\beta_{2}^\square(\bar{\lambda}(\theta) ) \big)$ where $\bar{\lambda}(\theta)$  is found by numerical minimization of $\lambda \mapsto \det\big(\beta_{2}^\square(\lambda ) \big)$ over $\R_+$.  In the case where $u_N$ and $v_M$ have constant entries, the value of $\det\big(\beta_{2}^\square(\bar{\lambda}(\theta) ) \big)$
is close to zero when $\theta  \in [0.28,1]$ which confirms the existence of outliers for  values of the spike within this interval. When $u_N = e_1^N$ and  $v_M = e_1^M$, one has clearly that $\det\big(\beta_{2}^\square(\bar{\lambda}(\theta) ) \big) \neq 0$ when $\theta  \in [0.28,1]$  and thus, for such spikes, there is no outlier. In both settings, when $\theta$ is sufficiently large (e.g.\ $\theta \geq 1$), there exists an outlier $\bar{\lambda}(\theta) > 1.5$ but with a location depending on the values of $u_N$ and $v_M$.

\subsubsection{Bernoulli variance profile} We now consider variance profiles whose entries may be equal to zero and are chosen randomly (and independently) as follows. Each entry of $\Gamma_{N,M}$  takes either the value zero with probability $1-p$ (for some $0 < p < 1$)  or a fixed positive value $\gamma^2 > 0$ with probability $p$. After randomly fixing the entries of  $\Gamma_{N,M}$ in this way, the value of $\gamma^2$ is chosen such that the normalisation condition \eqref{eq:normcond} is satisfied.  In Figure \ref{fig:BernouVarProfile}, we display the histogram of the singular values of one realization of $H'_{N,M}  = X_{N,M} +  \theta u_{N}  v^{\ast}_{M}$ for $\theta = 2$,   with  $u_N = \frac{1}{\sqrt{N}} \bone_N, v_M= \frac{1}{\sqrt{M}} \bone_M$, and for various  Bernoulli variance profiles by letting the  sampling probability $p$ ranging from $\frac{5}{M}$ to $\frac{40}{M}$. For  values of $p$ larger than $\frac{5}{M}$, the value $\bar{\lambda}(2)$ is an accurate approximation of the location of an outlier in the singular values distribution of $H'_{N,M}$. The shape of the  smooth approximation by $\tilde{f}_{n}$ of the singular values distribution of $X_{N,M}$  clearly depends on the value of $p$.

\begin{figure}[!t]
\centering%
\subfigure[$p = \frac{5}{M}, \gamma_{\max}^2 \approx 72$]{\includegraphics[width=0.40\linewidth]{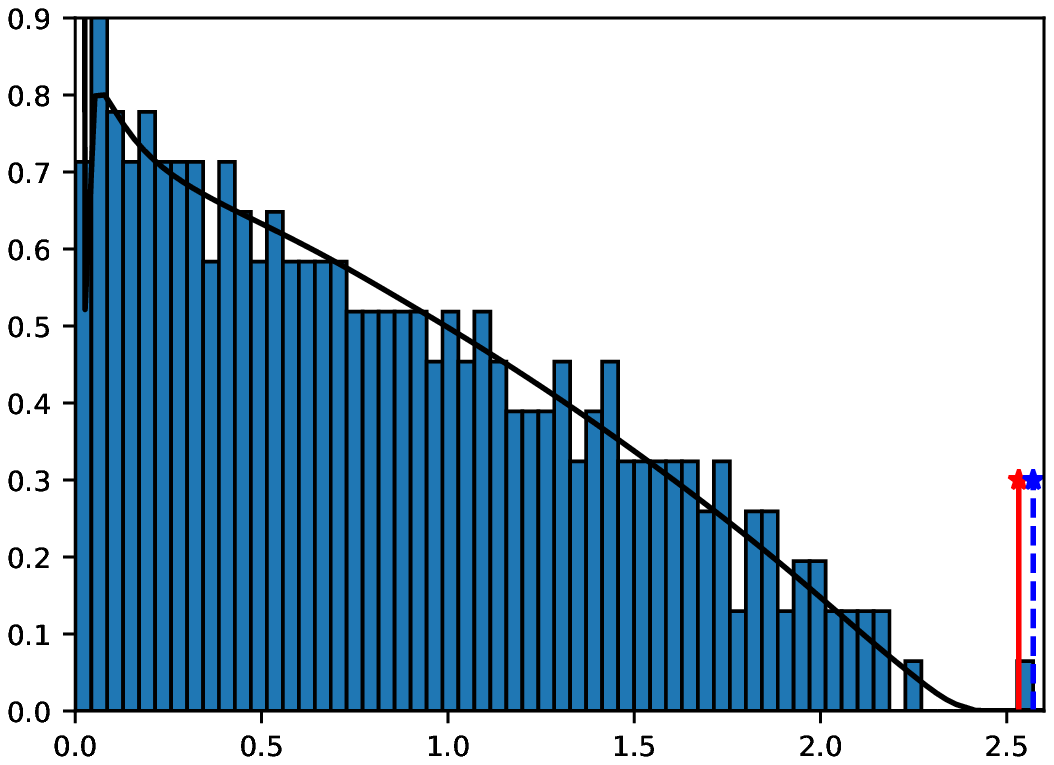}}
\subfigure[$p = \frac{10}{M}, \gamma_{\max}^2 \approx 36$]{\includegraphics[width=0.40\linewidth]{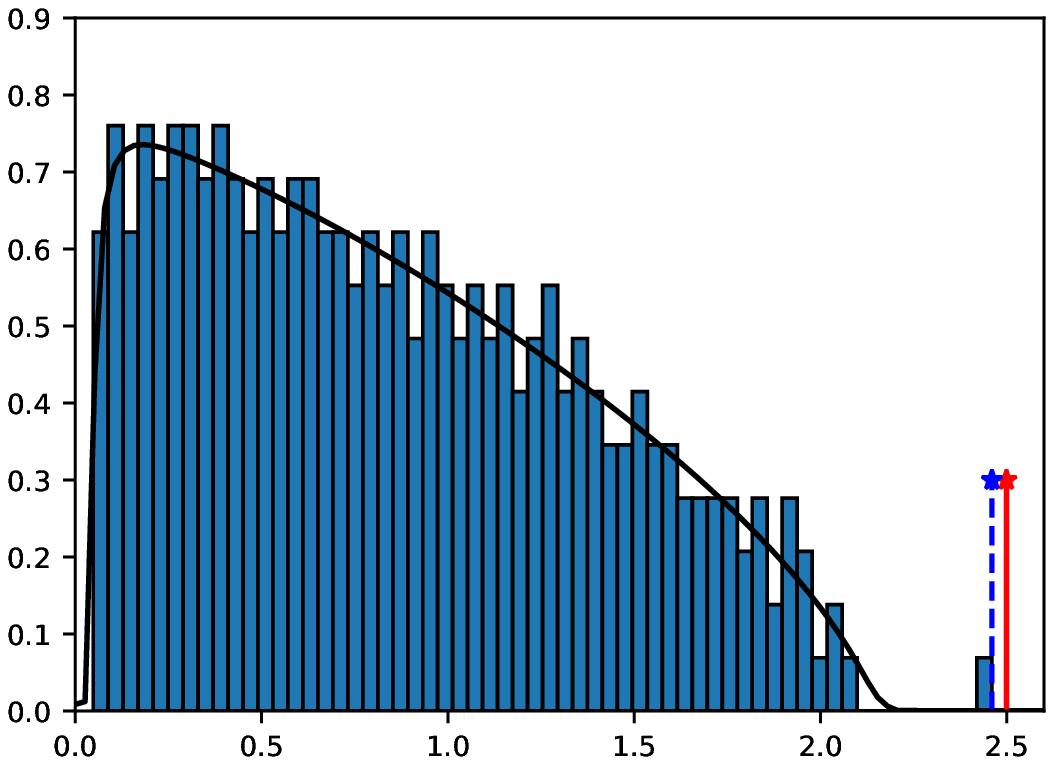}}

\subfigure[$p = \frac{20}{M}, \gamma_{\max}^2 \approx 18$]{\includegraphics[width=0.40\linewidth]{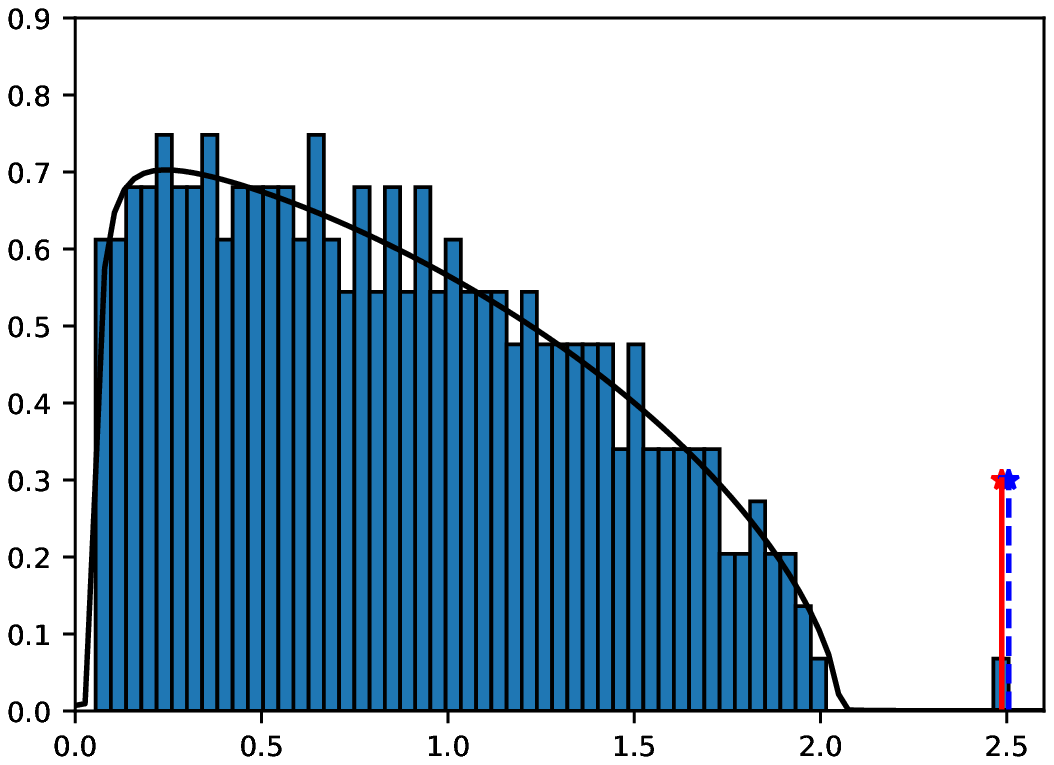}}
\subfigure[$p = \frac{40}{M}, \gamma_{\max}^2 \approx 9$]{\includegraphics[width=0.40\linewidth]{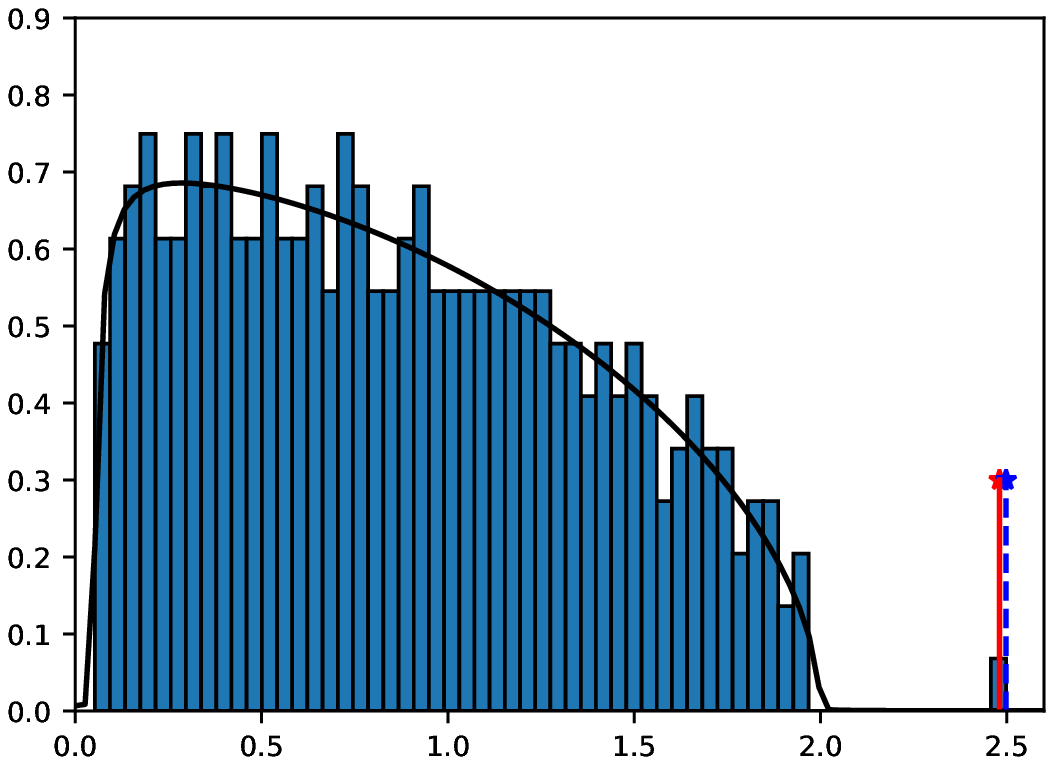}}

\caption{Bernoulli variance profile. Histogram of the singular values of one realization of $H'_{N,M}$ for $\theta = 2$ and with a Bernoulli variance profile for different values of the sampling probability $p$. In each figure, the black curve is the smooth approximation by $\tilde{f}_{n}$ of the singular values distribution of $X_{N,M}$, the red vertical line  denotes the value $\bar{\lambda}(2)$  which is  the approximation of the location of an outlier, while the blue vertical dashed line  denotes the location of the singular value of $H'_{N,M}$ which is the closet to $\bar{\lambda}(2)$.}
  \label{fig:BernouVarProfile}
\end{figure}

\subsubsection{Doubly stochastic variance profile}

 We finally assume that $N=M = 400$, and we consider the setting where $\frac{\Gamma_{N,M}}{M} $ is a doubly stochastic matrix. Under such an assumption, an explicit solution of the equation $\det\big(\beta_{2}^\square(\lambda ) \big) = 0$ exists as stated below.

\begin{Lem}\label{Lem:lambdatheta1}  
Assume that $\frac{\Gamma_{N,M}}{M} $ is a doubly stochastic matrix. Then, the Equation \eqref{eq:detbeta2} admits a solution given by
\begin{equation}
\lambda_{1}\left( \theta \right) = \frac{(1+\theta^{2})}{\theta} = \theta^{-1} + \theta \mbox{ provided that } \theta > 1, \label{eq:lambdatheta1}
\end{equation}
\end{Lem}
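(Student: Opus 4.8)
The plan is to mirror the proof of Lemma~\ref{Lem:lambdatheta2}: first solve the fixed point equation \eqref{FixPtEq2} (here with $\DD(Y_{N,M})=0$) explicitly by exploiting the doubly stochastic hypothesis, then substitute the result into the expression \eqref{eq:defbeta2} of $\beta_2^\square$ and reduce the equation \eqref{eq:detbeta2} to a scalar one whose roots can be computed in closed form.

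The first step is to observe that the doubly stochastic assumption (which forces $N=M$) makes the map $\mcal R_{N+M}$ of \eqref{eq:defRNM} fix scalar matrices: the $k$-th diagonal entry of $\mcal R_{N+M}(g\mbb I_{N+M})$ equals $g$ times a row sum of $\frac{\DD(\Gamma_{N,M})}{M}$, and each such row sum is either a row sum or a column sum of $\frac{\Gamma_{N,M}}{M}$, hence equals $1$. Thus $\mcal R_{N+M}(g\mbb I_{N+M})=g\mbb I_{N+M}$, so the scalar ansatz $G_{\DD(H)}^\square(\lambda\mbb I_{N+M})=g(\lambda)\mbb I_{N+M}$ turns \eqref{FixPtEq2} into the semicircle relation $g(\lambda)=\bigl(\lambda-g(\lambda)\bigr)^{-1}$, equivalently $g(\lambda)^2-\lambda g(\lambda)+1=0$, whose $\mbb C^+\to\mbb C^-$ branch is $g(\lambda)=\tfrac12\bigl(\lambda-\sqrt{\lambda^2-4}\bigr)$, the Stieltjes transform of the semicircle law on $[-2,2]$. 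By the uniqueness part of Theorem~\ref{MainTh}, this scalar ansatz is \emph{the} solution of \eqref{FixPtEq2}.

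The second step is to substitute $G_{\DD(H)}^\square(\lambda\mbb I_{N+M})=g(\lambda)\mbb I_{N+M}$ into \eqref{eq:defbeta2}; a short computation using $u_N^*u_N=v_M^*v_M=1$ shows that the $2\times 2$ factor multiplying $\mrm{diag}(\theta,-\theta)$ in \eqref{eq:defbeta2} equals $g(\lambda)\mbb I_2$, hence $\beta_2^\square(\lambda)=\mrm{diag}\bigl(1-\theta g(\lambda),\,1+\theta g(\lambda)\bigr)$ and $\det\bigl(\beta_2^\square(\lambda)\bigr)=1-\theta^2 g(\lambda)^2$. For real $\lambda>2$ the quantity $g(\lambda)$ is real, positive, and strictly decreasing from $1$ to $0$, so $\det\bigl(\beta_2^\square(\lambda)\bigr)=0$ forces $\theta g(\lambda)=1$, which has a (unique) solution iff $\theta>1$; inserting $g(\lambda)=1/\theta$ into $g^2-\lambda g+1=0$ yields $\lambda=\theta+\theta^{-1}=\frac{1+\theta^2}{\theta}$, which is the claimed $\lambda_1(\theta)$, and $\theta>1$ also guarantees $\lambda_1(\theta)>2=1+\sqrt{N/M}$, so this zero lies outside the bulk. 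I do not expect a genuine obstacle; the only points needing care are appealing to Theorem~\ref{MainTh} to certify that the scalar ansatz really is the unique solution of \eqref{FixPtEq2}, and selecting the correct branch/sign of $g$ so that the vanishing of $\det\beta_2^\square$ picks out the outlier detaching from $[-2,2]$ rather than a spurious root.
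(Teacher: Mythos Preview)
Your proof is correct and follows essentially the same route as the paper's: show that the doubly stochastic hypothesis forces the solution of \eqref{FixPtEq2} to be the scalar $g_N^\square(\lambda)\mbb I_{N+M}$ with $g_N^\square$ the semicircle Stieltjes transform, then reduce $\det\bigl(\beta_2^\square(\lambda)\bigr)$ to $1-\theta^2\bigl(g_N^\square(\lambda)\bigr)^2$ and solve. You supply a little more explicit detail (the diagonal form of $\beta_2^\square$, the monotonicity argument pinning down $\theta>1$), but the substance and the key computations are the same as the paper's.
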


\begin{proof} 
Under the assumption that the variance profile is doubly stochastic,  it can be easily shown that the solution to \eqref{FixPtEq2} is a scalar matrix $$G_{\DD(H)}^\square( \lambda \mbb I_{N+M}) = g_N^\square(\lambda) \mbb I_{N+M},$$ where $g^\square_N$ is complex-valued function satisfying
$
g_N^\square(\lambda) = (\lambda - g_N^\square(\lambda))^{-1}
$
for $\lambda \in \mbb C^+$. Therefore, one has that  $g_N^\square(\lambda) = \frac{\lambda - \sqrt{\lambda^2 - 4}}{2}$ which is the Stieltjes transform of the semicircular law. Hence, using exactly the same calculations than those made for a constant variance profile to derive Lemma \ref{Lem:lambdatheta2}, one obtains that, for any unit vectors $u_N$ and $v_N$, the equation
$$\det\big(\beta_{2}^\square(\lambda ) \big) = 1 - \theta^2 \big( g_N^\square(\lambda) \big)^2= 0$$ admits a solution given by \eqref{eq:lambdatheta1} provided that $\theta > 1$.
\end{proof}

\begin{figure}[!t]
\centering%
\subfigure[$K = 1, \gamma_{\max}^2 = 400$]{\includegraphics[width=0.40\linewidth]{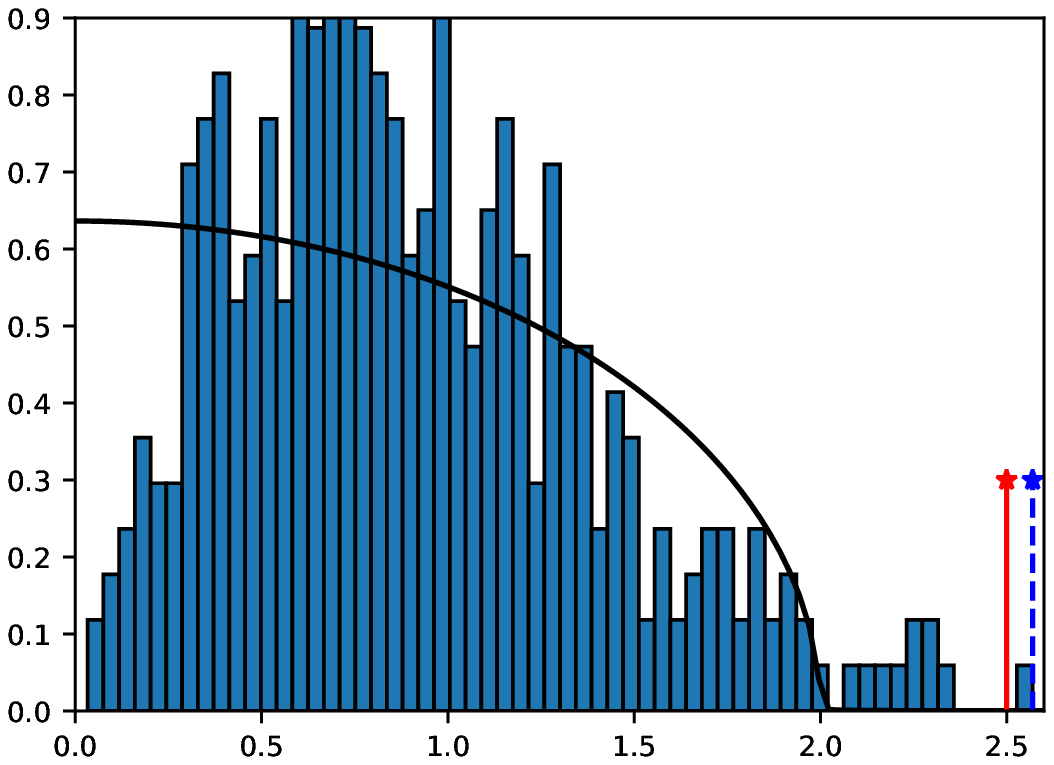}}
\subfigure[$K = 2, \gamma_{\max}^2 = 400$]{\includegraphics[width=0.40\linewidth]{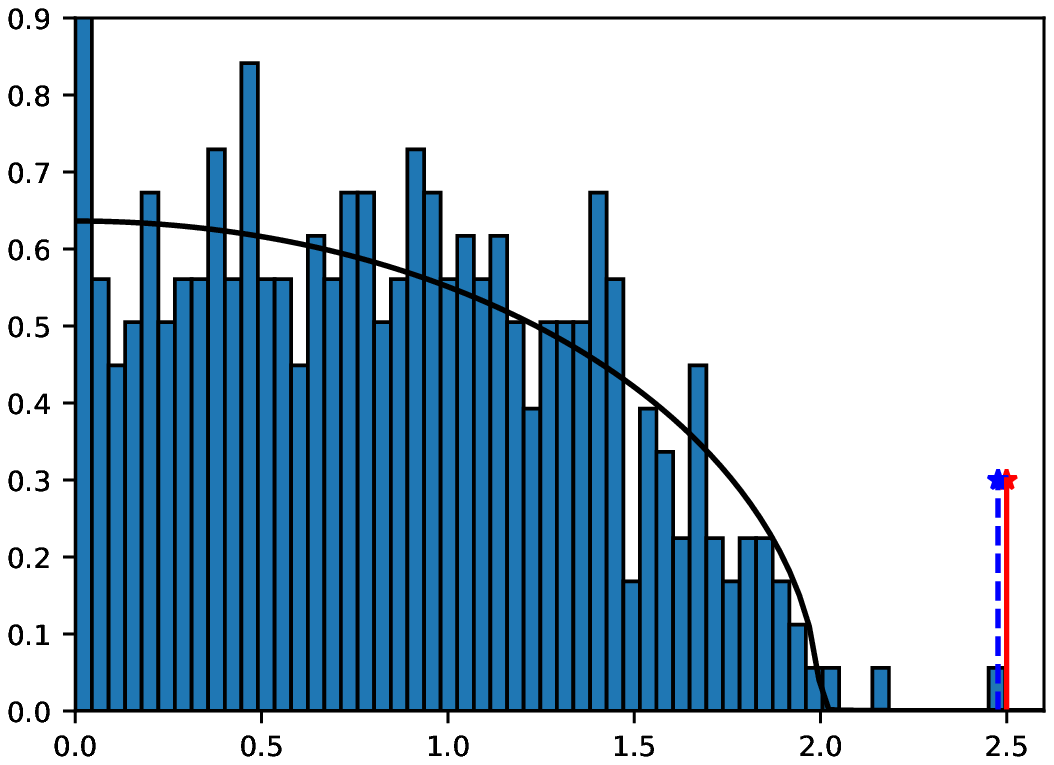}}

\subfigure[$K = 4, \gamma_{\max}^2 = 200$]{\includegraphics[width=0.40\linewidth]{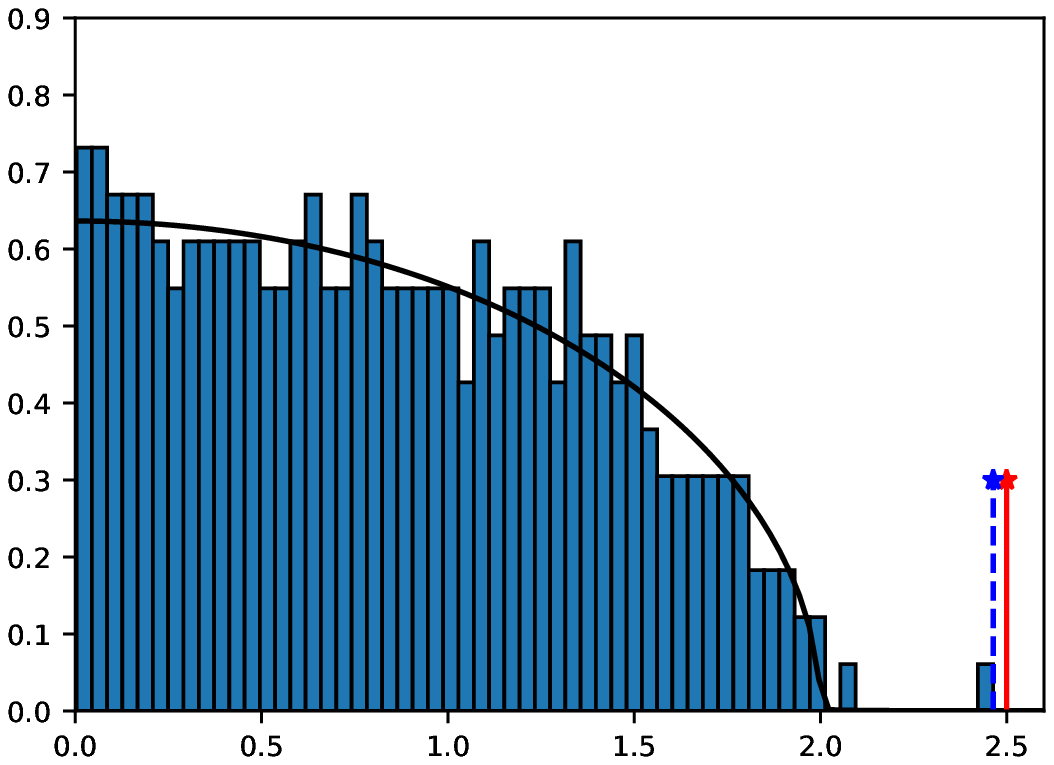}}
\subfigure[$K = 8, \gamma_{\max}^2 = 100$]{\includegraphics[width=0.40\linewidth]{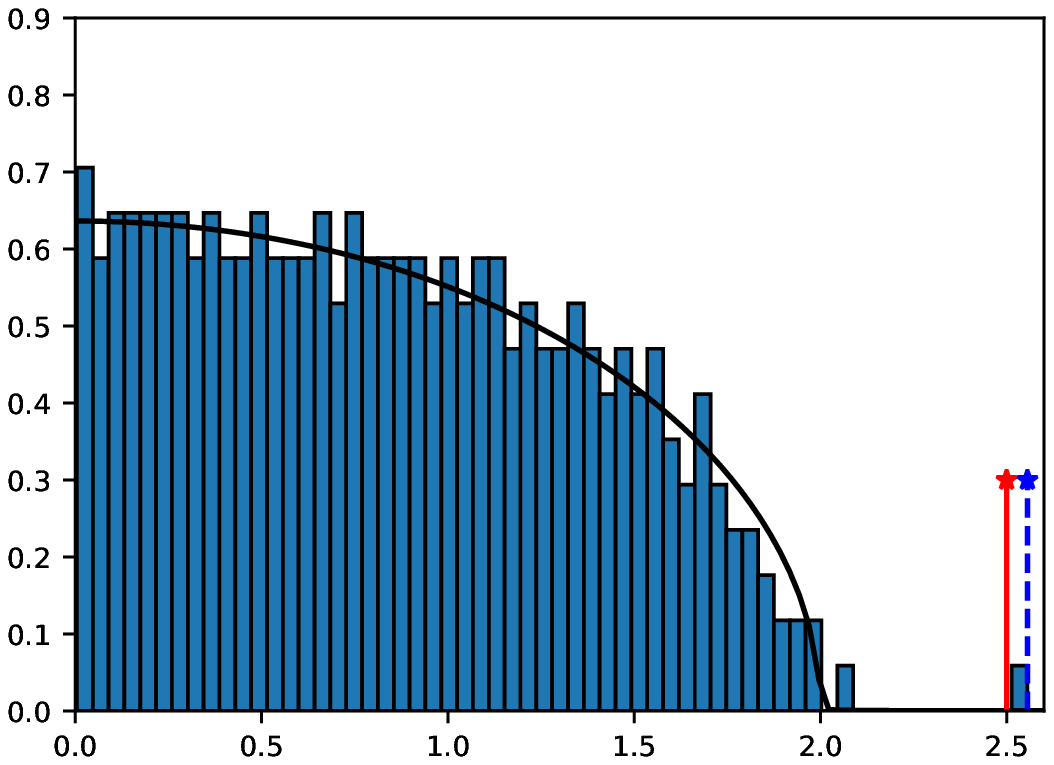}}

\caption{Doubly stochastic variance profile. Histogram of the singular values of one realization of $H'_{N,M}$ for $\theta = 2$ and with a variance profile given by \eqref{eq:varprofsto} for different values of $K$. In each figure, the black curve is the smooth approximation by $\tilde{f}_{n}$ of the singular values distribution of $X_{N,M}$, the red vertical line  denotes the value $\lambda_{1}\left(2\right) = 2.5$  which is  the approximation of the location of an outlier, while the blue vertical dashed line  denotes the location of the singular value of $H'_{N,M}$ which is the closet to $\lambda_{1}\left(2\right)$.}
  \label{fig:DoubleStoVarProfile}
\end{figure}

Interestingly, for a doubly stochastic variance profile, we obtain exactly the expression \eqref{eq:lambdatheta} of the asymptotic location of an outlier in the standard Gaussian spike population model for the ratio $c = 1$. In Figure \ref{fig:DoubleStoVarProfile}, we display the histogram of the singular values of one realization of $H'_{N,M}  = X_{N,M} +  \theta u_{N}  v^{\ast}_{M}$ with $\theta = 2$ and $u_N$ and $v_M$  chosen to be unit vectors with constant entries. The normalized variance profile $\frac{\Gamma_{N,M}}{M}$ of $X_{N,M}$ is chosen as follows
\begin{equation}
\frac{\Gamma_{N,M}}{M} =  \frac{1}{K}\sum_{k=1}^{M} P_{k}, \label{eq:varprofsto}
\end{equation}
where $P_{1},\ldots,P_{K}$ are permutation matrices (obtained by random permutations of the columns of the identity matrix $\mbb I_{N}$). For small values of $K$,  such variance profiles have many entries equal to zero.   In Figure \ref{fig:DoubleStoVarProfile},  we display the histogram of the singular values of one realization of $H'_{N,M}  = X_{N,M} +  \theta u_{N}  v^{\ast}_{M}$ with $\theta = 2$ for different values of $K = 1,2,4,8$. For $K \geq 4$, there is clearly an outlier located  approximately at $\lambda_{1}\left( \theta \right) = 2.5$. The quality of the smooth approximation by $\tilde{f}_{n}$ of the singular values distribution of $X_{N,M}$ also clearly depends on the value of $\gamma_{\max}^2$ (maximum value of the entries of the variance profile $\Gamma_{N,M}$) which is consistent with our theoretical results  in Theorem \ref{MainTh} on the control of the deviation between the deterministic equivalent $g_{H_N}^\square$ and the Stieltjes transform $g_{H_N}$.

\subsection{General deformed models}

Let us now consider  the general setting of the rectangular information plus noise model \eqref{Model4} with $Y_{N,M} \neq 0$.  

\subsubsection{Simulated model}

Taking again $N=360$ and $M=400$ we generate one realization from the model  $H'_{N,M} = X_{N,M} + Y_{N,M} + Z_{N,M}$ as follows. The matrix $X_{N,M}$ is a Gaussian matrix with piecewise constant variance profile given by \eqref{eq:piecewise}. Then, we generate a $N \times M$ matrix $W_{N,M}$ with i.i.d.\ real entries sampled from a Gaussian distribution with zero mean and variance $\tau^2 = \frac{1}{4M}$, that we write using singular value decomposition (SVD) as $W_{N,M} = U \Sigma V^\ast$. Denoting $(U_j)_{1 \leq j \leq  3}$ (resp.\ $(V_j)_{1 \leq j \leq  3}$) the left (resp.\ right) singular vectors associated to the three largest singular values $(\sigma_j)_{1 \leq j \leq 3}$ of $W_{N,M}$, we  finally define
$$
 Y_{N,M} =  W_{N,M} - \sum_{j=1}^{3} \sigma_j U_j V_j^{\ast} \quad \mbox{and} \quad Z_{N,M} = \sum_{j=1}^{3} \theta_j U_j V_j^{\ast} \; \mbox{ with $\theta_1=4$, $\theta_2 = 3$, $\theta_3 = 2$}.
$$
The histograms of the s.v.d.\ of $X_{N,M}$, $Y_{N,M}$ and $H'_{N,M}$ are displayed in Figure \ref{fig:simulatedgenmodel}. It can be seen that the $k=3$ spikes of $Z_{N,M}$ clearly generate 3 outliers. 

In Figure \ref{fig:simulatedgenmodel}(c), we also display the smooth approximation of the s.v.d.\ of the random matrix $H_{N,M}=  X_{N,M} + Y_{N,M}$ using the  inverse Stieltjes transform \eqref{eq:Stieltjesinv} of $g_{n}^\square(\lambda) = \frac{1}{N+M} \Tr \,G_{n}^\square( \lambda)$.  Since $Y_{N,M}$ is not a diagonal matrix, the deterministic equivalent $G_{n}^\square( \lambda)$ of the operator-valued Stieltjes transform of $H_{N,M}$  is obtained by iterating the following {\it matrix equation}  
\begin{equation}
 G_{n+1}^\square( \lambda) =\Delta\bigg [ \Big( \lambda \mbb I_{N+M}   -\mcal R_{N+M}\big(   G_{n}^\square( \lambda)\big) - Y_{N,M}   \Big)^{ -1}\bigg]. \label{eq:matrixfixedpoint}
\end{equation}

In Figure \ref{fig:simulatedgenmodel}(d), we also report the curves of the mapping $\lambda \mapsto \log(\beta_k^\square(\lambda))$, where $\beta_k^\square$ is the deterministic equivalent defined by \eqref{eq:defbetasquare},  and of the mapping $\lambda \mapsto \log(\tilde{\beta}_k^\square(\lambda))$ defined by \eqref{eq:defbetasquaregen}. As $Y_{N,M}$ is non diagonal, the second deterministic equivalent $\tilde{\beta}_k^\square$ gives  the right prediction of the locations of the outliers which is not the case for the first one $\beta_k^\square$.  This is confirmed by the numerical simulations reported in Figure \ref{fig:simulatedgenmodel}(c), where it can be seen that the zeros of the mapping  $\lambda \mapsto \log(\tilde{\beta}_k^\square(\lambda))$  correspond to the locations of the true outliers in the s.v.d.\ of $H'_{N,M}$.

\begin{figure}[!t]
\centering%

\subfigure[s.v.d.\ of $X_{N,M}$]{\includegraphics[width=0.45\linewidth]{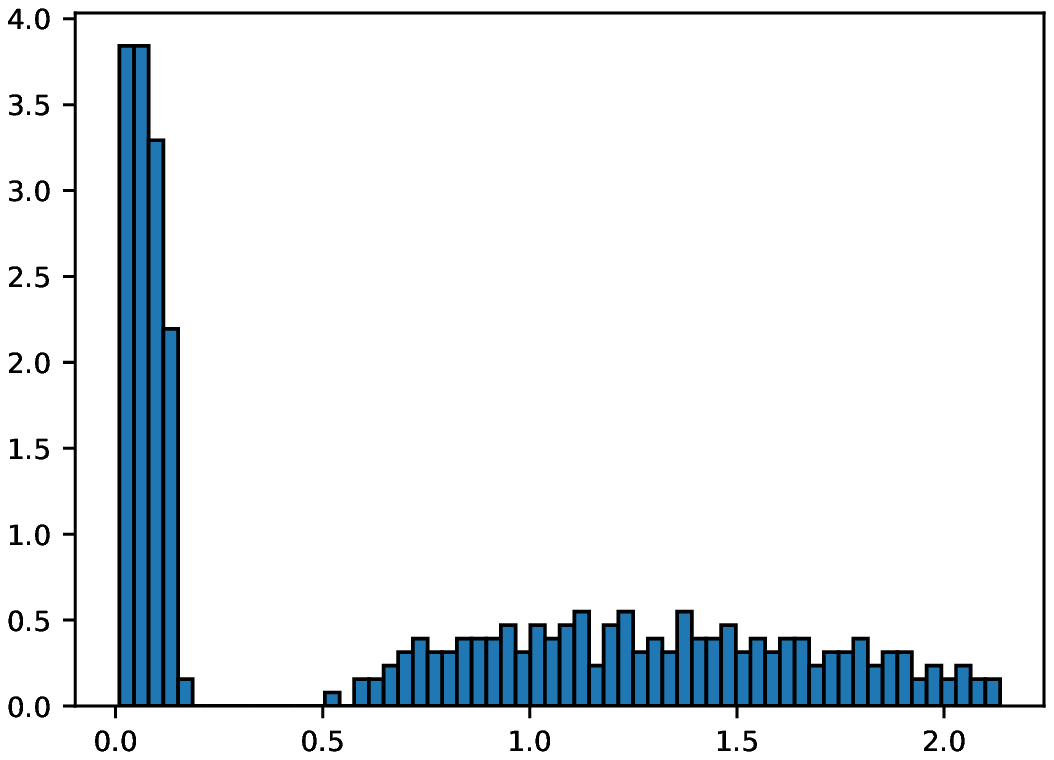}}
\subfigure[s.v.d.\ of $Y_{N,M}$]{\includegraphics[width=0.45\linewidth]{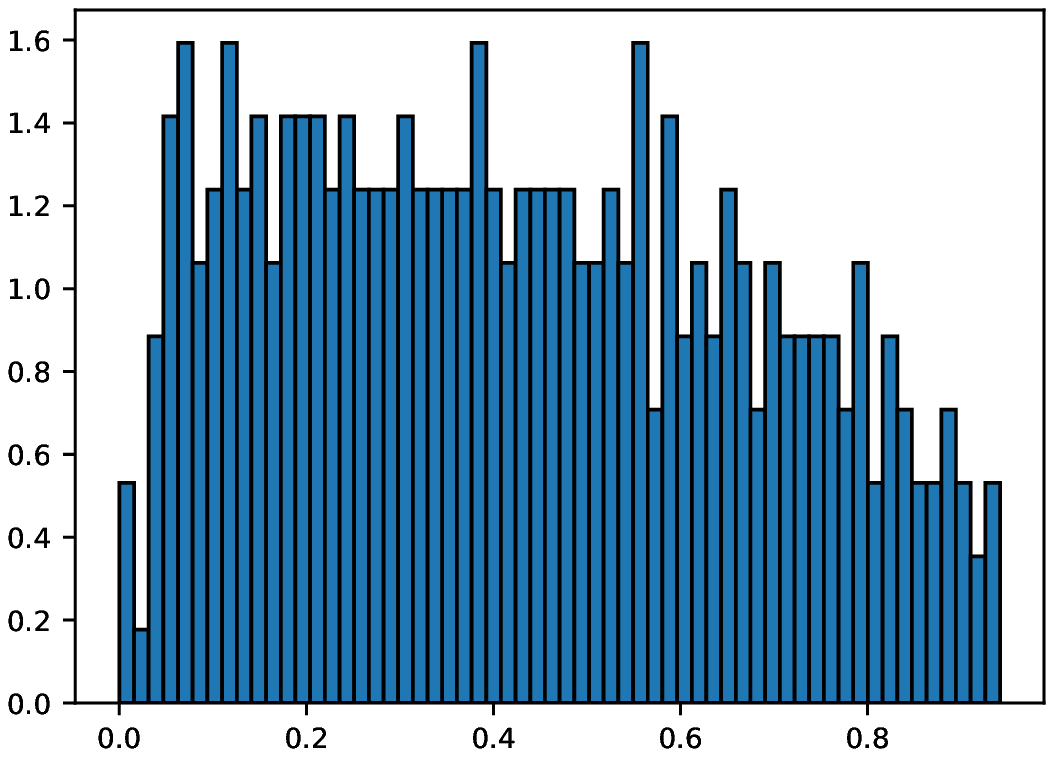}}

\subfigure[s.v.d.\ of $H'_{N,M}$]{\includegraphics[width=0.45\linewidth]{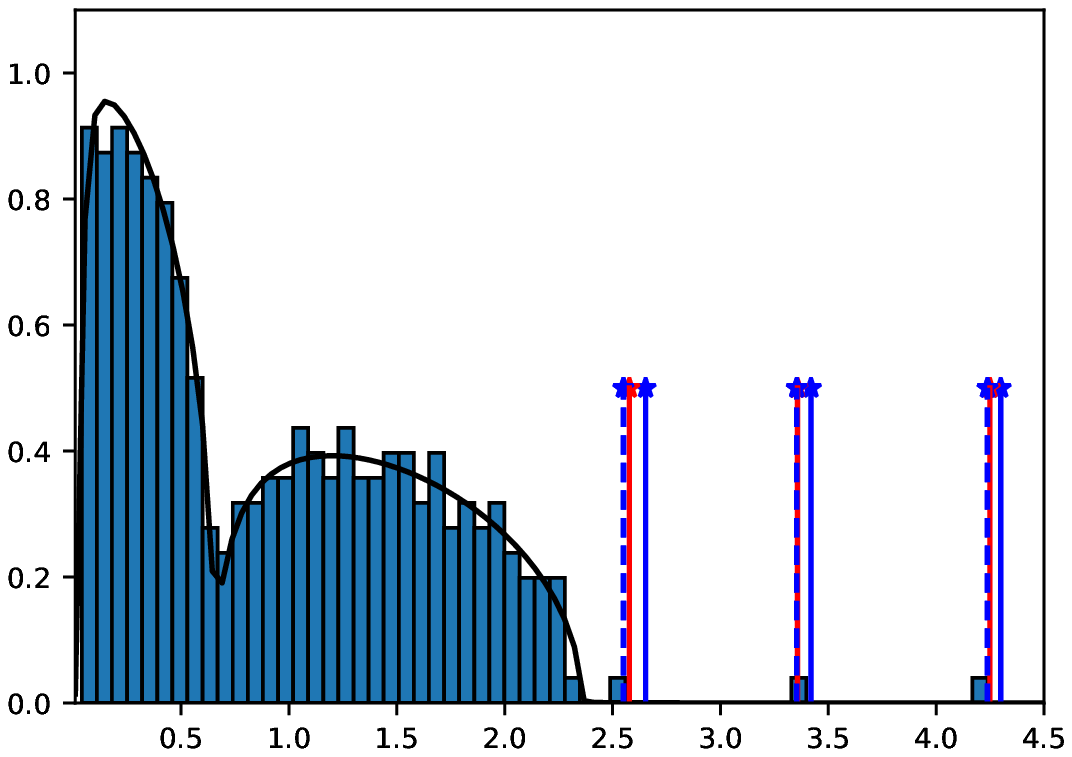}}
\subfigure[]{\includegraphics[width=0.45\linewidth]{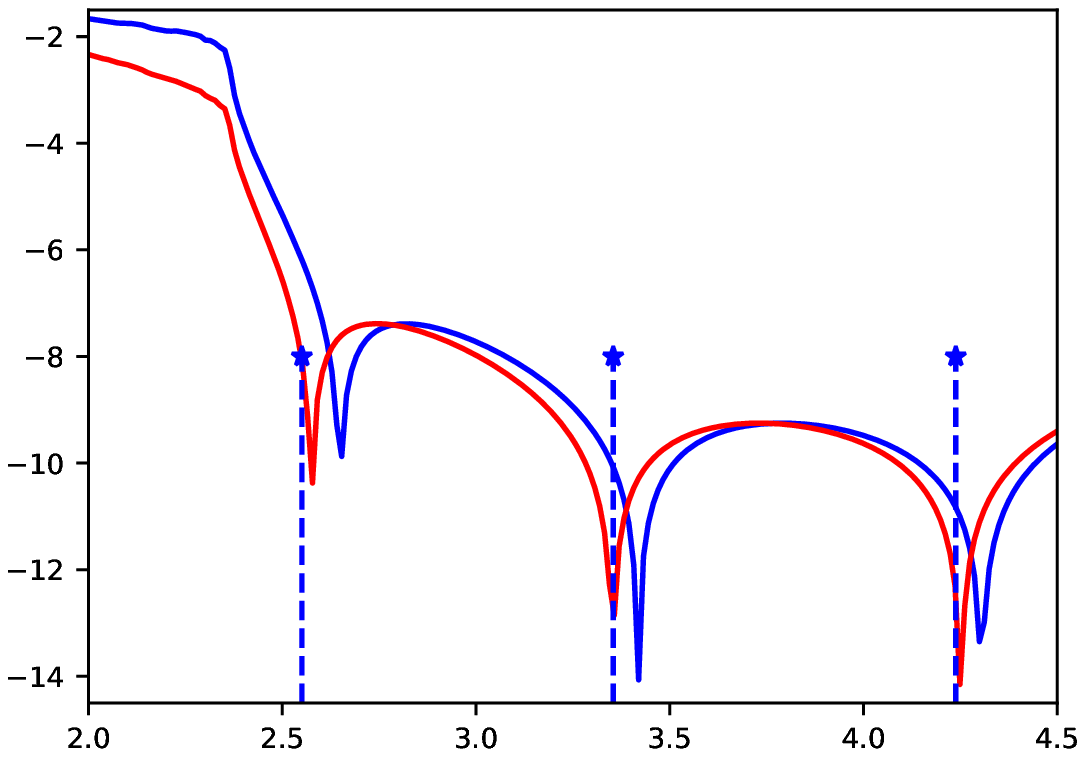}}

\caption{Deformed model with a piecewise constant variance profile. Histogram of the s.v.d.\ of (a)  $X_{N,M}$ and (b) $Y_{N,M}$. (c) Histogram of the singular values of $H'_{N,M} = X_{N,M} + Y_{N,M} + Z_{N,M}$ with $k = 3$ spikes. The black curve is the smooth approximation of the s.v.d.\ of $H_{N,M} = X_{N,M} + Y_{N,M}$. The red (resp.\ blue) vertical lines  denote the values $(\tilde{\lambda}_j)_{1 \leq j \leq 3}$ (resp.\  $(\lambda_j)_{1 \leq j \leq 3}$)  which are  the approximation of the locations of outliers given by the zeros of $\tilde{\beta}_{2k}^\square$ (resp.\ $\beta_{2k}^\square)$, (d) Graph of  $\lambda \mapsto \log\big(\det\big(\beta_{2k}^\square(\lambda) \big)\big)$ (blue curves)  and $\lambda \mapsto \log\big(\det\big(\tilde{\beta}_{2k}^\square(\lambda) \big)\big)$ (red curves). The blue vertical dashed lines  denote the locations of the true outliers in the s.v.d.\ of $H'_{N,M}$.
} \label{fig:simulatedgenmodel}
\end{figure}

\subsubsection{A model of noisy images with heteroscedasticity}

To conclude this section on numerical experiments, we study an example inspired by the problem of low-rank matrix denoising in image processing in the presence of Poisson noise. In this setting, one observes a $N \times M$ data matrix such that each $(i,j)$-th entry is independently sampled from a Poisson distribution with parameter $\kappa_{i,j} > 0$. Under such an assumption, the expectation and variance of each entry are thus equal to $\kappa_{i,j}$.

\begin{figure}[!t]
\centering%
\subfigure[]{\includegraphics[width=0.48\linewidth]{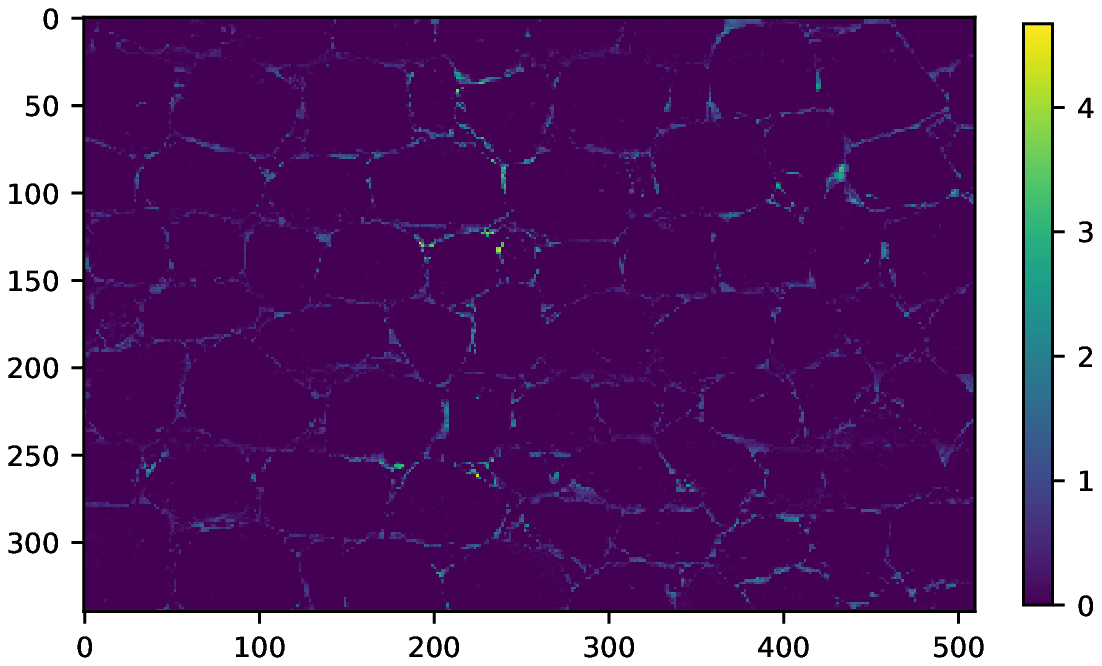}}
\subfigure[]{\includegraphics[width=0.48\linewidth]{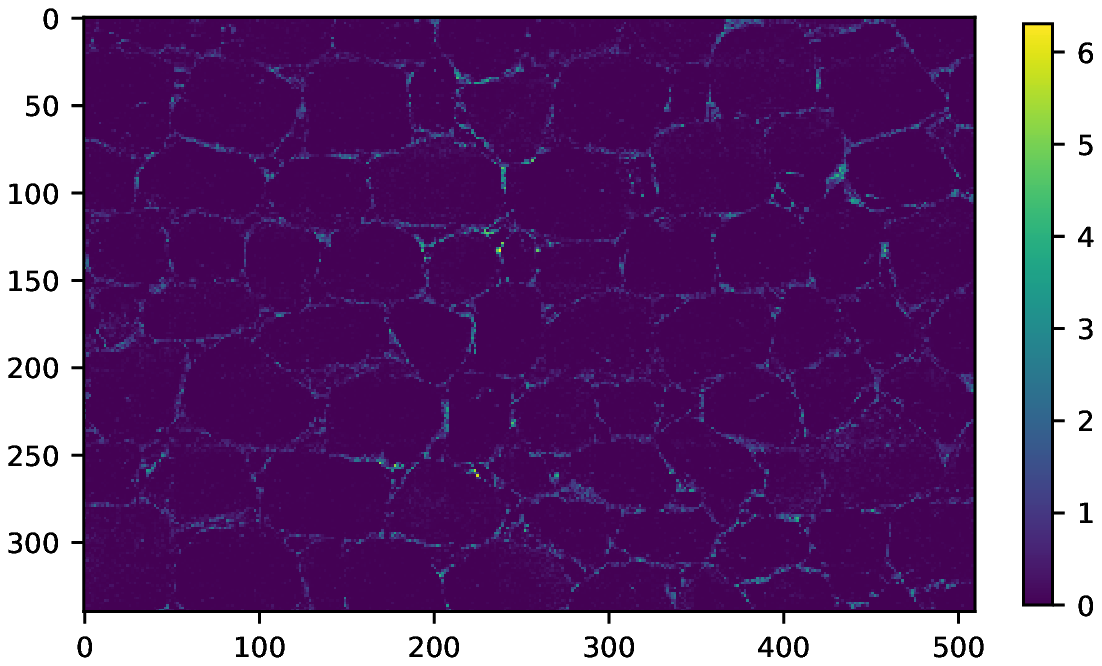}}

\subfigure[]{\includegraphics[width=0.48\linewidth]{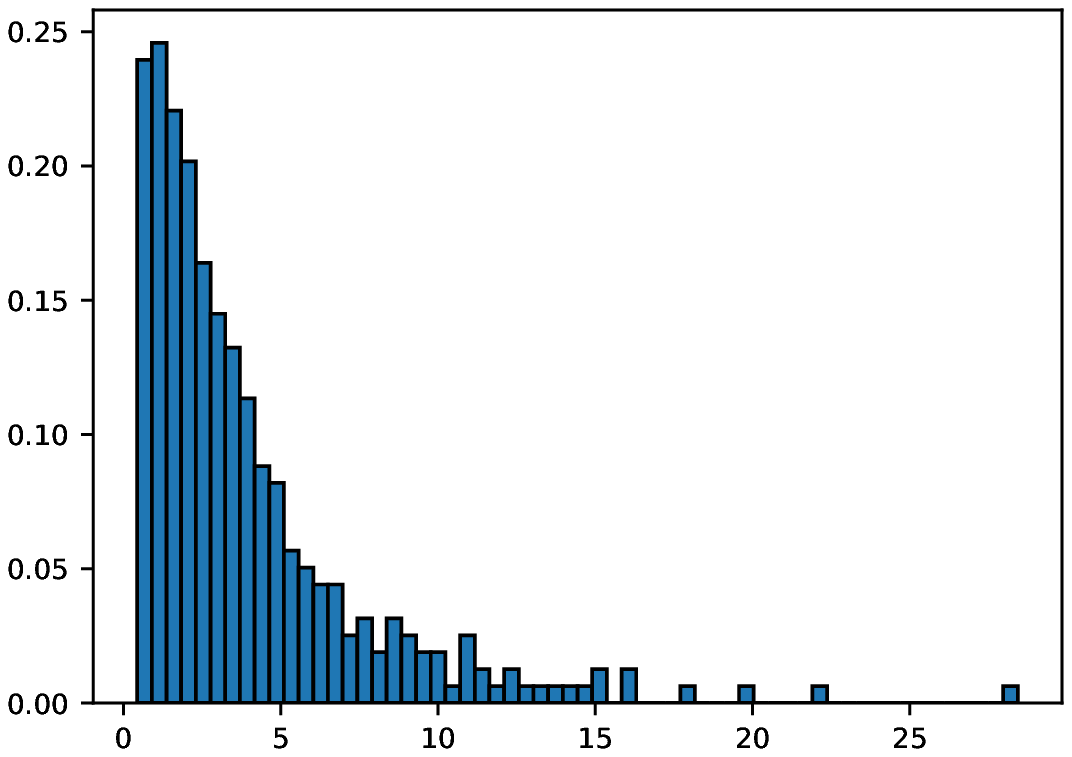}}

\caption{Gaussian setting with equal mean and variance.  (a)  Image of the values (in grayscale) of the entries of the $N \times M$ normalized variance profile $\frac{\Gamma_{N,M}}{M}$ with $N=340$ and $M=510$ (b) Image of the modulus of the entries of the matrix $H_{N,M}'$. (c)   Histogram of the singular values of the matrix $\Gamma_{N,M}$.}
  \label{fig:imgPoisson}
\end{figure}

Therefore, the following  rectangular information plus noise model 
\begin{equation}
H_{N,M}' = X_{N,M} + \frac{\Gamma_{N,M}}{M}, \label{eq:PoissonModel}
\end{equation}
where  $X_{N,M}$ is a rectangular Gaussian matrix with a variance profile $\Gamma_{N,M} = \big(\gamma_{N,M}^2(i,j)\big)_{i,j}$, may be viewed as a prototype for studying low-rank matrix denoising in the presence of Poisson noise (with $\kappa_{i,j} = \gamma_{N,M}^2(i,j)/M$), as considered e.g.\ in  \cite{Zhang18}. We shall refer to model \eqref{eq:PoissonModel} as the {\it Gaussian setting with equal mean and variance}.  In Figure \ref{fig:imgPoisson}, we display the image made of the entries of the normalized variance profile $\frac{\Gamma_{N,M}}{M}$ that is considered in these numerical experiments, as well as the histogram of the singular values of this matrix which is scaled so that it satisfies the normalization condition:
\begin{equation}
\frac{1}{N} \sum_{i=1}^{N} \sum_{j=1}^{M} \frac{\gamma_{N,M}^2(i,j)}{M} = 30. \label{eq:normcondPoisson}
\end{equation}
Now, let us consider the SVD  of the normalized variance profile
$\frac{\Gamma_{N,M}}{M} = U \Theta V^*$. For any $1 \leq k \leq \min(N,M)$, model \eqref{eq:PoissonModel} can  be written as
\begin{equation}
H_{N,M}' =  X_{N,M} + Y_{N,M}^{(k)} + Z_{N,M}^{(k)}, \quad \mbox{with}  \quad Z_{N,M}^{(k)} = U_{N,k} \Theta_k V_{M,k}^*, \label{eq:PoissonModelDecomp}
\end{equation}
where $ \Theta_k$ is $k \times k$ diagonal matrix whose elements are the $k$ largest singular values of $\frac{\Gamma_{N,M}}{M}$ and $ U_{N,k}$ (resp.\ $V_{M,k}$) is the matrix made of the associated left (resp.\ right) singular vectors, and
$$
Y_{N,M}^{(k)} =  \frac{\Gamma_{N,M}}{M} - U_{N,k} \Theta_k V_{M,k}^*.
$$
is the matrix obtained by keeping only the remaining smallest singular values in the SVD of the normalized variance profile.

In Figure \ref{fig:histoPoisson}, we display the histogram of the s.v.d.\ of $H'_{N,M}$ sampled from model \eqref{eq:PoissonModel}. We also report the smooth approximation of the singular value distributions of the random matrices $X_{N,M}$ and $H_{N,M}^{(k)} :=  X_{N,M} + Y_{N,M}^{(k)} $ for $k=1,2,3$ using the  inverse Stieltjes transform \eqref{eq:Stieltjesinv} of $g_{n}^\square(\lambda) = \frac{1}{N+M} \Tr \,G_{n}^\square( \lambda)$. As described previously, for the matrix $X_{N,M}$, the deterministic equivalent $G_{n}^\square( \lambda)$ of its operator-valued Stieltjes transform is obtained by iterating the vector Dyson equation \eqref{eq:itervector}. For the matrix $H_{N,M}^{(k)}$, such a deterministic equivalent  is obtained by iterating the matrix equation \eqref{eq:matrixfixedpoint}.

In Figure \ref{fig:histoPoisson}, we also report, for $k=1,2,3$, the values of the mapping  $\lambda \mapsto \det\big(\beta_{2k}^\square(\lambda)\big)$ and $\lambda \mapsto \det\big(\tilde{\beta}^\square_{2k}(\lambda)\big)$ for $15 \leq \lambda \leq 35$. Again, this illustrates the benefits of using $\tilde{\beta}_{2k}$ instead of $\beta_{2k}$ for outliers detection. Moreover, for all $1 \leq k \leq 3$ one predicts accurately $k$ outliers using the mapping $\lambda \mapsto \det\big(\tilde{\beta}^\square_{2k}(\lambda)\big)$.

\begin{figure}[!t]
\centering%
\subfigure[$k = 1$]{\includegraphics[width=0.32\linewidth]{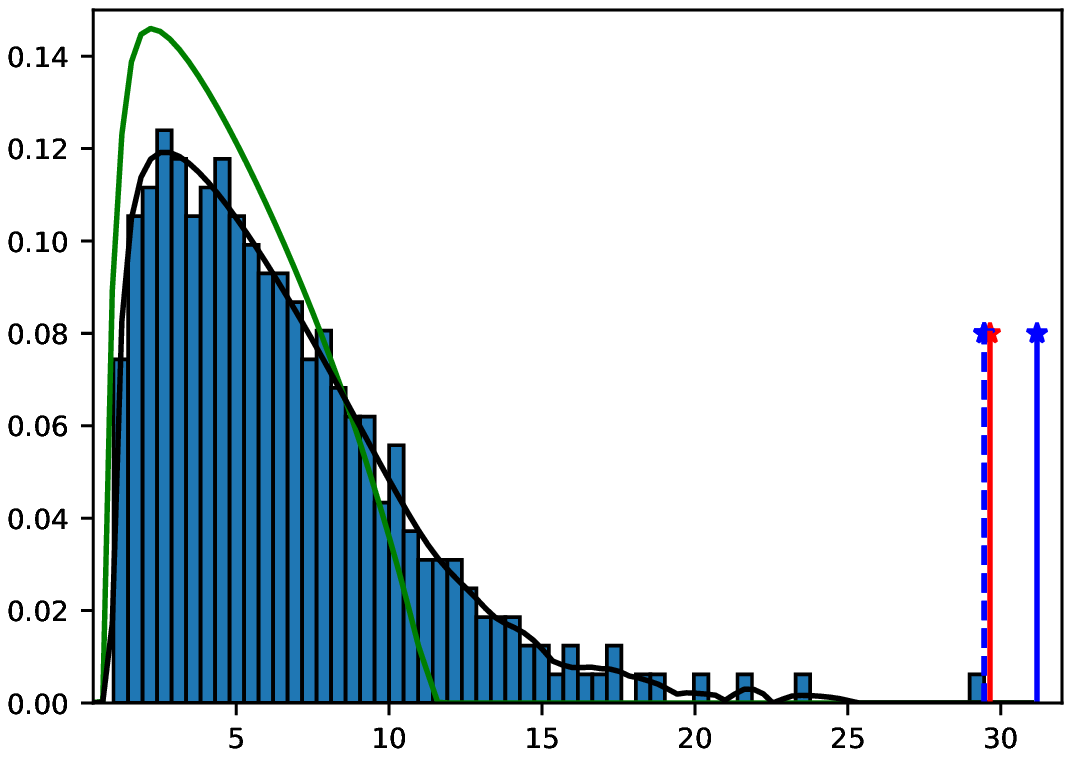}}
\subfigure[$k = 2$]{\includegraphics[width=0.32\linewidth]{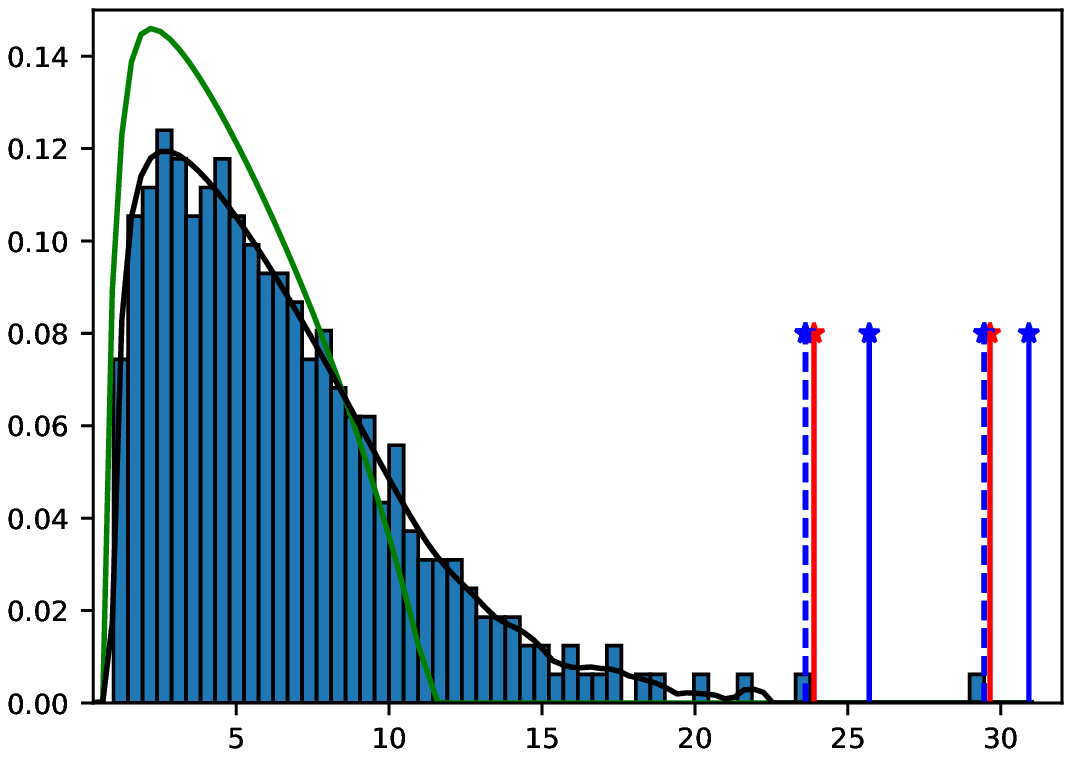}}
\subfigure[$k = 3$]{\includegraphics[width=0.32\linewidth]{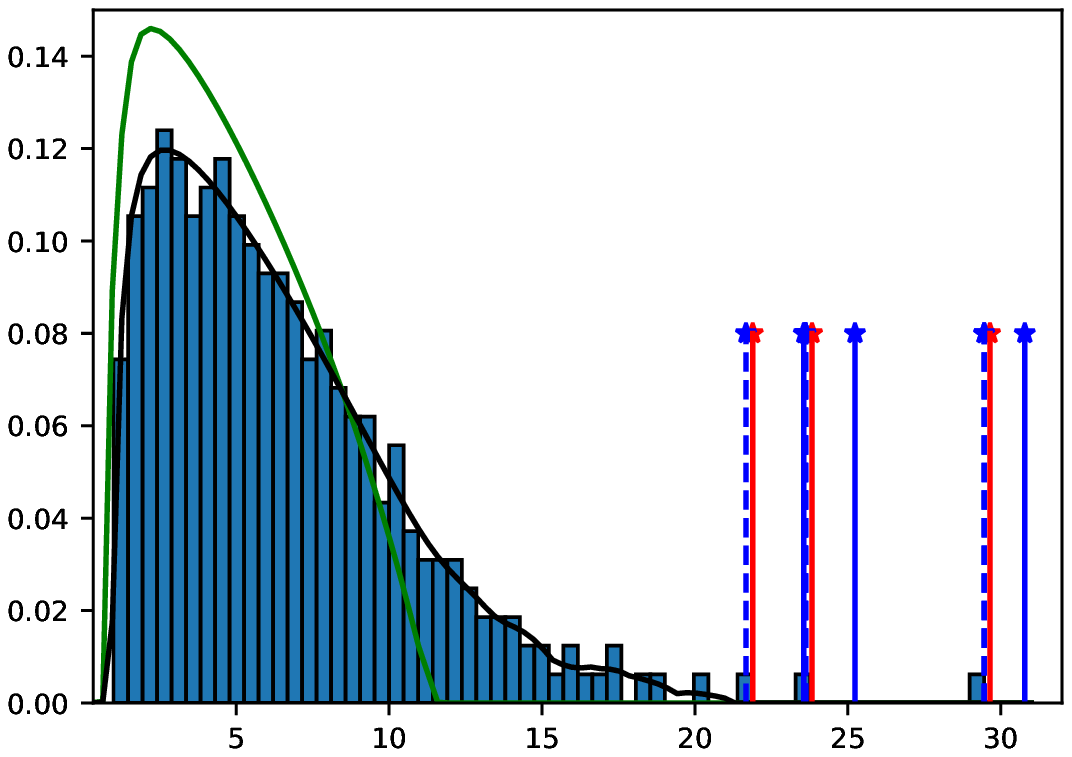}}

\subfigure[$k = 1$]{\includegraphics[width=0.32\linewidth]{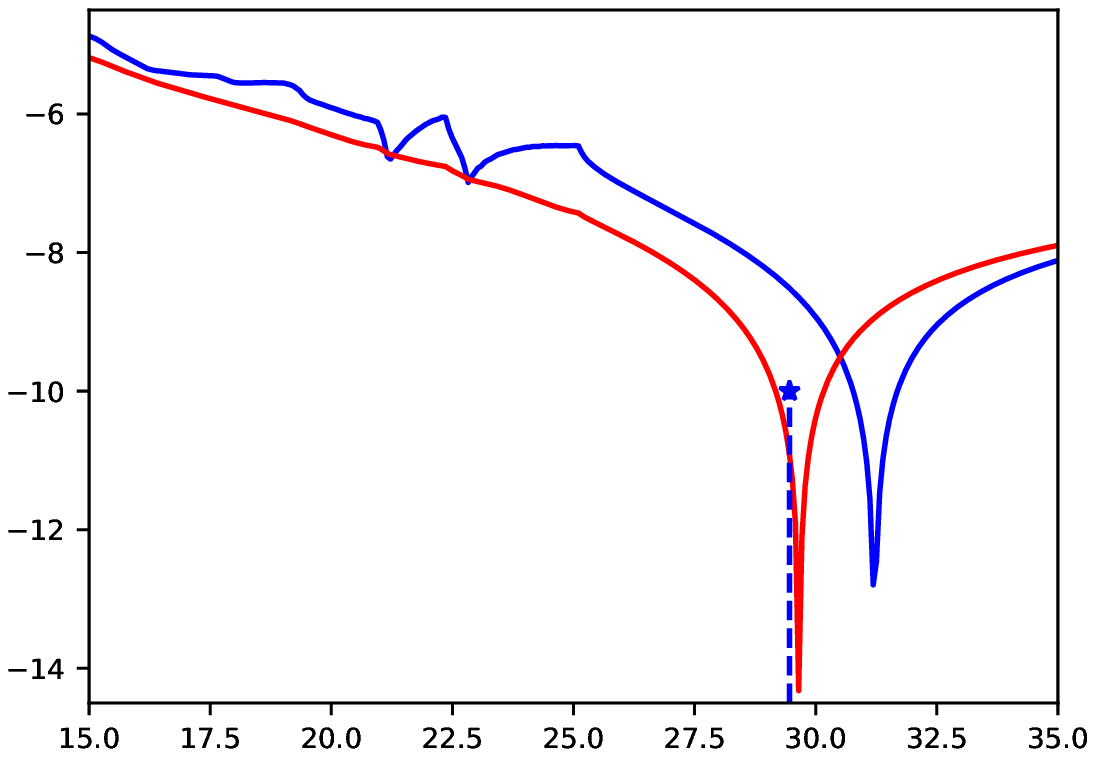}}
\subfigure[$k = 2$]{\includegraphics[width=0.32\linewidth]{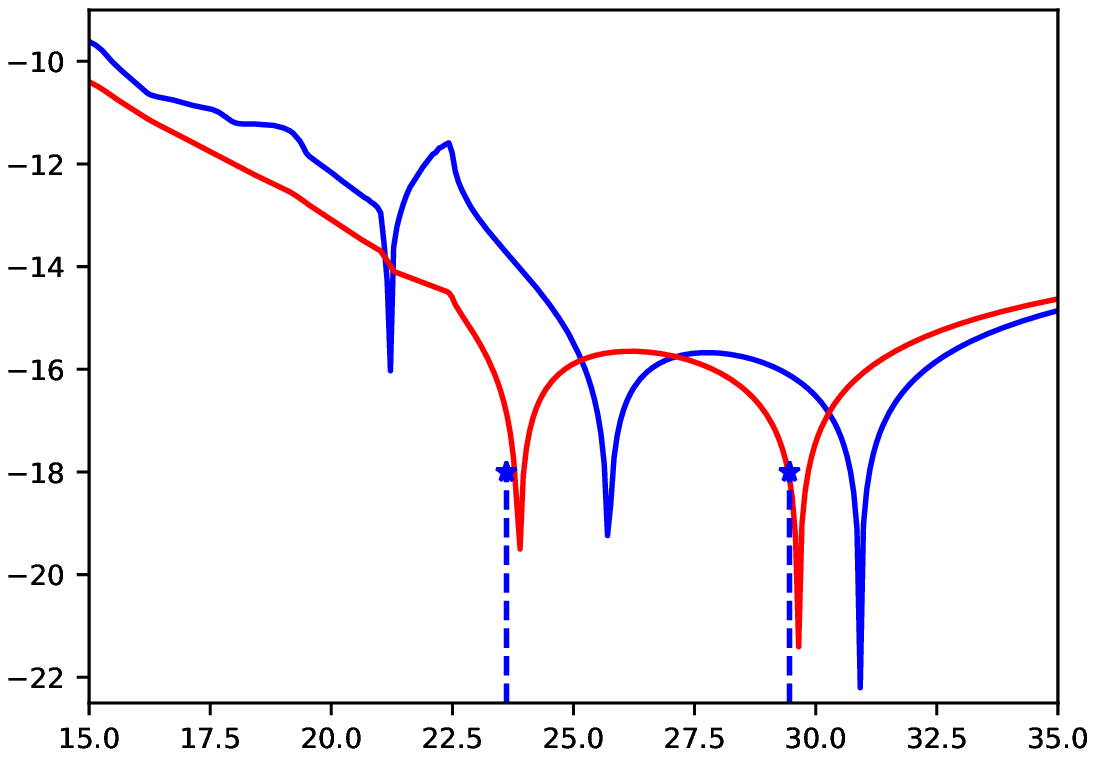}}
\subfigure[$k = 3$]{\includegraphics[width=0.32\linewidth]{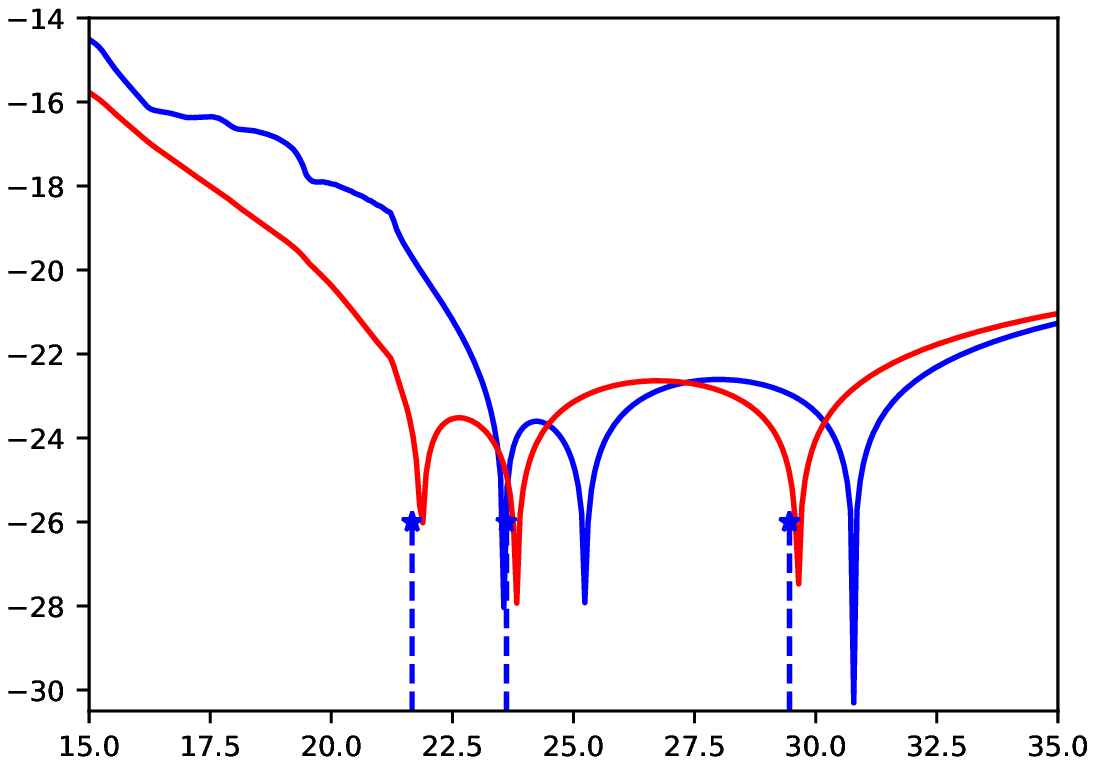}}

\caption{Gaussian setting with equal mean and variance. First row: histogram of the s.v.d.\ of $H'_{N,M} = X_{N,M} + Y_{N,M}^{(k)} + Z_{N,M}^{(k)} $ for $k$ spikes with $k = 1,2,3$. The black curves are the smooth approximations of the s.v.d.\ of $H^{(k)}_{N,M} = X_{N,M} + Y_{N,M}^{(k)} $ for different values of $k$, while the green curve is the smooth approximation of the s.v.d.\ of $X_{N,M}$. The red (resp.\ blue) vertical lines  denote the values which are  the approximation of the locations of outliers given by the zeros of $\tilde{\beta}_{2k}^\square$ (resp.\ $\beta_{2k}^\square)$. Second row: graph of  $\lambda \mapsto \log\big(\det\big(\beta_{2k}^\square(\lambda) \big)\big)$ (blue curves)  and $\lambda \mapsto \log\big(\det\big(\tilde{\beta}_{2k}^\square(\lambda) \big)\big)$ (red curves) for different values of $k$. The blue vertical dashed lines  denote the locations of the true outliers in the s.v.d.\ of $H'_{N,M}$}
  \label{fig:histoPoisson}
\end{figure}

\section{Organization of the proofs} \label{sec:organization}

 We first describe the mains steps to derive the proof of Theorem \ref{MainTh} using the notation of Section \ref{Sec:MainStat}. In Section \ref{Sec:AsympSubProp}, following the method and terminology of Haagerup and Thorbj{\o}rnsen in \cite{HT05}, we start by proving a \emph{Master equality} (see Lemma \ref{Lem:RecallASP}) involving  the expectation of the generalized resolvent $(\Lambda - X_N-Y_N)^{-1}$ that can be decomposed as follows
\eqa\label{Eq:AsympSubProp}
\esp\big[ (\Lambda - X_N-Y_N)^{-1} \big] = \big(\Omega_{X_N,Y_N}(\Lambda)-Y_N \big)^{-1} + F_N(\Lambda),
\qea
with
$$
\Omega_{X_N,Y_N}(\Lambda) =  \Lambda - \mcal R_N\big(  \esp\big[G_{X_N+Y_N}(\Lambda)\big]  \big),
$$
and $F_N(\Lambda) =  \big( \Omega_{H_N}(\Lambda) -Y_N \big)^{-1}  E_N$, where $E_N$ is the matrix of covariance between $(\Lambda - X_N-Y_N)^{-1} $ and $\mcal R_N\big( \esp[G_{X_N+Y_N}(\Lambda) ]\big)$ given explicitly in \eqref{Eq:DefEN}.  We deduce this result from an identity on the resolvent $(\Lambda - H_N)^{-1}$ that is a consequence of the {\it Gaussian integration by part formula} (see Lemma \ref{MasterEq} and Lemma \ref{lem:GaussIPP} below).

Following the heuristic of Section \ref{sec:freeapproach}, in particular \eqref{Eq:VonNeuSub}, the matrix $\Omega_{X_N,Y_N}(\Lambda)$ is a good candidate to approximate an operator-valued subordination function. Applying the operator $\Delta$ on both sides of equality \eqref{Eq:AsympSubProp} provides the following approximate equation for operator-valued Stieltjes transforms:
\begin{equation} \label{Eq:DiagAsympSubProp}
 \esp\big[G_{X_N+Y_N}(\Lambda)\big] = G_{Y_N}\Big(\Lambda - \mcal R_N\big(  \esp\big[G_{X_N+Y_N}(\Lambda)\big]  \big) \Big)  +  \Delta[F_N(\Lambda)].
\end{equation}
Equation \eqref{Eq:DiagAsympSubProp} tells that the expectation of $G_{X_N+Y_N}$ satisfies, up to additive error term, the fixed point equation \eqref{FixPtEq1} that defines its deterministic equivalent. Then, using \emph{Gaussian Poincar\'e inequality} (see Lemma \ref{prop:Poincare}), we obtain an upper bound on $\| E_N\|$ and $\| \Delta(E_N)\|$
 that we call \emph{Master inequalities} (see Lemma \ref{MasterIneq} below), following again the terminology of Haagerup and Thorbj{\o}rnsen \cite{HT05}. 
 It will be finally shown that 

\begin{equation} \label{eq:upperboundDeltaFN}
	\big\| \Delta[F_N(\Lambda)] \big\| \leq 
	 2\gamma_{\max}^3 N^{-1} \times  \| (\Im m \, \Lambda)^{-1}\|^4.
\end{equation}
 and if $Y_N$ is diagonal then 
 
\begin{equation} \label{eq:upperboundDeltaFN2}
 \big\| \Delta[F_N(\Lambda)] \big\|  
 \leq \gamma_{\max}^4 N^{-\frac 3 2} \times  \|(\Im m \, \Lambda)^{-1}\|^5.
\end{equation}

In Section \ref{Sec:FixedPtAnalysis} we prove the existence of the deterministic equivalent $G_{H_N}^\square$, solution of the equation as in \eqref{Eq:DiagAsympSubProp} taken for $\Delta[F_N(\Lambda)]=0$. We also prove  that the  upper bound \eqref{eq:upperboundDeltaFN} for $\big\| \Delta[F_N(\Lambda)] \big\| $  implies a bound for the difference $\big\| \esp\big[G_{H_N}(\Lambda)\big] - G^\square_{H_N}(\Lambda)\big\|$, thanks to an analysis of regularity of the fixed point problem \eqref{FixPtEq1}. More precisely, $G_{H_N}^\square$ is a good approximation of $G_{H_N}$ out of a small strip around the real axe as stated below.

\begin{Lem}\label{StabilityAnalysis} For all $\delta \in (0,1)$ and all $\Lambda$ such that $\Im m \, \Lambda \geq  \gamma_{\mrm{max}}  \left(\frac{2}{N(1-\delta)}\right)^{1/5} \mbb I_N$ we have
	\eq
		\Big\|\esp \big[ G_{H_N}(\Lambda) \big] - G_{H_N}^\square(\Lambda)\Big \| & \leq & \big( 1+ \gamma^2_{\mrm {max}}/\delta  \| (\Im m \, \Lambda)^{-1} \|^2  \big)  C_N,
	\qe
where $C_N$ is the bound in the r.h.s. of \eqref{eq:upperboundDeltaFN}. Moreover, if $Y_N$ is diagonal and $\Im m \, \Lambda \geq  \gamma_{\mrm{max}}  \left(\frac{2}{N(1-\delta)}\right)^{1/5} \mbb I_N$, then the above estimate holds with $C_N$ as in the  r.h.s. of \eqref{eq:upperboundDeltaFN2}.
\end{Lem}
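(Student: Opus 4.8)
\emph{Proof plan.} I would begin by reading \eqref{Eq:DiagAsympSubProp} as the statement that $G_N^{\mrm e}(\Lambda):=\esp\big[G_{H_N}(\Lambda)\big]$ solves the fixed point equation \eqref{FixPtEq1} \emph{up to the additive error} $\Delta[F_N(\Lambda)]$, whose operator norm is bounded by the quantity $C_N$ on the right-hand side of \eqref{eq:upperboundDeltaFN} (resp.\ of \eqref{eq:upperboundDeltaFN2} when $Y_N$ is diagonal). Setting $M^{\mrm e}:=\Lambda-\mcal R_N\big(G_N^{\mrm e}(\Lambda)\big)-Y_N$ and $M^\square:=\Lambda-\mcal R_N\big(G_{H_N}^\square(\Lambda)\big)-Y_N$, I would subtract \eqref{FixPtEq1} from this perturbed equation, invoke the resolvent identity $(M^{\mrm e})^{-1}-(M^\square)^{-1}=(M^{\mrm e})^{-1}(M^\square-M^{\mrm e})(M^\square)^{-1}$ together with the linearity of $\mcal R_N$ (whence $M^\square-M^{\mrm e}=\mcal R_N\big(G_N^{\mrm e}(\Lambda)-G_{H_N}^\square(\Lambda)\big)$), and thereby obtain, for the diagonal matrix $D:=G_N^{\mrm e}(\Lambda)-G_{H_N}^\square(\Lambda)$, the closed linear relation
$$(\mathrm{Id}-\mcal L)(D)=\Delta[F_N(\Lambda)],\qquad \mcal L(D):=\Delta\big[(M^{\mrm e})^{-1}\,\mcal R_N(D)\,(M^\square)^{-1}\big],$$
$\mcal L$ being a linear endomorphism of the space of diagonal matrices. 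The whole content of Lemma \ref{StabilityAnalysis} then reduces to a quantitative invertibility statement $\big\|(\mathrm{Id}-\mcal L)^{-1}\big\|\leq 1+\gamma_{\mrm{max}}^2\,\delta^{-1}\,\|(\Im m\,\Lambda)^{-1}\|^2$, valid under \eqref{eq:condlambda0}: combining it with $\|D\|\leq\|(\mathrm{Id}-\mcal L)^{-1}\|\,C_N$ yields the claim, and the diagonal case is identical with $C_N$ replaced by the smaller bound \eqref{eq:upperboundDeltaFN2}.

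The heart of the matter is this stability estimate; it cannot come from a Neumann expansion, since the naive inequality $\|\mcal L\|\leq\gamma_{\mrm{max}}^2\|(\Im m\,\Lambda)^{-1}\|^2$ exceeds $1$ throughout the domain \eqref{eq:condlambda0}, so one must exploit the Nevanlinna/positivity structure of the fixed point map. The natural device is to compare $\mcal L$ with the reference operator attached to the true solution, $\mcal L^\square(D):=\Delta\big[(M^\square)^{-1}\,\mcal R_N(D)\,(M^\square)^{-*}\big]$, which preserves positivity of diagonal matrices (because both $\mcal R_N$ and $A\mapsto (M^\square)^{-1}A(M^\square)^{-*}$ do). Testing it against the nonnegative diagonal matrix $-\Im m\,G_{H_N}^\square(\Lambda)$ and using the identities $-\Im m\,G_{H_N}^\square(\Lambda)=\Delta\big[(M^\square)^{-1}\Im m(M^\square)(M^\square)^{-*}\big]$ and $\Im m(M^\square)=\Im m\,\Lambda+\mcal R_N\big(-\Im m\,G_{H_N}^\square(\Lambda)\big)\geq \Im m\,\Lambda$ produces a Perron--Frobenius type inequality $\mcal L^\square\big(-\Im m\,G_{H_N}^\square(\Lambda)\big)\leq\big(1-\lambda_{\min}(\Im m\,\Lambda)/\|\Im m(M^\square)\|\big)\,\big(-\Im m\,G_{H_N}^\square(\Lambda)\big)$; together with $\|\Im m(M^\square)\|\leq\|\Im m\,\Lambda\|+\gamma_{\mrm{max}}^2\|(\Im m\,\Lambda)^{-1}\|$ this keeps the spectral radius of $\mcal L^\square$ below $1$ and gives $\|(\mathrm{Id}-\mcal L^\square)^{-1}\|\lesssim 1+\gamma_{\mrm{max}}^2\|(\Im m\,\Lambda)^{-1}\|^2$. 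I would then pass from $\mcal L^\square$ to the actual operator $\mcal L$ perturbatively, controlling the discrepancy through $M^{\mrm e}-M^\square=-\mcal R_N(D)$, through the a priori bound $\|D\|\leq 2\|(\Im m\,\Lambda)^{-1}\|$, and through $\Im m\big((M^\square)^{-1}\big)$; the hypothesis $\Im m\,\Lambda\geq\gamma_{\mrm{max}}\big(2/(N(1-\delta))\big)^{1/5}\mbb I_N$ is precisely calibrated so that the remaining error is absorbed at the cost of a factor $\delta^{-1}$, which is where the $1-\delta$ and the exponent $1/5$ enter. When $Y_N$ is diagonal, $M^\square$ is itself diagonal, $\mcal L$ and $\mcal L^\square$ essentially coincide, the adjoint mismatch disappears, and the better power of $N$ in \eqref{eq:upperboundDeltaFN2} is inherited.

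The step I expect to be the main obstacle is exactly this stability analysis for non-diagonal $Y_N$: one has to bound $(\mathrm{Id}-\mcal L)^{-1}$ uniformly in $Y_N$ and in the off-diagonal structure of $\Lambda$ while $\mcal L$ is genuinely expansive in operator norm, which requires tracking simultaneously the Perron--Frobenius gap, the non-self-adjoint gap between $(M^{\mrm e})^{-1}$ and $(M^\square)^{-*}$, and the $N^{-1}$-smallness of $C_N$, and reconciling these without importing $\|\Lambda\|$ or $\|Y_N\|$ into the bound. Once Lemma \ref{StabilityAnalysis} is in hand, it will be combined in Section \ref{Sec:resolventAnalysis} with the Gaussian Poincar\'e concentration bound on $\big\|G_{H_N}(\Lambda)-\esp[G_{H_N}(\Lambda)]\big\|$ (Lemma \ref{prop:Poincare}) to deliver the deviation inequality \eqref{eq:MainThStieltjes} of Theorem \ref{MainTh}.
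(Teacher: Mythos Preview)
Your approach is genuinely different from the paper's, and considerably harder. You attempt a direct stability analysis of the linearized fixed point operator $\mcal L$, which---as you correctly observe---is not contractive in operator norm, forcing you into a Perron--Frobenius/Nevanlinna argument in the spirit of the Ajanki--Erd\H{o}s--Kr\"uger papers. The paper sidesteps this entirely with a much simpler trick: it observes that the \emph{approximate} fixed point $\esp[G_{H_N}(\Lambda)]$ of $\psi_\Lambda$ is an \emph{exact} fixed point of $\psi_{\tilde\Lambda}$ for the perturbed parameter $\tilde\Lambda:=\Lambda-\mcal R_N(\Theta_N(\Lambda))$, simply because $\mcal R_N$ is linear and the error $\Theta_N$ sits additively. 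Since the analytic solution $G_{H_N}^\square$ is unique (Lemma \ref{Lem:UniquAnalytic}), this forces $\esp[G_{H_N}(\Lambda)]-\Theta_N(\Lambda)=G_{H_N}^\square(\tilde\Lambda)$, whence
\[
\big\|\esp[G_{H_N}(\Lambda)]-G_{H_N}^\square(\Lambda)\big\|\leq\big\|G_{H_N}^\square(\tilde\Lambda)-G_{H_N}^\square(\Lambda)\big\|+\|\Theta_N(\Lambda)\|.
\]
The first term is controlled by the Lipschitz estimate \eqref{Eq:EstimDeterEq} for $G_{H_N}^\square$ (established via the auxiliary large-$M$ random matrix model, not via any stability operator), together with $\|\Lambda-\tilde\Lambda\|\leq\gamma_{\mrm{max}}^2 C_N$ and $\|(\Im m\,\tilde\Lambda)^{-1}\|\leq\delta^{-1}\|(\Im m\,\Lambda)^{-1}\|$ under the hypothesis on $\Im m\,\Lambda$. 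No inversion of $\mathrm{Id}-\mcal L$ is ever needed.

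Regarding your own route: the Perron--Frobenius step you sketch yields at best a spectral-radius bound on $\mcal L^\square$, not an operator-norm bound on $(\mathrm{Id}-\mcal L^\square)^{-1}$; bridging that gap typically requires either working in a weighted norm adapted to the Perron vector or invoking additional structure, and it is not clear from your outline how you would obtain the clean constant $1+\gamma_{\mrm{max}}^2\delta^{-1}\|(\Im m\,\Lambda)^{-1}\|^2$ this way. The perturbative passage from $\mcal L^\square$ to $\mcal L$ is also left vague. Your strategy is the ``right'' one for proving optimal local laws where the paper's shortcut would not suffice, but for the weak local law stated here the paper's argument is both shorter and gap-free.
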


The rest of the proof of Theorem \ref{MainTh} is finally based on the control of the difference between the generalized resolvent $(\Lambda   - H_N)^{-1}$ and the expectation of $G_{H_N}(\Lambda)$. As explained in Section \ref{Sec:resolventAnalysis}, the comparison between the random quantity $G_{H_N}(\Lambda)$ and the deterministic equivalent $G_{H_N}^\square(\Lambda)$ follows from the combination of Lemma \ref{StabilityAnalysis} and \emph{Gaussian concentration inequality} of Lipschitz functions allowing to show that the entries of the generalized resolvent $ (\Lambda - H_N)^{-1}$ are close to their expectation with high probability.  Finally, Section \ref{Sec:resolventAnalysis} ends with a mathematical justification of the convergence of the numerical method used to approximate  the solution of the fixed point equation \eqref{FixPtEq1}.

\section{The approximate subordination property}\label{Sec:AsympSubProp}

\subsection{Notation and preliminaries}\label{Sec:Prelim}

We let $[N]$ be the set of integers between $1$ and $N$. Then, we recall some basic properties of matrices with complex entries that we repeatedly use in the proof. For any matrix $A$ in $\MN$ we denote by $\| A \|$ its operator norm, namely
	\eq
		\| A \| = \underset{ \substack{ x \in \mbb C^N \\ \mrm{s.t. \, } \| x \|_2 = 1}}\sup \| Ax\|_2,
		& \mrm{where \ } \|x\|_2 = \big( \langle x , x  \rangle \big)^{\frac 12}, & \ \langle x , y  \rangle =  \sum_i \bar{x_i} y_i,
	\qe
we $\langle \, \cdot \, \rangle$ stands for the standard scalar product on $\mbb C^N$. We recall if $A$ is a Hermitian matrix then $\|A\|$ is the spectral radius of $A$, and in general it is the square-root of the spectral radius of $AA^*$. In particular it satisfies the $\mcal C^*$-norm condition $\| A\|^2 = \|A^*\|^2 = \|AA^*\|$. We denote by $\Re e\, A$ and $\Im m \, A$ the real and imaginary parts of $A$, which are Hermitian matrices defined by 
	\eq
	\Re e\, A =  \frac 1 {2}(A+A^*) , & \Im m \, A = \frac 1 {2\mbf i}(A-A^*), & \mbf i^2=-1.
	\qe
We write $A\geq 0$ (resp. $A>0$) whenever the matrix $A$ is Hermitian and positive (resp. positive definite), and $A\leq 0$ (resp. $A<0$) if $-A$ satisfies this property. 

\begin{Lem}\label{Lem:NormEstim} Let $A$ in $\MN$  such that $\Im m \, A>0$. Then $A$ is invertible and 
		\begin{equation}\label{ImEst}
			\big \| A^{-1} \big \| \leq \| (\Im m \, A)^{-1}\|.
		\end{equation}
\end{Lem}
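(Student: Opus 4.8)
The claim is the standard resolvent bound: if $A \in \mathrm{M}_N(\mathbb C)$ has $\Im m\, A > 0$, then $A$ is invertible with $\|A^{-1}\| \leq \|(\Im m\, A)^{-1}\|$. The plan is to proceed in two short steps: first establish invertibility, then the norm bound.

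\emph{Step 1 (invertibility).} I would show that $A x \neq 0$ for every nonzero $x \in \mathbb C^N$. Write $B = \Im m\, A = \frac{1}{2\mathbf i}(A - A^*)$, a positive definite Hermitian matrix. For $x \in \mathbb C^N$, consider $\Im m\, \langle x, A x\rangle = \frac{1}{2\mathbf i}\big(\langle x, A x\rangle - \overline{\langle x, A x\rangle}\big) = \frac{1}{2\mathbf i}\big(\langle x, A x\rangle - \langle x, A^* x\rangle\big) = \langle x, B x\rangle$. Since $B > 0$, if $x \neq 0$ then $\langle x, B x\rangle > 0$, hence $\langle x, A x\rangle \neq 0$, which forces $A x \neq 0$. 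Thus $A$ is injective on a finite-dimensional space, so it is invertible.

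\emph{Step 2 (the bound).} I would estimate $\|A^{-1} y\|_2$ for an arbitrary $y \in \mathbb C^N$. Put $x = A^{-1} y$, so $y = A x$. From the computation above and Cauchy--Schwarz,
\[
\langle x, B x\rangle = \Im m\, \langle x, A x\rangle = \Im m\, \langle x, y\rangle \leq |\langle x, y\rangle| \leq \|x\|_2 \, \|y\|_2.
\]
On the other hand, since $B > 0$ we have the lower bound $\langle x, B x\rangle \geq \lambda_{\min}(B) \|x\|_2^2 = \|B^{-1}\|^{-1} \|x\|_2^2$, using that $\|B^{-1}\|$ equals $1/\lambda_{\min}(B)$ for the positive definite Hermitian matrix $B$. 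Combining the two inequalities gives $\|B^{-1}\|^{-1} \|x\|_2^2 \leq \|x\|_2 \|y\|_2$, hence $\|x\|_2 \leq \|B^{-1}\| \, \|y\|_2$, i.e. $\|A^{-1} y\|_2 \leq \|(\Im m\, A)^{-1}\| \, \|y\|_2$. Taking the supremum over unit vectors $y$ yields $\|A^{-1}\| \leq \|(\Im m\, A)^{-1}\|$.

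\emph{Main obstacle.} There is essentially no obstacle here — the argument is elementary and self-contained. The only point requiring a modicum of care is the identity $\Im m\,\langle x, A x\rangle = \langle x, (\Im m\, A) x\rangle$, which relies on $\langle x, A^* x\rangle = \overline{\langle x, A x\rangle}$ (a direct consequence of the definition of the adjoint and the conjugate-symmetry of the scalar product used in the paper), together with the fact that the field of values argument produces both the needed upper estimate (via Cauchy--Schwarz) and, through positive-definiteness of $\Im m\, A$, the matching lower estimate in terms of $\|(\Im m\, A)^{-1}\|$. This lemma will be used throughout to control generalized resolvents $(\Lambda - H_N)^{-1}$ in terms of $\|(\Im m\,\Lambda)^{-1}\|$.
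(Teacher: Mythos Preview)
Your proof is correct and follows essentially the same numerical-range argument as the paper: both use $\Im m\,\langle x, Ax\rangle = \langle x, (\Im m\,A)x\rangle$, bound this quadratic form below by $\|(\Im m\,A)^{-1}\|^{-1}\|x\|_2^2$, and apply Cauchy--Schwarz. The only cosmetic difference is that the paper bounds $\|Ax\|_2$ from below for unit $x$ and then inverts, whereas you substitute $x=A^{-1}y$ and bound $\|A^{-1}y\|_2$ from above directly; these are two phrasings of the same computation.
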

\begin{proof}
 We follow the proof of Lemma 3.1 in \cite{HT05}. For any unit vector $x$ in $\mbb C^N$, since $\langle \Re e (A)x,x\rangle $ and $ \langle \Im m(A)x,x\rangle$ are real, we have
	\eq
		\| Ax\|_2& = &\|Ax\|_2 \|x\|_2\geq \big| \langle Ax,x \rangle \big| =  \big|\langle \Re e (A)x,x\rangle + \mbf i  \langle \Im m(A)x,x\rangle \big|\\
		& \geq  & \big|  \langle  \Im m (A)x,x\rangle \big| \geq \|  (\Im m\, A)^{-1}\|^{-1} \| x\|_2 =  \|  (\Im m\, A)^{-1}\|^{-1}.
	\qe
Hence $A$ is injective, so it is invertible. Since $1=\|x\|_2=\| A^{-1}A x\|_2\leq \|A^{-1}\| \times \|Ax\|_2$, we get from the previous lower bound that $\|A^{-1}\|  \leq  \|  (\Im m\, A)^{-1}\|$.
\end{proof}

For a diagonal matrix $\Lambda$, note that $\|\Lambda\| = \underset{i\in [N]} \max |\Lambda(i,i)|$ and $\Lambda\geq 0$ whenever  $ \Lambda(i,i)\geq0$ for any $i=1\etc N$. Recall that we defined $\Delta(A) = \underset{i\in [N]}{\mrm{diag}}\big( A(i,i) \big)$ for any matrix $A$.

\begin{Lem}\label{Lem:DeltaProp} For any $A$ in $\MN$, one has
		\eqa\label{DHEst}
			\big \| \Delta (A)\big \| \leq \| A\|.
		\qea
	Moreover, $A\leq 0$ implies $\Delta(A)\leq 0$ and $A< 0$ implies $\Delta(A)< 0$.
\end{Lem}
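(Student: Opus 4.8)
The plan is to reduce everything to the elementary observation that the diagonal entries of $A$ are the quadratic forms of $A$ against the canonical basis. Write $e_1 \etc e_N$ for the canonical basis of $\mbb C^N$, so that $A(i,i) = \langle e_i, A e_i \rangle$ for each $i \in [N]$, and recall from the remark preceding the statement that for a diagonal matrix $\Lambda$ one has $\| \Lambda \| = \max_{i \in [N]} | \Lambda(i,i) |$ and $\Lambda \geq 0$ as soon as $\Lambda(i,i) \geq 0$ for all $i$.

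First I would prove \eqref{DHEst}. Since $\Delta(A)$ is diagonal, the remark above gives $\| \Delta(A) \| = \max_{i \in [N]} |A(i,i)| = \max_{i \in [N]} |\langle e_i, A e_i \rangle|$. For each fixed $i$, the Cauchy--Schwarz inequality yields $|\langle e_i, A e_i \rangle| \leq \| A e_i \|_2 \, \| e_i \|_2 = \| A e_i \|_2 \leq \| A \|$, using $\| e_i \|_2 = 1$ and the definition of the operator norm. Taking the maximum over $i$ then gives $\| \Delta(A) \| \leq \| A \|$.

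For the order statements, suppose $A \leq 0$, i.e.\ $A$ is Hermitian with $\langle x, A x \rangle \leq 0$ for every $x \in \mbb C^N$. Then each $A(i,i) = \langle e_i, A e_i \rangle$ is a real number that is $\leq 0$, so $\Delta(A)$ is a diagonal matrix with nonpositive diagonal entries, hence $\Delta(A) \leq 0$ by the remark. If moreover $A < 0$, then $\langle e_i, A e_i \rangle < 0$ for each $i$ since $e_i \neq 0$, so every diagonal entry of $\Delta(A)$ is strictly negative and $\Delta(A) < 0$.

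The argument is completely elementary and presents no genuine obstacle; the only point requiring a little care is applying the ordering conventions of Section \ref{Sec:Prelim} consistently, namely that negativity of a Hermitian matrix is tested against \emph{all} vectors, so that specializing to the $e_i$ is legitimate and produces exactly the diagonal entries. One also tacitly uses that $\Delta$ of a Hermitian matrix is Hermitian, which is immediate since the diagonal entries of a Hermitian matrix are real.
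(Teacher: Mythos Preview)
Your proof is correct and follows essentially the same route as the paper: both arguments identify the diagonal entries of $A$ with the quadratic forms $\langle e_i, A e_i\rangle$ and read off the norm and sign conclusions from there. For the order statements the paper takes the slightly more roundabout path of writing $A=-BB^*$ and computing $\Delta(A)(i,i)=-\sum_j |B(i,j)|^2$, whereas your direct appeal to the definition of $A\leq 0$ (resp.\ $A<0$) is cleaner and avoids the factorization.
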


\begin{proof} We have $\| \Delta (A)\big \| = \underset{i\in [N]} \max  \big| A(i,i) \big| =\underset{i\in [N]} \max   \big| \langle e_i, A e_i \rangle \big| \leq \|A\|,$ where $(e_i)_{i\in [N]}$ denotes the canonical basis of $\mbb C^N$. Moreover, if $A\leq 0$ then $A=-BB^*$ for some Hermitian matrix $B$. Hence for any $i=1\etc N$,
	$\Delta(A)(i,i) =- \sum_{j=1}^N \big| B(i,j)\big|^2 \leq 0,$
and so we get $\Delta(A)\leq 0$. If moreover $A<0$ then $A$ is invertible so the $i$-th line of $B$ is nonzero for any $i=1\etc N$ and hence $\Delta(A)<0$.
\end{proof}

If $\Lambda$ is an invertible diagonal matrix, note also that 
$
		 \|\Lambda^{-1}\| = \Big( \underset{i\in [N]} \min |\Lambda(i,i)|\ \Big)^{-1}.
$
We use several times the following direct consequence of Lemmas \ref{Lem:NormEstim} and \ref{Lem:DeltaProp}.
  
  \begin{Lem}\label{Eq:PositivityLambdaPrime} For any Hermitian matrix $A$ and any diagonal matrix $\Lambda$ such that $\Im m \, \Lambda>0$, the matrix $(\Lambda - A)$ is invertible and 
	\eqa\label{eq:boundopnorm}
		\big \| (\Lambda - A)^{-1} \big \| \leq \|  (\Im m \, \Lambda)^{-1}\|.
	\qea
Hence the map $G_{A} : \Lambda \mapsto \Delta \big[ (\Lambda - A)^{-1}\big]$ is well defined on $ \mrm D_N(\mbb C^+)$, and it satisfies $\big\|G_A(\Lambda)\big\| \leq \| (\Im m \, \Lambda)^{-1}\|$ and $\Im m \, G_{A}(\Lambda) <0$ for all $\Lambda$. 
\end{Lem}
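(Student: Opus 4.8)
The plan is to deduce everything from Lemmas~\ref{Lem:NormEstim} and~\ref{Lem:DeltaProp} already proved above, together with one elementary resolvent identity. First I would note that since $A$ is Hermitian one has $\Im m\,(\Lambda - A) = \Im m\,\Lambda - \Im m\,A = \Im m\,\Lambda > 0$; in other words the imaginary part of $\Lambda - A$ depends only on $\Lambda$. Applying Lemma~\ref{Lem:NormEstim} to the matrix $\Lambda - A$ then gives immediately that $\Lambda - A$ is invertible and
\[
	\big\| (\Lambda - A)^{-1} \big\| \;\leq\; \big\| \big(\Im m\,(\Lambda - A)\big)^{-1} \big\| \;=\; \big\| (\Im m\,\Lambda)^{-1} \big\|,
\]
which is exactly \eqref{eq:boundopnorm}. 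Consequently $G_A(\Lambda) = \Delta\big[(\Lambda - A)^{-1}\big]$ is well defined for every $\Lambda \in \mrm D_N(\mbb C^+)$, and combining with the contraction bound \eqref{DHEst} of Lemma~\ref{Lem:DeltaProp} yields $\|G_A(\Lambda)\| \leq \|(\Lambda - A)^{-1}\| \leq \|(\Im m\,\Lambda)^{-1}\|$.

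It remains to check that $\Im m\,G_A(\Lambda) < 0$, and the one computation I would carry out here is the standard identity for the imaginary part of an inverse. Writing $B := \Lambda - A$, so that $\Im m\,B > 0$, I would use the factorization $B^{-1} - (B^{-1})^* = B^{-1}(B^* - B)(B^{-1})^*$ to get
\[
	\Im m\,(B^{-1}) \;=\; \frac{1}{2\mbf i}\big(B^{-1} - (B^{-1})^*\big) \;=\; B^{-1}\cdot\frac{1}{2\mbf i}\big(B^* - B\big)\cdot(B^{-1})^* \;=\; -\,B^{-1}\,(\Im m\,B)\,(B^{-1})^*.
\]
Since $\Im m\,B > 0$ admits a positive definite square root and $B^{-1}$ is invertible, the right-hand side equals $-CC^*$ with $C := B^{-1}(\Im m\,B)^{1/2}$ invertible, hence $\Im m\,(B^{-1}) < 0$. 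Finally, since $\Delta$ is linear and commutes with taking adjoints, $\Im m\,\Delta(M) = \Delta(\Im m\,M)$ for any $M$, so the second assertion of Lemma~\ref{Lem:DeltaProp} applied to $\Im m\,(B^{-1}) < 0$ gives $\Im m\,G_A(\Lambda) = \Delta\big[\Im m\,(B^{-1})\big] < 0$.

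I do not expect any genuine obstacle here: the only point requiring more than bookkeeping is the factorization above producing $\Im m\,(B^{-1}) = -B^{-1}(\Im m\,B)(B^{-1})^*$, after which strict negativity is immediate; the norm bound and well-definedness are simply a matter of invoking the two preceding lemmas once the observation $\Im m\,(\Lambda - A) = \Im m\,\Lambda$ is in hand.
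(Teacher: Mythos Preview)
Your proof is correct and follows exactly the route the paper intends: the statement is announced as a ``direct consequence of Lemmas~\ref{Lem:NormEstim} and~\ref{Lem:DeltaProp}'' without further details, and you have simply spelled these out, adding the standard resolvent identity $\Im m\,(B^{-1}) = -B^{-1}(\Im m\,B)(B^{-1})^*$ needed to feed into the second assertion of Lemma~\ref{Lem:DeltaProp}.
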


Recall that we defined $\mcal R_N (G)=  \underset{i\in [N]}{\mrm{diag}} \Big( \sum_{j=1}^N \frac {\gamma^2_{i,j}}N G_{j,j}\Big)$ for any diagonal matrix $G$. Throughout the paper, we denote $\gamma_{\mrm{max}} = \max_{i,j}\gamma_{i,j}$.

\begin{Lem}\label{Lem:RNProp} For any diagonal matrix $G$ such that $\Im m \, G<0$, one has $\mcal R_N(G)\leq 0$. Moreover, for any diagonal matrix $G$, the following operator norm bound holds
	$$\| \mcal R_N( G ) \| \leq \gamma_{\mrm{max}}^2 \| G\|.$$
	\end{Lem}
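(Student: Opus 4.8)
The plan is to read off both assertions directly from the explicit formula $\mcal R_N(G) = \underset{i\in[N]}{\mrm{diag}}\big(\sum_{j=1}^N \frac{\gamma^2_{i,j}}{N} G_{j,j}\big)$, exploiting that $\mcal R_N$ is $\mbb C$-linear in $G$, that the weights $\gamma^2_{i,j}/N$ are all nonnegative, and that for each fixed $i$ the $i$-th row of weights sums to at most $\gamma_{\mrm{max}}^2$ (being a sum of $N$ terms, each $\leq \gamma_{\mrm{max}}^2/N$).

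For the first claim, since $G$ is diagonal we have $\Im m \, G = \underset{i\in[N]}{\mrm{diag}}(\Im m \, G_{i,i})$, so the hypothesis $\Im m \, G < 0$ means exactly that $\Im m \, G_{j,j} < 0$ for all $j$. By linearity, $\Im m \, \mcal R_N(G) = \mcal R_N(\Im m \, G)$, and its $i$-th diagonal entry $\sum_{j=1}^N \frac{\gamma^2_{i,j}}{N}\, \Im m \, G_{j,j}$ is a nonnegative combination of negative reals, hence $\leq 0$. Therefore $\Im m \, \mcal R_N(G)$ is a diagonal matrix with nonpositive diagonal, i.e. $\mcal R_N(G)\leq 0$; this is exactly what guarantees that $\Lambda - \mcal R_N(G) - Y_N \in \DN^+$ whenever $\Im m \, \Lambda>0$, $Y_N$ is Hermitian and $\Im m \, G<0$, a fact used repeatedly in the sequel.

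For the norm bound, $\mcal R_N(G)$ is diagonal, so $\|\mcal R_N(G)\| = \max_{i\in[N]} \big|\sum_{j=1}^N \frac{\gamma^2_{i,j}}{N} G_{j,j}\big| \leq \max_{i\in[N]} \sum_{j=1}^N \frac{\gamma^2_{i,j}}{N}\, |G_{j,j}|$. Bounding $|G_{j,j}| \leq \max_{k\in[N]}|G_{k,k}| = \|G\|$ and pulling it out, then using $\sum_{j=1}^N \frac{\gamma^2_{i,j}}{N} \leq \gamma_{\mrm{max}}^2$ for every $i$, one obtains $\|\mcal R_N(G)\| \leq \gamma_{\mrm{max}}^2 \|G\|$.

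There is no genuine obstacle here: the lemma is an immediate consequence of the explicit diagonal description of $\mcal R_N$ together with the linearity and positivity of its coefficients. The one point worth noticing is that, for $\Gamma_N\neq 0$ and $\Im m \, G<0$, the matrix $\mcal R_N(G)$ is not Hermitian, so the inequality $\mcal R_N(G)\leq 0$ is to be read as the statement that its imaginary part is negative semidefinite — precisely what the argument above establishes.
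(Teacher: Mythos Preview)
Your proof is correct and follows essentially the same approach as the paper: both arguments read off the imaginary-part inequality and the norm bound directly from the explicit diagonal formula for $\mcal R_N$, using nonnegativity of the weights $\gamma_{i,j}^2/N$ and the bound $\sum_j \gamma_{i,j}^2/N \leq \gamma_{\mrm{max}}^2$. You also correctly flag that the conclusion ``$\mcal R_N(G)\leq 0$'' in the statement is to be read as $\Im m\,\mcal R_N(G)\leq 0$, which is exactly what the paper's own proof establishes.
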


\begin{proof}
For any $G$ such that $\Im m \, G<0$, since the matrix $\Gamma_N$ has nonnegative entries, we have
	\eq
		  \Im m  \big( \mcal R_N    (G  ) \big) & = & \sum_{i\in [N]} \frac 1{2\i} \Big[ \sum_{j=1}^N \frac{\gamma_{i,j}^2}N G_{j,j}E_{i,i}  -  \big(\sum_{j=1}^N \frac{\gamma_{i,j}^2}N G_{j,j}E_{i,i} \big)^* \Big]\\
		 & = &  \sum_{i\in [N]}   \sum_{j=1}^N \frac{\gamma_{i,j}^2}N  \Im m (G_{j,j})E_{i,i},
	\qe
	where $E_{i,i} = e_{i} \otimes e_{i}$ denotes the matrix will entries equal to zero except the $(i,i)$-th one which is equal to one.
Hence the diagonal entries of $ \mcal R_N    (G  ) $ are indeed nonpositive. Moreover, for any diagonal matrix $G$ we have
$$
		\| \mcal R_N( G ) \| =  \underset{i\in [N]} \max \big| \mcal R_N( G )(i,i)\big| \leq \gamma_{\mrm{max}}^2    \frac 1 N \sum_{j=1}^N  |G(j,j)|.
$$
Since $|G(j,j)| \leq \|G\|$, we obtain the expected inequality.
\end{proof}

\subsection{The Master equality}

The notation  are as in Section \ref{Sec:MainStat}, in particular we denote $H_N=X_N+Y_N$. We start with the following observation.

\begin{Lem}\label{MasterEq} For any $\Lambda \in \mrm D_N(\mbb C^+)$, diagonal matrix with positive imaginary part, recalling that $
G_{H_N}(\Lambda) = \Delta \big[ (\Lambda- H_N)^{-1}\big],
$ we have the equality between the $N \times N$ matrices
\begin{equation}
\esp\big[ X_N (\Lambda- H_N)^{-1} \big] = \esp\Big[  \mcal R_N\big( G_{H_N}(\Lambda) \big)   (\Lambda- H_N)^{-1} \Big]. 
\label{eq:MasterEquality}
\end{equation}
\end{Lem}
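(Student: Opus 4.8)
The plan is to apply the Gaussian integration by parts formula to the quantity $\esp\big[ X_N (\Lambda - H_N)^{-1}\big]$, treating the independent Gaussian entries of $X_N$ as the integration variables. Writing $R := (\Lambda - H_N)^{-1}$ and expanding entrywise, the $(i,k)$-entry of $X_N R$ is $\sum_{j} (X_N)_{i,j} R_{j,k}$, so $\esp\big[(X_N R)_{i,k}\big] = \sum_j \esp\big[(X_N)_{i,j} R_{j,k}\big]$. Since $(X_N)_{i,j} = \gamma_N(i,j)\frac{x_{i,j}}{\sqrt N}$ with $x_{i,j}$ centered complex (or real, on the diagonal) Gaussian of variance one, the complex Gaussian integration by parts identity gives
\[
\esp\big[(X_N)_{i,j} R_{j,k}\big] = \sum_{a,b} \esp\big[(X_N)_{i,j}\,\overline{(X_N)_{a,b}}\big]\,\esp\Big[\frac{\partial R_{j,k}}{\partial \overline{(X_N)_{a,b}}}\Big].
\]
Because the subdiagonal entries of $X_N$ are independent and the Hermitian symmetry forces $(X_N)_{a,b} = \overline{(X_N)_{b,a}}$, the covariance $\esp\big[(X_N)_{i,j}\overline{(X_N)_{a,b}}\big]$ is nonzero only when $(a,b) = (i,j)$, and then it equals $\frac{\gamma_N^2(i,j)}{N}$ (here one uses that $\Gamma_N$ is symmetric so the real/complex bookkeeping on and off the diagonal is consistent).

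Next I would compute the derivative $\frac{\partial R_{j,k}}{\partial \overline{(X_N)_{i,j}}}$. Using the standard resolvent differentiation rule $\partial R = R\,(\partial H_N)\,R$ and that $\partial \overline{(X_N)_{i,j}} = \partial (X_N)_{j,i}$ picks out the elementary matrix $E_{j,i}$ as the perturbation of $H_N$, one gets $\frac{\partial R_{j,k}}{\partial \overline{(X_N)_{i,j}}} = R_{j,j} R_{i,k}$. Substituting back,
\[
\esp\big[(X_N R)_{i,k}\big] = \sum_j \frac{\gamma_N^2(i,j)}{N}\,\esp\big[R_{j,j} R_{i,k}\big] = \esp\Big[\Big(\sum_j \frac{\gamma_N^2(i,j)}{N} R_{j,j}\Big) R_{i,k}\Big].
\]
The inner sum $\sum_j \frac{\gamma_N^2(i,j)}{N} R_{j,j}$ is precisely the $(i,i)$-entry of $\mcal R_N\big(\Delta[R]\big) = \mcal R_N\big(G_{H_N}(\Lambda)\big)$, by the definition \eqref{eq:defRNbis}, and since $\mcal R_N(\cdot)$ is diagonal this is exactly the $(i,k)$-entry of $\mcal R_N\big(G_{H_N}(\Lambda)\big)\,R$. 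This is the claimed identity \eqref{eq:MasterEquality}.

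The main technical point to be careful about — and the step I would expect to need the most attention — is the Gaussian calculus bookkeeping: correctly handling the Hermitian constraint (so that $(X_N)_{i,j}$ and $(X_N)_{j,i}$ are not independent but conjugate), the distinction between the genuinely complex off-diagonal entries and the real diagonal entries, and ensuring the variance normalization $\gamma_N^2(i,j)/N$ comes out with the right constant in both cases. One also needs a mild justification that the integration by parts is legitimate, i.e. that $R = (\Lambda - H_N)^{-1}$ and its entrywise derivatives have the polynomial growth and integrability required by the Gaussian IBP formula; this is immediate from Lemma \ref{Eq:PositivityLambdaPrime}, which bounds $\|R\| \leq \|(\Im m\,\Lambda)^{-1}\|$ uniformly, so all moments are finite and dominated convergence applies. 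Once these points are settled, the identity follows by reassembling the entrywise computation into matrix form.
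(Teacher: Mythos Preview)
Your proof is correct and uses the same core idea as the paper: Gaussian integration by parts applied to $X_N(\Lambda-H_N)^{-1}$, combined with the resolvent differentiation formula $\partial R = R(\partial H_N)R$. The result and the intermediate computation $\esp[(X_N)_{i,j}R_{j,k}]=\frac{\gamma_N^2(i,j)}{N}\esp[R_{j,j}R_{i,k}]$ are exactly what is needed.

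The only notable difference is in parametrization. You work entrywise with the complex entries $(X_N)_{i,j}$ and Wirtinger derivatives, which forces the case distinction you flag between real diagonal and complex off-diagonal entries. The paper instead expands $X_N=\sum_{i,j} x'_{i,j}F_{i,j}$ in the real orthonormal basis $F_{i,j}$ of Hermitian matrices (so all $x'_{i,j}$ are independent \emph{real} Gaussians), applies the real IBP formula of Lemma~\ref{lem:GaussIPP} uniformly, and then packages the resulting sum via the algebraic identity
\[
\sum_{i,j}\frac{\gamma_{i,j}^2}{N}\,F_{i,j}\,A\,F_{i,j}=\mcal R_N\big(\Delta(A)\big),
\]
valid for any $A\in\MN$. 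This sidesteps entirely the real/complex bookkeeping you identify as the delicate step, at the cost of proving the displayed identity (a short direct computation). Your route is equally valid and perhaps more familiar from the local-law literature; the paper's route is slightly cleaner in that it invokes only the real-variable IBP lemma already stated and isolates the combinatorics in a single reusable formula.
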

Lemma \ref{MasterEq} is a consequence of the well-known Gaussian integration by part formula (see e.g.\ Lemma 3.3 in \cite{HT05}) that we recall below. 

\begin{Lem} \label{lem:GaussIPP} Let $f : \R^q \to \C$ be a continuously differentiable function, and $X_1,\ldots,X_q$ a sequence of independent centered real Gaussian variables with possibly different variances $\var(X_k) = \gamma_k^2$ for $1 \leq k \leq q$. Then, under the conditions that $f$ and its first order derivatives $ \frac{\partial}{\partial x_1}f, \ldots, \frac{\partial}{\partial x_q}f$ are polynomially bounded,  one has that
\begin{equation}
\esp \big[X_{k} f(X_1,\ldots,X_q) \big] = \gamma_k^2 \esp \left[ \frac{\partial}{\partial x_k}f(X_1,\ldots,X_q) \right]. \label{eq:GaussIPP}
\end{equation}
\end{Lem}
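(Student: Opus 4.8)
The plan is the standard two-step reduction: first reduce the multivariate identity to a one-dimensional statement by conditioning on the remaining coordinates, then prove the scalar identity by an elementary integration by parts against the Gaussian density.

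\textbf{Step 1: reduction to $q=1$.} Fix $k$ and freeze the coordinates $\mathbf X_{-k} := (X_j)_{j \neq k}$, setting $g(x) := f(X_1,\ldots,X_{k-1},x,X_{k+1},\ldots,X_q)$, a (random) $\mathcal C^1$ function of $x$. Since $X_k$ is independent of $\mathbf X_{-k}$, Fubini's theorem gives
\begin{equation*}
\esp\big[ X_k f(X_1,\ldots,X_q)\big] = \esp\Big[ \esp\big[ X_k\, g(X_k) \,\big|\, \mathbf X_{-k}\big]\Big], \qquad \esp\Big[ \tfrac{\partial f}{\partial x_k}(X_1,\ldots,X_q)\Big] = \esp\Big[ \esp\big[ g'(X_k)\,\big|\,\mathbf X_{-k}\big]\Big].
\end{equation*}
The polynomial growth of $f$ and of its first partials guarantees that all integrals here converge absolutely, so Fubini applies; hence it suffices to prove, for a single centered real Gaussian $X$ of variance $\gamma^2 > 0$ (the case $\gamma = 0$ being trivial since then $X \equiv 0$) and every $\mathcal C^1$ map $h : \R \to \C$ with $h, h'$ of polynomial growth, the scalar identity
\begin{equation} \label{eq:scalarIPP}
\esp\big[ X\, h(X)\big] = \gamma^2\, \esp\big[ h'(X)\big].
\end{equation}
By $\C$-linearity in $h$ we may take $h$ real-valued.

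\textbf{Step 2: the scalar identity.} Write $\varphi(x) = (2\pi\gamma^2)^{-1/2} e^{-x^2/(2\gamma^2)}$ for the density of $X$. The only analytic input is the elementary identity $\varphi'(x) = -\gamma^{-2} x\,\varphi(x)$, equivalently $x\,\varphi(x) = -\gamma^2\,\varphi'(x)$. Therefore
\begin{equation*}
\esp\big[X\,h(X)\big] = \int_{\R} x\, h(x)\,\varphi(x)\,dx = -\gamma^2\int_{\R} h(x)\,\varphi'(x)\,dx = \gamma^2\int_{\R} h'(x)\,\varphi(x)\,dx = \gamma^2\,\esp\big[h'(X)\big],
\end{equation*}
the third equality being integration by parts on $\R$. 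The boundary term $\big[-\gamma^2 h(x)\varphi(x)\big]_{-\infty}^{+\infty}$ vanishes because $|h|$ grows at most polynomially while $\varphi$ decays faster than any inverse power; the same bound gives absolute convergence of all the integrals, legitimizing the manipulation. Combining Steps 1 and 2 proves \eqref{eq:GaussIPP}.

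\textbf{Expected main obstacle.} There is no genuine difficulty: the statement is classical, and the only points requiring care are the absolute-convergence / Fubini justification in Step 1 and the vanishing of the boundary term in Step 2, both immediate from the polynomial-growth hypothesis combined with the super-polynomial decay of the Gaussian density. Should one prefer to avoid integrating by parts on an unbounded interval, an alternative is to first prove \eqref{eq:scalarIPP} for Schwartz functions $h$ — where no boundary issue arises — and then extend to polynomially bounded $\mathcal C^1$ functions by truncation together with a uniform-integrability argument furnished, once again, by the Gaussian tails.
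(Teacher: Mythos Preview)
Your proof is correct and entirely standard. The paper itself does not prove this lemma: it merely states it as a well-known fact and refers the reader to \cite[Lemma~3.3]{HT05}. Your two-step argument (reduction to the scalar case by independence/Fubini, then the elementary integration by parts against the Gaussian density using $x\varphi(x)=-\gamma^2\varphi'(x)$) is exactly the classical justification and would serve perfectly well as a self-contained proof.
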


\begin{proof}[Proof of Lemma \ref{MasterEq}] Let us write the random matrix $X_N$ in an appropriate orthonormal basis to make its dependency on only real Gaussian variables  more explicit. Let $E_{i,j} = e_{i} \otimes e_{j}$ be the canonical basis of $N \times N$ matrices, and define

\eqa\label{Eq:BaseF}
F_{i,j} = \left\{ \begin{array}{ccc} E_{i,j} & \mrm{if} & i=j\\
						\frac{1}{\sqrt{2}} \left(E_{i,j} + E_{j,i}\right) & \mrm{if} & i>j\\
						\i \frac{1}{\sqrt{2}} \left(E_{i,j} - E_{j,i}\right) & \mrm{if} & i<j \end{array}\right.
\qea
Then, $X_N$ can be decomposed as
\begin{equation} \label{eq:decompZN}
X_N =\sum_{i,j=1}^Nx'_{i,j}F_{i,j},
\end{equation}
where the $x'_{i,j}$ are i.i.d. real Gaussian random variables, centered and such that $x_{i,j}$ has variance $\frac{\gamma_{i,j}^2}N$. This implies that 
\begin{eqnarray*}
\esp\left[ X_N (\Lambda-H_N)^{-1} \right]  & = &   \sum_{i , j=1}^N   F_{i,j} \esp\left[x'_{i,j}  (\Lambda-H_N)^{-1}\right].
\end{eqnarray*}
By the Gaussian integration by part \eqref{eq:GaussIPP}, we have
	$$\esp\left[x'_{i,j}  (\Lambda-H_N)^{-1}\right] = \frac{\gamma^2_{i,j}}{N} \esp \Big[ \at{\frac{d}{d\epsilon}}{\epsilon=0} (\Lambda-H_N-\epsilon F_{i,j})^{-1} \Big].$$
Now, recalling that $H_N=X_N+Y_N$, we can compute
\begin{equation} \label{eq:diff}
\at{\frac{d}{d\epsilon}}{\epsilon=0} (\Lambda-H_N-\epsilon A)^{-1} = (\Lambda-H_N)^{-1} A (\Lambda-H_N)^{-1} 
\end{equation}
for any direction $A \in \C^{N \times N}$, and get the following relation
\begin{eqnarray}\label{Eq:StepIPP}
\esp\left[ X_N (\Lambda-H_N)^{-1} \right]  & = &     \sum_{i , j=1}^N\frac{\gamma^2_{i,j}}N  F_{i,j}  \esp\left[ (\Lambda-H_N)^{-1}  F_{i,j} (\Lambda-H_N)^{-1} \right].
\end{eqnarray}
Note that \eqref{eq:boundopnorm} ensures that the function and its derivatives are bounded, so we can correctly apply \eqref{eq:GaussIPP}. Moreover, since $\gamma_{ij}=\gamma_{ji}$, we have for any matrix $A$ 
	\eqa\label{Eq:Magic}
		  \sum_{i , j=1}^N  \frac{\gamma^2_{i,j}}N  F_{i,j} A F_{i,j} 		 &= &     \sum_{i =1}^N \frac{\gamma^2_{i,i}}N  E_{i,i} A E_{i,i} +    \frac{1}{2 }  \sum_{i > j} \frac{\gamma^2_{i,j}}N  (E_{i,j} +E_{j,i}) A (E_{i,j} +E_{j,i})\nonumber\\
	& & \ \ \ \ -    \frac{1}{2 }  \sum_{i < j} \frac{\gamma^2_{i,j}}N (E_{i,j} -E_{j,i}) A (E_{i,j} -E_{j,i})\nonumber\\
		 &= &     \sum_{i =1}^N \frac{\gamma^2_{i,i}}N E_{i,i} A E_{i,i} +     \sum_{i > j}  \frac{\gamma^2_{i,j}}N   (E_{i,j} A E_{j,i} +E_{j,i} A E_{i,j} )  \nonumber \\
	& =&   \sum_{i ,j=1}^N \frac{\gamma^2_{i,j}}N A_{jj}E_{ii} = \mcal R_N\big( \Delta(A)\big).
	\qea
Combined with \eqref{Eq:StepIPP}, the equality implies the expected result $$
\esp\left[ X_N (\Lambda-H_N)^{-1} \right] = \esp \left[ \mcal R_N\big( \Delta[(\Lambda-H_N)^{-1}]  \big) (\Lambda-H_N)^{-1}  \right].
$$
\end{proof}
Let us now introduce $E_N = E_N(\Lambda)$ defined as the  covariance   between the matrices $ \mcal R_N\big( G_{H_N}(\Lambda) \big) $ and $ (\Lambda- H_N)^{-1}$, namely
	\eqa\label{Eq:DefEN}
		E_N = \esp\Big[  \mcal R_N\big( G_{H_N}(\Lambda) \big)   (\Lambda- H_N)^{-1} \Big]- \esp\Big[  \mcal R_N\big( G_{H_N}(\Lambda) \big)  \Big] \times \esp\Big[  (\Lambda- H_N)^{-1} \Big].
	\qea
We can now state and prove the so-called Master equality introduced in Equation \eqref{Eq:AsympSubProp}.

\begin{Lem}[Master equality]\label{Lem:RecallASP} For any $\Lambda \in \DN^+$, we define the diagonal matrix 
\begin{equation}
\Omega_{H_N}(\Lambda) = \Lambda - \mcal R_N\Big(  \esp\big[G_{H_N}(\Lambda)\big] \Big). \label{eq:Omega}
\end{equation}
Then, we have $\Im m\, \Omega_{H_N}(\Lambda)> \Im m\, \Lambda$, and the following equality holds for all $\Lambda \in \DN^+$:
	\eqa\label{Eq:RecallASP}
		\esp\big[ (\Lambda-X_N-Y_N)^{-1}\big] =  \big( \Omega_{H_N}(\Lambda) -Y_N \big)^{-1} \times \big( \mbb I_N + E_N \big).
	\qea
\end{Lem}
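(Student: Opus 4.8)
The plan is to obtain \eqref{Eq:RecallASP} from the resolvent identity by taking expectations, substituting the Master equality of Lemma \ref{MasterEq}, and isolating the covariance matrix $E_N$. Throughout, write $R := (\Lambda - H_N)^{-1}$.

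First I would check the claimed imaginary-part inequality and the invertibility it provides. Lemma \ref{Eq:PositivityLambdaPrime} already yields $\Im m\, G_{H_N}(\Lambda) < 0$; to be quantitative I would use the resolvent computation $\Im m\, R = -R(\Im m\,\Lambda)R^*$, which shows that the $i$-th diagonal entry of $\Im m\, G_{H_N}(\Lambda) = \Delta[\Im m\, R]$ equals $-\|(\Im m\,\Lambda)^{1/2}R^*e_i\|_2^2$, hence is strictly negative since $R^*$ is invertible. Taking expectations preserves this, so $\Im m\,\esp[G_{H_N}(\Lambda)]$ is a diagonal matrix with strictly negative diagonal. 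Since $\mcal R_N$ has nonnegative coefficients it commutes with taking imaginary parts, and by Lemma \ref{Lem:RNProp} one gets $\mcal R_N\big(\Im m\,\esp[G_{H_N}(\Lambda)]\big) \leq 0$; therefore
\[
\Im m\,\Omega_{H_N}(\Lambda) = \Im m\,\Lambda - \mcal R_N\big(\Im m\,\esp[G_{H_N}(\Lambda)]\big) \geq \Im m\,\Lambda ,
\]
with strict inequality as soon as no row of $\Gamma_N$ vanishes identically. As $Y_N$ is Hermitian this gives $\Im m\,(\Omega_{H_N}(\Lambda) - Y_N) \geq \Im m\,\Lambda > 0$, so by Lemma \ref{Lem:NormEstim} the matrix $\Omega_{H_N}(\Lambda) - Y_N$ is invertible, with $\|(\Omega_{H_N}(\Lambda) - Y_N)^{-1}\| \leq \|(\Im m\,\Lambda)^{-1}\|$.

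Next I would run the algebraic manipulation. From $(\Lambda - Y_N - X_N)R = \mbb I_N$ one has $(\Lambda - Y_N)R = \mbb I_N + X_N R$; taking expectations and inserting the Master equality \eqref{eq:MasterEquality} gives $(\Lambda - Y_N)\,\esp[R] = \mbb I_N + \esp\big[\mcal R_N(G_{H_N}(\Lambda))\,R\big]$. By the definition \eqref{Eq:DefEN} of $E_N$ together with the linearity of $\mcal R_N$ (so that $\esp[\mcal R_N(G_{H_N}(\Lambda))] = \mcal R_N(\esp[G_{H_N}(\Lambda)])$), the last expectation equals $\mcal R_N(\esp[G_{H_N}(\Lambda)])\,\esp[R] + E_N$. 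Moving this term to the left-hand side and recalling $\Omega_{H_N}(\Lambda) = \Lambda - \mcal R_N(\esp[G_{H_N}(\Lambda)])$ produces $(\Omega_{H_N}(\Lambda) - Y_N)\,\esp[R] = \mbb I_N + E_N$, and left-multiplying by the inverse from the previous step yields exactly \eqref{Eq:RecallASP}.

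The computational part is essentially forced once Lemma \ref{MasterEq} is available, so I expect the only genuinely delicate point to be the imaginary-part bound of the first step: showing that $\Im m\,\Omega_{H_N}(\Lambda)$ dominates $\Im m\,\Lambda$ — and hence stays away from the real axis — is precisely what legitimizes inverting the approximate subordination matrix $\Omega_{H_N}(\Lambda) - Y_N$, and some care is needed there in handling the expectation of the strictly negative diagonal matrix $\Im m\, G_{H_N}(\Lambda)$ and the action of $\mcal R_N$ on it.
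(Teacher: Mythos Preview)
Your argument is correct and arrives at \eqref{Eq:RecallASP} by essentially the same mechanism as the paper, though organized somewhat more directly. The paper introduces a generic diagonal parameter $\Omega$, left-multiplies $X_N(\Lambda-H_N)^{-1}$ by $(\Omega-Y_N)^{-1}$, expands this product algebraically, applies Lemma~\ref{MasterEq}, and then specializes $\Omega=\Omega_{H_N}(\Lambda)$ to kill a term. You instead start from $(\Lambda-Y_N)R=\mbb I_N+X_NR$, take expectations, insert Lemma~\ref{MasterEq}, and split off $E_N$ right away, which avoids the auxiliary parameter and the map $f$ altogether. Both routes use exactly the same ingredients --- Lemma~\ref{MasterEq}, the definition \eqref{Eq:DefEN} of $E_N$, and the sign property of $\mcal R_N$ from Lemma~\ref{Lem:RNProp} --- so the difference is purely one of presentation; your version is a little more economical.

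On the imaginary-part step you are in fact more careful than the paper. You correctly note that the \emph{strict} inequality $\Im m\,\Omega_{H_N}(\Lambda)>\Im m\,\Lambda$ can fail at an index $i$ whose row of $\Gamma_N$ vanishes identically, whereas the non-strict inequality $\Im m\,\Omega_{H_N}(\Lambda)\geq\Im m\,\Lambda>0$ always holds and is already enough to invoke Lemma~\ref{Lem:NormEstim} and invert $\Omega_{H_N}(\Lambda)-Y_N$. The paper simply asserts the strict inequality without addressing this degenerate case.
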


\begin{proof}
Starting with the left hand side $ X_N (\Lambda- H_N)^{-1} $ in \eqref{eq:MasterEquality}, we want to obtain an expression involving only generalized resolvent. Recall that $H_N=X_N+Y_N$. If we where solely considering the random matrix $X_N$ and assume $Y_N=0$, we should write
	$$ X_N (\Lambda- X_N)^{-1} = \big( -(\Lambda - X_N) + \Lambda \big) (\Lambda- X_N)^{-1}  = - \mbb I_N + \Lambda  (\Lambda- H_N)^{-1}.$$
When $Y_N$ is non zero, we first fix a deterministic matrix $\Omega \in D_N(\mbb C^+)$ whose choice is determined later on, and multiplying on the left by $(\Omega-Y_N)^{-1}$ our expression under consideration: we have
	\eq
		\lefteqn{(\Omega- Y_N)^{-1}X_N (\Lambda- H_N)^{-1}}\\
		 & = & (\Omega- Y_N)^{-1} \big[  -(\Lambda-X_N-Y_N) + (\Omega - Y_N) +(\Lambda-\Omega)\big]  (\Lambda- H_N)^{-1}\\
		& = & -(\Omega- Y_N)^{-1}  + (\Lambda- H_N)^{-1} + (\Omega- Y_N)^{-1} (\Lambda-\Omega) (\Lambda- H_N)^{-1}.
	\qe
Moreover, since $Y_N$ is deterministic, the master equality \eqref{eq:MasterEquality} is equivalent to
	\eq
		 \esp\big[   (\Omega- Y_N)^{-1}X_N (\Lambda- H_N)^{-1}    \big] &= &\esp\big[   (\Omega- Y_N)^{-1}   \mcal R_N\big( G_{H_N}(\Lambda) \big)   (\Lambda- H_N)^{-1}  \big].
	\qe
Introducing the map $f:A \mapsto  \esp\big[   (\Omega- Y_N)^{-1} A (\Lambda- H_N)^{-1} \big]$ for any random matrix $A$, we obtain
	\eq
		  \esp\big[(\Lambda-X_N-Y_N)^{-1}\big]  &=& \big(\Omega - Y_N\big)^{-1} + f\Big(  \mcal R_N\big( G_{H_N}(\Lambda) \big) \Big) - f(\Lambda - \Omega).
	\qe
Since $f$ is linear, we have
	\eq
		\lefteqn{f\Big(  \mcal R_N\big( G_{H_N}(\Lambda) \big) \Big) - f(\Lambda - \Omega)}\\
		  & = & f\Big( \esp\big[ \mcal R_N\big( G_{H_N}(\Lambda) \big]  - \Lambda + \Omega\big) \Big) + f\Big(  \mcal R_N\big( G_{H_N}(\Lambda)\big) - \esp\big[ \mcal R_N\big( G_{H_N}(\Lambda)\big) \big]\Big).
	\qe 
We set $\Omega = \Omega_{H_N}(\Lambda) = \Lambda - \mcal R_N\big( \esp\big[G_{H_N}(\Lambda)\big] \big),$ so that the first term in the above equality vanishes. By Lemma \ref{Lem:RNProp}, we have that  $\Im m \big(\mcal R_N\big( G_{H_N}(\Lambda) \big) \big)\leq 0$, so $\Omega_{H_N}$ belongs to $\mrm D_N(\mbb C^+)$ and satisfies $\Im m\, \Omega_{H_N}(\Lambda)> \Im m\, \Lambda$. Hence we obtain the following expression
	\eq
		  \esp\big[(\Lambda-X_N-Y_N)^{-1}\big]  &=& \big(\Omega_{H_N}(\Lambda) - Y_N\big)^{-1}  +  f\Big(  \mcal R_N\big( G_{H_N}(\Lambda)\big) - \esp\big[ \mcal R_N\big( G_{H_N}(\Lambda)\big) \big]\Big).
	\qe
Since $Y_N$ is deterministic, the above identity completes the proof of Lemma \ref{Lem:RecallASP}.
\end{proof}

\subsection{The Master inequality} 

\subsubsection{Statement and use of Poincar\'e inequality}

We prove the following estimates on $E_N=E_N(\Lambda)$ defined in \eqref{Eq:DefEN} as the covariance   between the matrices $ \mcal R_N\big( G_{H_N}(\Lambda) \big) $ and $ (\Lambda- H_N)^{-1}$

\begin{Lem}[Master inequality]\label{MasterIneq}  Recall that we denote $\gamma_{\mrm{max}} = \max_{i,j}\gamma_{i,j}$. For any $\Lambda$ belonging to $\mrm D_N(\mbb C^+)$, we have 
	\eqa
	\big\|  E_N \big\| & \leq &  2\gamma_{\max}^3 N^{-1} \times  \|(\Im m \, \Lambda)^{-1}\|^3,\label{eq:MasterIneq}\\
	\big\|  \Delta \big[ E_N \big] \big\| &  \leq &  \gamma_{\max}^4 N^{-\frac 3 2} \times  \|(\Im m \, \Lambda)^{-1}\|^4. \label{eq:MasterIneqBis}
	\qea
\end{Lem}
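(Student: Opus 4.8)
The plan is to bound the covariance matrix $E_N$ by recognizing each of its entries as a covariance between two scalar random variables, and then controlling these by the Gaussian Poincaré inequality (Lemma \ref{prop:Poincare}). First I would recall that for a standard Gaussian vector and a Lipschitz function $\varphi$, the Poincaré inequality gives $\var(\varphi) \leq \esp\| \nabla \varphi\|^2$; combined with Cauchy--Schwarz, for two functions $\varphi,\psi$ one has $|\cov(\varphi,\psi)| \leq \big(\esp\|\nabla\varphi\|^2\big)^{1/2}\big(\esp\|\nabla\psi\|^2\big)^{1/2}$. Here the underlying Gaussian variables are the $x'_{i,j}$ from \eqref{eq:decompZN}, each of variance $\gamma_{i,j}^2/N$, so the gradient norm picks up factors of $\gamma_{\max}/\sqrt N$.

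The key computation is the derivative of the resolvent with respect to $x'_{i,j}$: using \eqref{eq:diff}, $\partial_{x'_{i,j}} (\Lambda - H_N)^{-1} = (\Lambda-H_N)^{-1} F_{i,j} (\Lambda-H_N)^{-1}$, and likewise $\mcal R_N\big(G_{H_N}(\Lambda)\big)$ differentiates through $\Delta$ and the linear map $\mcal R_N$. So for a fixed pair of indices $(a,b)$, writing $R = (\Lambda-H_N)^{-1}$, the $(a,b)$ entry of $E_N$ is $\cov\big( [\mcal R_N(\Delta R)]_{a,a} \delta_{ab}\text{-type term}, R_{a,b}\big)$ — more precisely $E_N = \cov\big(\mcal R_N(\Delta R)\,R\big)$ entrywise — and I would estimate the sum over $(i,j)$ of $\frac{\gamma_{i,j}^2}{N}$ times the product of the operator norms of the two derivative matrices. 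Each resolvent factor contributes $\|(\Im m\,\Lambda)^{-1}\|$ by Lemma \ref{Eq:PositivityLambdaPrime}, the map $\mcal R_N$ contributes a factor $\gamma_{\max}^2$ by Lemma \ref{Lem:RNProp}, and the $F_{i,j}$ are unit-norm; careful bookkeeping of how many resolvent factors appear in each term (the derivative of $\mcal R_N(\Delta R)\cdot R$ by the product rule yields two groups of terms) should give the $\gamma_{\max}^3 N^{-1}\|(\Im m\,\Lambda)^{-1}\|^3$ bound in \eqref{eq:MasterIneq}. For \eqref{eq:MasterIneqBis} I would apply the same argument but additionally exploit that taking $\Delta$ of the covariance picks out only diagonal entries $E_N(a,a)$, and that $R_{a,a} = e_a^* R e_a$; the extra structure — one summation index effectively gets pinned by a Kronecker delta coming from the diagonal extraction, removing a factor $N^{1/2}$ from the crude count while the $F_{i,j}$ structure (symmetric/antisymmetric combinations) forces cancellations — yields the improved $\gamma_{\max}^4 N^{-3/2}\|(\Im m\,\Lambda)^{-1}\|^4$ rate. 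One should double check the variance normalization: $\var(x'_{i,j}) = \gamma_{i,j}^2/N \leq \gamma_{\max}^2/N$, and summing $\sum_{i,j} \gamma_{i,j}^2/N \leq N\gamma_{\max}^2$, which is what converts an $N^{-2}$-per-term bound into the stated rates.

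The main obstacle I anticipate is the careful accounting in the diagonal case \eqref{eq:MasterIneqBis}: getting the extra power of $N^{-1/2}$ requires tracking precisely which summations survive after extracting the diagonal and using the Cauchy--Schwarz step in the right order (e.g.\ applying Cauchy--Schwarz in the index sum rather than only in the probability space), so that one genuinely gains rather than loses in the exponents. A naive application of Poincaré to each entry separately and then bounding the operator norm of $E_N$ by $N$ times the max entry would be too lossy; instead I would control $\|E_N\|$ (or $\|\Delta E_N\|$) directly, perhaps by writing $E_N$ as a sum over $(i,j)$ of rank-controlled pieces and using $\|\sum_k A_k B_k\| \leq \big(\sum_k\|A_k\|^2\big)^{1/2}\big(\sum_k\|B_k\|^2\big)^{1/2}$-type inequalities adapted to the Gaussian covariance structure. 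The rest — plugging in the norm bounds from Lemmas \ref{Lem:NormEstim}, \ref{Lem:DeltaProp}, \ref{Lem:RNProp} and \ref{Eq:PositivityLambdaPrime} — is routine once the combinatorial structure of the derivatives is laid out cleanly.
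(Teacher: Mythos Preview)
Your overall strategy---Cauchy--Schwarz on covariances, then Gaussian Poincar\'e for the variances, using the resolvent derivative formula \eqref{eq:diff}---is the same as the paper's. But two of your key mechanisms are misidentified, and without fixing them the bookkeeping will not close.

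For the operator-norm bound \eqref{eq:MasterIneq}: you correctly note that bounding $\|E_N\|$ via $N\max_{a,b}|E_N(a,b)|$ is too lossy, but your proposed cure (``sum of rank-controlled pieces'') is not the right tool. The paper's device is an \emph{operator-valued} Cauchy--Schwarz inequality (Jocic): writing $A_N=(\Lambda-H_N)^{-1}$, $D_N=\mcal R_N(\Delta A_N)$ and $\overset\circ{\phantom M}$ for centering,
\[
\big\|\esp[\overset\circ D_N\overset\circ A_N]\big\|\;\leq\;\big\|\esp[\overset\circ D_N\overset\circ D_N^*]\big\|^{1/2}\,\big\|\esp[\overset\circ A_N^*\overset\circ A_N]\big\|^{1/2}.
\]
The point is that $D_N$ is \emph{diagonal}, so the first factor equals $\max_k\var(D_N(k,k))^{1/2}$, which Poincar\'e bounds by $\gamma_{\max}^3 N^{-1}\|(\Im m\,\Lambda)^{-1}\|^2$. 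The second factor is bounded \emph{crudely} by $2\|(\Im m\,\Lambda)^{-1}\|$---no Poincar\'e, no decay in $N$. That asymmetry is what gives $N^{-1}$ for the operator norm.

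For the diagonal bound \eqref{eq:MasterIneqBis}: the improvement is \emph{not} that ``a Kronecker delta pins an index.'' It is simply that $\Delta[E_N](\ell,\ell)=\cov(D_N(\ell,\ell),A_N(\ell,\ell))$ is a scalar covariance, so the ordinary Cauchy--Schwarz gives $\sqrt{\var D_N(\ell,\ell)}\sqrt{\var A_N(\ell,\ell)}$, and now \emph{both} factors are scalar variances that Poincar\'e controls, yielding an extra $N^{-1/2}$ from the second factor that was absent in the operator-norm case.

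Finally, the Poincar\'e computation is not just bookkeeping: you need the algebraic identity $\sum_{i,j}F_{i,j}MF_{i,j}=\sum_{i,j}M_{jj}E_{ii}$ (equation \eqref{Eq:Magic2} in the paper) to evaluate $\sum_{i,j}\big|[A_NF_{i,j}A_N](\ell,\ell)\big|^2$ as a closed expression in $A_NA_N^*$ and $A_N^*A_N$, after which the resolvent bound from Lemma \ref{Eq:PositivityLambdaPrime} finishes. Your sketch does not mention this identity, and without it the sum over $(i,j)$ will not collapse cleanly.
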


%

By Lemma \ref{Lem:RecallASP} one has that $\|\Im m \,   \Omega^{-1}_{H_N}(\Lambda) \|\leq \| (\Im m \, \Lambda)^{-1}\|$. Hence as announced in the presentation of the proof of Section \ref{sec:organization}, Lemma \ref{MasterIneq} implies that the operator-valued subordination property \eqref{Eq:DiagAsympSubProp} approximatively holds up to an error term $\Delta[F_N(\Lambda)] = \Delta \big[ (\Omega_{H_N}-Y_N)^{-1}E_N \big]$ satisfying
$$
\big\| \Delta[F_N(\Lambda)] \big\| \leq \big\|(\Omega_{H_N}-Y_N)^{-1}E_N \big\| \leq  2\gamma_{\max}^3 N^{-1} \times  \| (\Im m \, \Lambda)^{-1}\|^4.
$$
Moreover, if $Y_N$ is diagonal then so is $(\Omega_{H_N}-Y_N)^{-1}$ and we have
$$
\big\| \Delta[F_N(\Lambda)] \big\|  =  \big\|(\Omega_{H_N}-Y_N)^{-1}\Delta\big[E_N\big] \big\| \leq \gamma_{\max}^4 N^{-\frac 3 2} \times  \|(\Im m \, \Lambda)^{-1}\|^5.
$$

The rest of the section is devoted to the proof of Lemma \ref{MasterIneq}. To abbreviate the notation, we define the random matrices (and functions of the diagonal parameter $\Lambda$)
\eq
		\begin{array}{ccccc}
		A_N & =   (\Lambda - H_N )^{-1},& \ \overset \circ A_N &= A_N - \esp[ A_N],\\
		D_N &=   \mcal R_N\big(\Delta( A_N ) \big),& \ \overset \circ D_N &= D_N - \esp[ D_N],
		\end{array}
	\qe
so that $ E_N= \esp\big[  \overset \circ D_N \overset \circ A_N \big]  $. 
We first use the Cauchy-Schwarz inequality for the norm
	\eqa \label{CSnorm} \big\|  \esp\big[\overset \circ D_N\overset \circ A\big] \big\| \leq \big\|  \esp\big[ \overset \circ D_N\overset \circ D_N^*\big] \big\|^{\frac 12}\big\|  \esp\big[ \overset \circ A_N^*\overset \circ A_N\big] \big\|^{\frac 12}.
	\qea
We recall the proof of \eqref{CSnorm} from  {\cite[Lemma 3.1 (a1)]{Jocic}}. By the singular value decomposition, $\|  \esp [\overset \circ D_N\overset \circ A_N]\| $ is the supremum of $\big| \big\langle  \esp [\overset \circ D_N\overset \circ A_N]x,y \big\rangle\big|$ over all unit vectors $x$ and $y$ in $\mbb C^N$. Moreover, we have
	\eq
		 \Big| \big\langle  \esp [\overset \circ D_N\overset \circ A_N]x,y \big\rangle\Big| & \leq & \esp\big[ \big| \langle \overset \circ D_N\overset \circ A_N x, y \rangle \big| \big] \leq \esp\big[\|\overset \circ D_N^*y \|_2  \times \| \overset \circ A_N x\|_2 \big]\\
		 & \leq & \big( \esp[ \| \overset \circ D_N^*y \|_2^2 ]  \times \esp[ \| \overset \circ A_Nx \|_2^2 ] \big)^{\frac 12}\\
		 & = &  \big(  \langle \esp[ \overset \circ D_N\overset \circ D_N^* ] y, y \rangle \times \langle \esp[  \overset \circ A_N^*\overset \circ A_N] x, x \rangle\big)^{\frac 12}\\
		 & \leq &  \big\|  \esp\big[ \overset \circ D_N\overset \circ D_N^*\big] \big\|^{\frac 12}\big\|  \esp\big[ \overset \circ A_N^*\overset \circ A_N\big] \big\|^{\frac 12}.
	\qe
Hence the inequality \eqref{CSnorm}. We hence deduce a first estimate
	\eqa		
	\big\|  E_N  \big\| &\leq&   \underset{k\in [N]}\sup \Big( \var D_N(k,k) \Big)^{\frac 12} \times \big\| \esp\big[ \overset \circ A_N^*\overset \circ A_N \big]\big\| ^{\frac 12}\label{Eq:CaseG}
	\qea
		 For the diagonal of $E_N$ we shall use the classical Cauchy-Schwarz inequality:
	\eqa
	\nonumber
		\nonumber \big\|  \Delta\big[  E_N \big] \big\| & = & \big\|  \overset \circ D_N \Delta\big[ \overset \circ A_N  \big] \big\|  =   \underset{k\in [N]}\max \Big(\Big| \esp\big[   \overset\circ D_N(k,k) \times  \overset \circ A_N(k,k) \Big| \Big) \\
		&\leq&   \underset{k\in [N]}\max  \bigg( \var\big( D_N(k,k)\big)  \var\big(  A_N(k,k) \big)		\bigg)^{\frac 12}.\label{Eq:CaseD}
	\qea

We now state the Gaussian Poincar\'e inequality (see e.g.\ Proposition 4.1 in \cite{HT05}), which allows use to estimate the variances $\var\big( A_N(k,k)\big) $ and $ \var\big(  D_N(k,k) \big)$ that appear in Inequalities \eqref{Eq:CaseG} and \eqref{Eq:CaseD}. 

\begin{Prop}[Gaussian Poincar\'e inequality] \label{prop:Poincare}
Let $f : \R^q \to \C$ be a continuously differentiable function, and $X_1,\ldots,X_q$ a sequence of independent centered real Gaussian variables with possibly different variances $\var(X_k) = \gamma_k^2$ for $1 \leq k \leq q$. Then, under the condition that $f$ and its first order derivatives are  polynomially bounded, one has that
$$
\var\left( f(X_1,\ldots,X_q) \right) \leq \esp\left( \| \Gamma^{1/2} \nabla f (X_1,\ldots,X_q)\|^2_2 \right)
$$
where $\Gamma = \diag(\gamma_1^2,\ldots,\gamma_q^2)$,  $\nabla f$ is the gradient of $f$, and $\| \cdot \|_2$ is the standard Euclidean norm of a vector with complex entries.
\end{Prop}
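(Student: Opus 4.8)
\medskip
\noindent\emph{Proof proposal.} The plan is to reduce the statement to the case of a real-valued function of a \emph{standard} Gaussian vector and then to run the classical Ornstein--Uhlenbeck semigroup argument, whose only substantial analytic ingredient is the Gaussian integration by parts formula already recorded in Lemma~\ref{lem:GaussIPP}. First I would split $f = u + \mbf i v$ into real and imaginary parts: since $|f - \esp f|^2 = (u - \esp u)^2 + (v - \esp v)^2$ and $\| \Gamma^{1/2} \nabla f\|_2^2 = \| \Gamma^{1/2} \nabla u\|_2^2 + \| \Gamma^{1/2} \nabla v\|_2^2$, it suffices to treat real-valued $f$. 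Next, writing $X_k = \gamma_k Y_k$ with $Y = (Y_1,\dots,Y_q)$ a standard Gaussian vector in $\R^q$ and setting $g(y) = f(\gamma_1 y_1,\dots,\gamma_q y_q)$, one has $\var(f(X_1,\dots,X_q)) = \var(g(Y))$ while $\partial_k g(Y) = \gamma_k\, \partial_k f(X_1,\dots,X_q)$, so $\esp[\| \nabla g(Y)\|_2^2] = \esp[\| \Gamma^{1/2}\nabla f(X_1,\dots,X_q)\|_2^2]$. The polynomial-growth hypotheses are stable under this linear change of variables, so it remains to prove $\var(g(Y)) \le \esp[\| \nabla g(Y)\|_2^2]$ for a real $g$ of polynomial growth with polynomially bounded gradient, $Y \sim N(0,\mbb I_q)$.

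Then I would introduce the Ornstein--Uhlenbeck semigroup $P_t g(x) = \esp\big[ g\big( e^{-t} x + \sqrt{1 - e^{-2t}}\, Y\big)\big]$, $t \ge 0$, with generator $\mathcal L h = \sum_{k=1}^q \partial_k^2 h - \sum_{k=1}^q x_k\, \partial_k h$, so that $\tfrac{d}{dt}P_t g = \mathcal L P_t g$ and $P_0 g = g$. Two elementary facts are used. First, the identity $\esp[ h(Y)\, \mathcal L h(Y)] = -\esp[\| \nabla h(Y)\|_2^2]$, which follows by applying \eqref{eq:GaussIPP} with each variance equal to one to $\esp[ Y_k\, (\partial_k h\cdot h)(Y)]$ for $k=1,\dots,q$. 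Second, the commutation relation $\partial_k P_t g = e^{-t}\, P_t(\partial_k g)$, obtained by differentiating under the expectation sign. With these in hand, write
\[
\var(g(Y)) = \esp[g(Y)^2] - \big(\esp[g(Y)]\big)^2 = -\int_0^\infty \frac{d}{dt}\,\esp\big[ (P_t g(Y))^2\big]\, dt = 2\int_0^\infty \esp\big[ \| \nabla P_t g(Y)\|_2^2\big]\, dt,
\]
where the middle equality uses $P_0 g = g$ and $P_t g \to \esp[g(Y)]$ in $L^2$ as $t\to\infty$, and the last one uses the integration by parts identity together with $\tfrac{d}{dt}P_t g = \mathcal L P_t g$. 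By the commutation relation and the Cauchy--Schwarz inequality (the law of $Y$ being a probability measure), $\| \nabla P_t g(x)\|_2^2 \le e^{-2t}\, P_t\big( \| \nabla g\|_2^2\big)(x)$ pointwise; since $N(0,\mbb I_q)$ is $P_t$-invariant this gives $\esp[ \| \nabla P_t g(Y)\|_2^2] \le e^{-2t}\, \esp[ \| \nabla g(Y)\|_2^2]$, and integrating yields $\var(g(Y)) \le 2\,\esp[ \| \nabla g(Y)\|_2^2] \int_0^\infty e^{-2t}\, dt = \esp[ \| \nabla g(Y)\|_2^2]$, which is the claim.

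The main obstacle is entirely technical: one must justify the differentiation under the expectation in the commutation relation, the interchange of $\tfrac{d}{dt}$ with $\esp$, and the $L^2$-convergence $P_t g \to \esp[g(Y)]$, using only polynomial growth of $g$ and $\nabla g$. This is handled by checking that $P_t g$ and its first partial derivatives are themselves polynomially bounded, with constants controlled uniformly on compact $t$-intervals (via Jensen's inequality and Gaussian moment bounds), so that suitable $N(0,\mbb I_q)$-integrable dominating functions exist; the decay of $\esp[(P_t g(Y))^2] - (\esp[g(Y)])^2$ then follows from the commutation relation applied once more. An alternative route that bypasses the semigroup is to use the sub-additivity of the variance over independent coordinates, $\var(f) \le \sum_{k=1}^q \esp\big[ \var_k(f)\big]$ where $\var_k$ denotes the variance in the $k$-th variable alone, which reduces the statement to the one-dimensional Gaussian Poincar\'e inequality $\var_{N(0,\gamma_k^2)}(\varphi) \le \gamma_k^2\,\esp[\varphi'^2]$; the latter is immediate from the Hermite expansion of $\varphi$, or from the one-dimensional specialization of the semigroup argument above.
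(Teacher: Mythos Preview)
Your argument is correct and is one of the standard proofs of the Gaussian Poincar\'e inequality via the Ornstein--Uhlenbeck semigroup; the reductions to the real-valued case and to a standard Gaussian are fine, the commutation relation $\partial_k P_t g = e^{-t} P_t(\partial_k g)$ and the integration-by-parts identity $\esp[h(Y)\mathcal L h(Y)] = -\esp[\|\nabla h(Y)\|_2^2]$ are used correctly, and the final integration in $t$ gives the right constant. The alternative tensorization route you sketch is also valid.

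There is nothing to compare here: the paper does not give its own proof of this proposition. It is stated as a known result with a reference to \cite[Proposition~4.1]{HT05}, and then applied directly. So your write-up supplies more than the paper does on this point.
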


We write the matrices $A_N$ and $D_N$ as functions of the independent real Gaussian random variables defined in \eqref{eq:decompZN}, namely: recalling $H_N =X_N+Y_N$, we define for any real matrix $M=(m_{i,j})_{i,j}$ the diagonal matrices
	\eq
		 f_{1}(M) & = &  \Delta\Big[ \big(\Lambda - \sum_{i,j}m_{i,j} F_{i,j} - Y_N \big)^{-1}\Big] ,\\
		 f_{2}(M)& = & \mcal R_N\Big(\Delta\Big[ \Big(\Lambda - \sum_{i,j}m_{i,j} F_{i,j} - Y_N \Big)^{-1} \Big]\Big),
	\qe
where $(F_{i,j})_{i,j}$ is the basis of Hermitian matrices defined by \eqref{Eq:BaseF}, so that we have $ \Delta[ A_N]  = f_{1}\big((x'_{i,j})_{i,j}\big)$ and $D_N  = f_{2}\big((x'_{i,j})_{i,j}\big)$ for the coordinates $(x'_{i,j})_{i,j}$ of $X_N$ in the basis $(F_{i,j})_{i,j}$, see \eqref{eq:decompZN}. By the same computation as for the derivative in \eqref{eq:diff}, and by the linearity of the maps $\mcal R_N$ and $\Delta$, we obtain
	\eq
		\frac{\partial}{\partial x'_{i,j}} f_{1}\big((x'_{i,j})_{i,j}\big)   & = & \Delta\big[(\Lambda - H_N )^{-1}  F_{i,j} (\Lambda - H_N )^{-1}    \big]  = \Delta[A_NF_{i,j}A_N],\\
		\frac{\partial}{\partial x'_{i,j}} f_{2}\big((x'_{i,j})_{i,j}\big)   & = & \mcal R_N\Big(\Delta\big[  (\Lambda - H_N )^{-1}  F_{i,j} (\Lambda - H_N )^{-1}  \big] \Big)  = \mcal R_N\Big(\Delta\big[A_NF_{i,j}A_N) \big] \Big).
	\qe
For any $\ell \in [N]$, we apply Proposition \ref{prop:Poincare} (Gaussian Poincar\'e inequality) to the $\ell$-th diagonal entry of $A_N$ and of $D_N$, and hence get 
	\eqa
		\var\big(  A_N(\ell,\ell) \big)  & \leq & 
		\esp \bigg[ \sum_{i,j}  \frac{\gamma_{i,j}^2}N  \Big|  \big[ A_N F_{i,j} A_N  \big](\ell,\ell)\Big|^2\bigg]\nonumber\\
		  & \leq &  \frac{ \gamma_{\mrm{max}}^2}N  \esp \bigg[ \sum_{i,j}  \Big|  \big[ A_N F_{i,j} A_N  \big](\ell,\ell)\Big|^2\bigg],\label{Eq:PoincareEstim2}	\\
		 \var\big( D_N(\ell,\ell)\big) & \leq & 
		\esp \bigg[\sum_{i,j} \frac{\gamma_{i,j}^2}N \Big| \mcal R_N\Big(\Delta\big[  A_N  F_{i,j} A_N  \big] \Big)(\ell,\ell) \Big|^2\bigg]\nonumber\\
		& \leq & \frac{ \gamma_{\mrm{max}}^2}N
		\esp \bigg[\sum_{i,j}   \Big| \mcal R_N\Big(\Delta\big[  A_N  F_{i,j} A_N  \big] \Big)(\ell,\ell) \Big|^2\bigg]. \label{Eq:PoincareEstim1}
	\qea

\subsubsection{Estimation of the terms given by Poincar\'e inequality}

Hence  to obtain the required estimates for \eqref{Eq:CaseG} and \eqref{Eq:CaseD}, one should find  upper bounds for \eqref{Eq:PoincareEstim2} and \eqref{Eq:PoincareEstim1}. This is the purpose of this section, where we derive below the two estimates \eqref{Eq:CaseGsol2} and \eqref{Eq:CaseGsol3}, which allow to complete the proof of Lemma \ref{MasterIneq}.

We first write in a different way  the term $\sum_{i,j}    \big|  \big[ A_N F_{i,j} A_N  \big](\ell,\ell)\big|^2$. Recall that $(e_k)_{k\in [N]}$ denotes the canonical basis of $\mbb C^N$. Recalling that the entry $(\ell,\ell)$ of a matrix $M$ is equal to $e_\ell^* M e_\ell$, that the elementary matrix $E_{\ell,k}$ equals $e_\ell e_k^*$. Since $F_{ij}$ is a Hermitian matrix, we have
	\eq \Big|  \big[ A_N F_{i,j} A_N  \big](\ell,\ell)\Big|^2 & = &    \big[ A_N F_{i,j} A_N  \big](\ell,\ell) \times  \big[A_N^* F_{i,j} A_N^* \big](\ell,\ell)\\
	& = & e_\ell^* A_N F_{i,j} A_Ne_\ell \times  e_\ell^*   A_N^* F_{i,j} A_N^* e_\ell,
\qe
Note also that \eqref{Eq:Magic} implies the following equality, valid for any matrix $M$:
 	\eqa\label{Eq:Magic2}
	\sum_{i ,j=1}^N   F_{ij} MF_{ij}  =  \sum_{i ,j=1}^N  M_{jj}E_{ii} =  \sum_{i ,j=1}^N   e_i e_j^* M e_j e_i^*.
	\qea
Using \eqref{Eq:Magic2} for $M = A_Ne_\ell e_\ell^*   A_N^* $, we get
	\eq
	\lefteqn{ \sum_{i,j}    \Big|  \big[ A_N F_{i,j} A_N  \big](\ell,\ell)\Big|^2}\\
	 & = & e_\ell^* A_N \sum_{i,j}   \big (F_{i,j} M  F_{i,j} \big)    A_N^* e_\ell =   e_\ell^* A_N \sum_{i,j}  \big(    e_i e_j^* M e_j e_i^*  \big)A_N^* e_\ell\\
	& = & \sum_{i,j}     e_\ell^* A_N   e_i e_j^* A_Ne_\ell e_\ell^*   A_N^*  e_j e_i^* A_N^* e_\ell =   \sum_{i,j}    |A_N(\ell,i) |^2  |A_N(j,\ell)|^2\\
	& = &  \big[A_NA_N^*] (\ell,\ell)^2.
	\qe
Hence the application of Poincar\'e inequality \eqref{Eq:PoincareEstim2} yields 
	\eqa
		 \var\big(  A_N(\ell,\ell) \big)    & \leq  &   \gamma_{\mrm{max}}^2 N^{-1} \esp  \Big[  \big[A_NA_N^*] (\ell,\ell)^2\Big] \nonumber\\
		 & \leq &   \gamma_{\mrm{max}}^2 N^{-1} \esp  \big[    \|A_N\|^4 \big] \leq  \gamma_{\mrm{max}}^2 N^{-1}   \| (\Im m \, {\Lambda})^{-1}\|^4.\label{Eq:CaseGsol2}
	\qea
Similarly, we re-write the term $\sum_{i,j}   \big| \mcal R_N\big(\Delta\big[  A_N  F_{i,j} A_N  \big] \big)(\ell,\ell) \big|^2$. Since for any matrix $M$,
	$$\mcal R_N\big( \Delta M \big)(\ell,\ell) = \sum_{\ell'} \frac{\gamma^2_{\ell\ell'}}N   e_{\ell'}^* M e_{\ell'},$$
we obtain
	\eq
		\lefteqn{\Big| \mcal R_N\Big(\Delta\big[  A_N  F_{i,j} A_N  \big] \Big)(\ell,\ell) \Big|^2}\\
		& = &    \mcal R_N\Big(\Delta\big[  A_N  F_{i,j} A_N  \big] \Big)(\ell,\ell)  \mcal R_N\Big(\Delta\big[  A_N^*  F_{i,j} A_N^*  \big] \Big)(\ell,\ell) \\
		& = & \sum_{\ell', \ell''}   \gamma_{\ell,\ell'}^2   \gamma_{\ell,\ell''}^2 N^{-2}   e^*_{\ell'}     A_N   \Big( F_{i,j} A_N e_{\ell'}  e^*_{\ell''}     A_N^*  F_{i,j}\Big) A_N^* e_{\ell''}.
\qe
Using \eqref{Eq:Magic2} for $M =A_N e_{\ell'}  e^*_{\ell''}     A_N^*  $, we get the expressions and estimates
	\eq
		\lefteqn{\sum_{i,j}   \Big| \mcal R_N\Big(\Delta\big[  A_N  F_{i,j} A_N  \big] \Big)(\ell,\ell) \Big|^2}\\
		& = &  \sum_{i,j, \ell', \ell''}    \gamma_{\ell,\ell'}^2 \gamma_{\ell,\ell''}^2 N^{-2}   e^*_{\ell'}     A_N   e_i e_j^* A_N e_{\ell'}  e^*_{\ell''}     A_N^*  e_j e_i^*A_N^* e_{\ell''}\\
		& = & \sum_{i,j, \ell', \ell''}    \gamma_{\ell,\ell'}^2  \gamma_{\ell,\ell''}^2 N^{-2}  A_N(\ell',i) A_N(j,\ell')    A_N^*(\ell'',j) A_N^*(i,\ell'')\\
		& \leq & \gamma_{\mrm{max}}^4 N^{-2} \sum_{\ell', \ell''}   \Big| \sum_{i,j}  A_N(\ell',i) A_N(j,\ell')    A_N^*(\ell'',j) A_N^*(i,\ell'') \Big|\\
		& = & \gamma_{\mrm{max}}^4 N^{-2}\sum_{\ell', \ell''}   \Big|  (A_NA_N^*)(\ell',\ell'') \times (A^*A)(\ell'',\ell') \Big|.
	\qe
The Cauchy-Schwarz inequality for $   \sum_{\ell', \ell''} $ implies 
	\eq
		\lefteqn{\sum_{i,j}   \Big| \mcal R_N\Big(\Delta\big[  A_N  F_{i,j} A_N  \big] \Big)(\ell,\ell) \Big|^2}\\
		& \leq &  \gamma_{\mrm{max}}^4 N^{-2} \times \Big(  \sum_{\ell', \ell''}   \Big|  (A_NA_N^*)(\ell',\ell'')\Big|^2  \Big)^{\frac 12} \times \Big(    \sum_{\ell', \ell''} \Big|  (A^*A)(\ell'',\ell') \Big|^2 \Big)^{\frac 12}\\
		& = &   \gamma_{\mrm{max}}^4 N^{-2} \times  \Tr\big[(A_NA_N^*)^2\big]  \leq   \gamma_{\mrm{max}}^4 N^{-1}    \|(\Im m \, {\Lambda})^{-1}\|^4.
	\qe
Hence with \eqref{Eq:PoincareEstim1}, the above inequality gives
\eqa
		  \var\big( D_N(\ell,\ell)\big)  & \leq & \gamma_{\mrm{max}}^2 {N^{-1}}  \times \gamma_{\mrm{max}}^4 {N^{-1}}        \| (\Im m \, {\Lambda})^{-1}\|\\
		  & = & \gamma_{\mrm{max}}^6 {N^{-2}}       \| (\Im m \, {\Lambda})^{-1}\|^4\label{Eq:CaseGsol3}.
	\qea
	
Unfortunately, the Poincaré inequality does not conclude to an interesting estimate for $\big\| \esp\big[ \overset \circ A_N^*\overset \circ A_N \big]\big\|$ that will be roughly bounded by $4\| (\Im m \, {\Lambda})^{-1}\|^2$. Inserting the estimates \eqref{Eq:CaseGsol3}  in \eqref{Eq:CaseG} give
	\eq
		 \big\|  E_N   \big\|   &\leq &   \Big(  \gamma_{\mrm{max}}^6 {N^{-2}}      \|(\Im m \, {\Lambda})^{-1}\|^4 \times  4\| (\Im m \, {\Lambda})^{-1}\|^2\Big)^{\frac 12} \\
		 & =&   2\gamma_{\mrm{max}}^3{N^{-1}} \|(\Im m \, {\Lambda})\|^3 .
	\qe
Moreover, \eqref{Eq:CaseGsol2} and  \eqref{Eq:CaseG} implies
	\eq
		 \Big\| \Delta \big[ E_N  \big] \Big\|  &\leq &\Big(  \gamma_{\mrm{max}}^6 {N^{-2}}       \|(\Im m \, {\Lambda})^{-1}\|^4 \times  \gamma_{\mrm{max}}^2N^{-1}   \|(\Im m \, {\Lambda})^{-1}\|^4 \Big)^{\frac 12} \\
		 & =&    \gamma_{\mrm{max}}^4{N^{-\frac 32}} \|(\Im m \, {\Lambda})^{-1}\|^4,
	\qe	
which proves Inequality \eqref{eq:MasterIneq} and complete the proof of Lemma \ref{MasterIneq}.

\section{The fixed point equation}\label{Sec:FixedPtAnalysis}

In this section, we prove the existence and uniqueness of the deterministic equivalent $G_{H_N}^\square(\Lambda)$, and we derive an estimate for the difference $G_{H_N}^\square - \esp\big[ G_{H_N}\big]$.

\subsection{Fixed point formulation}
Recall that for any $\Lambda$ in $\mrm D_N(\mbb C^+)$, we denote $G_{Y_N}(\Lambda)= \Delta\big[ (\Lambda - Y_N)^{-1}\big]$. For any $\Lambda$ in $\mrm D_N(\mbb C^+)$ we consider the function 
\begin{equation} \label{eq:psiLambda}
\begin{array}{cccc}
		\psi_{ \Lambda} : &\mrm D_N(\mbb C)^- & \to & \mrm D_N(\mbb C)^-\\
			& G & \mapsto&G_{Y_N}  \big( \Lambda  -\mcal R_N(G)  \big) .
	\end{array}
\end{equation}
So $G_{H_N}^\square(\Lambda)$ is solution of Equation \eqref{FixPtEq1} of Theorem \ref{MainTh} if and only if $G_{H_N}^\square (  \Lambda )$ is a fixed point of $\psi_{{\Lambda}}$ for any $\Lambda$. Note that by Lemma \ref{Lem:RNProp}, when $G\in \mrm D_N(\mbb C)^- $ then $\mcal R_N(G) \leq 0$ and so $\Lambda  -\mcal R_N(G) \in \mrm D_N(\mbb C^+)$. Hence we can correctly evaluate the function $G_{Y_N}$ in this diagonal matrix and the expression defining $\psi_{\Lambda}$ makes sense. Moreover, by the last statement of Lemma \ref{Eq:PositivityLambdaPrime} we indeed have $\psi_{ \Lambda}( G) \in \mrm D_N(\mbb C)^-$. We shall use the next statement later. 

\begin{Lem}\label{Lem:LipsFP} For any $\Lambda \in \mrm D_N (\mbb C)^+$ the function $\psi_{\Lambda}$ is bounded and  Lipschitz for the operator norm: for any $G,G'\in \mrm D_N(\mbb C)^- $,
	\eq
		\| \psi_{\Lambda}(G)  \| & \leq  & \| (\Im m \, \Lambda)^{-1}\|,\\
		\|\psi_{\Lambda}(G) - \psi_{\Lambda}(G')\| & \leq  & \gamma^2_{\mrm{max}}  \big\|   (\Im m \, \Lambda)^{-1}     \big\|^2 \times \| G-G'\|.
	\qe
\end{Lem}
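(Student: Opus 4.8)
The plan is to prove the two claims in Lemma \ref{Lem:LipsFP} by unwinding the definition of $\psi_\Lambda$ and applying the norm estimates already established. Recall that $\psi_\Lambda(G) = G_{Y_N}\big(\Lambda - \mcal R_N(G)\big) = \Delta\big[(\Lambda - \mcal R_N(G) - Y_N)^{-1}\big]$, and that for $G \in \mrm D_N(\mbb C)^-$ we have $\mcal R_N(G) \leq 0$ by Lemma \ref{Lem:RNProp}, so $\Im m\,(\Lambda - \mcal R_N(G)) \geq \Im m\,\Lambda > 0$. Hence $\Lambda - \mcal R_N(G) - Y_N$ has positive imaginary part (as $Y_N$ is Hermitian) and is invertible.

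First I would handle the boundedness claim. By Lemma \ref{Lem:DeltaProp}, $\|\psi_\Lambda(G)\| = \|\Delta[(\Lambda - \mcal R_N(G) - Y_N)^{-1}]\| \leq \|(\Lambda - \mcal R_N(G) - Y_N)^{-1}\|$. Applying Lemma \ref{Eq:PositivityLambdaPrime} with the Hermitian matrix $A = Y_N + \mcal R_N(G)$ (note $\mcal R_N(G)$ is a real, hence Hermitian, diagonal matrix since $\Im m\,\mcal R_N(G)\leq 0$ — more precisely one should just use that $-\mcal R_N(G)$ has nonnegative imaginary part, so $\Im m\,(\Lambda - \mcal R_N(G)) \geq \Im m\,\Lambda$) gives $\|(\Lambda - \mcal R_N(G) - Y_N)^{-1}\| \leq \|(\Im m\,(\Lambda - \mcal R_N(G)))^{-1}\| \leq \|(\Im m\,\Lambda)^{-1}\|$, where the last inequality uses that for positive definite diagonal matrices $B \geq B'$ implies $\|B^{-1}\| \leq \|B'^{-1}\|$. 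This gives the first bound.

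For the Lipschitz estimate, I would use the resolvent identity. Writing $P = \Lambda - \mcal R_N(G) - Y_N$ and $P' = \Lambda - \mcal R_N(G') - Y_N$, we have $P'^{-1} - P^{-1} = P^{-1}(P - P')P'^{-1} = P^{-1}\big(\mcal R_N(G') - \mcal R_N(G)\big)P'^{-1} = P^{-1}\mcal R_N(G' - G)P'^{-1}$, using linearity of $\mcal R_N$. Then by submultiplicativity of the operator norm, Lemma \ref{Lem:DeltaProp}, the bound $\|\mcal R_N(G'-G)\| \leq \gamma_{\mrm{max}}^2\|G'-G\|$ from Lemma \ref{Lem:RNProp}, and the operator norm bounds $\|P^{-1}\|, \|P'^{-1}\| \leq \|(\Im m\,\Lambda)^{-1}\|$ established above, we get
\[
\|\psi_\Lambda(G) - \psi_\Lambda(G')\| \leq \|P^{-1}\mcal R_N(G'-G)P'^{-1}\| \leq \|(\Im m\,\Lambda)^{-1}\|^2\,\gamma_{\mrm{max}}^2\,\|G-G'\|,
\]
which is exactly the claimed inequality.

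I do not anticipate a genuine obstacle here; the proof is a direct assembly of Lemmas \ref{Lem:NormEstim}, \ref{Lem:DeltaProp}, \ref{Eq:PositivityLambdaPrime}, and \ref{Lem:RNProp}. The only point requiring a little care is justifying that $\Im m\,(\Lambda - \mcal R_N(G) - Y_N) \geq \Im m\,\Lambda$ so that both the invertibility and the uniform operator-norm bound $\|(\Im m\,\Lambda)^{-1}\|$ apply simultaneously to $P^{-1}$ and $P'^{-1}$; this follows from $\Im m\,\mcal R_N(G) \leq 0$ (Lemma \ref{Lem:RNProp}, since $\Im m\,G < 0$) together with $Y_N$ being Hermitian, and then monotonicity of the map $B \mapsto \|B^{-1}\|$ on positive definite diagonal matrices.
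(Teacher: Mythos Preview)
Your proof is correct and follows essentially the same route as the paper's: the boundedness comes from $\|\Delta[\cdot]\|\leq\|\cdot\|$ together with $\Im m(\Lambda-\mcal R_N(G))\geq \Im m\,\Lambda$, and the Lipschitz bound from the resolvent identity, submultiplicativity, and $\|\mcal R_N(G-G')\|\leq\gamma_{\max}^2\|G-G'\|$. Your self-correction is the right fix---$\mcal R_N(G)$ is not real (hence not Hermitian) for $G\in\mrm D_N(\mbb C)^-$, so one should not invoke Lemma~\ref{Eq:PositivityLambdaPrime} with $A=Y_N+\mcal R_N(G)$; the clean way (which is what the paper does) is to treat $\Lambda-\mcal R_N(G)\in\mrm D_N(\mbb C)^+$ as the new diagonal parameter and apply Lemma~\ref{Eq:PositivityLambdaPrime} with $A=Y_N$.
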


\begin{proof} We have by Lemmas \ref{Eq:PositivityLambdaPrime} and \ref{Lem:RNProp}
	$$\|\psi_{\Lambda}(G) \|= \Big\| G_{Y_N}  \big( \Lambda  -\mcal R_N(G)  \big) \Big \| \leq \Big\| \Im m \big( \Lambda  -\mcal R_N(G)  \big)^{-1} \Big\| \leq \| (\Im m \, \Lambda)^{-1}\|$$ 
	Moreover, by Lemma \ref{Lem:DeltaProp} and the fact that $\| \, \cdot \, \|$ is an algebra norm, for any $G, G'$ in  $D_N (\mbb C)^-$ we have	 
	\eq
		\lefteqn{\|\psi_{\Lambda}(G) - \psi_{\Lambda}(G')\|	}\\
		&=&	  \Big\| \Delta\Big(\big( \Lambda  - \mcal R_N( G)-Y_N \big)^{ -1}   \Big( \mcal R_N( G-G')\Big)    \big( \Lambda  - \mcal R_N(  G')-Y_N\big)^{ -1} \Big)\Big\|\\
			&\leq&   \big\| \big( \Lambda  - \mcal R_N( G)-Y_N \big)^{ -1} \big\|  \times \big\|    \mcal R_N( G-G')\big\| \times \big\|     \big( \Lambda  - \mcal R_N(  G')-Y_N\big)^{ -1}\big\|
	\qe
By  \eqref{eq:boundopnorm} we have $\big\| \big( \Lambda  - \mcal R_N( G)-Y_N \big)^{ -1} \big\| \leq \big\| \Im m \big( \Lambda  - \mcal R_N( G) \big)^{-1}\big\|$. But by Lemma \ref{Lem:RNProp} we have $\Im m \big(\Lambda  - \mcal R_N( G) \big) \geq \Im m \, \Lambda$ and so $\big\| \big( \Lambda  - \mcal R_N( G)-Y_N \big)^{ -1} \big\| \leq \| (\Im m \, \Lambda)^{-1}\|$. The same inequality holds for $G'$ instead of $G$. These inequalities together with the estimate $ \big\|    \mcal R_N( G-G')\big\|  \leq  \gamma^2_{\mrm{max}} \| G - G'\|$ of Lemma \ref{Lem:RNProp} yield
	\eq
		\|\psi_{\Lambda}(G) - \psi_{\Lambda}(G')\|	  
			& \leq & \gamma^2_{\mrm{max}}  \big\|  (\Im m \, \Lambda)^{-1}     \big\|^2 \times \| G-G'\|.
	\qe
\end{proof}

By Banach fixed-point theorem and Lemma \ref{Lem:LipsFP}, we get the existence and uniqueness a priori of the deterministic equivalent on a region on $\DN^+$. 

\begin{Cor}\label{Cor:Apriori} For any $\Lambda$ such that $\Im m \, \Lambda > \gamma_{max}\mbb I_N$ there exists a unique deterministic diagonal matrix $G_{H_N}^\square(\Lambda) \in \DN^{-}$ such that $G_{H_N}^\square(\Lambda) = \psi_{\Lambda}\big(G_{H_N}^\square(\Lambda)\big)$. 
\end{Cor}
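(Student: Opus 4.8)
The plan is to obtain $G_{H_N}^\square(\Lambda)$ as the unique fixed point of $\psi_\Lambda$ by the Banach fixed-point theorem, the inputs being the boundedness and Lipschitz estimates already established in Lemma \ref{Lem:LipsFP}. The only point requiring care is that the natural domain $\DN^-$ of $\psi_\Lambda$ is open, hence not a complete metric space, so I would first cut down to a closed ball that $\psi_\Lambda$ leaves invariant.

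First I would record that, for the diagonal matrix $\Im m\,\Lambda$, the hypothesis $\Im m\,\Lambda > \gamma_{\mrm{max}} \mbb I_N$ gives $\|(\Im m\,\Lambda)^{-1}\| = \big(\min_i \Im m\,\Lambda(i,i)\big)^{-1} < \gamma_{\mrm{max}}^{-1}$, so that the Lipschitz constant $\kappa := \gamma^2_{\mrm{max}}\,\|(\Im m\,\Lambda)^{-1}\|^2$ of Lemma \ref{Lem:LipsFP} satisfies $\kappa < 1$. Then I would set $K := \big\{ G \in \DN : \Im m\,G \leq 0,\ \|G\| \leq \|(\Im m\,\Lambda)^{-1}\| \big\}$, a closed bounded subset of the finite-dimensional space of diagonal matrices, hence a complete metric space for the operator norm. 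The computation in the proof of Lemma \ref{Lem:RNProp} uses only that the diagonal entries of $\Im m\,G$ are $\leq 0$, so it still yields $\Im m\big(\mcal R_N(G)\big) \leq 0$ for every $G \in K$; consequently $\Lambda - \mcal R_N(G) \in \DN^+$ with $\Im m\big(\Lambda - \mcal R_N(G)\big) \geq \Im m\,\Lambda$, and hence the formula $\psi_\Lambda(G) = G_{Y_N}\big(\Lambda - \mcal R_N(G)\big)$ makes sense on all of $K$. By Lemma \ref{Eq:PositivityLambdaPrime} it satisfies $\Im m\,\psi_\Lambda(G) < 0$ and $\|\psi_\Lambda(G)\| \leq \|(\Im m\,\Lambda)^{-1}\|$, so $\psi_\Lambda(K) \subseteq K$; and since the proof of the Lipschitz bound of Lemma \ref{Lem:LipsFP} again uses only $\Im m\,G, \Im m\,G' \leq 0$, the map $\psi_\Lambda$ is a $\kappa$-contraction on $K$.

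With these facts the Banach fixed-point theorem would give a unique $G_{H_N}^\square(\Lambda) \in K$ with $\psi_\Lambda\big(G_{H_N}^\square(\Lambda)\big) = G_{H_N}^\square(\Lambda)$, and since $\psi_\Lambda$ takes values with strictly negative imaginary part this fixed point in fact lies in $\DN^-$. To upgrade uniqueness from $K$ to all of $\DN^-$ I would note that any fixed point $G \in \DN^-$ obeys $\|G\| = \|\psi_\Lambda(G)\| \leq \|(\Im m\,\Lambda)^{-1}\|$, hence belongs to $K$ where uniqueness is already known; equivalently, two fixed points $G, G'$ satisfy $\|G-G'\| = \|\psi_\Lambda(G)-\psi_\Lambda(G')\| \leq \kappa\|G-G'\|$ with $\kappa < 1$, forcing $G = G'$. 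Finally $G_{H_N}^\square(\Lambda)$ is deterministic because $\psi_\Lambda$ is built only from $\Lambda$, $Y_N$ and $\Gamma_N$. The main — and essentially the only — obstacle is the bookkeeping: replacing the open set $\DN^-$ by the complete set $K$ and checking that the estimates of Lemmas \ref{Lem:RNProp}, \ref{Eq:PositivityLambdaPrime} and \ref{Lem:LipsFP} persist on the boundary $\{\Im m\,G = 0\}$, all of which is immediate from their proofs.
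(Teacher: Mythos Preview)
Your proposal is correct and follows the same approach as the paper, which simply states ``By Banach fixed-point theorem and Lemma \ref{Lem:LipsFP}'' without further elaboration. You have in fact been more careful than the paper by explicitly addressing the completeness issue (passing from the open set $\DN^-$ to the closed invariant set $K$) and checking that the relevant estimates persist on the boundary $\{\Im m\,G = 0\}$; the paper leaves these routine details implicit.
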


\subsection{The deterministic equivalent}

\subsubsection{Setting of the problem} 
We want to extend the previous corollary for any $\Lambda$ in $\DN^+$. The difficulty is that $\psi_\Lambda$ is not contractive in general. Fortunately, it will be enough in our problem to consider uniqueness in the class of analytic function in several variables. Recall from \cite{Sch05}  that for any open set $\Omega$ of $\DN$, we say that a function $G : \Omega \to \MN$ is analytic on $\Omega$ whenever for any $k,\ell=1\etc N$ the function 
	$$(\lambda_1\etc \lambda_N) \mapsto G\big(\mrm{diag}(\lambda_1 \etc \lambda_N) \big)(k,\ell)$$
are analytic in each variable $\lambda_i$.  

\begin{Lem}\label{Lem:UniquAnalytic} There exists a unique deterministic analytic map $G_{H_N}^{\square}: \DN^+ \to \DN^-$ such that $G_{H_N}^\square(\Lambda) = \psi_{\Lambda}\big(G_{H_N}^\square(\Lambda)\big)$ for any $\Lambda \in \DN^+$. Moreover, for any $\Lambda, \Lambda'\in \DN^+$,
	\eqa\label{Eq:EstimDeterEq}
		 \big\|  G_{H_N}^\square(\Lambda) - G_{H_N}^\square(\Lambda')  \big\| \leq \| (\Im m \, \Lambda)^{-1}\|  \| (\Im m \, {\Lambda'})^{-1}\| \times \| \Lambda - \Lambda'\|.
	\qea
\end{Lem}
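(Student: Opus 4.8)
The plan is to establish Lemma~\ref{Lem:UniquAnalytic} in three stages: first the \emph{a priori} existence and uniqueness on the region $\{\Im m\,\Lambda > \gamma_{\max}\mbb I_N\}$ already granted by Corollary~\ref{Cor:Apriori}; then the Lipschitz estimate \eqref{Eq:EstimDeterEq} on that region; and finally an analytic-continuation argument that propagates both the existence of a solution and the Lipschitz bound to all of $\DN^+$, while securing uniqueness inside the class of analytic maps.

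\textbf{Step 1: the Lipschitz estimate on the contractive region.} For $\Lambda,\Lambda'$ with $\Im m\,\Lambda,\Im m\,\Lambda' > \gamma_{\max}\mbb I_N$, write $G = G_{H_N}^\square(\Lambda)$, $G' = G_{H_N}^\square(\Lambda')$, both fixed points of the respective $\psi$'s. Using the resolvent identity,
\[
(\Lambda - \mcal R_N(G) - Y_N)^{-1} - (\Lambda' - \mcal R_N(G') - Y_N)^{-1}
= (\Lambda - \mcal R_N(G) - Y_N)^{-1}\big[(\Lambda' - \Lambda) + \mcal R_N(G - G')\big](\Lambda' - \mcal R_N(G') - Y_N)^{-1},
\]
then apply $\Delta$ and take operator norms. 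Using $\|\Delta(\cdot)\|\le\|\cdot\|$ (Lemma~\ref{Lem:DeltaProp}), the bound $\|(\Lambda - \mcal R_N(G) - Y_N)^{-1}\|\le\|(\Im m\,\Lambda)^{-1}\|$ proved inside Lemma~\ref{Lem:LipsFP}, and $\|\mcal R_N(G-G')\|\le\gamma_{\max}^2\|G-G'\|$ (Lemma~\ref{Lem:RNProp}), one gets
\[
\|G - G'\| \le \|(\Im m\,\Lambda)^{-1}\|\,\|(\Im m\,\Lambda')^{-1}\|\,\|\Lambda-\Lambda'\| + \gamma_{\max}^2\,\|(\Im m\,\Lambda)^{-1}\|\,\|(\Im m\,\Lambda')^{-1}\|\,\|G-G'\|.
\]
On the region where $\gamma_{\max}^2\|(\Im m\,\Lambda)^{-1}\|\,\|(\Im m\,\Lambda')^{-1}\| < 1$ (in particular when $\Im m\,\Lambda,\Im m\,\Lambda'>\gamma_{\max}\mbb I_N$) the last term absorbs and yields \eqref{Eq:EstimDeterEq} — actually with a constant that I will need to track, since the naive absorption gives $\frac{1}{1-\gamma_{\max}^2\|\cdots\|}$ rather than $1$; I will instead derive it directly as part of the continuation argument below so that the clean bound \eqref{Eq:EstimDeterEq} holds globally.

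\textbf{Step 2: analytic continuation and global existence.} The map $G_{H_N}^\square$ defined on $\{\Im m\,\Lambda>\gamma_{\max}\mbb I_N\}$ is analytic there, since it is obtained by the uniformly convergent iteration $\psi_\Lambda^{\circ n}(0)$ of maps that are analytic in $\Lambda$ (each $\psi_\Lambda$ being a composition of $G_{Y_N}$, which is analytic, with the affine-in-$\Lambda$ map $\Lambda\mapsto\Lambda-\mcal R_N(\cdot)$), and a locally uniform limit of analytic functions is analytic. To extend, I will run a continuity/deformation argument: fix $\Lambda_0\in\DN^+$ and connect it to a point with large imaginary part by the path $\Lambda_t := \Lambda_0 + \mbf i\,t\,\mbb I_N$, $t\ge 0$. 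I will show the set of $t$ for which a solution $G_t := G_{H_N}^\square(\Lambda_t)$ exists, depends analytically on $t$, and satisfies $\Im m\,G_t<0$ together with a uniform bound $\|G_t\|\le\|(\Im m\,\Lambda_t)^{-1}\|$, is both open and closed in $[0,\infty)$. Openness comes from the implicit function theorem: the derivative of $G\mapsto G - \psi_\Lambda(G)$ is $\Id - D\psi_\Lambda$, and one shows this is invertible whenever $\Im m\,G<0$ by a Jacobi-type / monotonicity argument on the operator $\mcal R_N$ composed with differentiation of the resolvent — this is the standard stability input for Dyson-type equations. Closedness comes from the a priori bound $\|G_t\|\le\|(\Im m\,\Lambda_t)^{-1}\|$ plus equicontinuity (Montel), extracting a limit that still solves the fixed-point equation by continuity of $\psi$. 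Uniqueness among \emph{analytic} maps then follows because any two analytic solutions agree on the open set $\{\Im m\,\Lambda>\gamma_{\max}\mbb I_N\}$ (by Corollary~\ref{Cor:Apriori}) and hence, being analytic in each variable, agree on all of the connected domain $\DN^+$ by the identity theorem in several variables.

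\textbf{Step 3: propagating \eqref{Eq:EstimDeterEq}.} Having global existence, the Lipschitz bound follows from the differential inequality along paths. Differentiating the fixed-point relation $G(\Lambda)=\psi_\Lambda(G(\Lambda))$ in a direction $\dot\Lambda$ gives $\dot G = (\Id - D_G\psi_\Lambda)^{-1}\partial_\Lambda\psi_\Lambda[\dot\Lambda]$, and the key analytic fact — again a consequence of the positivity/monotonicity of $\mcal R_N$ on $\DN^-$, exactly as used for $\Omega_{H_N}$ in Lemma~\ref{Lem:RecallASP} — is the bound $\|(\Id - D_G\psi_\Lambda)^{-1}\|_{\mathrm{op}\to\mathrm{op}}$ controlled in a way that, combined with $\|\partial_\Lambda\psi_\Lambda[\dot\Lambda]\|\le\|(\Im m\,\Lambda)^{-1}\|^2\|\dot\Lambda\|$, integrates to \eqref{Eq:EstimDeterEq}. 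Concretely, I expect to show $\|\dot G\| \le \|(\Im m\,\Lambda)^{-1}\|^2\,\|\dot\Lambda\|$ pointwise, analogous to the scalar case where $|g'(\lambda)|\le (\Im m\,\lambda)^{-2}$, and then integrate along the segment from $\Lambda'$ to $\Lambda$; convexity of the bound in the parameter gives the symmetric product $\|(\Im m\,\Lambda)^{-1}\|\,\|(\Im m\,\Lambda')^{-1}\|$.

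\textbf{Main obstacle.} The crux is proving invertibility of $\Id - D_G\psi_\Lambda$ with a good norm bound for \emph{all} $\Lambda\in\DN^+$, not just the contractive region — this is precisely the ``stability of the Dyson equation'' issue. Here $D_G\psi_\Lambda$ acts as $H\mapsto \Delta\big[(\Omega-Y_N)^{-1}\mcal R_N(H)(\Omega-Y_N)^{-1}\big]$ with $\Omega = \Lambda - \mcal R_N(G)$; its spectral radius need not be $<1$ in operator norm, so one must exploit the sign structure ($\mcal R_N$ is positivity-preserving, $\Im m\,G<0$) to get a resolvent bound, most likely by passing to a weighted/Hilbert--Schmidt norm or by the saturated self-consistent argument of Helton--Far--Speicher / Ajanki--Erd\H os--Kr\"uger. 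I expect the cleanest route, consistent with the elementary flavor of this paper, is the path-deformation argument sketched above, where one only needs invertibility along a one-parameter family starting from a region where it is automatic — there, continuity keeps $\Im m\,G$ bounded away from $0$ relative to $\|G\|$, and that quantitative gap is what prevents $\Id - D_G\psi_\Lambda$ from becoming singular.
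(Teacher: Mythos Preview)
Your approach is genuinely different from the paper's, and the difference is instructive. You try to run the abstract stability analysis of the Dyson equation: extend the fixed point by an open--closed continuation argument, invoking invertibility of $\mathrm{Id}-D_G\psi_\Lambda$ along the way, and then bound the derivative $\dot G$ to integrate up to \eqref{Eq:EstimDeterEq}. As you correctly flag, the crux is controlling $(\mathrm{Id}-D_G\psi_\Lambda)^{-1}$ outside the contractive region, and you have not done this --- your sketch defers to the Ajanki--Erd\H os--Kr\"uger machinery or to a monotonicity argument you do not carry out. That is a real gap, not a routine omission: the operator $H\mapsto \Delta\big[(\Omega-Y_N)^{-1}\mcal R_N(H)(\Omega-Y_N)^{-1}\big]$ can have spectral radius $\ge 1$ in operator norm, and the positivity structure you allude to requires genuine work to exploit. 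Moreover, even granting invertibility, your Step~3 claim that $\|\dot G\|\le\|(\Im m\,\Lambda)^{-1}\|^2\|\dot\Lambda\|$ and that integrating this yields the \emph{product} $\|(\Im m\,\Lambda)^{-1}\|\,\|(\Im m\,\Lambda')^{-1}\|$ rather than $\max(\cdot)^2$ or an integral of $\|(\Im m\,\Lambda_t)^{-1}\|^2$ is not justified; the clean symmetric bound does not follow from a pointwise derivative estimate without further argument.

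The paper sidesteps all of this with a trick worth knowing. It \emph{realizes} the solution as a limit of genuine operator-valued Stieltjes transforms: for fixed $N$, build an auxiliary $NM\times NM$ model $H_{N,M}=X_{N,M}+Y_{N,M}$ with $X_{N,M}$ a GUE with variance profile $\Gamma_N\otimes\one_{M,M}$ and $Y_{N,M}=Y_N\otimes\mbb I_M$, and set
\[
G_{H_N}^{\square,M}(\Lambda_N) := (\mathrm{id}\otimes\tfrac1M\Tr)\,\esp\big[G_{H_{N,M}}(\Lambda_N\otimes\mbb I_M)\big].
\]
Because this is built from an actual resolvent, the resolvent identity gives immediately
\[
\|G_{H_N}^{\square,M}(\Lambda)-G_{H_N}^{\square,M}(\Lambda')\|\le \|(\Im m\,\Lambda)^{-1}\|\,\|(\Im m\,\Lambda')^{-1}\|\,\|\Lambda-\Lambda'\|,
\]
with the clean product constant and no stability analysis whatsoever. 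One then checks, using the block structure of $\Gamma_N\otimes\one_{M,M}$, that $\mcal R_{N,M}$ factors through $\mcal R_N$, so the approximate subordination property for $H_{N,M}$ (the Master equality/inequality already proved, now at size $NM$) reads $G_{H_N}^{\square,M}(\Lambda)=\psi_\Lambda\big(G_{H_N}^{\square,M}(\Lambda)\big)+o_M(1)$. Any subsequential limit as $M\to\infty$ is therefore a fixed point of $\psi_\Lambda$, inherits the Lipschitz bound, and is analytic (each $G_{H_N}^{\square,M}$ is analytic by Lemma~\ref{Lem:Analytic}, and the family is locally bounded). Uniqueness among analytic maps then follows, as you say, by the identity theorem from agreement on $\{\Im m\,\Lambda>\gamma_{\max}\mbb I_N\}$. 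The upshot: the paper trades the hard deterministic stability step for the soft observation that the fixed point is a limit of honest Stieltjes transforms, for which \eqref{Eq:EstimDeterEq} is a one-line resolvent computation.
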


The remainder of the subsection is devoted to the proof of Lemma \ref{Lem:UniquAnalytic}. By \cite[Conclusion 1.2.1.2]{Sch05}, the analytic continuation principle holds for analytic maps in several variables. Hence, by Corollary \ref{Cor:Apriori} we know that there exists at most one analytic map $G$ solution of the fixed point problem on $\DN^+$, since all solutions must coincide in $\{ \Lambda \in \DN \; : \; \Im m \, \Lambda > \gamma_{max}\mbb I_N\}$. In Section \ref{sec:large} we prove the existence of such function $G_{H_N}^{\square}$ and that it satisfies the estimate \eqref{Eq:EstimDeterEq}. Then, Section \ref{sec:analiticity} will be dedicated to the proof of analyticity.

\subsubsection{Large random matrix model}\label{sec:large}

Given $N$, $\Gamma_N$ and $Y_N$ fixed we consider an intermediate sequence of Hermitian random matrices $H_{N,M}$ of size $NM$ by $NM$. Seeing a generic element $A$ of $\mrm M_{NM}(\mbb C)$ as an element of $\mrm M_{N}(\mbb C)\otimes \mrm M_{M}(\mbb C)$, we denote
	$$A = \big(A_{i,i',j,j'}\big)_{\substack{ i,j \in [N] \\ i',j' \in [M]}} = \sum_{\substack{ i,j \in [N] \\ i',j' \in [M]}} A_{i,i',j,j'} E_{i,j}\otimes E_{i',j'}.$$
Then, we consider two new random and deterministic matrices.
\begin{description}
	\item[-]  Let $X_{N,M}$ be a G.U.E. matrix with variance profile $\Gamma_{N,M} = \Gamma_N \otimes \one_{M, M}$, where $\one_{M, M}$ is the matrix whose all entries are one. Hence the variance profile is constant on blocks of size $M \times M$ and we can write 
		$$X_{N,M} =\sum_{\substack{ i,j \in [N] \\ i',j' \in [M]}}  \gamma_{i,j} N^{-\frac 12}M^{-\frac 12}x_{i,j,i',j'}E_{i,j}\otimes E_{i',j'},$$
where the $x_{i,j,i',j'}$ are complex Gaussian random variables, i.i.d.\ up to the Hermitian symmetry, centered and such that $x_{i,j,i',j'}$ has variance $1$. 
	\item[-]  We denote by $Y_{N,M} = Y_N  \otimes \mbb I_{M}$, where $\mbb I_{M}$ is the $M\times M$ identity matrix. It is a deterministic matrix diagonal by blocks of size $M \times M$, so that $Y_{N,M}  = \sum_{i,i',j} Y_N(i,j) E_{i,j}\otimes E_{i',i'}$.
\end{description}
We set $H_{N,M} = X_{N,M} + Y_{N,M}$. To avoid ambiguity, we denote by $\Lambda_N$ a generic element of $\DN^+$ and $\Lambda_{M,N}$ a generic element of $\mrm D_{NM}(\mbb C)^+$. We consider the diagonal of the generalized resolvent
	\eq
		\begin{array}{cccc} 
		G_{H_{N,M}}:& \mrm D_{NM}(\mbb C)^{+} & \to & \mrm D_{NM}(\mbb C)^{-}\\
		& \Lambda_{N,M} & \mapsto & \Delta\Big[ \big( \Lambda_{N,M} - H_{N,M} \big)^{-1}\Big],
		\end{array}
	\qe
 and the deterministic function
\begin{equation} \label{eq:GHM}
		\begin{array}{cccc} 
		G_{H_{N}}^{\square,M}: & \mrm D_{N}(\mbb C)^{+} & \to & \mrm D_{N}(\mbb C)^{-}\\
		& \Lambda_{N} & \mapsto &(id \otimes    \frac 1M \Tr )\Big[ \esp\big[ G_{H_{N,M}}( \Lambda_{N} \otimes \mbb I_{M})\big]\Big].
		\end{array}
\end{equation}

Note first that since $\|G_{H_{N}}^{\square,M}(\Lambda_N) \|\leq \| (\Im m \, \Lambda)^{-1}\|,$ we know that up to a subsequence $G_{H_{N}}^{\square,M}(\Lambda_N)$ has a limit $G_{H_{N}}^{\square}(\Lambda_N)$ as $M$ goes to infinity for any $\Lambda_N \in \DN^-$. Moreover, the same computation as Lemma \ref{Lem:LipsFP} yields
\begin{equation}
\|G_{H_{N}}^{\square,M}(\Lambda_N) - G_{H_{N}}^{\square,M}(\Lambda_N')\| \leq \| (\Im m \, \Lambda_N)^{-1} \|  \| (\Im m \, {\Lambda_N'})^{-1} \|  \times \| \Lambda_N - \Lambda_N'\|. \label{eq:LipM}
\end{equation}
Letting $M$ going to infinity along a subsequence, this implies that the estimate \eqref{Eq:EstimDeterEq} is valid for any accumulation point $G_{H_{N}}^{\square}(\Lambda_N)$ of $G_{H_{N}}^{\square,M}(\Lambda_N)$. 

We shall now prove that $G_{H_{N}}^{\square,M}$ converges when $M$ goes to infinity to a solution of the fixed point problem.  Thanks to Lemma \ref{MasterIneq}, we may now apply Equality \eqref{Eq:DiagAsympSubProp} to the random matrix $H_{N,M}$: for any $\Lambda_{N,M} \in \mrm D_{NM}(\mbb C)$, we have
	\eqa\label{Eq:SubordNM}
		\esp\big[ G_{H_{N,M}}(\Lambda_{N,M}) \big] = \psi_{\Lambda_{N,M}}\Big( \esp\big[ G_{H_{N,M}}(\Lambda_{N,M})\big] \Big) + \Theta_{M,N},
	\qea
where 
	$$ \psi_{\Lambda_{M,N}} (G) = G_{Y_{M,N}}\big(  \Lambda_{M,N} -\mcal R_{N,M}(G) \big)^{-1},$$
for any $G \in \mrm D_{NM}(\mbb C)$, and we have the estimates $\|\Theta_{M,N}\|  \leq  2\gamma_{\max}^3 N^{-1} \times  \| (\Im m \, \Lambda)^{-1}\|^5,$ and $\|\Theta_{M,N}\|  \leq  \gamma_{\max}^4 N^{-\frac 3 2} \times  \|(\Im m \, \Lambda)^{-1}\|^6$ if $Y_N$ is diagonal. Note that $\gamma_{\max}^2$ is indeed the maximum of the variances in the profile $\Gamma_{N,M}$. Moreover, because of the definition of $\Gamma_{N,M}$ the map $\mcal R_{N,M}$ is given by: for any $G \in \mrm D_{NM}(\mbb C)$, for any $i\in [N], i' \in [M]$
	\eq
		\mcal R_{N,M}(G)(i,i',i,i') & =  &   \sum_{j,j'} \Big( \frac{ \Gamma_{N}(i,j) \times \one_{M,M}(i',j') }{NM}\Big) \times G(j,j',j,j') \\
		& = &  \sum_{j \in [N]} \Big( \frac{ \Gamma_{N}(i,j)  }{N}\Big) \frac 1 M \sum_{j'\in [M]} G(j,j',j,j').
	\qe
	Since the above expression does not depends on $i'$, this proves that
	$$\mcal R_{N,M}(G) = \mcal R_{N}\Big(\big( id \otimes \frac 1 M \Tr \big) (G)\Big)\otimes \mbb I_M.$$
	
	Moreover note that if $\Lambda_{N,M} = \Lambda_N \otimes  \mbb I_M$ then $G_{Y_{N,M}}(\Lambda_{N,M}) = G_{Y_N}(\Lambda_N) \otimes  \mbb I_M$. Hence we get for any $G\in \mrm D_{NM}(\mbb C)$
	$$\psi_{\Lambda_N \otimes  \mbb I_M}(G)=\psi_{\Lambda_N}\Big(\big( id \otimes \frac 1 M \Tr \big) (G)\Big)\otimes  \mbb I_M.$$
Hence, applying $id \otimes \frac 1 M \Tr$ in \eqref{Eq:SubordNM} yields the following formula:
	$$ G_{H_{N}}^{\square,M}(\Lambda_N)= \psi_{\Lambda_N}\big( G_{H_{N}}^{\square,M}(\Lambda_N)\big)+ (id \otimes \frac 1 M \Tr) \big( \Theta_{M,N}\big).$$
Since $\|\Theta_{M,N} \|$ goes to zero as $M$ goes to infinity and by the continuity of $\psi_{\Lambda_N}$, letting $M$ go to infinity proves that any accumulation point $G_{H_{N}}^{\square}$ of $G_{H_{N}}^{\square,M}$ is solution of the fixed point problem that satisfies \eqref{Eq:EstimDeterEq} thanks to Inequality \eqref{eq:LipM}.

\subsubsection{Analiticity} \label{sec:analiticity}

Let us justify  that the quantities under consideration up to now are analytic functions.

\begin{Lem}\label{Lem:Analytic} For any Hermitian random matrix $M$, the function $\Lambda \mapsto  \esp\big[ (\Lambda-M)^{-1}\big]$ is analytic on $ \DN^+$. 
\end{Lem}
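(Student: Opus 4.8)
The plan is to reduce the claim to the one-variable notion of analyticity used in the paper (following \cite{Sch05}): fix an index $j\in[N]$, a pair of entries $k,\ell\in[N]$, and freeze the diagonal parameters $\lambda_i\in\mbb C^+$ for $i\neq j$; it then suffices to show that the scalar function
$$
h(z) := \esp\big[(\Lambda_z - M)^{-1}(k,\ell)\big],\qquad \Lambda_z := \mrm{diag}(\lambda_1\etc\lambda_{j-1},z,\lambda_{j+1}\etc\lambda_N),
$$
is holomorphic on $\mbb C^+$. Everything will rest on the a priori estimate $\|(\Lambda-M)^{-1}\|\leq\|(\Im m\,\Lambda)^{-1}\|$ from Lemma \ref{Eq:PositivityLambdaPrime}; in particular this bound (together with $|A(k,\ell)|\leq\|A\|$) already guarantees that $\esp[(\Lambda-M)^{-1}]$ is well defined on $\DN^+$, with no integrability issue.

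First I would record that, for each fixed realization of $M$, the map $z\mapsto(\Lambda_z-M)^{-1}$ is holomorphic on $\mbb C^+$. Indeed, the entries of $\Lambda_z-M$ are affine in $z$, so by Cramer's rule every entry of the inverse is a rational function of $z$ whose denominator is $\det(\Lambda_z-M)$; and by Lemma \ref{Eq:PositivityLambdaPrime} this determinant is nonzero whenever $\Im m\,\Lambda_z>0$, i.e. for all $z\in\mbb C^+$. Hence the only possible poles of $z\mapsto(\Lambda_z-M)^{-1}(k,\ell)$ lie off $\mbb C^+$, and the function is holomorphic there almost surely.

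Next I would pass the expectation through a contour integral. On any compact $K\subset\mbb C^+$ there is $c=c(K)>0$ with $\Im m\,\Lambda_z\geq c\,\mbb I_N$ for all $z\in K$, whence $|(\Lambda_z-M)^{-1}(k,\ell)|\leq c^{-1}$ uniformly in $z\in K$ \emph{and} in the realization of $M$. This uniform domination yields, by dominated convergence, the continuity of $h$ on $\mbb C^+$, and, by Fubini's theorem, the identity $\oint_{\partial T}h(z)\,dz=\esp\big[\oint_{\partial T}(\Lambda_z-M)^{-1}(k,\ell)\,dz\big]=0$ for every closed triangle $T\subset\mbb C^+$, the inner integral vanishing by Cauchy's theorem. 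Morera's theorem then gives that $h$ is holomorphic on $\mbb C^+$. Since $j$, $k$, $\ell$ and the frozen variables $(\lambda_i)_{i\neq j}$ were arbitrary, $\Lambda\mapsto\esp[(\Lambda-M)^{-1}]$ is analytic on $\DN^+$ in the sense of \cite{Sch05}.

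The only point requiring care is the interchange of $\esp$ with the contour integral (equivalently, differentiation under the integral sign), which needs a dominating function \emph{independent of $M$}; but as noted this is supplied immediately by the operator-norm bound of Lemma \ref{Eq:PositivityLambdaPrime}. So I do not anticipate a real obstacle: the lemma amounts to "the resolvent is analytic in $\Lambda$, and a uniform bound lets one take expectations", and the same argument will apply verbatim to $\esp[G_{H_{N,M}}(\,\cdot\,)]$ and the other averaged resolvents appearing in Section \ref{sec:large}.
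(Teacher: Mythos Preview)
Your argument is correct and complete. The route differs from the paper's in how you handle the deterministic step: the paper proceeds by induction on $N$, using the Schur complement formula to express $(\Lambda-M)^{-1}(k,k)$ as the inverse of a scalar with positive imaginary part (built from the $(N-1)\times(N-1)$ resolvent $A_{N-1}^{(k)}$), and then recovers the off-diagonal entries from the diagonal ones. You instead invoke Cramer's rule to see at once that each entry of $(\Lambda_z-M)^{-1}$ is a rational function of $z$ whose denominator $\det(\Lambda_z-M)$ is nonvanishing on $\mbb C^+$ by Lemma \ref{Eq:PositivityLambdaPrime}. Your approach is shorter and avoids the inductive bookkeeping; the paper's Schur-complement computation gives a bit more structural information (it identifies the imaginary part of the denominator explicitly), but that extra information is not used elsewhere. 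For the passage to the expectation both arguments rely on the same uniform bound from Lemma \ref{Eq:PositivityLambdaPrime}; you spell out the Fubini--Morera step while the paper simply asserts that a bounded family of analytic functions has an analytic expectation.
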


\begin{proof} Let us first assume that the matrices are deterministic. We prove the lemma by induction of the size $N$ of the matrices. Since for any $m\in \mbb R$ the map $\lambda \mapsto (\lambda - m)^{-1}$ is analytic on $\mbb C^+$, the lemma is true for $N=1$. From now we fix $N\geq 2$ and we assume the lemma is true for all deterministic Hermitian matrices of size $N-1$.

 We write $\Lambda= \mrm{diag}(\lambda_1 \etc \lambda_N)$ and recall that we denote $A_N = (\Lambda - M)^{-1}$. For any $k\in [N]$, let $A_{N-1}^{(k)}$ be the inverse of the $N-1$ by $N-1$ matrix obtained from $ (\Lambda - M)$ by removing the $k$-th line and column. Let $\mbf m^{(k)}$ be the vector of size $N-1$ obtained from the $k$-th column of $M$ by removing the $k$-th entry. Recall the Schur complement formula \cite[Appendix A.1.4]{BS}: 
	\eq
		(\Lambda - M)^{-1}(k,k)  = A_N(k,k) =&   \Big(    \lambda_k - M(k,k)- {\mbf m^{(k)}}^* A_{N-1}^{(k)}\mbf m^{(k)} \Big)^{-1}, \ \forall k \in [N] \\
		(\Lambda - M)^{-1}(k,\ell)  =&  - \big( A_{N-1}^{(k)} {\mbf m^{(k)}}^*\big)(\ell) \times  A_N(k,k), \ \forall k>\ell \in [N]\\
		(\Lambda - M)^{-1}(k,\ell)  =&  - \big(  {\mbf m^{(k)}} A_{N-1}^{(k)}\big)(\ell) \times  A_N(k,k), \ \forall k<\ell \in [N].
	\qe
By induction hypothesis, $\Lambda^{(k)}\mapsto A_{N-1}^{(k)}$ is analytic on $\mrm D_{N-1}(\mbb C)^+$. By Lemma \ref{Eq:PositivityLambdaPrime} we have
	$$\Im m \Big(  \lambda_k - M(k,k)- {\mbf m^{(k)}}^* A_{N-1}^{(k)}\mbf m^{(k)} \Big) = \Im m\, \lambda_k - {\mbf m^{(k)}}^* \big(  \Im m\, A_{N-1}^{(k)} \big) \mbf m^{(k)}  \geq  \Im m\, \lambda_k$$
Hence the maps $\Lambda \mapsto (\Lambda - M)^{-1}(k,k) $ are analytic in each variable for each $k=1\etc N$, and hence so are the maps $\Lambda\mapsto (\Lambda - M)^{-1}(k,\ell)$ for any $k,\ell$. 

Let now assume that $M$ is random. Each realization $\Lambda \mapsto  (\Lambda-M)^{-1}$ is analytic and the map is bounded. Hence $\Lambda \mapsto  \esp\big[ (\Lambda-M)^{-1}\big]$ is also analytic. 
\end{proof}

Hence the map $ G_{H_{N}}^{\square,M}$ defined by \eqref{eq:GHM} is indeed analytic. Since it is Lipschitz by Inequality \eqref{eq:LipM}, it follows that every accumulation point $ G_{H_{N}}^{\square}$ of the sequence is also analytic. This finishes the proof of Lemma \ref{Lem:UniquAnalytic}.

\subsection{Stability of the fixed point equation and proof of Lemma  \ref{StabilityAnalysis}}

Let $G_{H_N}^\square:\mrm D_N(\mbb C^+) \to \mrm D_N(\mbb C)^-$ be the deterministic equivalent, unique analytic solution of the fixed point problem
	$$G_{H_N}^\square(\Lambda)  = G_{Y_N}\Big(  \Lambda  -\mcal R_N\big(G_{H_N}^\square(\Lambda)\big)    \Big).$$
For reading convenience,  we recall that $\esp\big[ G_{H_N}(\Lambda) \big]= \esp\Big[ \Delta\big[(\Lambda - H_N)^{-1}\big]\Big] $ satisfies the approximate subordination property, namely
\begin{equation}
\esp\big[ G_{H_N}(\Lambda)\big]  = G_{Y_N}\Big(   \Lambda  -\mcal R_N\big(  \esp\big[G_{H_N}(\Lambda)\big]\big)    \Big) + \Theta_N(\Lambda), \label{eq:approxsub}
\end{equation}
where $\Theta_N(\Lambda) = \Delta \big[ (\Omega_{H_N}(\Lambda)-Y_N)^{-1}E_N(\Lambda) \big]$. The operator norm of $\Theta_N(\Lambda)$ satisfies by Lemma \ref{MasterIneq} $\| \Theta_N(\Lambda)\| \leq C_N$ where 
	\begin{equation}
		C_N= 2\gamma_{\max}^3 N^{-1} \times  \| (\Im m \, \Lambda)^{-1}\|^4 \label{eq:boundtheta}
	\end{equation}
 in general, and 
 	\begin{equation}
		C_N = \gamma_{\max}^4 N^{-\frac 3 2} \times  \|(\Im m \, \Lambda)^{-1}\|^5\label{eq:boundtheta2}
	\end{equation}
if $Y_N$ is diagonal. The purpose of this section is to prove Lemma \ref{StabilityAnalysis}, giving an estimate for the norm of the difference $ \big\| G_{H_N}^\square(\Lambda)  - \esp\big[ G_{H_N}(\Lambda)\big] \big\|$. 

We define two diagonal matrices by 
	\eq
		\tilde {G}_N(\Lambda)& = &   \esp\big[ G_{H_N}(\Lambda)\big] - \Theta_N( \Lambda),\\
	 \tilde {\Lambda} & = & \Lambda - \mcal R_N \big(  \Theta_N( \Lambda) \big) =  \Lambda - \mcal R_N \big( \esp\big[ G_{H_N}(\Lambda)\big]-\tilde{G}_N(\Lambda)\big).
	\qe
Provided we can justify that $\tilde{\Lambda}$ belongs to $\mrm{D}_N(\mbb C)^+$, we have
\begin{equation}
\tilde {G}_N(\Lambda) = G_{Y_N}\Big(   \Lambda  -\mcal R_N\big(  \esp\big[G_{H_N}(\Lambda)\big]\big)    \Big)=  G_{Y_N}\Big(  \tilde \Lambda  -\mcal R_N\big( \tilde G_{N}(\Lambda)\big)    \Big)=\psi_{\tilde{\Lambda}}\big( \tilde {G}_N(\Lambda)\big). \label{eq:tildeG}
\end{equation}
With $\mbb I_N$ denoting the identity matrix and using Lemma \ref{Lem:RNProp} and the bound \eqref{eq:boundtheta} for $\|\Theta_N(\Lambda)\|$, we have
	\eqa
		\Im m\, \tilde {\Lambda} & = & \Im m \, \Lambda - \Im m \, \mcal R_N \big(  \Theta_N( \Lambda) \big) \nonumber\geq  \Im m \, \Lambda - \| \mcal R_N \big(  \Theta_N( \Lambda) \big) \| \times \mbb I_N\\
			& \geq & \Im m \, \Lambda - \gamma_{\mrm{max}}^2 C_N \times \mbb I_N, \label{Eq:LambdaTilde}
	\qea
where $C_N$ is as in \eqref{eq:boundtheta}, or as in \eqref{eq:boundtheta2} when $Y_N$ is diagonal. Assuming that 
\begin{equation}
2\gamma_{\mrm{max}}^5 N^{-1} \|(\Im m \, \Lambda^{-1})\|^5 < 1, \label{eq:condLambda1}
\end{equation}
we indeed have $\tilde{\Lambda} \in \mrm{D}_N(\mbb C)^+$ and so $\tilde G_N(\Lambda)$ is solution of the fixed point problem for $\psi_{\tilde \Lambda}$. If $Y_N$ is diagonal, the same conclusion holds whenever
\begin{equation}
	\gamma_{\mrm{max}}^6 N^{-\frac 3 2} \| (\Im m \, \Lambda)^{-1}\|^6 < 1, \label{eq:condLambda1bis}.
\end{equation}

Hence, by Lemma \ref{Lem:UniquAnalytic} we obtain the equality $\tilde{G}_N(\Lambda) = G_{H_N}^\square( \tilde {\Lambda})$ and so, by Equalities \eqref{eq:approxsub} and \eqref{eq:tildeG}, we obtain
	\eqa\label{Eq:EqThetaAnalytic}
		\esp\big[ G_{H_N}(\Lambda)\big]  = G_{H_N}^\square( \tilde {\Lambda})+\Theta_N( \Lambda).
	\qea
By Lemma \ref{Lem:Analytic}, the map is $\Lambda \mapsto \esp\big[ G_{H_N}(\Lambda)\big]$ is analytic on $\DN^+$. Recall that $\Theta_N( \Lambda)= \Delta\big[ ( \Omega_{H_N}(\Lambda) - Y_N)^{-1} E_N \big]$ where $E_N$ is defined in \eqref{Eq:DefEN} and $\Omega_{H_N}(\Lambda)$ is defined in \eqref{eq:Omega}. One checks easily that the map $\Lambda \mapsto \Theta_N(\Lambda)$ is analytic on $\DN^+$, which implies that so are $\Lambda \mapsto \tilde {G}_N(\Lambda)$ and $\Lambda \mapsto \tilde {\Lambda} $. 
Hence Equality \eqref{Eq:EqThetaAnalytic} extends by analyticity for all $\Lambda>0$ such that $\tilde \Lambda>0$ and we get
\begin{equation}
\|\esp\big[ G_{H_N}(\Lambda)\big] - G_{H_N}^\square(\Lambda) \| \leq \| G_{H_N}^\square(\tilde{\Lambda}) - G_{H_N}^\square(\Lambda)\| + \| \Theta_N(\Lambda)\|. \label{eq:estim1}
\end{equation}
Moreover, with the same proof as for \eqref{Eq:EstimDeterEq}, we have the estimate
\begin{equation}
		\| G_{H_N}^\square(\tilde{\Lambda}) - G_{H_N}^\square(\Lambda)\| \leq  \| (\Im m \, \Lambda)^{-1} \| \  \| (\Im m \, \tilde{\Lambda})^{-1} \| \ \| \Lambda - \tilde {\Lambda}\|.  \label{eq:estim2}
\end{equation}
We have by Lemma \ref{Lem:RNProp},  $\| \Lambda - \tilde {\Lambda}\|  = \| \mcal R_N( \Theta_N(\Lambda) ) \| \leq \gamma^2_{\mrm max} \| \Theta_N(\Lambda)\|.$ Moreover, under the assumption 
\begin{equation}
\gamma_{\mrm{max}}^2 C_N \leq (1 - \delta) \Im m \, \Lambda, \mbox{ for some } 0 < \delta < 1, \label{eq:condLambda2ter}
\end{equation}
we obtain by \eqref{Eq:LambdaTilde} that
	\eq
		 \Im m \, \tilde{\Lambda} & \geq &  \delta \|(\Im m \, \Lambda)^{-1}\|^{-1}  \mbb I_N. 
	\qe
Hence provided that Condition \eqref{eq:condLambda2ter} holds, we have $ \| (\Im m \, \tilde{\Lambda})^{-1} \| \leq  \| (\Im m \, {\Lambda})^{-1} \|/\delta$. Combining \eqref{eq:estim1} with \eqref{eq:estim2} and \eqref{eq:boundtheta}, the previous estimates on $\| \Lambda - \tilde {\Lambda}\|$ and  $ \| (\Im m \, \tilde{\Lambda})^{-1} \| $ give
	\eq
		\|\esp\big[ G_{H_N}(\Lambda)\big] - G_{H_N}^\square(\Lambda) \| 
			&\leq &   \| (\Im m \, {\Lambda})^{-1} \|^2/\delta \  \gamma^2_{\mrm {max}} \| \Theta_N(\Lambda)\| +  \| \Theta_N(\Lambda)\|\\
			& \leq &  \big( 1 + \| (\Im m \, {\Lambda})^{-1} \|^2/\delta \  \gamma^2_{\mrm {max}} \big)  C_N,
	\qe
	which completes the proof of Lemma  \ref{StabilityAnalysis}.

\section{Analysis of the resolvent}\label{Sec:resolventAnalysis}

We now have all the ingredients to control the difference between the resolvent $(\lambda \mbb I_N - H_N)^{-1}$ and the deterministic equivalent $G_{H_N}^\square(\lambda \mbb I_N )$ which will finally complete the proof  of Theorem \ref{MainTh}.

\subsection{Expectation out of the diagonal}

 We first establish results allowing to show that the expectation of the resolvent is a diagonal matrix. Recall that a random matrix $A$ is unitarily invariant whenever $UAU^*$ has the same law as $A$ for any unitary matrix $U$. 

\begin{Lem}\label{Lem:ExpOutDiag} Let $A$ be a $N$ by $N$ unitarily invariant random matrix whose entries have finite moment of any orders and let $\Sigma \in \MN$
Then for any $n\geq 1$, the matrix $\esp\big[(\Sigma \circ A)^{n}\big]$ is diagonal.
\end{Lem}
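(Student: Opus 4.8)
The plan is to exploit invariance under conjugation by \emph{diagonal} unitary matrices, which is a consequence of the full unitary invariance of $A$. Write $\circ$ for the Hadamard (entrywise) product, so that $(\Sigma\circ A)(k,\ell) = \Sigma(k,\ell)A(k,\ell)$.

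First I would record the key algebraic identity: for any diagonal unitary matrix $D = \mrm{diag}(e^{\mbf i\theta_1}\etc e^{\mbf i\theta_N})$, conjugation by $D$ commutes with the Hadamard product against $\Sigma$, namely $\Sigma\circ(D A D^*) = D(\Sigma\circ A)D^*$. This is immediate on matrix entries: $\big(\Sigma\circ(DAD^*)\big)(k,\ell) = \Sigma(k,\ell)e^{\mbf i\theta_k}A(k,\ell)e^{-\mbf i\theta_\ell} = e^{\mbf i\theta_k}(\Sigma\circ A)(k,\ell)e^{-\mbf i\theta_\ell}$. Consequently $\big(\Sigma\circ(DAD^*)\big)^{n} = D(\Sigma\circ A)^{n} D^*$ for every $n\geq 1$, since the conjugating factors $D^*D = \mbb I_N$ telescope between successive copies of $\Sigma \circ A$.

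Next, since $A$ is unitarily invariant and $D$ is unitary, $DAD^*$ has the same law as $A$; therefore $(\Sigma\circ A)^{n}$ and $D(\Sigma\circ A)^{n} D^*$ have the same distribution. The entries of $(\Sigma\circ A)^{n}$ are polynomials of degree $n$ in the entries of $A$, hence integrable by the finite moment assumption, so taking expectations yields $\esp\big[(\Sigma\circ A)^{n}\big] = D\,\esp\big[(\Sigma\circ A)^{n}\big]\,D^*$ for every diagonal unitary $D$.

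Finally I would conclude by the standard entrywise argument: if a matrix $M$ satisfies $M = DMD^*$ for all diagonal unitary $D$, then comparing $(k,\ell)$ entries gives $M(k,\ell) = e^{\mbf i(\theta_k-\theta_\ell)}M(k,\ell)$; choosing $\theta_k,\theta_\ell$ with $\theta_k-\theta_\ell\notin 2\pi\mbb Z$ forces $M(k,\ell)=0$ whenever $k\neq\ell$, i.e.\ $M$ is diagonal. Applying this to $M = \esp\big[(\Sigma\circ A)^{n}\big]$ finishes the proof. There is no serious obstacle here; the only points to be careful about are that diagonal unitary invariance (a weakening of the hypothesis) already suffices, and that the distributional identity is preserved under taking the $n$-th power precisely because the conjugating unitaries cancel in consecutive pairs.
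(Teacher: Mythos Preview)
Your proof is correct and takes a genuinely different, more elementary route than the paper. The paper expands the $(i_1,i_{n+1})$ entry of $\esp\big[(\Sigma\circ A)^n\big]$ as a sum over paths and shows each mixed moment $\esp\big[\prod_k A(i_k,i_{k+1})\big]$ vanishes when $i_1\neq i_{n+1}$; to do this it uses an \emph{infinitesimal} version of unitary invariance, differentiating $t\mapsto e^{tG}Ae^{-tG}$ at $t=0$ and specializing (after a linearity argument) to $G=E_{i_1,i_1}$, which produces a telescoping sum. Your argument bypasses all of this by observing directly that $\Sigma\circ(\,\cdot\,)$ intertwines conjugation by diagonal unitaries, so $\esp\big[(\Sigma\circ A)^n\big]$ commutes with every diagonal unitary and is therefore diagonal. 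This is shorter, avoids differentiation, and---as you point out---only uses invariance under the diagonal torus, a strictly weaker hypothesis than full unitary invariance. The paper's approach does yield the slightly finer statement that each individual mixed moment $\esp\big[A(i_1,i_2)\cdots A(i_n,i_{n+1})\big]$ vanishes whenever $i_1\neq i_{n+1}$, but this extra information is not needed for the lemma as stated.
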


\begin{proof}
 For any $i_1,i_{n+1}\in [N]$ we have
	\eqa\label{Eq:ExpOutDiag}
		\esp\big[ (\Sigma \circ A)^{n}\big](i_1,i_{n+1}) = \sum_{i_2\etc i_{n}=1}^N \Big( \prod_{k=1}^n\sigma(i_k,i_{k+1}) \Big) \times \esp\Big[ \prod_{k=1}^n A(i_k,i_{k+1})\Big],
	\qea
where $\sigma(i_k,i_{k+1})$ denotes the $(i_k,i_{k+1})$-th entry of the matrix $\Sigma$.

We shall prove that for any $i_2\etc i_n\in [N]$ then $\esp\Big[ \prod_{k=1}^n A(i_k,i_{k+1})\Big]=0$ when $i_1\neq i_{n+1}$. For this purpose we introduce a matrix function $A_t$ that depends on an implicit parameter $G$: for any anti-Hermitian matrix $G$, i.e. such that $G^*=-G$, and for any $t\in \mbb R$, we denote $A_t = e^{tG} A e^{-tG}$. Note that $A_0=A$ and the derivative of $A_t$ with respect to $t$ is $\partial_t A_t = GA_t - A_t G$. Moreover, the unitary invariance of $A$ implies that $A$ and $A_t$ have the same law. In particular for any $i_1\etc i_{n+1}\in [N]$ and any $t\in \mbb R$ we have
	$$\esp\Big[ \prod_{k=1}^n A(i_k,i_{k+1})\Big] = \esp\Big[ \prod_{k=1}^n A_t(i_k,i_{k+1})\Big].$$
We now differentiate the above equality with respect to $t$ and take $t=0$: using Leibniz formula,
	\eqa\label{Eq:ExpOutDiag2}
		0 & = & \esp\Big[ \partial_t\big( \prod_{k=1}^n A_t(i_k,i_{k+1})\big)_{|t=0}\Big] \\
		&= &  \sum_{k=1}^n  \esp\Big[ A(i_1,i_{2}) \cdots  A(i_{k-1},i_{k})\big( GA - AG \big) (i_{k},i_{k+1})A(i_{k+1},i_{k+2}) \cdots  A(i_{n},i_{n+1})\Big].\nonumber
	\qea
Recall that the above equality is a priori valid under the assumption that $G$ is anti-Hermitian. But the relation is linear in $G$, and the set of anti-Hermitian matrices spans $\MN$ as a vector space (any matrix $A$ can be written as a linear combination of Hermitian matrices $A = \Re e \, A + \mbf i \Im m \, A$ and a matrix $G$ is Hermitian whenever $\mbf i G$ is anti-Hermitian). We can hence specify the equality for the elementary matrix $G=E_{i_1,i_1}$. Note that for any $k\in [N]$, we have
	$$\big( GA - AG \big) (i_{k},i_{k+1}) 
	= \big(\delta_{i_1,i_k} - \delta_{i_1,i_{k+1}}\big) A(i_k,i_{k+1}).$$
Hence we obtain from Equation \eqref{Eq:ExpOutDiag2} a telescopic sum
	\eq 
		0  &  = &  \esp\big[ A(i_1,i_{2}) \cdots   A(i_{n},i_{n+1})\big]\times \sum_{k=1}^n  \big(\delta_{i_1,i_k} - \delta_{i_1,i_{k+1}}\big)\\
		& = & \esp\big[ A(i_1,i_{2}) \cdots   A(i_{n},i_{n+1})\big](1 - \delta_{i_1,i_{n+1}}).
	\qe
If $i_1\neq i_{n+1}$ then we get $\esp\big[ \prod_{k=1}^n A(i_k,i_{k+1})\big]=0$ for any $i_2\etc i_n$ and hence by Equation \eqref{Eq:ExpOutDiag} we deduce that $\esp\big[ (\Sigma \circ A)^{n}\big] $ is a diagonal matrix.

\end{proof}

\begin{Cor}\label{Cor:ExpOutDiag} Let $A$ and $\Sigma$ be as in Lemma \ref{Lem:ExpOutDiag}. Assume moreover that $ \Sigma \circ A$ is Hermitian. Then for any $\Lambda \in \DN^+$, the expectation of the generalized resolvent $\esp\big[ (\Lambda - \Sigma \circ A)^{-1} \big]$ is a diagonal matrix.
\end{Cor}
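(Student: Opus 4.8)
The plan is to deduce Corollary \ref{Cor:ExpOutDiag} from Lemma \ref{Lem:ExpOutDiag} by expanding the generalized resolvent into a Neumann series, so that the diagonality of the expectation of each term carries over to the expectation of the whole resolvent. First I would reduce to the case where the relevant Neumann series converges: pick $\Lambda \in \DN^+$ and observe that $\Sigma \circ A$ is Hermitian (by assumption), so for $\|\Lambda^{-1}\| \, \|\Sigma \circ A\| < 1$ we may write
\begin{equation*}
(\Lambda - \Sigma \circ A)^{-1} = \Lambda^{-1} \sum_{n \geq 0} \big( (\Sigma \circ A) \Lambda^{-1} \big)^n .
\end{equation*}
Because $\Lambda$ is a fixed diagonal matrix, the map $B \mapsto \Lambda^{-1} B$ preserves the property of "being a diagonal matrix up to the randomness," and more importantly each term $\big( (\Sigma \circ A)\Lambda^{-1}\big)^n$ can be rewritten, after absorbing the diagonal $\Lambda^{-1}$ into $\Sigma$, as $(\Sigma' \circ A)^n \Lambda^{-1}$ for the modified profile $\Sigma'(i,j) = \sigma(i,j)\Lambda(j,j)^{-1}$; here $A$ is still unitarily invariant with finite moments of all orders. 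By Lemma \ref{Lem:ExpOutDiag}, $\esp\big[ (\Sigma' \circ A)^n \big]$ is diagonal for every $n$, hence so is $\esp\big[ (\Sigma' \circ A)^n \big]\Lambda^{-1}$, and the finite-moment hypothesis together with the operator-norm bound $\| \Lambda^{-1} \sum_{n\geq 0} (\ldots)^n \| \leq \| \Lambda^{-1}\|/(1 - \|\Lambda^{-1}\|\|\Sigma\circ A\|)$ — valid on the event where the series converges — allows interchanging expectation and summation by dominated convergence, so $\esp\big[ (\Lambda - \Sigma\circ A)^{-1}\big]$ is diagonal whenever $\|\Lambda^{-1}\| \, \esp\|\Sigma\circ A\|$ is small enough; one must be slightly careful since $\|\Sigma \circ A\|$ is random, so I would instead integrate the series termwise over all of probability space using that $\esp\big[ \| \Sigma\circ A\|^n \big] < \infty$ and choosing $\Lambda$ with $\Im m\, \Lambda$ large, which forces uniform (in $\omega$) geometric convergence.

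Second I would remove the largeness restriction on $\Lambda$ by an analyticity argument, exactly as done elsewhere in the paper (e.g. the proof of Lemma \ref{Lem:UniquAnalytic} and the use of \cite{Sch05}). The function $\Lambda \mapsto \esp\big[ (\Lambda - \Sigma\circ A)^{-1}\big]$ is analytic on $\DN^+$ by Lemma \ref{Lem:Analytic} (applied to the Hermitian random matrix $M = \Sigma \circ A$). For each pair $k \neq \ell$, the scalar function $\Lambda \mapsto \esp\big[ (\Lambda - \Sigma\circ A)^{-1}\big](k,\ell)$ is analytic in each variable and vanishes on the nonempty open subset $\{\Lambda \in \DN^+ : \Im m\,\Lambda > c\,\mbb I_N\}$ for a suitable constant $c$ depending on $\esp\|\Sigma \circ A\|$ and on $N$; by the analytic continuation principle for functions analytic in several variables \cite[Conclusion 1.2.1.2]{Sch05} it vanishes identically on $\DN^+$. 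Hence $\esp\big[ (\Lambda - \Sigma\circ A)^{-1}\big]$ is diagonal for every $\Lambda \in \DN^+$.

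The main obstacle I anticipate is the justification of the termwise expectation of the Neumann series with a genuinely random (and a priori unbounded) norm $\|\Sigma\circ A\|$: one cannot simply assume $\|\Sigma\circ A\| < \|\Lambda^{-1}\|^{-1}$ almost surely, so the argument has to be arranged so that the series $\sum_n \esp\big[ \| (\Sigma\circ A)\Lambda^{-1}\|^n \big]$ itself is summable, which in turn requires a moment bound of the form $\esp\big[\|\Sigma\circ A\|^n\big] \leq C^n n!$ or better — this does follow from the finite-moment assumption on the entries of $A$ combined with crude norm-by-Frobenius-norm estimates, but writing it carefully is the only delicate point; alternatively, one can avoid it entirely by truncating $A$ on the event $\{\|\Sigma\circ A\| \leq t\}$, applying the bounded case there, letting $t \to \infty$, and invoking dominated convergence with the integrable dominating function $\Lambda \mapsto \|(\Im m\,\Lambda)^{-1}\|$ from Lemma \ref{Eq:PositivityLambdaPrime}. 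Everything else — the absorption of $\Lambda^{-1}$ into the variance profile, the invocation of Lemma \ref{Lem:ExpOutDiag}, and the analytic continuation — is routine given the tools already assembled in the paper.
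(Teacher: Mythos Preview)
Your approach is essentially the paper's: Neumann series plus Lemma \ref{Lem:ExpOutDiag} for $\Lambda$ with large imaginary part, then analytic continuation via Lemma \ref{Lem:Analytic}. The absorption $(\Sigma\circ A)\Lambda^{-1}=\Sigma'\circ A$ with $\Sigma'=\Sigma\Lambda^{-1}$ is exactly what the paper does.

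There is, however, a genuine gap in your handling of the unbounded case. Your moment-bound route does not go through: ``entries have finite moments of all orders'' gives $\esp[\|\Sigma\circ A\|^n]<\infty$ for each $n$ but says nothing about the growth rate in $n$, so $\sum_n c^n\esp[\|\Sigma\circ A\|^n]$ need not converge for any $c>0$. Your alternative, truncating on the event $\{\|\Sigma\circ A\|\leq t\}$, fails for a different reason: the indicator of that event is \emph{not} invariant under $A\mapsto UAU^*$ (the profile $\Sigma$ is fixed), so the truncated matrix $A\cdot\one_{\{\|\Sigma\circ A\|\leq t\}}$ is no longer unitarily invariant and Lemma \ref{Lem:ExpOutDiag} no longer applies to it. The paper instead performs a \emph{spectral} truncation of $A$ itself, setting $A^{(B)}=\sum_j \lambda_j\one(|\lambda_j|\leq B)u_ju_j^*$; this is manifestly unitarily invariant (functional calculus commutes with conjugation), almost surely bounded, and converges to $A$ as $B\to\infty$, so dominated convergence (with dominating constant $\|(\Im m\,\Lambda)^{-1}\|$) finishes the job. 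A minimal fix of your argument would be to truncate on $\{\|A\|\leq t\}$ rather than $\{\|\Sigma\circ A\|\leq t\}$, since $\|A\|$ is conjugation-invariant.
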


\begin{proof}
 We first assume that there is a constant $B>0$ such that almost surely one has $\| A\| \leq B$. For any matrix $M$ we denote by $\|M\|_F$ its Frobenius norm $\big\| M\big\|_F =  \big( \Tr \big[M^* M \big] \big)^{\frac{1}{2}}$ and we recall that $\| M \| \leq \| M \|_F \leq \sqrt N \| M\|$. Hence we have for $M=\Sigma \circ A$,
 	\eq
		\|  \Sigma \circ A \| &\leq & \Big( \Tr \Big[\big( \Sigma \circ A\big)^*\big( \Sigma \circ A\big) \Big] \Big)^{\frac 12}\\
		& = & \Big(  \sum_{i,j} |\sigma(i,j)|^2 |A(i,j)|^2  \Big)^{\frac 12}\\
			& \leq & \sigma_{\max} \times \| A \|_F \leq  \sigma_{\max} \sqrt N  \| A \|.
	\qe
	where $\sigma(i,j)$ denotes the $(i,j)$-th entry of the matrix $\Sigma$, and $\sigma_{\max} = \max_{i,j} |\sigma(i,j)|$.
For any $\Lambda \in \DN^+$ such that $\Im m \, \Lambda >  \sigma_{\max} \sqrt N  B \mbb I_N$, we have that $\| (\Sigma \circ A) \Lambda^{-1}\|<1$, and thus the following identity holds
	$$  (\Lambda - \Sigma \circ A)^{-1}  = \sum_{n\geq 0} \Lambda^{-1}   \big((\Sigma \circ A) \Lambda^{-1}\big)^n,$$
where the convergence of the sum is normal. In particular we can interchange summation and expectation, namely
	$$\esp\big[ (\Lambda - \Sigma \circ A)^{-1} \big] = \sum_{n\geq 0} \Lambda^{-1}   \esp\Big[\big((\Sigma \circ A) \Lambda^{-1}\big)^n\Big].$$
Moreover, we have the equality $(\Sigma \circ A) \Lambda^{-1} = \Sigma' \circ A$ where $\Sigma' = \Sigma\Lambda^{-1}$. Hence by Lemma \ref{Lem:ExpOutDiag} for any $\Lambda \in \DN^+$ and any $n\geq 1$ the matrix   $\esp\big[\big((\Sigma \circ A) \Lambda^{-1}\big)^n\big]$ is diagonal. So for any $\Lambda$ such that $\Im m \, \Lambda> \sigma_{\max} \sqrt N  B \mbb I_N$, the matrix $\esp\big[ G_{X_N}(\Lambda) \big]$ is also diagonal. This fact extends for any $\Lambda \in \DN^+$ by analytic continuation thanks to Lemma \ref{Lem:Analytic}.

To treat the general case, we use a classical spectral truncation argument. Let us denote by $\lambda_1\etc \lambda_N$ and $u_1\etc u_N$ the eigenvalues and the associated eigenvectors of $A$, so that we have $A = \sum_{i=1}^N \lambda_j u_i^* u_i$. For any $B>0$, we denote by $A^{(B)}$ the matrix $A^{(B)} = \sum_{i=1}^N \lambda_j \one\big( |\lambda_j|\leq B \big)u_i^* u_i$, where $\one$ denote the indicator function. Hence $A^{(B)}$ is uniformly bounded in operator norm by $B$. By the previous case, the matrix $\esp\big[ (\Lambda - \Sigma \circ A^{(B)})^{-1} \big] $ is diagonal. Moreover, we have
	\eq
		 \Big\|   (\Lambda - \Sigma \circ A)^{-1}    -   (\Lambda - \Sigma \circ A^{(B)})^{-1}   \Big\|  
		& \leq & \| (\Im m \, \Lambda^{-1})\|^2   \big\| \Sigma \circ \big( A^{(B)} - A \big) \big\|  \\
		& \leq &  \| (\Im m \, \Lambda^{-1})\|^2 \sigma_{\max} \sqrt N \| A^{(B)}-A \|.
	\qe
With $N$ fixed, we get that almost surely as $B$ tends to infinity the matrix $(\Lambda - \Sigma \circ A^{(B)})^{-1}$ converges to $(\Lambda - \Sigma \circ A)^{-1}$. Since the matrices are bounded in operator norm, the convergence holds in expectation. In particular, $\esp\big[ (\Lambda - \Sigma \circ A)^{-1}\big]$ is the limit of a diagonal matrix so it is diagonal. 
\end{proof}

\subsection{Concentration argument and proof of the main results}

We now complete the proof of Theorem \ref{MainTh} by combining Lemma \ref{StabilityAnalysis} with a concentration argument for the resolvent $(\Lambda - X_N - Y_N)^{-1}$ towards its expectation $\esp\big[ (\Lambda - H_N)^{-1} \big]$.

\begin{Lem}\label{Lem:ConcenResolvent} 
Let $\Lambda \in \DN^+$. Then, for  any pair of unit vectors $v,w$ (that is $\| v \|_2 = \| w \|_2 = 1$), one has that, for all $t> 0$,
\begin{equation}
\Pr \big( \big| v^\ast( \left(\Lambda - H_N)^{-1} - \esp \big[ (\Lambda - H_N)^{-1}\big] \right)w \big| \geq t \big) \leq 4 \exp \left( -N \frac{ t^2 \| (\Im m \, \Lambda)^{-1}\|^{-4}}{2 \gamma_{\max}^2} \right), \label{eq:concen0}
\end{equation}
where  $\gamma_{\max}^2$ is the maximum of the variances in the profile $\Gamma_{N}$. Moreover, for any $\lambda \in \mbb C^+$ and all $t> 0$,
\begin{equation}
\Pr \big( \big| g_{H_N} (\lambda)   - \esp \big[ g_{H_N} (\lambda)  \big]  \big| \geq t \big) \leq 4 \exp \left( -N^2 \frac{ t^2 | \Im m \, \lambda|^4}{8 \gamma_{\max}^2} \right), \label{eq:concen1}
\end{equation}
where $g_{H_N}$ is the Stieltjes transform of $H_N $.
\end{Lem}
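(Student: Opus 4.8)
The plan is to realize both quantities as Lipschitz functions of the underlying real Gaussian vector and then to invoke the Gaussian concentration inequality for Lipschitz functions of Gaussians (in the same normalisation as the Gaussian Poincar\'e inequality, Proposition~\ref{prop:Poincare}). Recall from \eqref{eq:decompZN} that $H_N = Y_N + \sum_{i,j} x'_{i,j} F_{i,j}$ with the $x'_{i,j}$ independent centered real Gaussian variables of variance $\gamma_{i,j}^2/N$, and fix $\Lambda \in \DN^+$. Writing $A_N = (\Lambda - H_N)^{-1}$, the bound \eqref{eq:boundopnorm} gives $\|A_N\| \leq \|(\Im m\,\Lambda)^{-1}\|$ for every realization, so the maps considered below are continuously differentiable with polynomially bounded derivatives and the concentration inequality applies.

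For \eqref{eq:concen0}, fix unit vectors $v,w$ and set $\varphi = v^* A_N w$, seen as a function of $(x'_{i,j})_{i,j}$. As in \eqref{eq:diff}, $\partial_{x'_{i,j}}\varphi = v^* A_N F_{i,j} A_N w = a^* F_{i,j} b$ with $a = A_N^* v$, $b = A_N w$; since $F_{i,j}$ is Hermitian, $|a^* F_{i,j} b|^2 = (a^* F_{i,j} b)(b^* F_{i,j} a)$, and summing over $i,j$ while using the identity \eqref{Eq:Magic2} with $M = bb^*$ yields
\[
\sum_{i,j} \big| \partial_{x'_{i,j}} \varphi \big|^2 \;=\; a^*\Big( \sum_{i,j} F_{i,j}\, bb^*\, F_{i,j} \Big) a \;=\; \|b\|_2^2\,\|a\|_2^2 \;\leq\; \|A_N\|^4 \;\leq\; \|(\Im m\,\Lambda)^{-1}\|^4 .
\]
Hence each of $h \in \{\Re e\,\varphi,\ \Im m\,\varphi\}$ satisfies $\sum_{i,j}\tfrac{\gamma_{i,j}^2}{N}\,|\partial_{x'_{i,j}}h|^2 \leq \tfrac{\gamma_{\max}^2}{N}\|(\Im m\,\Lambda)^{-1}\|^4$ pointwise, so the Gaussian concentration inequality gives $\Pr(|h-\esp h|\geq t/\sqrt2)\leq 2\exp\!\big(-\tfrac{Nt^2}{c\,\gamma_{\max}^2}\|(\Im m\,\Lambda)^{-1}\|^{-4}\big)$ for a numerical constant $c$. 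Combining $\{|\varphi-\esp\varphi|\geq t\}\subseteq\{|\Re e(\varphi-\esp\varphi)|\geq t/\sqrt2\}\cup\{|\Im m(\varphi-\esp\varphi)|\geq t/\sqrt2\}$ with a union bound produces \eqref{eq:concen0}. For \eqref{eq:concen1} one repeats the argument with $\varphi$ replaced by $g_{H_N}(\lambda)=\tfrac1N\Tr A_N$ (so $\Lambda=\lambda\mbb I_N$ and $\|(\Im m\,\Lambda)^{-1}\|=|\Im m\,\lambda|^{-1}$); here $\partial_{x'_{i,j}}g_{H_N}(\lambda)=\tfrac1N\Tr(F_{i,j}A_N^2)$, and since $(F_{i,j})_{i,j}$ is an orthonormal basis of $\MN$ for the trace inner product, $\sum_{i,j}|\Tr(F_{i,j}A_N^2)|^2=\|A_N^2\|_F^2\leq N\|A_N\|^4$, whence $\sum_{i,j}|\partial_{x'_{i,j}}g_{H_N}(\lambda)|^2\leq \tfrac1N\|(\Im m\,\Lambda)^{-1}\|^4$. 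This gains an extra factor $N$, which is exactly the passage from $N$ to $N^2$ in the exponent; the same concentration-plus-union-bound argument then gives \eqref{eq:concen1}.

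I do not expect a genuine obstacle here: the only model-specific input is the Lipschitz-constant estimate, which uses the basis identity \eqref{Eq:Magic2} precisely as in the proof of the Master inequality (Lemma~\ref{MasterIneq}), together with the a priori resolvent bound \eqref{eq:boundopnorm} that legitimizes applying Gaussian concentration. The one point needing a little care is moving from a real scalar Lipschitz function to the complex-valued resolvent bilinear form (resp.\ Stieltjes transform): one treats $\Re e$ and $\Im m$ separately and bookkeeps the $\sqrt2$ factors lost in splitting the event, which is what produces the numerical constants $2$ and $8$ displayed in \eqref{eq:concen0} and \eqref{eq:concen1}.
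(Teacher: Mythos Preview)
Your proposal is correct and follows the same overall strategy as the paper: realize the two functionals as Lipschitz functions of the underlying real Gaussian coordinates $(x'_{i,j})$ and invoke the Gaussian concentration inequality for Lipschitz functions, handling the complex-valued case by splitting into real and imaginary parts and losing a $\sqrt 2$.

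The only difference is in how the Lipschitz constant is estimated. The paper works with finite differences: for \eqref{eq:concen0} it uses the resolvent identity $\phi^{v,w}_\Lambda(A)-\phi^{v,w}_\Lambda(A')=v^*(\Lambda-A-Y_N)^{-1}(A-A')(\Lambda-A'-Y_N)^{-1}w$ together with $\|A-A'\|\le\|A-A'\|_F$, and for \eqref{eq:concen1} it bounds $|\Tr[BC]|$ via a Lasserre-type singular value inequality to obtain $|\phi_\lambda(A)-\phi_\lambda(A')|\le 2\sqrt N\,|\Im m\,\lambda|^{-2}\|A-A'\|_F$. You instead compute the gradient explicitly and sum: for the bilinear form you use the basis identity \eqref{Eq:Magic2}, and for the trace you use that $(F_{i,j})$ is an orthonormal basis of $\MN$ for the Hilbert--Schmidt inner product, giving $\sum_{i,j}|\Tr(F_{i,j}A_N^2)|^2=\|A_N^2\|_F^2$. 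Your route for the Stieltjes transform is a bit cleaner (it avoids the external trace inequality and in fact yields a slightly better numerical constant than the one stated), while the paper's finite-difference route is more self-contained in that it does not rely on \eqref{Eq:Magic2}. Either way the argument goes through.
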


To prove this result, we use the following result of Gaussian concentration inequality for Lipschitz functions (see e.g.\ \cite[Theorem 5.6]{MR3185193}).

\begin{Th}\label{Concentration} Let $X=(X_1\etc X_n)$ be a vector of $n$ independent standard normal random variables. Let $f: \mbb R^n \to \mbb R$ be a $L$-Lipschitz function for the Euclidean norm of $\mbb R^n$. Then for all $t>0$ we have
	\eqa\label{Eq:Concentration}
		\mbb P\Big( f(X) - \esp\big[ f(X) \big] \geq t \Big) \leq e^{-t^2/(2L^2)}.
	\qea
\end{Th}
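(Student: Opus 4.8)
To prove the stated concentration inequality it suffices to establish the \emph{sub-Gaussian bound for the Laplace transform},
\[
\esp\big[ e^{\lambda ( f(X) - \esp[ f(X)] )} \big] \leq e^{\lambda^2 L^2 / 2} \qquad \text{for all } \lambda > 0 ,
\]
since the Chernoff method then gives $\Pr\big( f(X) - \esp[ f(X)] \geq t \big) \leq e^{-\lambda t}\,\esp\big[ e^{\lambda ( f(X) - \esp[f(X)])}\big] \leq e^{-\lambda t + \lambda^2 L^2/2}$ for every $\lambda > 0$, and minimizing the exponent at $\lambda = t/L^2$ yields exactly $e^{-t^2/(2L^2)}$. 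Before establishing the Laplace bound I would reduce to the case where $f \in C^\infty(\R^n)$ with $\| \nabla f(x) \|_2 \leq L$ for every $x$: convolving $f$ with a Gaussian mollifier $\phi_\e$ produces smooth functions $f_\e$ that remain $L$-Lipschitz, and because a Lipschitz function has at most linear growth all the Gaussian integrals involved are finite, with $\esp[ e^{\lambda f_\e(X)}] \to \esp[ e^{\lambda f(X)}]$ and $\esp[ f_\e(X)] \to \esp[ f(X)]$ by dominated convergence. So it is enough to prove the Laplace bound for such smooth $f$.

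The heart of the argument is an interpolation along the Ornstein--Uhlenbeck semigroup. For $t \geq 0$ and $g:\R^n \to \R$ of at most linear growth, set $P_t g(x) = \esp\big[ g\big( e^{-t} x + \sqrt{1 - e^{-2t}}\, Z \big)\big]$ where $Z$ is a standard Gaussian vector independent of everything (Mehler's formula). I would record the elementary properties: $P_0 = \mrm{Id}$; $P_t g(x) \to \esp[ g(X)]$ as $t \to \infty$, pointwise in $x$; the standard Gaussian law is $P_t$-invariant, so $\esp[ (P_t g)(X)] = \esp[ g(X)]$; the generator is $\mcal L = \Delta - x \cdot \nabla$ with $\partial_t P_t g = \mcal L P_t g$ (obtained by differentiating Mehler's formula in $t$ and using the Gaussian integration by part of Lemma \ref{lem:GaussIPP}); the integration-by-parts identity $\esp\big[ (\mcal L g)(X)\, h(X) \big] = - \esp\big[ \nabla g(X) \cdot \nabla h(X)\big]$; and, crucially, the commutation relation $\nabla (P_t g) = e^{-t} P_t( \nabla g)$, obtained by differentiating Mehler's formula in $x$. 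The last identity gives the pointwise gradient contraction
\[
\big\| \nabla (P_t f)(x) \big\|_2 = e^{-t} \big\| P_t(\nabla f)(x) \big\|_2 \leq e^{-t}\, \esp\big[ \big\| \nabla f\big( e^{-t} x + \sqrt{1 - e^{-2t}} Z \big)\big\|_2 \big] \leq e^{-t} L ,
\]
by Jensen's inequality.

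With these tools, fix $\lambda > 0$ and set $\Phi(t) = \log \esp\big[ e^{\lambda (P_t f)(X)} \big]$, so $\Phi(0) = \log \esp[ e^{\lambda f(X)}]$ and $\Phi(t) \to \lambda\, \esp[ f(X)]$ as $t \to \infty$. Differentiating and using $\partial_t P_t f = \mcal L P_t f$ together with the integration-by-parts identity applied to $h = e^{\lambda P_t f}$ (so that $\nabla h = \lambda e^{\lambda P_t f} \nabla P_t f$) gives
\[
\Phi'(t) = \frac{ \lambda\, \esp\big[ (\mcal L P_t f)(X)\, e^{\lambda (P_t f)(X)} \big]}{ \esp\big[ e^{\lambda (P_t f)(X)} \big]} = - \frac{ \lambda^2\, \esp\big[ \big\| \nabla (P_t f)(X) \big\|_2^2\, e^{\lambda (P_t f)(X)} \big]}{ \esp\big[ e^{\lambda (P_t f)(X)} \big]} \geq - \lambda^2 L^2 e^{-2t} ,
\]
using the gradient contraction above. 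Integrating over $t \in [0,\infty)$ yields
\[
\lambda\, \esp[ f(X)] - \log \esp[ e^{\lambda f(X)}] = \lim_{t\to\infty}\Phi(t) - \Phi(0) \geq - \lambda^2 L^2 \int_0^\infty e^{-2t}\, dt = - \frac{\lambda^2 L^2}{2} ,
\]
which is precisely the required bound $\log \esp[ e^{\lambda ( f(X) - \esp[f(X)])}] \leq \lambda^2 L^2/2$; combined with the Chernoff step this proves the inequality for smooth $f$, and the mollification limit extends it to all $L$-Lipschitz $f$.

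The obstacles are technical rather than conceptual. One must justify differentiating under the expectation in the computation of $\Phi'(t)$ and the two boundary limits of $\Phi$; this is handled by the linear growth of $f$ (hence of $P_t f$, with Lipschitz constant $\leq L$ uniformly in $t$) and the fact that $e^{-t} x + \sqrt{1 - e^{-2t}} Z$ has Gaussian law with covariance bounded uniformly in $t \geq 0$, so that dominated convergence applies throughout. The mollification step likewise needs only $\esp[ e^{\lambda f_\e(X)}] \to \esp[ e^{\lambda f(X)}]$, again immediate from linear growth. The one genuinely important inequality is the exponential gradient decay $\| \nabla (P_t f)(x) \|_2 \leq e^{-t} L$, which comes from the commutation relation $\nabla P_t = e^{-t} P_t \nabla$ and is exactly where the sharp constant $\tfrac12$ in the exponent arises, through $\int_0^\infty e^{-2t}\,dt = \tfrac12$; a cruder interpolation (e.g.\ along the spherical path $X \sin\theta + Z\cos\theta$) would only give the weaker constant $\pi^2/8$ in place of $\tfrac12$.
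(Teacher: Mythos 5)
Your proof is correct. Note, however, that the paper does not prove this statement at all: it is quoted as a known result with a pointer to \cite[Theorem 5.6]{MR3185193}, where it is derived from the Gaussian logarithmic Sobolev inequality via Herbst's argument. Your route — Chernoff bounding reduced to the sub-Gaussian Laplace estimate, then semigroup interpolation along the Ornstein--Uhlenbeck flow using the commutation $\nabla P_t = e^{-t}P_t\nabla$ and the integration-by-parts identity for the generator $\mcal L=\Delta - x\cdot\nabla$ — is a different but equally standard derivation, and it is complete: the gradient contraction $\|\nabla P_t f\|_2\leq e^{-t}L$ correctly produces the sharp constant through $\int_0^\infty e^{-2t}\,dt=\tfrac12$, the mollification and dominated-convergence justifications you flag are exactly the ones needed (linear growth of $f$ makes $e^{\lambda P_t f}$ Gaussian-integrable uniformly in $t$, so differentiating $\Phi$ under the expectation and passing to the limits $t\to 0,\infty$ is legitimate), and the Chernoff optimization at $\lambda=t/L^2$ gives precisely $e^{-t^2/(2L^2)}$. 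What the semigroup proof buys over the cited log-Sobolev/Herbst argument is self-containedness — it never invokes the logarithmic Sobolev inequality as an external input — at the cost of being somewhat longer; both yield the same sharp bound. The only blemish is the closing aside: the Maurey--Pisier spherical interpolation gives exponent $-2t^2/(\pi^2L^2)$, so the degraded constant is $2/\pi^2$ in place of $1/2$ (equivalently $\pi^2/4$ in place of $2$ in the denominator), not quite as you phrased it; this is immaterial to the proof.
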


\begin{proof}
We let $\HN \subset \MN$ be the subset of Hermitian matrices. For two unit vectors $v,w$  and  any $\Lambda$ in $\mrm D_N(\mbb C^+)$ we consider the function 
	$$\begin{array}{cccc}
		\phi_{ \Lambda}^{v,w} : &\mrm \HN & \to &  \mbb C \\
			& A & \mapsto& v^*(\Lambda - A - Y_N)^{-1}w.
	\end{array}$$
In order to use Theorem \ref{Concentration} for the real and the imaginary parts of $\phi_{\Lambda}^{u,w}$, we shall estimate its Lipschitz constant. We denote by $\big\| A\big\|_F =  \big( \Tr \big[A^* A \big] \big)^{\frac{1}{2}}$ the Frobenius norm of a matrix $A$. We use the isomorphism between $\MN$ endowed with $\| \, \cdot \, \|_F$ and $\mbb R^{N^2}$ endowed as the Euclidean norm. Using Cauchy-Schwarz's inequality and Lemma \ref{Eq:PositivityLambdaPrime}, it follows that, for any $A, A'$ in  $\HN$,	 
\begin{eqnarray}
		| \phi_{ \Lambda}^{v,w}(A) - \phi_{ \Lambda}^{v,w}(A') |	&=&	  \Big| v^*  \big( \Lambda  - A-Y_N \big)^{ -1}   \Big( A-A'\Big)    \big( \Lambda  - A'-Y_N\big)^{ -1} w \Big| \nonumber \\
			&\leq&   \big\| v^*  \big( \Lambda  - A-Y_N \big)^{ -1} \big\|_2  \big\| \Big( A-A'\Big)    \big( \Lambda  - A'-Y_N\big)^{ -1} w  \big\|_2  \nonumber \\
			&\leq&   \big\|   \big( \Lambda  - A-Y_N \big)^{ -1} \big\| \big\| A-A' \big\| \big\| \big( \Lambda  - A'-Y_N\big)^{ -1}  \big\| \nonumber  \\
			& \leq &  \| (\Im m \, \Lambda)^{-1}\|^2 \big\| A-A' \big\|   \leq   \| (\Im m \, \Lambda)^{-1}\|^2 \big\| A-A' \big\|_F. \label{ineq:GaussLip}
\end{eqnarray}
Since $A$ can be decomposed in the basis \eqref{Eq:BaseF} of Hermitian matrices as
$
A = \sum_{i,j=1}^N \tilde{a}_{i,j}F_{i,j},
$
where the $\tilde{a}_{i,j}$ are reals, we have that $\phi_{ \Lambda}^{v,w}$ is a $L$-Lipschitz function (with Lipschitz constant $L = \| (\Im m \, \Lambda)^{-1}\|^2$) of the $N \times N$ matrix $\tilde{A} = (\tilde{a}_{i,j})$ having real entries. Now, recall the decomposition \eqref{eq:decompZN} of $X_N$ into the basis \eqref{Eq:BaseF}
\begin{equation}
X_N =\sum_{i,j=1}^N \frac{\gamma_{i,j}}{\sqrt{N}} \tilde{x}_{i,j}F_{i,j}, \label{eq:decompXN}
\end{equation}
where the $\tilde{x}_{i,j}$ are i.i.d. real Gaussian random variables that are centered and of variance one. By Inequality \eqref{ineq:GaussLip} and after a change of variable, we get that the function $(\tilde{x}_{i,j})_{i,j}\mapsto \phi_{ \Lambda}^{v,w}(X_N)$ is a $L$-Lipschitz function (that is complex-valued) with Lipschitz constant
$$
L = \| (\Im m \, \Lambda)^{-1}\|^2 \frac{\gamma_{\max}}{\sqrt{N}}.
$$

Hence \eqref{eq:concen0} follows by Gaussian concentration inequality, namely Equation \eqref{Eq:Concentration}.
Now, let $\lambda \in \mbb C^+$ and recall that the Stieltjes transform  $H_N = X_N + Y_N$ is the map
$$
g_{H_N} (\lambda) =  \frac 1 N \Tr \big[ G_{H_N}(\lambda \mbb I_N) \big] =  \frac 1 N \Tr \big[ (\lambda \mbb I_N - H_N)^{-1}\big], 
$$
and let us consider the mapping  $A \mapsto \phi_{\lambda}(A) :=   \Tr \big[ (\lambda \mbb I_N -A - Y_N)^{-1}\big]$ for $A \in \HN$. For any $A, A'$ in  $\HN$, we remark that
\begin{eqnarray}
\big| \phi_{\lambda}(A) - \phi_{\lambda}(A') \big| & = & \big|  \Tr \big[ \big( \lambda \mbb I_N   - A-Y_N \big)^{ -1}    ( A-A' )    \big( \lambda \mbb I_N   - A'-Y_N\big)^{ -1} \big] \big|  \nonumber \\
& = & \big|  \Tr \big[ \big( \lambda \mbb I_N   - A-Y_N \big)^{ -1}    \big( \lambda \mbb I_N   - A'-Y_N\big)^{ -1}    ( A-A' )  \big] \big| \label{eq:boundTrace}
\end{eqnarray}
To obtain an appropriate upper bound for \eqref{eq:boundTrace}, we use the following inequalities (see e.g.\ Lemma II.2 in \cite{Lasserre}): denoting by $\sigma_1(A) \leq \cdots \leq \sigma_N(A)$ the singular values of $A$,
\begin{equation}
 \sum_{i=1}^{N} \sigma_{n-i+1}(\Re e\, B)  \sigma_{i}(C) \leq \Re e \left( \Tr \big[ B C\big] \right) \leq \sum_{i=1}^{N} \sigma_{i}(\Re e\, B)  \sigma_{i}(C) \label{eq:Lasserre}
\end{equation}
which hold for any $B \in \MN$ and  $C \in \HN$. For two such matrices, Inequality \eqref{eq:Lasserre} combined with Cauchy-Schwarz's inequality implies that
$$
| \Re e \left( \Tr \big[ B C\big] \right) | \leq   |\sigma_{1}(\Re e\, B)| \sum_{i=1}^{N} |\sigma_{i}(C)|  \leq  \| \Re e\, B\| \sqrt{N}  \sqrt{ \sum_{i=1}^{N} |\sigma_{i}(C)|^2}.
$$
Since $\| C \|_F^2 =  \sum_{i=1}^{N} |\sigma_{i}(C)|^2$ and $  \| \Re e\, B\| \leq \| B \|$ (by the same argument as for the imaginary part in the proof of Lemma \ref{Lem:NormEstim}), one finally obtains that
$$
| \Re e \left( \Tr \big[ B C\big] \right) | \leq  \sqrt{N} \| B \| \| C \|_F.
$$
Using the fact $\Im m \, A  =  - \Re e (\mbf i A )$,  one obtains by similar arguments that
$$
| \Im m \left( \Tr \big[ B C\big] \right) | \leq  \sqrt{N} \| B \| \| C \|_F,
$$
which finally yields
$$
| \Tr \big[ B C\big]   | \leq | \Re e \left( \Tr \big[ B C\big] \right) | + | \Im m \left( \Tr \big[ B C\big] \right) | \leq 2  \sqrt{N} \| B \| \| C \|_F
$$
Hence, combining the above inequality  with  \eqref{eq:boundTrace} and Lemma \ref{Eq:PositivityLambdaPrime}, it follows that
$$
\big| \phi_{\lambda}(A) - \phi_{\lambda}(A') \big| \leq 2  \sqrt{N} | \Im m \, \lambda |^{-2}   \big\| A-A' \big\|_F
$$
Therefore, thanks to the decomposition \eqref{eq:decompXN} for $X_N$, the mapping $\tilde{X}_N \mapsto \frac 1 N \Tr \big[ (\lambda \mbb I_N - X_N-Y_N)^{-1}\big] =  \frac 1 N   \phi_{\lambda}(X_N)  $ is a $L$-Lipschitz function   with Lipschitz constant
$$
L = 2 | \Im m \, \lambda |^{-2} \frac{\gamma_{\max}}{N}.
$$
 
Therefore, using again Gaussian concentration for Lipschitz functions, one obtains Inequality \eqref{eq:concen1}, which completes the proof of Lemma \ref{Lem:ConcenResolvent}.
\end{proof}

Now, let us fix $0 < \delta < 1$, and consider $\Lambda \in \DN^+$ satisfying $2\gamma^5_{\mrm{max}} N^{-1}\| (\Im m \, \Lambda)^{-1}\|^5 \leq 1-\delta$ so that   $ \| \esp \big[ G_{H_N}(\Lambda) \big]   - G^\square_{H_N}(\Lambda)\|  \leq t_N^{(1)}$, by Lemma \ref{StabilityAnalysis}, where
	\eqa\label{Def:TildeT1}
	t_N^{(1)} :=    \Big( 1 +  \frac{\gamma^2_{\mrm {max}}}{\delta}   \| (\Im m \, \Lambda)^{-1} \|^2 \Big)  \frac{2 \gamma_{\mrm{max}}^3\| (\Im m \, \Lambda)^{-1}\|^4}N.
	\qea
Since $G_{H_N}(\Lambda)$ and its expectation are diagonal matrices one has that the operator norm of their difference  satisfies
$$
\big\|G_{H_N}(\Lambda) - \esp \big[ G_{H_N}(\Lambda)  \big] \big\| = \max_{1 \leq i \leq N} \big|(\Lambda - H_N)^{-1}[i,i] -  \esp \big[  (\Lambda - H_N)^{-1}[i,i] \big]  \big|.
$$
Thus, combining Inequality \eqref{eq:concen0}  with a union bound yields the following concentration inequality: for all $t> 0$,
$$
\Pr \big( \big\|G_{H_N}(\Lambda) - \esp \big[ G_{H_N}(\Lambda)  \big] \big\| \geq t \big) \leq 4 N \exp \left( -N \frac{ t^2 \| (\Im m \, \Lambda)^{-1}\|^{-4}}{2 \gamma_{\max}^2} \right).
$$
Hence, taking $t = t_N^{(2)} :=\sqrt{2} \gamma_{\max} \sqrt{   d \log(N) }  \| (\Im m \, \Lambda)^{-1}\|^{2}  N^{-1/2} $ (for some $d > 1$), one finally obtains that
$$
\Pr \big( \big\| G_{H_N}(\Lambda)   - G^\square_{H_N}(\Lambda)  \big\| \geq t_N^{(2)} + t_N^{(1)} \big) \leq 4 N^{1-d},
$$
which proves Inequality \eqref{eq:MainThStieltjes},  and completes the proof of Theorem \ref{MainTh} in the general case. The case diagonal where $Y_N$ is diagonal  follows similarly.

Then, to derive the proof of Corollary \ref{MainCor}, we use the concentration inequality \eqref{eq:concen1} for the Stieltjes transform $g_{H_N} (\lambda) $. Since $g_{H_N}^\square (\lambda) =  \frac 1 N \Tr \big[ G_{H_N}^\square(\lambda \mbb I_N) \big]$ and given that $| \frac 1 N \Tr \big[ A \big]| \leq \| A \|$ for any diagonal matrix $A \in \MN$, we obtain  from Lemma \ref{StabilityAnalysis}  that,  for $\lambda$ satisfying Condition \eqref{eq:condlambda2bis},
\begin{equation}
\big| \esp \big[ g_{H_N} (\lambda) \big] - g_{H_N}^\square (\lambda) \big|  \leq  \tilde{t}_N^{(1)}, \quad \mbox{where} \quad \tilde{t}_N^{(1)} :=  \Big( 1 +  \frac{\gamma^2_{\mrm {max}}}{\delta | \Im m \, \lambda|^{2}}    \Big) \frac{2 \gamma_{\mrm{max}}^3}{ N   (\Im m \, \lambda)^4}. \label{eq:concen2}
\end{equation}
Therefore, taking $t = \tilde{t}_N^{(2)} := \frac{\sqrt{2} \gamma_{\max} \sqrt{2d \log(N)}}{  | \Im m \, \lambda|^2 N}   $ (for some $d > 0$) one obtains, by combining Inequalities \eqref{eq:concen1} and \eqref{eq:concen2}, that
$$
\Pr \big( \big| g_{H_N} (\lambda)   - g_{H_N}^\square (\lambda) \big| \geq \tilde{t}_N^{(2)} + \tilde{t}_N^{(1)} \big) \leq N^{-d},
$$
which proves Inequality \eqref{eq:MainThStieltjes2}. The case diagonal where $Y_N$ is diagonal also follows similarly, and this completes the proof of Corollary \ref{MainCor}.

We now prove Corollary \ref{Cor:Beta1}, assuming hence that $Y_N$ is a diagonal matrix. We take $\Lambda = \lambda \mbb I_N$  with satisfying $\Im m \, \lambda \geq \gamma_{\mrm{max}} N^{-1/4} (1-\delta)^{-1/6}$. Since $Y_N$ is supposed to be Hermitian, it is a diagonal matrix with real entries. Therefore,  by Corollary \ref{Cor:ExpOutDiag}, one has that $\esp\big[ (\lambda \mbb I_N - H_N)^{-1} \big]$ is  diagonal. Then, we remark that
\begin{equation}
 \| \beta_k(\lambda) - \beta_k^\square(\lambda) \| \leq \|    U_{N,k}^* \left( (\lambda \mbb I_N - H_N)^{-1} -G_{H_N}^\square( \lambda \mbb I_N) \right)  U_{N,k} \| \| \Theta_k \| \label{ineqconcenbeta0}
\end{equation}
Now, with $\hat{t}_N^{(1)} :=  \Big( 1 +  \frac{\gamma^2_{\mrm {max}}}{\delta | \Im m \, \lambda|^{2}}    \Big) \frac{ \gamma_{\mrm{max}}^4}{ N^{3/2}   (\Im m \, \lambda)^5}$, we have 
	\eqa
		\lefteqn{  \big\|    U_{N,k}^* \left(  \esp\big[ G_{H_N}( \lambda \mbb I_N)  \big] -G_{H_N}^\square( \lambda \mbb I_N) \right)  U_{N,k} \big\|}\nonumber\\
		& \leq & \big\|      \esp\big[ G_{H_N}( \lambda \mbb I_N)  \big] -G_{H_N}^\square( \lambda \mbb I_N)  \big\| \leq \hat{t}_N^{(1)}  \label{ineqconcenbeta1}.
	\qea
Moreover, if we denote by $u_1,\ldots,u_k$ the columns of the matrix $U_{N,k}$, we have
	\eq
		\lefteqn{ \big\| U_{N,k}^* \left( (\lambda \mbb I_N - H_N)^{-1} -\esp\big[ (\lambda \mbb I_N - H_N)^{-1} \big] \right)  U_{N,k}\big \| }\\
		 &\leq & k \max_{1 \leq \ell, \ell' \leq k }    u_{\ell}^* \left( (\lambda \mbb I_N - H_N)^{-1} -\esp\big[ (\lambda \mbb I_N - H_N)^{-1} \right) u_{\ell'}.
	\qe
Thus, by combining Inequality \eqref{eq:concen0} with the fact that $\esp\big[ (\lambda \mbb I_N - H_N)^{-1} \big]  =  \esp\big[ G_{H_N}( \lambda \mbb I_N)  \big]$ (since $ \esp\big[ (\lambda \mbb I_N - H_N)^{-1} \big] $ is diagonal), it follows  by a union bound argument that, for all $t > 0$, 
\begin{equation}
\Pr \left(  \frac{1}{k}\| U_{N,k}^* \left( (\lambda \mbb I_N - H_N)^{-1} -\esp\big[ G_{H_N}( \lambda \mbb I_N)  \big]\right)  U_{N,k} \|  \geq t \right) \leq 4 k^2 \exp \left( -N \frac{ t^2 | \Im m \, \lambda|^4}{2 \gamma_{\max}^2} \right). \label{ineqconcenbeta2}
\end{equation}
Therefore, combining Inequalities \eqref{ineqconcenbeta0}, \eqref{ineqconcenbeta1} and \eqref{ineqconcenbeta2} we obtain that 
$$
\Pr \left( \| \beta_k(\lambda) - \beta_k^\square(\lambda) \|  \geq  \| \Theta_k \|( k \bar{t}_N^{(2)} + \hat{t}_N^{(1)}) \right) \leq 4 k^2 N^{-d}, 
$$
with $\bar{t}_N^{(2)} := \sqrt{2} \gamma_{\max} \frac{ \sqrt{d \log(N)}}{  | \Im m \, \lambda|^2} N^{-1/2}$, which finally yields Inequality  \eqref{Goal}.

Let us now prove Corollary \ref{Cor:Beta2}, where $Y_N$ is not necessarily diagonal. In particular $ \esp\big[ (\lambda \mbb I_N - H_N)^{-1} \big] $ is not necessarily a diagonal matrix. Thus, we shall use the deterministic equivalent  $\tilde{\beta}_k^\square(\lambda)$ defined by \eqref{eq:defbetasquaregen} to approximate $ \beta_k(\lambda)$. First, as previously, we remark that
\begin{equation}
 \| \beta_k(\lambda) - \tilde{\beta}_k^\square(\lambda) \| \leq \|    U_{N,k}^* \left( (\lambda \mbb I_N - H_N)^{-1} -\big(\Omega_{H_N}^\square(\lambda \mbb I_N)-Y_N \big)^{-1}  \right)  U_{N,k} \| \| \Theta_k \| \label{ineqconcentildebeta0},
\end{equation}
where $\Omega_{H_N}^\square(\lambda \mbb I_N) =  \lambda \mbb I_N - \mcal R_N\big(  G_{H_N}^\square(\lambda \mbb I_N)  \big)$.
By the same arguments  used to derive inequality \eqref{ineqconcenbeta2}, it follows that
\begin{equation}
\Pr \left(  \frac{1}{k}\| U_{N,k}^* \left( (\lambda \mbb I_N - H_N)^{-1} -\esp\big[ (\lambda \mbb I_N - H_N)^{-1} \big] \right)  U_{N,k} \|  \geq t \right) \leq 4 k^2 \exp \left( -N \frac{ t^2 | \Im m \, \lambda|^4}{2 \gamma_{\max}^2} \right). \label{ineqconcentildebeta2}
\end{equation}

Hence, the only difference with the setting where $Y_N$ is diagonal is the control of the term 
$
 \| U_{N,k}^* \left(  \esp\big[ (\lambda \mbb I_N - H_N)^{-1} \big]  - \big(\Omega_{H_N}^\square(\lambda \mbb I_N)-Y_N \big)^{-1} \right)  U_{N,k} \|
$ which is obviously bounded by $\|  \esp\big[ (\lambda \mbb I_N - H_N)^{-1} \big]  - \big(\Omega_{H_N}^\square(\lambda \mbb I_N)-Y_N \big)^{-1} \|$. We now consider the decomposition 
\begin{eqnarray*}
\lefteqn{ \|  \esp\big[ (\lambda \mbb I_N - H_N)^{-1} \big]  - \big(\Omega_{H_N}^\square(\lambda \mbb I_N)-Y_N \big)^{-1} \| }\\& \leq & \|  \esp\big[ (\lambda \mbb I_N - H_N)^{-1} \big]  - \big(\Omega_{H_N}(\lambda \mbb I_N)-Y_N \big)^{-1} \|  \\
& & + \| \big(\Omega_{H_N}(\lambda \mbb I_N)-Y_N \big)^{-1}  -  \big(\Omega_{H_N}^\square(\lambda \mbb I_N)-Y_N \big)^{-1} \|
\end{eqnarray*}
where $ \big(\Omega_{H_N}(\lambda \mbb I_N)-Y_N \big)^{-1}  = \lambda \mbb I_N - \mcal R_N\Big(  \esp\big[G_{H_N}(\lambda \mbb I_N)\big] \Big)$. By combining Lemma \ref{Eq:PositivityLambdaPrime}, Lemma \ref{Lem:RecallASP} and Lemma \ref{MasterIneq}, we obtain that
\begin{equation}
 \|  \esp\big[ (\lambda \mbb I_N - H_N)^{-1} \big]  - \big(\Omega_{H_N}(\lambda \mbb I_N)-Y_N \big)^{-1} \|   \leq   \frac{2 \gamma_{\mrm{max}}^3}{ N   (\Im m \, \lambda)^4}.  \label{ineqconcentildebeta3}
\end{equation}
Now, using the equality
$
\big(\Omega_{H_N}(\lambda \mbb I_N)-Y_N \big)^{-1}  -  \big(\Omega_{H_N}^\square(\lambda \mbb I_N)-Y_N \big)^{-1}  =  \big(\Omega_{H_N}^\square(\lambda \mbb I_N)-Y_N \big)^{-1}  \left(\Omega_{H_N}^\square(\lambda \mbb I_N) -  \Omega_{H_N}(\lambda \mbb I_N) \right) \big(\Omega_{H_N}(\lambda \mbb I_N)-Y_N \big)^{-1}
$,
one has that
\begin{eqnarray*}
	\lefteqn{ \| \big(\Omega_{H_N}(\lambda \mbb I_N)-Y_N \big)^{-1}  -  \big(\Omega_{H_N}^\square(\lambda \mbb I_N)-Y_N \big)^{-1} \|}\\ & \leq & \| \big(\Omega_{H_N}^\square(\lambda \mbb I_N)-Y_N \big)^{-1} \|  \|  \big(\Omega_{H_N}(\lambda \mbb I_N)-Y_N \big)^{-1} \| \\
 & & \times \| \mcal R_N\big(  G_{H_N}^\square(\lambda \mbb I_N)  \big) -  \mcal R_N\Big(  \esp\big[G_{H_N}(\lambda \mbb I_N)\big] \Big)\|
\end{eqnarray*}
By Lemma \ref{Eq:PositivityLambdaPrime}  and Lemma \ref{Lem:RecallASP} one obtains that $ \|  \big(\Omega_{H_N}(\lambda \mbb I_N)-Y_N \big)^{-1} \| \leq   (\Im m \, \lambda)^{-1}$. Using again Lemma \ref{Eq:PositivityLambdaPrime}  one has that $\| \big(\Omega_{H_N}^\square(\lambda \mbb I_N)-Y_N \big)^{-1} \| \leq   \| \big( \Im m\, \Omega_{H_N}^\square(\lambda \mbb I_N)  \big)^{-1} \| $. Since $\Omega_{H_N}^\square(\lambda \mbb I_N) =  \lambda \mbb I_N - \mcal R_N\big(  G_{H_N}^\square(\lambda \mbb I_N)  \big)$ and given that $ \Im m\, G_{H_N}^\square(\lambda \mbb I_N) < 0$ by Lemma \ref{Lem:UniquAnalytic}, it follows from  Lemma \ref{Lem:RNProp} that $\Im m \big( \mcal R_N\big(  G_{H_N}^\square(\lambda \mbb I_N)  \big) \big)\leq 0$. Consequently, one has that $\Im m\, \Omega_{H_N}^\square(\lambda \mbb I_N)  > \Im m\, \lambda \mbb I_N$, and this finally yields $ \| \big(\Omega_{H_N}^\square(\lambda \mbb I_N)-Y_N \big)^{-1} \| \leq   (\Im m \, \lambda)^{-1}$.

Then,  using Lemma \ref{Lem:RNProp}, we remark that 
$$
\| \mcal R_N\big(  G_{H_N}^\square(\lambda \mbb I_N)  \big) -  \mcal R_N\Big(  \esp\big[G_{H_N}(\lambda \mbb I_N)\big] \Big)\| \leq \gamma_{\mrm{max}}^2  \| G_{H_N}^\square(\lambda \mbb I_N)  -  \esp\big[G_{H_N}(\lambda \mbb I_N)\big] \|,
$$
and therefore,  if $\Im m \, \lambda \geq  \gamma_{\mrm{max}}  \left(\frac{2}{N(1-\delta)}\right)^{1/5}$, then Lemma \ref{StabilityAnalysis} implies that
$$
\| \mcal R_N\big(  G_{H_N}^\square(\lambda \mbb I_N)  \big) -  \mcal R_N\Big(  \esp\big[G_{H_N}(\lambda \mbb I_N)\big] \Big)\| \leq  \Big( 1 +  \frac{\gamma^2_{\mrm {max}}}{\delta | \Im m \, \lambda|^{2}}    \Big) \frac{2 \gamma_{\mrm{max}}^5}{ N   (\Im m \, \lambda)^4}.
$$
Therefore, we  obtain that
\begin{equation}
 \| \big(\Omega_{H_N}(\lambda \mbb I_N)-Y_N \big)^{-1}  -  \big(\Omega_{H_N}^\square(\lambda \mbb I_N)-Y_N \big)^{-1} \|  \leq     \Big( 1 +  \frac{\gamma^2_{\mrm {max}}}{\delta | \Im m \, \lambda|^{2}}    \Big) \frac{2 \gamma_{\mrm{max}}^5}{ N   (\Im m \, \lambda)^6},  \label{ineqconcentildebeta4}
\end{equation}
Hence, the use of Inequalities    \eqref{ineqconcentildebeta3} and \eqref{ineqconcentildebeta4}  yields that
\begin{equation}
\|  \esp\big[ (\lambda \mbb I_N - H_N)^{-1} \big]  - \big(\Omega_{H_N}^\square(\lambda \mbb I_N)-Y_N \big)^{-1} \| \leq  \frac{2 \gamma_{\mrm{max}}^3}{ N   (\Im m \, \lambda)^4} +  \Big( 1 +  \frac{\gamma^2_{\mrm {max}}}{\delta | \Im m \, \lambda|^{2}}    \Big) \frac{2 \gamma_{\mrm{max}}^5}{ N   (\Im m \, \lambda)^6}.  \label{ineqconcentildebeta5} 
\end{equation}
Finally, combining  Inequalities \eqref{ineqconcentildebeta0}, \eqref{ineqconcentildebeta2} and \eqref{ineqconcentildebeta5} yields Inequality \eqref{Goal2}, which completes the proofs of the results stated in Section  \ref{Sec:MainStat}.

\subsection{Convergence of the fixed point algorithm}

A key step in these numerical experiments in the  numerical approximation of the solution of the fixed point equation \eqref{FixPtEq1} through an iterative algorithm whose convergence is first briefly discussed.

\begin{Lem} For any diagonal function $G^{(0)}_N: \DN^+ \to \DN^-$, we consider the sequence of diagonal functions $\big(G^{(n)}_N\big)_{n\geq0}$ given by $G^{(n+1)}_N(\Lambda) = \psi_{\Lambda}\big( G^{(n)}_N(\Lambda) \big)$ for any $\Lambda \in  \DN^+$. Then, as $n$ goes to infinity, the sequence $G^{(n)}_N(\Lambda)$ converges to $G^\square_{H_N}(\Lambda)$, where $G^\square_{H_N}$ denotes the deterministic equivalent characterized in Lemma \ref{Lem:UniquAnalytic}.
\end{Lem}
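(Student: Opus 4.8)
\emph{Proof plan.} The plan is to move the iteration to the ``subordination side'' of the equation and then to invoke a holomorphic fixed point theorem, since $\psi_\Lambda$ is a strict contraction (in operator norm) only in the region $\Im m\,\Lambda>\gamma_{\mrm{max}}\mbb I_N$ covered by Corollary~\ref{Cor:Apriori}; for a general $\Lambda\in\DN^+$ it need not be contractive, and this lack of contractivity is the main obstacle. Concretely, one writes $\tau(\Lambda'):=\Lambda-\mcal R_N(\Lambda')$, a $\mbb C$-affine (hence holomorphic) self-map of $\DN$, so that $\psi_\Lambda=G_{Y_N}\circ\tau$, and sets $h:=\tau\circ G_{Y_N}$. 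Then $h(\Lambda')=\Lambda-\mcal R_N\big(\Delta[(\Lambda'-Y_N)^{-1}]\big)$ is holomorphic on $\DN^+$ (matrix inversion is holomorphic where defined; cf.\ Lemma~\ref{Lem:Analytic}), the iterates $\Lambda'^{(n)}:=\tau\big(G^{(n)}_N(\Lambda)\big)$ obey $\Lambda'^{(n+1)}=h(\Lambda'^{(n)})$, and $G^{(n+1)}_N(\Lambda)=G_{Y_N}(\Lambda'^{(n)})$. It therefore suffices to show that the orbit of $h$ converges to $\tau\big(G^\square_{H_N}(\Lambda)\big)$ and then to pass to the limit using continuity of $G_{Y_N}$.

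Next one exhibits an invariant bounded domain. With $c:=\|(\Im m\,\Lambda)^{-1}\|^{-1}=\min_i\Im m\,\Lambda(i,i)>0$ and $M:=\|\Lambda\|+2\gamma^2_{\mrm{max}}/c+\|Y_N\|+1$, set
\[
	V:=\big\{\Lambda'\in\DN \;:\; \Im m\,\Lambda'>\tfrac c2\mbb I_N,\ \|\Lambda'\|<M\big\}\subset\DN^+,
\]
which is open, bounded and convex (hence a domain). Using Lemmas~\ref{Lem:DeltaProp}, \ref{Eq:PositivityLambdaPrime} and \ref{Lem:RNProp} one checks, for $\Lambda'\in V$, that $\|G_{Y_N}(\Lambda')\|\leq\|(\Im m\,\Lambda')^{-1}\|<2/c$ and $\Im m\,\mcal R_N(G_{Y_N}(\Lambda'))\leq0$, whence $\Im m\,h(\Lambda')\geq\Im m\,\Lambda=c\mbb I_N$ and $\|h(\Lambda')\|\leq\|\Lambda\|+2\gamma^2_{\mrm{max}}/c<M$. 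Thus $h(V)$ is contained in the compact set $K:=\{\Lambda':\Im m\,\Lambda'\geq c\mbb I_N,\ \|\Lambda'\|\leq\|\Lambda\|+2\gamma^2_{\mrm{max}}/c\}$, which lies at positive distance from $\partial V$. So $h$ is a holomorphic self-map of the bounded domain $V$ whose image is compactly contained in $V$, and the Earle--Hamilton fixed point theorem yields a unique fixed point $\Lambda'_\ast\in V$ of $h$ together with $h^m(\Lambda')\to\Lambda'_\ast$ as $m\to\infty$ for every $\Lambda'\in V$.

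Finally one ties the iteration and the deterministic equivalent to $\Lambda'_\ast$. Because $G^{(0)}_N(\Lambda)\in\DN^-$, Lemma~\ref{Lem:RNProp} gives $\Im m\,\mcal R_N(G^{(0)}_N(\Lambda))\leq0$, so $\tau(G^{(0)}_N(\Lambda))\in\DN^+$ with imaginary part $\geq\Im m\,\Lambda$; hence $\|G^{(1)}_N(\Lambda)\|\leq1/c$ by Lemma~\ref{Eq:PositivityLambdaPrime}, and therefore $\Lambda'^{(1)}=\tau(G^{(1)}_N(\Lambda))\in V$. Since $\Lambda'^{(n)}=h^{n-1}(\Lambda'^{(1)})$ for $n\geq1$, one gets $\Lambda'^{(n)}\to\Lambda'_\ast$ and so $G^{(n)}_N(\Lambda)=G_{Y_N}(\Lambda'^{(n-1)})\to G_{Y_N}(\Lambda'_\ast)$. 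On the other hand $G^\square_{H_N}(\Lambda)=\psi_\Lambda(G^\square_{H_N}(\Lambda))$ with $\|G^\square_{H_N}(\Lambda)\|\leq1/c$ by Lemma~\ref{Lem:LipsFP}, so $\Lambda'_\square:=\tau(G^\square_{H_N}(\Lambda))\in V$ and $h(\Lambda'_\square)=\tau\big(\psi_\Lambda(G^\square_{H_N}(\Lambda))\big)=\Lambda'_\square$; by the uniqueness part of Earle--Hamilton, $\Lambda'_\ast=\Lambda'_\square$, and thus $\lim_n G^{(n)}_N(\Lambda)=G_{Y_N}(\Lambda'_\square)=\psi_\Lambda(G^\square_{H_N}(\Lambda))=G^\square_{H_N}(\Lambda)$, as claimed. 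The only non-elementary input is the Earle--Hamilton theorem; every estimate on $h$, $\tau$ and $V$ reduces to the norm bounds of Section~\ref{Sec:Prelim}, so I expect no difficulty beyond correctly setting up the invariant domain $V$.
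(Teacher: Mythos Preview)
Your proof is correct and takes a genuinely different route from the paper's.

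The paper argues by a compactness/analytic-continuation scheme: the sequence $(G^{(n)}_N)_n$ is uniformly bounded by Lemma~\ref{Lem:LipsFP}, hence has subsequential limits; each $G^{(n)}_N$ is (claimed to be) analytic in $\Lambda$, so any subsequential limit is analytic; on the region $\Im m\,\Lambda>\gamma_{\max}\mbb I_N$ where $\psi_\Lambda$ is a strict contraction (Corollary~\ref{Cor:Apriori}) the limit is forced to be $G^\square_{H_N}$; analytic continuation then pins down the limit on all of $\DN^+$.

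You instead work pointwise in $\Lambda$: conjugating the iteration to the ``subordination side'' via $\tau$, you produce a holomorphic self-map $h=\tau\circ G_{Y_N}$ of a bounded convex domain $V\subset\DN^+$ whose image is compactly contained in $V$, and invoke the Earle--Hamilton theorem to obtain a unique fixed point and global convergence of the $h$-iterates. Your approach has the pleasant advantage of making no analyticity demand on the initial datum $G^{(0)}_N$; the paper's ``$G^{(n)}$ is analytic in $\Lambda$'' step implicitly assumes this, which is not part of the hypothesis that $G^{(0)}_N$ merely maps $\DN^+$ to $\DN^-$. The paper's argument, in turn, avoids any external fixed-point theorem and stays within the elementary norm estimates of Section~\ref{Sec:Prelim}. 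The Earle--Hamilton route is in fact the standard device in operator-valued free probability for establishing convergence of subordination iterations, so your argument is well aligned with the broader literature on these equations.
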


\begin{proof} Recall that by Lemma \ref{Lem:LipsFP} the sequence $\big(G^{(n)}_N(\Lambda)\big)_{n\geq0}$ is bounded, and so up to a subsequence it converges to some diagonal matrix $G_N(\Lambda)$. If $\Im m \, \Lambda> \gamma_{\max}\mbb I_N$, by Corollary \ref{Cor:Apriori} then $G_N(\Lambda)= G^\square_{H_N}(\Lambda)$ by contractivity of the fixed point problem. Moreover, for each $n$, the function $G^{(n)}$ is an analytic function of the variable $\Lambda$, uniformly bounded for $\Im m \, \Lambda>\eps$ for any $\eps$. By dominated convergence, the limit $G_N$ up to any subsequence is analytic. By analytic continuation, all limit coincide and are equal to $G^\square_{H_N}$. 
\end{proof}

\bibliographystyle{siam}
\bibliography{BiblioSpikesVarianceProfile}

\end{document}